\newcommand\restrict[1]{\raisebox{-.5ex}{$|$}_{#1}}
\newcommand{\CC}{{\mathbb C}}
\newcommand{\cA}{{\mathscr A}}
\newcommand{\cB}{{\mathscr B}}
\newcommand{\cE}{{\mathscr E}}
\newcommand{\cF}{{\mathscr F}}
\newcommand{\cG}{{\mathscr G}}
\newcommand{\cH}{{\mathscr H}}
\newcommand{\cK}{{\mathscr K}}
\newcommand{\cL}{{\mathscr L}}
\newcommand{\cM}{{\mathscr M}}
\newcommand{\cO}{{\mathscr O}}
\newcommand{\cQ}{{\mathscr Q}}
\newcommand{\cR}{{\mathscr R}}
\newcommand{\cS}{{\mathscr S}}
\newcommand{\cU}{{\mathscr U}}
\newcommand{\cV}{{\mathscr V}} 
\newcommand{\cX}{{\mathscr X}} 
\newcommand{\cY}{{\mathscr Y}}
\newcommand{\dra}{\dashrightarrow}
\newcommand{\FF}{{\mathbb F}}
\newcommand{\GR}{\mathbb{G}\mathrm{r}}
\newcommand{\hra}{\hookrightarrow}
\newcommand{\la}{\langle}
\newcommand{\lra}{\longrightarrow}
\newcommand{\n}{\noindent}
\newcommand{\NN}{{\mathbb N}}
\newcommand{\ov}{\overline}
\newcommand{\PP}{{\mathbb P}}
\newcommand{\qbeck}{{\mathsf q}}
\newcommand{\QQ}{{\mathbb Q}}
\newcommand{\ra}{\rangle}
\newcommand{\RR}{{\mathbb R}}
\newcommand{\sE}{{\mathsf E}}
\newcommand{\wt}{\widetilde}
\newcommand{\ZZ}{{\mathbb Z}}
\newcommand{\Gr}{\mathrm{Gr}}
\theoremstyle{plain} 
\newtheorem{thm}{Theorem}[section]
\newtheorem*{thm*}{Theorem}
\newtheorem{clm}[thm]{Claim}
\newtheorem{crl}[thm]{Corollary}
\newtheorem*{hyp*}{Hypothesis}
\newtheorem{lmm}[thm]{Lemma}
\newtheorem{prp}[thm]{Proposition}
\newtheorem{prp-dfn}[thm]{Proposition-Definition}
\theoremstyle{definition}
\newtheorem{ass-dfn}[thm]{Assumption-Definition}
\newtheorem{dfn}[thm]{Definition}
\theoremstyle{remark}
\newtheorem{expl}[thm]{Example}
\newtheorem*{qst*}{Main Question}
\newtheorem{rmk}[thm]{Remark}
\DeclareMathOperator{\Amp}{Amp}
\DeclareMathOperator{\Blow}{Bl}
\DeclareMathOperator{\ch}{ch}
\DeclareMathOperator{\cl}{cl}
\DeclareMathOperator{\Def}{Def}
\DeclareMathOperator{\divisore}{div}
\DeclareMathOperator{\ext}{ext}
\DeclareMathOperator{\Ext}{Ext}
\DeclareMathOperator{\Hom}{Hom}
\DeclareMathOperator{\Id}{Id}
\DeclareMathOperator{\im}{Im}
\DeclareMathOperator{\NS}{NS}
\DeclareMathOperator{\Pic}{Pic}
\DeclareMathOperator{\pr}{pr}
\DeclareMathOperator{\rk}{rk}
\DeclareMathOperator{\SH}{SH}
\DeclareMathOperator{\Span}{span}
\DeclareMathOperator{\supp}{supp}
\DeclareMathOperator{\Sym}{Sym}
\DeclareMathOperator{\Td}{Td}
\begin{document}
 \title{Modular sheaves with many moduli}
 \author{Kieran G. O'Grady}
 \address{Dipartimento di Matematica, 
 Sapienza Universit\`a di Roma,
 P.le A.~Moro 5,
 00185 Roma - ITALIA}
 \email{ogrady@mat.uniroma1.it}
\date{\today}
\thanks{Partially supported by PRIN 2017YRA3LK}
\begin{abstract}
We exhibit moduli spaces of slope stable vector bundles on general polarized HK varieties $(X,h)$ of type $K3^{[2]}$ which have an irreducible component of dimension $2a^2+2$, with $a$ an arbitrary integer greater than $1$. This is done by studying  the case $X=S^{[2]}$ where  $S$ is an elliptic $K3$ surface. 
We show that in this case  there is an irreducible component of the moduli space of stable vector bundles on $S^{[2]}$ which is birational to a moduli space of sheaves on $S$. We expect that if the moduli space of sheaves on $S$ is a smooth HK variety (necessarily of type $K3^{[a^2+1]}$) then 
the following more precise version holds: the closure of the moduli space of slope stable vector bundles on  $(X,h)$  in the moduli space of Gieseker-Maruyama semistable sheaves with its GIT polarization  is  
 a general polarized HK variety of type $K3^{[a^2+1]}$.
\end{abstract}

  \maketitle 
%\tableofcontents
\bibliographystyle{amsalpha}
\section{Introduction}\label{sec:intro} 
\subsection{Background and motivation}\label{subsec:retroterra}
\setcounter{equation}{0}
Starting with Mukai's groundbreaking work of the 80s, moduli of (semistable) sheaves on a (polarized) $K3$ surface have played a prominent r\^ole in Mathematics. These moduli spaces are varieties interesting in themselves (some of them are HK varieties of type $K3^{[n]}$, a few of them admit resolutions which are HK varieties of type OG10), and their Geometry is intertwined with that of the $K3$ surface. One wonders whether moduli of  sheaves on higher dimensional HK varieties may also be the source of interesting Geometry. In~\cite{ogfascimod} we introduced the notion of a modular (torsion free) sheaf. The sheaf  $\cF$ on a HK variety $X$ is modular if $\Delta(\cF)\coloneq -2\rk(\cF)\ch_2(\cF)+\ch_1(\cF)^2$ (the  \emph{discriminant of $\cF$}) satisfies a topological condition (see Subsection~\ref{subsec:ancoramod} for details), for example it is modular if $\Delta(\cF)$ is a multiple of $c_2(X)$. We proved that variation of slope stability for modular sheaves behaves as variation of slope stability for sheaves on surfaces, and that slope (semi)stability of modular sheaves on a HK with a Lagrangian fibration can be tackled  with methods similar to those employed when dealing with sheaves on elliptically fibered $K3$ surfaces. Building on these results, in~\cite{ogfascimod,og-rigidi-su-k3n} we proved existence and uniqueness results analogous to  existence and uniqueness results for stable spherical vector bundles on $K3$ surfaces. In this regard we mention that Mukai's beautiful one-line proof of uniqueness fails,  our unicity argument is substantially more involved - one may view  this  as a foreboding of difficulties to come. 

The main result of the present paper is the following. Let  $(X,h)$ be a general polarized HK variety of type $K3^{[2]}$, with the exclusion of the case in which the divisibility of $h$ is $1$ and $q_X(h)\equiv 2\pmod{8}$. Then for all choices of a positive integer  $a$  (greater than $1$) in an ideal of $\ZZ$ which depends on $(X,h)$ 
there exists a choice of a triple $(r,m,s)\in \NN_{+}\oplus\ZZ\oplus\QQ$  for which the moduli space of $h$ slope stable vector bundles $\cF$ on $X$ with $\rk(\cF)=r$, $c_1(\cF)=mh$ and $\Delta(\cF)=sc_2(X)$ 
contains an irreducible component of dimension $2a^2+2$.

In fact the proof  suggests that the following holds: the moduli spaces that we consider (or to be safe, their closure in the moduli of Gieseker-Maruyama semistable sheaves) are deformations of moduli spaces of sheaves on $K3$ surfaces.  Moreover we expect that in many cases the couple $(\text{mod.~space},\text{GIT pol.})$  is  a general polarized   HK variety of type $K3^{[a^2+1]}$ (or that this holds for a connected component of the moduli space).
At  first glance this appears to be a letdown, but the key word is \emph{general}: we expect to realize a general polarized HK variety of type $K3^{[n]}$ (for certain values of $n$) as a moduli space of sheaves on a 
general polarized HK variety  of type $K3^{[2]}$.
Note that if $n>1$ then a 
general polarized HK variety of type $K3^{[n]}$ cannot be a moduli space of sheaves on a general polarized $K3$ surface because the former has $20$ moduli while the latter has $19$ moduli. In~\cite{gen-hk-as-mod-sheaves} we prove that our expectation is correct (it holds for a connected component of the moduli space, we do not know whether the moduli space is irreducible) when the moduli space has dimension $4$ (the \lq\lq missing case\rq\rq\ $a=1$)
\subsection{Main result}\label{subsec:risulprinc}
\setcounter{equation}{0}
Let $(X,h)$ be a polarized HK variety (polarizations are always primitive). A \emph{mock Mukai vector} is given by   
\begin{equation}\label{fintomukai}
w=(r,l,s)\in \NN_{+}\oplus \NS(X)\oplus H^{2,2}_{\ZZ}(X).
\end{equation}
We  let $M_w(X,h)$ be the moduli space of $h$ slope stable  vector bundles $\cF$ such that 
\begin{equation}
w(\cF)\coloneq (\rk(\cF), c_1(\cF),  \Delta(\cF))=w.
\end{equation}
By Maruyama's classical results,  $M_w(X,h)$ is a scheme of finite type over $\CC$.
\begin{rmk}
 Let $(X,h)$ be a polarized $K3$ surface and let 
$v=(r,l,s)\in  \NN\oplus \NS(X)\oplus H^{2,2}_{\ZZ}(X)$ be a Mukai vector. We denote by $\cM_v(X,h)$ the moduli space of $h$ Gieseker-Maruyama semistable sheaves on $X$ with Mukai vector $v$. If $w$ is the mock Mukai vector 
$(r,l,2r^2+v^2)$, where $v^2=\la v,v\ra$ is the Mukai square of $v$,  then $M_w(X,h)$ is the open subset of $\cM_v(X,h)$ parametrizing slope stable locally free sheaves.
In order to avoid confusion   we  use  the notation $M_w(X,h)$ only if $\dim X>2$.
\end{rmk}
Now suppose that $X$ is of type $K3^{[2]}$. Then $q_X(h)$, the value of the Beauville-Bogomolov-Fujiki quadratic form on $h$,  is a positive even integer. 
The divisibility of $h$, i.e.~the positive generator  of $q_X(h,H^2(X;\ZZ))$, that we denote by $\divisore(h)$, is either $1$ or $2$, and if the latter holds then $q_X(h)\equiv -2\pmod{8}$. In both cases (i.e.~divisibility $1$ and $2$) the corresponding moduli space of polarized varieties is irreducible of dimension $20$. 
 For $a,r_1 \in\NN_{+}$ we let
\begin{equation}\label{nostrodoppiovu}
w:=ar_1\left(2r_1,\frac{2}{\divisore(h)} h,\frac{a r^3_1 c_2(X)}{3}\right).
\end{equation}
\begin{thm}\label{thm:belteo}
Let $r_1$ be a positive integer. 
Let  $(X,h)$ be a polarized  HK variety of type $K3^{[2]}$ such that
\begin{equation}\label{divdipol}
\divisore(h)=
\begin{cases}
1  & \text{if $r_1\equiv 0 \pmod{2}$,} \\
2  & \text{if $r_1\equiv 1 \pmod{2}$,} 
\end{cases}
\end{equation}
and
\begin{equation}\label{congruenze}
q_X(h)\equiv
\begin{cases}
-2 \pmod{2r_1} & \text{if $r_1\equiv 0 \pmod{4}$,} \\
-2r_1-8 \pmod{8r_1}  & \text{if $r_1\equiv 1 \pmod{4}$,} \\
r_1-2 \pmod{2r_1} & \text{if $r_1\equiv 2 \pmod{4}$,} \\
2r_1-8 \pmod{8r_1}   & \text{if $r_1\equiv 3 \pmod{4}$.}
\end{cases}
\end{equation}
Let $a$ be a positive integer greater than $1$ such that $2a$ is a multiple of $r_1$, and let $w$ be the mock Mukai vector given 
by~\eqref{nostrodoppiovu}.  Then the moduli space $M_w(X,h)$ is non empty, and for $(X,h)$  general it has an irreducible component of  dimension $2a^2+2$.
\end{thm}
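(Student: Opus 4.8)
The plan is to prove the theorem first on a carefully chosen member of the $20$-dimensional family of polarized HK varieties of type $K3^{[2]}$ satisfying \eqref{divdipol}--\eqref{congruenze}, namely on $X_0=S^{[2]}$ with $S$ an elliptic $K3$ surface, and then to propagate the conclusion to the general $(X,h)$ by deformation. I would begin with a lattice computation showing that the congruences \eqref{divdipol}--\eqref{congruenze} are exactly what is needed so that a suitable primitive combination of the fibre class of the elliptic fibration and the exceptional class on $S^{[2]}$ yields a polarization $h_0$ with $\divisore(h_0)$ as prescribed and $q_{X_0}(h_0)$ in the required congruence class; this places $(X_0,h_0)$ in the relevant irreducible $20$-dimensional moduli space of polarized varieties.

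On $X_0=S^{[2]}$ I would construct the bundles through a Fourier--Mukai type correspondence adapted to the Lagrangian fibration $\pi\colon S^{[2]}\to\PP^2$ induced by the elliptic fibration of $S$. Fix a primitive Mukai vector $v$ on $S$ with $\langle v,v\rangle=2a^2$, chosen (with a generic polarization $h_S$ on $S$) so that $\cM_v(S,h_S)$ is a smooth projective HK variety of type $K3^{[a^2+1]}$, hence of dimension $2a^2+2$. To a stable sheaf $E$ on $S$ with Mukai vector $v$ I would associate, via the relative integral functor along $\pi$ combined with the tautological construction on $S^{[2]}$, a locally free sheaf $\cF$ on $S^{[2]}$; the numerics of the transform are to be arranged so that $\rk(\cF)=2ar_1^2$, $c_1(\cF)=\tfrac{2ar_1}{\divisore(h_0)}h_0$ and $\Delta(\cF)=\tfrac{a^2r_1^4}{3}c_2(X_0)$, that is $w(\cF)=w$. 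In particular $\cF$ is modular, since $\Delta(\cF)$ is a multiple of $c_2(X_0)$. Slope stability of $\cF$ with respect to $h_0$ would then be verified by the techniques for modular sheaves on a Lagrangian-fibred HK variety from~\cite{ogfascimod}: one reduces stability to the restriction of $\cF$ to the generic (abelian surface) fibre of $\pi$, where the transform identifies it with a twist of a stable sheaf. This already gives $M_w(X_0,h_0)\neq\emptyset$.

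Running the correspondence in families over $\cM_v(S,h_S)$, I would show that $E\mapsto\cF$ defines a morphism from a dense open subset of $\cM_v(S,h_S)$ to $M_w(X_0,h_0)$ that is generically injective with image dense in an irreducible component $N_0$, i.e.\ a birational map $\cM_v(S,h_S)\dra N_0$. Since $\cM_v(S,h_S)$ is irreducible of dimension $2a^2+2$, this forces $\dim N_0=2a^2+2$. Pinning down the dimension is the delicate point: on a fourfold $\Ext^2(\cF,\cF)$ need not vanish, so the naive deformation space of $\cF$ could be larger than expected; the birational comparison with a moduli space of sheaves on the \emph{surface} $S$ is precisely what controls the fourfold obstructions and identifies $\dim N_0$ with the expected value $\langle v,v\rangle+2=2a^2+2$.

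Finally I would pass from $(X_0,h_0)$ to a general $(X,h)$ by deformation, and here modularity does the essential work. By the deformation theory of modular sheaves, the obstruction to deforming $\cF$ along a first-order deformation of $S^{[2]}$ vanishes because $c_1(\cF)$ is proportional to $h_0$ and $\Delta(\cF)$ is proportional to $c_2(X_0)$; hence $\cF$ deforms over the whole $20$-dimensional family of polarized $(X,h)$ of type $K3^{[2]}$ obeying \eqref{divdipol}--\eqref{congruenze}, remaining locally free and slope stable by openness of these conditions. Consequently $N_0$ deforms to an irreducible component of $M_w(X,h)$ of dimension $2a^2+2$ for $(X,h)$ general, and non-emptiness persists throughout the family. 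I expect the genuine obstacle to lie in the second and third steps: constructing a locally free $\cF$, proving its slope stability via the Lagrangian fibration, and above all establishing that it moves in a component of the correct dimension, i.e.\ taming the fourfold's $\Ext^2$ obstructions through the birational comparison with $\cM_v(S,h_S)$.
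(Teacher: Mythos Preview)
Your overall strategy matches the paper's: build on $S^{[2]}$ for an elliptic $K3$, prove stability via the Lagrangian fibration, exhibit a birational map from a moduli space on $S$ of dimension $2a^2+2$, then deform. Two steps, however, are genuine gaps rather than details to be filled in.

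\textbf{The construction.} The paper does \emph{not} use a relative integral functor along $\pi$. The sheaf is $\cG(\cE_1,\cE_2)\coloneq\rho_{*}(\tau_1^{*}\cE_1\otimes\tau_2^{*}\cE_2)$, the BKR image of $\cE_1\boxtimes\cE_2\oplus\cE_2\boxtimes\cE_1$, where $\cE_1$ is the \emph{fixed spherical} bundle with $v(\cE_1)=v_1=(r_1,D,(m_0+1)/r_1)$ and $\cE_2$ varies with $v(\cE_2)=av_1-\tfrac{2a}{r_1}(0,0,1)$. The presence of the rigid $\cE_1$ is what forces $\ext^2(\cG,\cG)=2$ (Lemma~\ref{lmm:vuquad}) and makes $\Def(\cG)\cong\Def(\cE_2)$; this is how the fourfold obstructions are tamed, not merely by a birational comparison. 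The Lagrangian fibration enters only in proving slope stability (Proposition~\ref{prp:stabonlagr}), via the description of $\cG_t$ on a product fibre together with irreducibility of the decomposition curve $\Lambda(\cE_2)$ (Appendix~\ref{sec:sheavesonellk3}). Your proposed ``relative integral functor'' is not known to yield a modular sheaf with $w(\cF)=w$.

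\textbf{The deformation.} The claim that modularity makes the obstruction to sideways deformation vanish is not a theorem; indeed, the paper proves (Proposition~\ref{prp:atomicose}) that $\cG(\cE_1,\cE_2)$ is \emph{not} atomic when $v(\cE_1)^2=-2$, so Markman--Beckmann type results do not apply. Instead the paper shows $\cG$ is \emph{strongly projectively hyperholomorphic} for a K\"ahler class in an open ${\mathsf a}(\cG)$-chamber (this requires the K\"ahler extension of wall-and-chamber theory in Section~\ref{sec:ancorastab}) and then invokes Verbitsky's twistor theorem to get surjectivity of $\Def(S^{[2]},\cG)\to\Def(S^{[2]},c_1(\cG))$ (Propositions~\ref{prp:defproj} and~\ref{prp:gisimuove}). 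A further step is needed to move from a polarization in the rank~$2$ lattice $\Lambda=\langle f,g\rangle$ to the primitive class $g=\tfrac{1}{i}(2\mu(D)-r_1\delta)$: one takes $d_0\gg0$ so that on a general $X_t$ with $\NS(X_t)=\Lambda_t$ there is a single open ${\mathsf a}(w_0)$-chamber and $g_t$ is ample (Claim~\ref{clm:moltogen}). The congruences~\eqref{divdipol}--\eqref{congruenze} then drop out of $q(g)=(8m_0-2r_1^2)/i^2$ together with $r_1\mid(m_0+1)$.
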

Following is an outline of the main steps that go into the proof of Theorem~\ref{thm:belteo}. Given sheaves 
$\cE_1$ and $\cE_2$ on a $K3$ surface $S$, there is a natural involution of the sheaf 
$\cE_1\boxtimes\cE_2\oplus \cE_2\boxtimes\cE_1$ on $S^2$ which lifts the involution of $S^2$ exchanging the factors. 
We denote by $\cG(\cE_1,\cE_2)$  the sheaf on $S^{[2]}$ which corresponds via the BKR correspondence to the $\cS_2$-sheaf  $\cE_1\boxtimes\cE_2\oplus \cE_2\boxtimes\cE_1$. In Section~\ref{sec:allagrande} we study basic properties of  the sheaf $\cG(\cE_1,\cE_2)$. (We assume that $\cE_1$ is locally free and $\cE_2$ is torsion free, this guarantees that $\cG(\cE_1,\cE_2)$ is torsion free. If also $\cE_2$ is  locally free then $\cG(\cE_1,\cE_2)$ is locally free.) We show that  $\cG(\cE_1,\cE_2)$ is modular if 
$(\rk(\cE_1),c_1(\cE_1))$ is proportional to 
$(\rk(\cE_2),c_1(\cE_2))$ and the Mukai vectors $v(\cE_1)$, $v(\cE_2)$ are orthogonal. These conditions imply in particular that 
$\ov{v}(\cE_1)^2+\ov{v}(\cE_2)^2=0$, where $\ov{v}(\cE_i)$ is the normalized Mukai vector of $\cE_i$, i.e.~the multiple of the Mukai vector with first entry equal to $1$, and $\ov{v}(\cE_2)^2$ is the square of $\ov{v}(\cE_2)$ in the (rational) Mukai lattice of $S$. 
It follows that there are two types of choices of   $\cE_1,\cE_2$ that might produce a sheaf $\cG(\cE_1,\cE_2)$ which is modular and stable, corresponding to $\ov{v}(\cE_1)^2=\ov{v}(\cE_2)^2=0$ and $\ov{v}(\cE_1)^2<0<\ov{v}(\cE_2)^2$ respectively.  
The first choice gives $v(\cE_1)=v(\cE_2)$  isotropic. This has been considered by Markman
  in~\cite{markman-1-obstructed} and has led to the proof of the analogue of the Shafarevich conjecture for couples of HK varieties of type $K3^{[n]}$, see~\cite{markman-rat-isometries}.  We consider the second choice, i.e.~$v(\cE_1)^2=-2$ and $\cE_1$  is a spherical vector bundle. Thus $v(\cE_2)^2>0$, and in fact $v(\cE_2)^2$ may be arbitrarily large.   A key fact  that holds with some mild assumptions is that all (nearby) deformations of $\cG(\cE_1,\cE_2)$ are given by $\cG(\cE_1,\cE'_2)$
  where $\cE_2'$ is a (nearby)  deformation of $\cE_2$. Now suppose that $\cE_2$ is stable 
 and  $\cG(\cE_1,\cE_2)$ is slope stable. Then we get that as 
  $\cE'_2$ varies among stable deformations of $\cE_2$ the sheaves $\cG(\cE_1,\cE'_2)$ fill out an irreducible component of a moduli space of sheaves on $S^{[2]}$ (and $\cG(\cE_1,\cE'_2)$ is locally free for a general such $\cE_2'$). Actually one checks easily that we get a component birational to the relevant moduli space of (semistable) sheaves on $S$. But we are getting ahead of ourselves: the proof that this idea works is in Section~\ref{sec:modbir}.
Section~\ref{sec:eccoesempi} lists examples of couples   $\cE_1,\cE_2$ such that  $\cG(\cE_1,\cE_2)$ is modular. 
In that section we also show that with a suitable choice of  $\cE_1,\cE_2$ the couple $(S^{[2]},\cG(\cE_1,\cE_2))$ deforms to 
the couple $(F(Y),\bigwedge^2 \cQ)$ where $F(Y)$ is the variety of lines on a general cubic forufold $Y\subset\PP^5$ and $\cQ$ is the restriction to $F(Y)$ of the tautological rank $4$ quotient vector bundle on $\Gr(1,\PP^5)$. These examples of modular vector bundles were discovered by Fatighenti, see~\cite{fatighenti-esempi}. The present work has been motivated by the desire to understand Fatighenti's example. In Section~\ref{sec:nonatomico} we perform more computations in order to determine whether $\cG(\cE_1,\cE_2)$ is atomic or not (of course we assume that it is modular). The answer is that it is atomic if and only if the Mukai vectors $v(\cE_1)$, $v(\cE_2)$ are both isotropic. Section~\ref{sec:ancorastab} extends  the results on  variation of slope (semi)stability of modular sheaves on a HK variety with respect to ample classes proved in~\cite{ogfascimod} to variation with respect to  K\"ahler classes. In the same section we give results on slope (semi)stability of modular sheaves on a HK variety with a Lagrangian fibration which go beyond those proved in loc.~cit. They are needed in order to deal with stability of  a sheaf on a  Lagrangian fibration which restricts to a strictly semistable sheaf on a general Lagrangian fiber.
As mentioned above, in Section~\ref{sec:modbir} we prove that under certain hypotheses the sheaf $\cG(\cE_1,\cE_2)$ is slope stable by applying some of the results in Section~\ref{sec:ancorastab} together with results on certain sheaves on   elliptic $K3$ surfaces which are proved in Appendix~\ref{sec:sheavesonellk3}.
Theorem~\ref{thm:belteo} is proved in Section~\ref{sec:alfinlaprova}. We show that a general sheaf $\cG(\cE_1,\cE_2)$ in the irreducible component of the moduli space of sheaves on $S^{[2]}$ described above extends to a nearby deformation of $(S^{[2]},c_1(\cG(\cE_1,\cE_2))$ by applying Verbitsky's fundamental results on projectively hyperholomorphic vector bundles together with the results  of Section~\ref{sec:ancorastab}.
\subsection{Acknowledgements}
As mentioned above, this work originated from Fatighenti's example, I thank Enrico heartily. Thanks also go to Alesssio Bottini for several conversations on arguments related to this paper.
\section{Simple modular sheaves with many moduli}\label{sec:allagrande}
 \subsection{A construction of sheaves on $S^{[2]}$}\label{subsec:fasciogi}
\setcounter{equation}{0}
Let $S$ be a (complex) smooth projective surface, and let $X_2(S)$ be the blow up of $S^2$ along the diagonal. We have a commutative diagram
\begin{equation}\label{commiso}
\xymatrix{ X_2(S)\ar[d]_{\rho}\ar[rr]^{\tau}    &  &  S^2 \ar[d]^{\pi}\\ 
  S^{[2]}  \ar[rr]^{\gamma} & & S^{(2)} }
\end{equation}
where $\pi$ is the quotient map and $\gamma$ is the cycle (or Hilbert-to-Chow) map. 
The map $\rho$ in~\eqref{commiso} is finite,  flat, of degree $2$.
Let $\pr_i\colon S^2\to S$ be the $i$-th projection, and 
let $\tau_i\colon X_2(S)\to S$ be the composition $\tau_i:=\pr_i\circ \tau$. 

Given sheaves $\cE_1,\cE_2$ on $S$ let $\cF=\cF(\cE_1,\cE_2)$ be the  sheaf on $X_2(S)$ defined by
\begin{equation}
\cF(\cE_1,\cE_2):=\tau_1^{*}\cE_1\otimes\tau_2^{*}\cE_2\oplus \tau_1^{*}\cE_2\otimes\tau_2^{*}\cE_1.
\end{equation}
Let $\sigma$ be the involution of $X_2(S)$ which lifts the involution of $S^2$ which exchanges the factors. Thus $\tau_{3-i}\circ\sigma=\tau_i$ for $i\in\{1,2\}$. The obvious isomorphism $\sigma^{*}\cF(\cE_1,\cE_2)\cong \cF(\cE_1,\cE_2)$ defines an action 
of the  symmetric group $\cS_2$  on $\cF(\cE_1,\cE_2)$ which is compatible with its action on $X_2(S)$.
  Since the action of $\cS_2$ on  $X_2(S)$  maps  any fiber of  $\rho$ to itself,  we get
 an action of  $\cS_2$  on $\rho_{*}(\cF)$ (i.e.~an action lifting the trivial action on $S^{[2]}$). 
\begin{dfn}
Let $\cG=\cG(\cE_1,\cE_2)= \rho_{*}(\cF)^{\cS_2}$ be the sheaf of $\cS_2$-invariants for the action 
of  $\cS_2$  on $\rho_{*}(\cF)$.
\end{dfn}
By definition of the $\cS_2$-action we have
\begin{equation}\label{tuttoqua}
\cG(\cE_1,\cE_2)\cong \rho_{*}(\tau_1^{*}\cE_1\otimes\tau_2^{*}\cE_2).
\end{equation}
The remark below explains why we define $\cG(\cE_1,\cE_2)$ as a sheaf of $\cS_2$-invariants.

\begin{rmk}
Suppose that $S$ is a $K3$ surface. The Bridgeland-King-Reid (BKR) McKay correspondence~\cite{bkr} applied to the category $D_{\cS_2}(S^2)$
of $\cS_2$-equivariant (coherent) sheaves on $S^2$  gives an equivalence between $D_{\cS_2}(S^2)$  and  the category of
(coherent) sheaves on $S^{[2]}$.
Let $\ov{\cF}(\cE_1,\cE_2)$ be the $\cS_2$-equivariant sheaf on $S^2$ defined by
\begin{equation}
\ov{\cF}=\ov{\cF}(\cE_1,\cE_2):=\pr_1^{*}\cE_1\otimes\pr_2^{*}\cE_2\oplus \pr_1^{*}\cE_2\otimes\pr_2^{*}\cE_1.
\end{equation}
If we adopt the definition in~\cite[Section~2.4]{krug-bkr} then 
$\cG(\cE_1,\cE_2)$ corresponds to  $\ov{\cF}(\cE_1,\cE_2)$ via the BKR McKay correspondence.
\end{rmk}
We discuss a few properties of the above construction. First note that
\begin{equation}
\cG(\cE_1,\cE_2'\oplus\cE_2'')\cong 
\cG(\cE_1,\cE_2')\oplus \cG(\cE_1,\cE_2'').
\end{equation}
Secondly we discuss the case  $\cE_1=\cE_2=\cA$ where  $\cA$ is  locally free. Following~\cite{ogfascimod} we associate to $\cA$ locally free sheaves $\cA[2]^{\pm}$ on $S^{[2]}$ as follows.  Let
\begin{equation}\label{stanlaurel}
\phi\colon \sigma^{*}\left(\tau_1^{*}(\cA)\otimes\tau_2^{*}(\cA)\right)  \overset{\sim}{\lra}  \tau_1^{*}(\cA)\otimes\tau_2^{*}(\cA) 
\end{equation}
be the isomorphism switching the factors of the tensor products. Then $\phi$ defines an action of $\cS_2$ on $\tau_1^{*}(\cA)\otimes\tau_2^{*}(\cA)$ which is compatible with its action on $X_2(S)$. Hence we get an $\cS_2$-action on 
$\rho_{*}\left(\tau_1^{*}(\cA)\otimes\tau_2^{*}(\cA)\right)^{\cS_2}$. 
The sheaf of $\cS_2$ invariants of 
$\rho_{*}\left(\tau_1^{*}(\cA)\otimes\tau_2^{*}(\cA)\right)^{\cS_2}$ for this  action is $\cA[2]^{+}$. 
One may define another action of $\cS_2$ multiplying $\phi$ by $-1$. The sheaf of $\cS_2$ invariants of 
$\rho_{*}\left(\tau_1^{*}(\cA)\otimes\tau_2^{*}(\cA)\right)^{\cS_2}$ for this second  action (i.e.~anti-invariants of the first action) is $\cA[2]^{-}$. 
\begin{prp}\label{prp:casoaa}
Let $\cA$ be a locally free sheaf on $S$. Then 
\begin{equation}\label{aaspezza}
\cG(\cA,\cA)\cong \cA[2]^{+}\oplus \cA[2]^{-}.
\end{equation}
\end{prp}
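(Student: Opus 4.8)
The plan is to carry out the entire argument inside the category of $\cS_2$-equivariant sheaves on $X_2(S)$, reducing the statement to a single constant change of basis that splits $\cF(\cA,\cA)$, as an equivariant sheaf, into the two pieces whose invariants are $\cA[2]^{+}$ and $\cA[2]^{-}$. Write $\cN:=\tau_1^{*}(\cA)\otimes\tau_2^{*}(\cA)$, so that $\cF(\cA,\cA)=\cN\oplus\cN$, and let $\phi\colon\sigma^{*}\cN\to\cN$ be the factor-switching isomorphism of~\eqref{stanlaurel}. By definition $\cA[2]^{+}$ (resp.\ $\cA[2]^{-}$) is $(\rho_{*}\cN)^{\cS_2}$ for the equivariant structure $\phi$ (resp.\ $-\phi$), whereas the equivariant structure $\lambda$ defining $\cG(\cA,\cA)$ is the one exchanging the two summands of $\cN\oplus\cN$, namely $\lambda=\left(\begin{smallmatrix}0&\phi\\\phi&0\end{smallmatrix}\right)\colon\sigma^{*}\cN\oplus\sigma^{*}\cN\to\cN\oplus\cN$. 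A preliminary check is that these are genuine $\cS_2$-linearizations: switching the two tensor factors twice is the identity, i.e.\ $\phi\circ\sigma^{*}\phi=\mathrm{id}_{\cN}$, and the same relation gives $\lambda\circ\sigma^{*}\lambda=\mathrm{id}$, so $\lambda$, $\phi$ and $-\phi$ all satisfy the cocycle condition.

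The heart of the matter is a linear-algebra observation that is available because we work over $\CC$, so that $2$ is invertible. Consider the constant automorphism $\Psi:=\left(\begin{smallmatrix}1&1\\1&-1\end{smallmatrix}\right)$ of $\cN\oplus\cN$, which is invertible since $\det\Psi=-2\neq0$. Because $\Psi$ has scalar entries, $\sigma^{*}\Psi$ is given by the very same matrix, and multiplying out the $2\times2$ matrices one finds that $\left(\begin{smallmatrix}\phi&0\\0&-\phi\end{smallmatrix}\right)\circ\sigma^{*}\Psi$ and $\Psi\circ\lambda$ are both equal to $\left(\begin{smallmatrix}\phi&\phi\\-\phi&\phi\end{smallmatrix}\right)$. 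This is exactly the condition for $\Psi$ to be a morphism of $\cS_2$-equivariant sheaves, so it yields an isomorphism
\[
\Psi\colon\bigl(\cN\oplus\cN,\ \lambda\bigr)\ \overset{\sim}{\lra}\ (\cN,\phi)\oplus(\cN,-\phi).
\]

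To conclude I would apply the additive functor $(\rho_{*}(-))^{\cS_2}$ to this isomorphism. Since $\rho_{*}$ commutes with finite direct sums and the formation of $\cS_2$-invariants is additive, the decomposition passes through, giving
\[
\cG(\cA,\cA)=\bigl(\rho_{*}(\cN\oplus\cN)\bigr)^{\cS_2}\cong(\rho_{*}\cN)^{\cS_2}_{\phi}\oplus(\rho_{*}\cN)^{\cS_2}_{-\phi}=\cA[2]^{+}\oplus\cA[2]^{-},
\]
where the subscripts record which linearization the invariants are taken for. This is precisely~\eqref{aaspezza}.

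There is no geometric obstacle here; the only thing to get right is the bookkeeping in the equivariant category. Concretely, one must verify that the scalar matrix $\Psi$ genuinely intertwines the two linearizations (the $2\times2$ computation above, where the signs must be tracked carefully) and that the $\cS_2$-action induced on $\rho_{*}\cF(\cA,\cA)$ by the \emph{definition} of $\cG$ coincides with the action coming from the equivariant structure $\lambda$. The latter holds because both are the functorial action attached to the switching isomorphism $\sigma^{*}\cF(\cA,\cA)\cong\cF(\cA,\cA)$, which on $\cN\oplus\cN$ is exactly the off-diagonal matrix $\lambda$.
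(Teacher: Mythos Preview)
Your proof is correct and takes essentially the same approach as the paper. The paper writes down the two injections $\xi\mapsto(\xi,\xi)$ and $\xi\mapsto(\xi,-\xi)$ into $\cF(\cA,\cA)$ and checks their $\cS_2$-equivariance separately; these are precisely the two columns of your matrix $\Psi$, so your single equivariant isomorphism $\Psi$ is just a repackaging of the paper's argument.
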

\begin{proof}
We have injections 
\begin{equation*}
\begin{matrix}
\tau_1^{*}(\cA)\otimes\tau_2^{*}(\cA) & \hra & \tau_1^{*}(\cA)\otimes\tau_2^{*}(\cA)\oplus \tau_1^{*}(\cA)\otimes\tau_2^{*}(\cA)=\cF(\cA,\cA)\\
\xi & \mapsto & (\xi,\xi)
\end{matrix}
\end{equation*}
and
\begin{equation*}
\begin{matrix}
\tau_1^{*}(\cA)\otimes\tau_2^{*}(\cA) & \hra & \tau_1^{*}(\cA)\otimes\tau_2^{*}(\cA)\oplus \tau_1^{*}(\cA)\otimes\tau_2^{*}(\cA)=\cF(\cA,\cA)\\
\xi & \mapsto & (\xi,-\xi)
\end{matrix}
\end{equation*}
Moreover $\cF(\cA,\cA)$ splits as the direct sum of the images of the two injections.  The first of the above maps 
is $\cS_2$-equivariant if the action on $\tau_1^{*}(\cA)\otimes\tau_2^{*}(\cA) $ is the first one defined above, and the second one is $\cS_2$-equivariant if the action on $\tau_1^{*}(\cA)\otimes\tau_2^{*}(\cA) $ is the second one defined above.
 Taking $\cS_2$ invariants of the direct images for $\rho_{*}$ one gets the isomorphism in~\eqref{aaspezza}.
\end{proof}
\begin{prp}
With notation as above, the following hold:
\begin{enumerate}
\item
If $\cE_1,\cE_2$ are locally free, then $\cG(\cE_1,\cE_2)$ is locally free.
\item
If $\cE_1$ is locally free, and $\cE_2$ is torsion free,  then $\cG(\cE_1,\cE_2)$ is 
torsion free.
\end{enumerate}
\end{prp}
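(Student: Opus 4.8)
The plan is to reduce everything to the finite flat morphism $\rho\colon X_2(S)\to S^{[2]}$ and the structure of $\cF(\cE_1,\cE_2)$ on $X_2(S)$, using the description in~\eqref{tuttoqua} that $\cG(\cE_1,\cE_2)\cong\rho_*(\tau_1^*\cE_1\otimes\tau_2^*\cE_2)$. The key observation is that $\rho$ is finite and flat of degree $2$, so $\rho_*$ is an exact functor preserving local freeness of sheaves whose direct image is a locally free $\cO_{S^{[2]}}$-module, and more importantly $\rho_*$ takes torsion-free sheaves to torsion-free sheaves because $\rho$ is finite and surjective. Thus both statements will follow once I understand the sheaf $\cH:=\tau_1^*\cE_1\otimes\tau_2^*\cE_2$ on the smooth variety $X_2(S)$ and the behaviour of $\rho_*$.

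For part~(1), suppose $\cE_1,\cE_2$ are both locally free. Then $\tau_i^*\cE_i$ is locally free on $X_2(S)$ since $\tau_i$ is a morphism of smooth varieties, hence $\cH$ is locally free. Because $\rho$ is finite and \emph{flat} of degree $2$, the pushforward of a locally free sheaf is locally free: locally on $S^{[2]}$ the map $\rho$ corresponds to a free rank-$2$ algebra extension, and pushing forward a projective (locally free) module over the total space yields a projective module over the base. So $\cG(\cE_1,\cE_2)=\rho_*\cH$ is locally free, giving~(1). I would phrase this via the flatness of $\rho$ asserted right after~\eqref{commiso}.

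For part~(2), assume $\cE_1$ is locally free and $\cE_2$ is merely torsion free. First I would argue that $\cH=\tau_1^*\cE_1\otimes\tau_2^*\cE_2$ is torsion free on $X_2(S)$. Since $\cE_1$ is locally free, $\tau_1^*\cE_1$ is locally free, so tensoring by it preserves torsion-freeness; it therefore suffices to show $\tau_2^*\cE_2$ is torsion free. Here is where the main technical point lies: $\tau_2\colon X_2(S)\to S$ is flat (it factors as the blow-down $\tau$ followed by a projection $\pr_2$, and one checks $\tau_2$ is flat), and the flat pullback of a torsion-free sheaf along a flat morphism with the appropriate fibre/reducedness hypotheses is torsion free. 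Concretely, a torsion-free sheaf on the integral surface $S$ injects into a locally free sheaf (its double dual), and $\tau_2^*$ preserves this injection by flatness, embedding $\tau_2^*\cE_2$ into a locally free sheaf on the integral variety $X_2(S)$; a subsheaf of a locally free sheaf on an integral scheme is torsion free. Finally, $\rho_*$ of a torsion-free sheaf is torsion free: if $t$ were a torsion section of $\rho_*\cH$ it would correspond, by finiteness and surjectivity of $\rho$, to a torsion section of $\cH$, contradicting torsion-freeness of $\cH$.

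The step I expect to be the main obstacle is verifying that $\tau_2^*\cE_2$ is torsion free, i.e.~controlling the pullback of a torsion-free sheaf along $\tau_2$; this requires knowing that $\tau_2$ is flat (or at least that the exceptional divisor of $\tau$ does not create embedded torsion) and that $X_2(S)$ is integral, so that ``subsheaf of locally free $\Rightarrow$ torsion free'' applies. I would handle this by using flatness of $\tau_2$ together with the embedding of $\cE_2$ into $\cE_2^{\vee\vee}$, after which the remaining assertions about $\rho_*$ reduce to the exactness and finiteness of $\rho$, which are elementary given the properties of $\rho$ recorded after diagram~\eqref{commiso}.
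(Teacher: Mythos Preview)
Your proof is correct and follows essentially the same line as the paper's: establish local freeness (resp.\ torsion-freeness) of the relevant sheaf on $X_2(S)$, then push forward along the finite flat map $\rho$. The only cosmetic differences are that you invoke the identification~\eqref{tuttoqua} directly (so you avoid the extra step of passing to $\cS_2$-invariants of $\rho_*\cF$), and for~(2) you use the embedding $\cE_2\hookrightarrow\cE_2^{\vee\vee}$ together with flatness of $\tau_2$ (which does hold, by miracle flatness: the fibres of $\tau_2$ are all blow-ups of $S$ at a point), whereas the paper uses a two-step locally free resolution of $\cE_2$; these are equivalent ways of exhibiting $\tau_2^*\cE_2$ as a subsheaf of a locally free sheaf on the integral variety $X_2(S)$.
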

\begin{proof}
(1):  Since $\cF=\cF(\cE_1,\cE_2)$ is a tensor product of locally free sheaves, it is locally free.  
Since the map $\rho$ in~\eqref{commiso} is finite  and flat,  it follows that $\rho_{*}(\cF)$ is 
 locally free. Thus $\cG= \rho_{*}(\cF)^{\cS_2}$ is locally free.

\n
(2):  Since $\cE_2$ is a torsion free sheaf on a (smooth) surface it has a two-step locally free resolution which is an injection of vector bundles away from a subset of codimension $2$. Pulling back via $\tau_2$ to $X_2(S)$ and tensoring by $\tau_1^{*}\cE_1$ we get that $\cF(\cE_1,\cE_2)$ 
has a two-step locally free resolution which is an injection of vector bundles away from a subset of codimension $2$. It follows that 
$\cF(\cE_1,\cE_2)$ is torsion free. 
Hence $\rho_{*}\cF(\cE_1,\cE_2)$ is torsion free, and a fortiori the subsheaf $\cG(\cE_1,\cE_2)$ is torsion free.
\end{proof}
Next we do the construction in families. Let $B$ be a scheme, and let $\sE_1,\sE_2$ be sheaves on $S\times B$. 
We define a sheaf $\cF(\sE_1,\sE_2)$ on $X_2(S)\times B$ by letting
\begin{equation*}
\cF(\sE_1,\sE_2):=(\tau_1\times\Id_B)^{*}\sE_{1}\otimes
(\tau_2\times\Id_B)^{*}\sE_{2}\oplus 
(\tau_1\times\Id_B)^{*}\sE_{2}\otimes
(\tau_2\times\Id_B)^{*}\sE_{1}.
\end{equation*}
The symmetric group $\cS_2$ acts on $(\rho\times B)_{*}\cF(\sE_1,\sE_2)$: we let 
$\cG(\sE_1,\sE_2)$ be the subsheaf of $\cS_2$-invariants. 
\begin{prp}\label{prp:gipiatto}
With notation as above, suppose 
that $\sE_1$ is a $B$-flat  family of locally free sheaves on $S$, and that
$\sE_2$ is a $B$-flat  family of torsion free sheaves sheaves on $S$.
Then $\cG(\sE_1,\sE_2)$ is a $B$-flat family of torsion free on $S^{[2]}$. 
\end{prp}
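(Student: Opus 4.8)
The plan is to reduce everything to the family analogue of the isomorphism~\eqref{tuttoqua}. Exactly as in the absolute case, the involution $\sigma\times\Id_B$ of $X_2(S)\times B$ interchanges the two summands of $\cF(\sE_1,\sE_2)$, while $\rho\times\Id_B$ is invariant under $\cS_2$; hence $(\rho\times\Id_B)_{*}\cF(\sE_1,\sE_2)$ is the direct sum of two copies of $\cN:=(\rho\times\Id_B)_{*}\cH$, interchanged by the $\cS_2$-action, where $\cH:=(\tau_1\times\Id_B)^{*}\sE_1\otimes(\tau_2\times\Id_B)^{*}\sE_2$. Since we work over $\CC$, the averaging idempotent $\tfrac12(1+\sigma)$ realizes the $\cS_2$-invariants as a direct summand isomorphic to $\cN$, so $\cG(\sE_1,\sE_2)\cong(\rho\times\Id_B)_{*}\cH$. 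It therefore suffices to prove that $\cH$ is $B$-flat and that finite flat push-forward along $\rho\times\Id_B$ preserves $B$-flatness, and then to identify the fibres.

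For $B$-flatness I would first observe that each $\tau_i\colon X_2(S)\to S$ is flat: $X_2(S)$ is smooth (it is the blow-up of the smooth $S^2$ along the smooth diagonal), $S$ is regular, and every fibre of $\tau_i$ is a blow-up of a copy of $S$ at one point, hence of the constant dimension $2=\dim X_2(S)-\dim S$; flatness then follows from the local (``miracle flatness'') criterion. Consequently $\tau_i\times\Id_B$ is flat, and pull-back along a flat morphism preserves $B$-flatness, so both $(\tau_1\times\Id_B)^{*}\sE_1$ and $(\tau_2\times\Id_B)^{*}\sE_2$ are flat over $B$. To control the tensor product I would use the standard fact that a $B$-flat family of locally free sheaves is locally free on the total space (lift a fibrewise basis and apply Nakayama together with the vanishing $\mathrm{Tor}_1^{\cO_B}(\sE_1,\kappa(b))=0$): thus $(\tau_1\times\Id_B)^{*}\sE_1$ is locally free, and tensoring the $B$-flat sheaf $(\tau_2\times\Id_B)^{*}\sE_2$ by a locally free sheaf keeps it $B$-flat. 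Finally $\rho\times\Id_B$ is finite and flat, being a base change of $\rho$ (which is finite flat of degree $2$ by the discussion after~\eqref{commiso}); since flatness over $B$ is stalk-local and finite push-forward computes stalks as finite sums of stalks upstairs, $(\rho\times\Id_B)_{*}\cH$ is again $B$-flat.

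For the fibres I would invoke base change. All operations involved---flat pull-back, tensoring by a locally free sheaf, finite flat push-forward, and extraction of $\cS_2$-invariants via the idempotent $\tfrac12(1+\sigma)$---commute with restriction to a fibre $S^{[2]}\times\{b\}$. This yields $\cG(\sE_1,\sE_2)\restrict{b}\cong\cG(\cE_1,\cE_2)$ with $\cE_i:=\sE_i\restrict{S\times\{b\}}$, where $\cE_1$ is locally free and $\cE_2$ is torsion free; the absolute statement proved above (torsion-freeness of $\cG(\cE_1,\cE_2)$ when $\cE_1$ is locally free and $\cE_2$ torsion free) then shows $\cG(\cE_1,\cE_2)$ is torsion free. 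Together with the $B$-flatness established above, this proves that $\cG(\sE_1,\sE_2)$ is a $B$-flat family of torsion-free sheaves on $S^{[2]}$.

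The main obstacle, I expect, is the $B$-flatness of $\cH$ rather than the fibrewise torsion-freeness: because $\sE_2$ is only fibrewise torsion free and need not be locally free on $S\times B$, the factor $(\tau_2\times\Id_B)^{*}\sE_2$ cannot be treated by the naive ``pull-back of locally free is locally free'' argument, and one genuinely needs the flatness of $\tau_2$ to propagate $B$-flatness through the pull-back and the tensor product. The remaining verifications (the family form of~\eqref{tuttoqua} and the base-change compatibilities) are formal once one keeps track of the fact that $2$ is invertible, so that the invariants functor is exact and commutes with base change.
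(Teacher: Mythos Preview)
Your proof is correct and takes a genuinely different route from the paper's. The paper argues by choosing a two-step locally free resolution $0\to\sE_2^1\to\sE_2^0\to\sE_2\to 0$ on $S\times B$ that restricts fibrewise to a resolution, pulls it back to $X_2(S)\times B$, tensors with the locally free $(\tau_1\times\Id_B)^{*}\sE_1$, and then invokes the local criterion for flatness (Matsumura, Theorem~22.5) to conclude that $\cF(\sE_1,\sE_2)$ is $B$-flat; the push-forward and passage to invariants are handled as you do. By contrast, you bypass resolutions entirely: you establish flatness of $\tau_i$ via miracle flatness (equidimensional fibres $\Blow_s S$ between smooth varieties), deduce that flat pull-back preserves $B$-flatness, and then tensor with the locally free factor. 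Your route is more conceptual and avoids the need to construct a global two-term resolution in families; it also makes explicit the flatness of $\tau_2$, which the paper uses implicitly when asserting that the pulled-back resolution remains exact. The paper's approach has the virtue of producing an explicit two-step locally free resolution of $\cF(\sE_1,\sE_2)$ on $X_2(S)\times B$, which is sometimes useful for further computations, but for the bare flatness statement your argument is cleaner.
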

\begin{proof}
Since $\sE_2$ is a $B$-flat family of torsion free sheaves on $S$, it has a two-step locally free resolution 
\begin{equation}\label{fijirugby}
0\lra \sE_2^1\lra \sE_2^0\lra \sE_2\lra 0
\end{equation}
which restricts to a locally free resolution of $\sE_{2| S\times\{b\}}$ for every (schematic) point of $B$. The pull-back to $X_2(S)\times B$ of the exact sequence in~\eqref{fijirugby}  remains exact. It follows that  $\cF(\sE_1,\sE_2)$ is 
the direct sum of two sheaves, each of which has a two-step locally free resolution which remains exact when restricted to each fiber of the projection $X_2(S)\times B\to B$. This implies that $\cF(\sE_1,\sE_2)$ is $B$-flat, see Theorem~22.5 in~\cite{matsumura}. Since $\rho$ is flat we get that $(\rho\times\Id)_{*}\cF(\sE_1,\sE_2)$ is $B$-flat, and hence also its sheaf of $\cS_2$-invariants.

\end{proof}
\subsection{Main results}\label{subsec:queer}
\setcounter{equation}{0}
Notation is as in Subsection~\ref{subsec:fasciogi}. Below are the main results of the present section.
\begin{prp}\label{prp:extdigi}
Suppose
that    $S$ is a $K3$ surface, that   
$\cE_1,\cE_2$ are simple sheaves on $S$, that $\cE_1$ is locally 
free, and that $\cE_2$  is torsion-free.  Then the following hold:
\begin{enumerate}
\item
The sheaf  $\cG=\cG(\cE_1,\cE_2)$ is  simple 
if and only if
\begin{equation}\label{noninter}
\ext^0_S(\cE_1,\cE_2)\cdot\ext^0_S(\cE_2,\cE_1)=0.
\end{equation}
\item
If $\cG$ is  simple, then  
\begin{equation*}
\ext^1_{S^{[2]}}(\cG,\cG)  =  \ext^1_S(\cE_1,\cE_1)+\ext^1_S(\cE_2,\cE_2)+
(\ext^0_S(\cE_1,\cE_2)+\ext^0_S(\cE_2,\cE_1))\cdot\ext^1(\cE_1,\cE_2).
\end{equation*}
\item
If 
\begin{equation}\label{duezeri}
\ext^0_S(\cE_1,\cE_2)=\ext^0_S(\cE_2,\cE_1)=0
\end{equation}
(in particular $\cG$ is  simple), then 
\begin{equation}\label{noodles}
\ext^2_{S^{[2]}}(\cG,\cG)  = 
2+\ext^1_S(\cE_1,\cE_2)^2+\ext^1_S(\cE_1,\cE_1)\cdot \ext^1_S(\cE_2,\cE_2),
\end{equation}
\item
If~\eqref{duezeri} holds, then deformations of $\cG$ are unobstructed, and $\Def(\cG)$ is identified with 
$\Def(\cE_1)\times\Def(\cE_2)$ via the map
\begin{equation}\label{defoprod}
\Def(\cE_1)\times \Def(\cE_2)\overset{\Phi}{\lra}  \Def(\cG)
\end{equation}
which associates to deformations  $\cE_1(s)$, $\cE_2(t)$ of $\cE_1,\cE_2$ respectively the sheaf $\cG(\cE_1(s),\cE_2(t))$ (this makes sense by Proposition~\ref{prp:gipiatto}).
\end{enumerate}
\end{prp}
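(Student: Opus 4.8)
The plan is to transport the entire computation to the $\cS_2$-equivariant derived category of $S^2$ through the BKR equivalence, where every $\Ext$-group becomes a Künneth product followed by extraction of $\cS_2$-invariants. Since $S$ is a $K3$ surface, BKR identifies $D_{\cS_2}(S^2)$ with the derived category of $S^{[2]}$ and carries $\ov{\cF}=\ov{\cF}(\cE_1,\cE_2)$ to $\cG$, so that for every $i$
\[
\Ext^i_{S^{[2]}}(\cG,\cG)\cong\Ext^i_{\cS_2}(\ov{\cF},\ov{\cF})=\Ext^i_{S^2}(\ov{\cF},\ov{\cF})^{\cS_2},
\]
the last identification holding because $\cS_2$ is finite and we are in characteristic zero. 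I would then write $\ov{\cF}=A\oplus B$ with $A=\pr_1^{*}\cE_1\otimes\pr_2^{*}\cE_2$ and $B=\pr_1^{*}\cE_2\otimes\pr_2^{*}\cE_1$, the factor-swap sending $A$ to $B$, so that $\Ext^i_{S^2}(\ov{\cF},\ov{\cF})$ splits into the four blocks $\Ext^i(A,A)$, $\Ext^i(A,B)$, $\Ext^i(B,A)$, $\Ext^i(B,B)$, each computed by the Künneth formula $\Ext^i_{S^2}(\cE_a\boxtimes\cE_b,\cE_c\boxtimes\cE_d)=\bigoplus_{p+q=i}\Ext^p_S(\cE_a,\cE_c)\otimes\Ext^q_S(\cE_b,\cE_d)$.

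The key observation is that the generator of $\cS_2$, being the composite of the factor-swap pullback with the equivariant isomorphism that exchanges the two summands of $\ov{\cF}$, interchanges the diagonal blocks $\Ext^i(A,A)\leftrightarrow\Ext^i(B,B)$ and the off-diagonal blocks $\Ext^i(A,B)\leftrightarrow\Ext^i(B,A)$, leaving no block fixed. Hence on each of the two pairs the involution simply exchanges two isomorphic summands, and its invariant subspace is a single copy of one block; in particular its dimension equals that of one representative, independently of the Koszul signs produced by the Künneth reindexing. Using that $\cE_1,\cE_2$ are simple (so $\ext^0_S(\cE_i,\cE_i)=1$) and Serre duality on $S$ (so $\Ext^2_S(\cE_i,\cE_i)\cong\CC$ and $\ext^1_S(\cE_1,\cE_2)=\ext^1_S(\cE_2,\cE_1)$), I read off the three degrees. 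In degree $0$ the diagonal pair contributes only $\Id_{\ov{\cF}}$ and the off-diagonal pair contributes $\Hom_S(\cE_1,\cE_2)\otimes\Hom_S(\cE_2,\cE_1)$, giving $\ext^0_{S^{[2]}}(\cG,\cG)=1+\ext^0_S(\cE_1,\cE_2)\cdot\ext^0_S(\cE_2,\cE_1)$, which equals $1$ precisely under~\eqref{noninter}; this is~(1). In degree $1$ the diagonal pair gives $\Ext^1_S(\cE_1,\cE_1)\oplus\Ext^1_S(\cE_2,\cE_2)$ while the off-diagonal pair gives a space of dimension $(\ext^0_S(\cE_1,\cE_2)+\ext^0_S(\cE_2,\cE_1))\cdot\ext^1_S(\cE_1,\cE_2)$ after applying the Serre-duality symmetry, which is~(2). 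In degree $2$, under the vanishing~\eqref{duezeri} the off-diagonal block loses the two terms involving $\Hom_S(\cE_1,\cE_2)$ or $\Hom_S(\cE_2,\cE_1)$ and reduces to $\Ext^1_S(\cE_1,\cE_2)\otimes\Ext^1_S(\cE_2,\cE_1)$ of dimension $\ext^1_S(\cE_1,\cE_2)^2$, while the diagonal block has dimension $2+\ext^1_S(\cE_1,\cE_1)\cdot\ext^1_S(\cE_2,\cE_2)$; summing the two invariant contributions yields~\eqref{noodles}, which is~(3).

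For~(4) I would combine the previous count with a standard smoothness argument. By Mukai's theorem simple sheaves on a $K3$ surface have smooth, unobstructed deformation spaces, so $\Def(\cE_1)\times\Def(\cE_2)$ is smooth of dimension $\ext^1_S(\cE_1,\cE_1)+\ext^1_S(\cE_2,\cE_2)$. Proposition~\ref{prp:gipiatto} makes the map $\Phi$ of~\eqref{defoprod} well defined, and its differential is exactly the natural map
\[
\Ext^1_S(\cE_1,\cE_1)\oplus\Ext^1_S(\cE_2,\cE_2)\lra\Ext^1_{S^{[2]}}(\cG,\cG)
\]
onto the invariants of the diagonal pair computed above. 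Under~\eqref{duezeri} the off-diagonal contribution in degree $1$ vanishes, so by~(2) this differential is an isomorphism of tangent spaces. Therefore the image of $\Phi$ is a smooth subscheme of $\Def(\cG)$ of dimension $\ext^1_{S^{[2]}}(\cG,\cG)=\dim T_{[\cG]}\Def(\cG)$; since this number is at once a lower bound (realized by the image of $\Phi$) and the tangent-space upper bound for $\dim_{[\cG]}\Def(\cG)$, the space $\Def(\cG)$ is smooth at $[\cG]$ and $\Phi$ is a local isomorphism, which gives both the unobstructedness and the identification $\Def(\cG)\cong\Def(\cE_1)\times\Def(\cE_2)$.

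The step I expect to be the main obstacle is the precise bookkeeping of the $\cS_2$-action. One must fix the equivariant structure on $\ov{\cF}$ carefully enough to be sure that the induced involution genuinely permutes the two diagonal blocks and the two off-diagonal blocks, rather than preserving them and splitting each into $\pm1$-eigenspaces, since it is exactly this swapping that turns every invariant subspace into one copy of a single block and produces the clean dimension formulas. The Koszul signs coming from the compatibility of the factor-swap with the Künneth isomorphism have to be tracked, but they only reshuffle basis vectors inside a swapped pair and so do not affect dimensions; the only place where the exact form of the action genuinely matters is the identification of $d\Phi$ with the diagonal-block inclusion, which I would verify directly from the definition of $\Phi$ together with the functoriality of the BKR equivalence.
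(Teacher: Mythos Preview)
Your proposal is correct and, for Items~(1)--(3), follows exactly the paper's approach: BKR gives $\Ext^p_{S^{[2]}}(\cG,\cG)\cong\Ext^p_{S^2}(\ov{\cF},\ov{\cF})^{\cS_2}$, K\"unneth splits the latter into four blocks, and the involution swaps the two diagonal and the two off-diagonal blocks so that the invariants are one copy of each; reading off degrees $0,1,2$ with simplicity and Serre duality yields the formulas. Your bookkeeping of the $\cS_2$-action is right, and the worry you flag is not an issue: since the equivariant structure on $\ov{\cF}=A\oplus B$ is the swap $\sigma^{*}A\cong B$, the induced action on $\Ext$ genuinely permutes the blocks rather than preserving them.

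For Item~(4) the two arguments diverge slightly. You identify $d\Phi$ with the inclusion of the diagonal-block summand and conclude directly that $d\Phi$ is an isomorphism, whence $\Def(\cG)$ is smooth and $\Phi$ is a local isomorphism. The paper instead checks that $\Phi$ is \emph{injective}: shrinking the balls $U_i$ and using $\ext^0_S(\cE_1,\cE_2)=\ext^0_S(\cE_2,\cE_1)=0$ together with simplicity, one gets $\Hom(\cG(\cE_1(s_1),\cE_2(s_2)),\cG(\cE_1(t_1),\cE_2(t_2)))=0$ for $s\neq t$ via the same BKR/K\"unneth computation in degree $0$; injectivity plus the dimension equality $\dim(U_1\times U_2)=\ext^1_{S^{[2]}}(\cG,\cG)$ then forces smoothness of $\Def(\cG)$ and makes $\Phi$ an isomorphism. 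Your route is a bit more conceptual (the differential is already visible in the degree-$1$ analysis), while the paper's avoids having to name $d\Phi$ at the cost of a separate Hom vanishing; either works.
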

In order to state the other main result of the present section we recall the description of the second cohomology of $S^{[2]}$. 
Let  $E\subset X_2(S)$ be the  exceptional 
divisor of the blow up map $\tau\colon X_2(S)\to S^2$, and let   
\begin{equation}
e:=\cl(E)\in H^2(X_2(S),\ZZ).
\end{equation}
There exist a homomorphism
\begin{equation}
\mu\colon H^2(S;\ZZ)\lra H^2(S^{[2]};\ZZ)
\end{equation}
and a class $\delta\in H^2(S^{[2]};\ZZ)$ such that 
\begin{equation}\label{tiromu}
\rho^{*}\mu(\alpha)=\tau_1^{*}\alpha+\tau_2^{*}\alpha,\quad \rho^{*}\delta=e,
\end{equation}
where $\alpha$ is an arbitrary class in $H^2(S;\ZZ)$. We have a direct sum decomposition
\begin{equation}
H^2(S^{[2]};\ZZ)=\im(\mu)\oplus\ZZ\delta,
\end{equation}
whose addends are orthogonal for the Beauville-Bogomolov-Fujiki (BBF) symmetric bilinear form $q_{S^{[2]}}$. Moreover $ q_{S^{[2]}}(\mu(\alpha))=\alpha^2$ for $\alpha\in H^2(S;\ZZ)$, and 
$q_{S^{[2]}}(e)=-2$. As a matter of notation we denote by the same symbol $\mu$ the extension of $\mu$ to a linear map $H^2(S;\CC)\to H^2(S^{[2]};\CC)$.

\begin{prp}\label{prp:gimodulare}
Let    $S$ be a $K3$ surface. 
Let $\cE_1,\cE_2$ be torsion-free sheaves on $S$ of ranks $r_1,r_2$, with $\cE_1$  locally 
free. Let 
$\cG=\cG(\cE_1,\cE_2)$.
\begin{enumerate}
\item 
We have
\begin{equation}\label{rangoeciuno}
\rk(\cG)= 2r_1 r_2 ,\quad c_1(\cG) =r_2  \mu(c_1(\cE_1))+r_1  \mu(c_1(\cE_2))-r_1 r_2\delta.
\end{equation}
\item 
Suppose in addition that
\begin{equation}\label{sameslope}
r_2 \cdot c_1(\cE_1)=r_1 \cdot c_1(\cE_2),
\end{equation}
and that, letting   $v(\cE_i)$ be the Mukai vector of $\cE_i$, we have
\begin{equation}\label{sommaquad}
 r_2^2 \cdot v(\cE_1)^2+r_1^2 \cdot v(\cE_2)^2=0,
\end{equation}
where $ v(\cE_i)^2\coloneq \la v(\cE_i),v(\cE_i)\ra$ is the square of the Mukai pairing $\la \cdot,\cdot\ra$.
Then  
\begin{equation}\label{discform}
\Delta(\cG)  =  \frac{ r_1^2 r_2^2}{3} c_2(S^{[2]}).
\end{equation}
\end{enumerate}
\end{prp}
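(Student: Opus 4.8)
The plan is to compute the low-degree Chern data of $\cG=\rho_*(M)$, where $M:=\tau_1^{*}\cE_1\otimes\tau_2^{*}\cE_2$, by Grothendieck--Riemann--Roch for the finite flat morphism $\rho$, used in the form
\[
\ch(\cG)\cdot\Td(T_{S^{[2]}})=\rho_*\bigl(\ch(M)\cdot\Td(T_{X_2(S)})\bigr),
\]
and to read off the $H^2$- and $H^4$-components. The two pushforward identities I use throughout are $\rho_*\tau_i^{*}\alpha=\mu(\alpha)$ and $\rho_*e=2\delta$. The second follows from $\rho^{*}\delta=e$ and $\rho_*\rho^{*}=2\cdot\mathrm{id}$; the first follows from $\rho^{*}\mu(\alpha)=\tau_1^{*}\alpha+\tau_2^{*}\alpha$, the equality $\rho_*\tau_1^{*}\alpha=\rho_*\tau_2^{*}\alpha$ (a consequence of $\rho\circ\sigma=\rho$ and $\tau_2=\tau_1\circ\sigma$), and the rational injectivity of $\rho^{*}$.

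For part (1) the rank is immediate: $\rk(\cG)=(\deg\rho)\cdot\rk(M)=2r_1r_2$. For $c_1$ I extract the $H^2$-part of the GRR identity. Since $S^{[2]}$ is holomorphic symplectic, $c_1(T_{S^{[2]}})=0$, so $\Td(T_{S^{[2]}})$ has trivial $H^2$-part; and $K_{X_2(S)}=E$ gives $c_1(T_{X_2(S)})=-e$ and $\Td_1(T_{X_2(S)})=-\tfrac12 e$. Hence $c_1(\cG)=\rho_*\bigl(c_1(M)-\tfrac12 r_1r_2\,e\bigr)$, and substituting $c_1(M)=r_2\,\tau_1^{*}c_1(\cE_1)+r_1\,\tau_2^{*}c_1(\cE_2)$ and applying the two pushforward identities yields exactly~\eqref{rangoeciuno}.

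For part (2) I go one degree higher. With $\ch(M)=\tau_1^{*}\ch(\cE_1)\cdot\tau_2^{*}\ch(\cE_2)$, the $H^4$-part of the GRR identity gives $\ch_2(\cG)=\rho_*\bigl(\ch_2(M)-\tfrac12 e\cdot c_1(M)+r_1r_2\,\Td_2(T_{X_2(S)})\bigr)-\tfrac{1}{12}c_2(S^{[2]})\cdot\ch_0(\cG)$, where $\ch_0(\cG)=2r_1r_2$. The auxiliary inputs I must assemble are: the blow-up expression for $\Td_2(T_{X_2(S)})$ coming from $\tau\colon X_2(S)\to S^2$ (equivalently $c(T_{X_2(S)})$ in terms of $\tau^{*}c(T_{S^2})$ and $E$, using $c_1(S)=0$, $c_2(S)=24$); the pushforwards of the degree-four monomials $\rho_*(\tau_1^{*}x\cdot\tau_2^{*}y)$, $\rho_*(e\cdot\tau_i^{*}x)$, $\rho_*(e^2)$, obtained from the geometry of the $\PP^1$-bundle $E=\PP(T_S)$ over the diagonal (isomorphic to $S$) and the self-intersection relations for $e$; and $\rho^{*}c_2(S^{[2]})$, computed from the ramification of $\rho$ along $E$. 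I then form $\Delta(\cG)=c_1(\cG)^2-2\rk(\cG)\,\ch_2(\cG)=c_1(\cG)^2-4r_1r_2\,\ch_2(\cG)$ using the $c_1(\cG)$ already found, and impose the hypotheses: \eqref{sameslope} forces $c_1(\cE_1)$ and $c_1(\cE_2)$ to be proportional, so that $r_2^{2}c_1(\cE_1)^2=r_1r_2\,c_1(\cE_1)\!\cdot\!c_1(\cE_2)=r_1^{2}c_1(\cE_2)^2$ and the mixed $\mu$-terms in $c_1(\cG)^2$ collapse, while \eqref{sommaquad}, rewritten via $v(\cE_i)^2=c_1(\cE_i)^2-2r_i\ch_2(\cE_i)-2r_i^{2}$, supplies the single linear relation among the $c_1(\cE_i)^2$ and $\ch_2(\cE_i)$ needed to eliminate the remaining $\mu$- and $\delta$-contributions.

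The main obstacle is precisely this final cancellation: organizing the degree-four pushforwards — in particular the exceptional-divisor terms $\rho_*(e^2)$ and $\rho_*(e\cdot\tau_i^{*}x)$, which are the least routine — and then recognizing the surviving class as a multiple of $c_2(S^{[2]})$, the arithmetic producing the coefficient $r_1^{2}r_2^{2}/3$. To make the recognition step rigorous I would perform the comparison after applying $\rho^{*}$, i.e.\ verify the identity $\rho^{*}\Delta(\cG)=\tfrac{r_1^{2}r_2^{2}}{3}\,\rho^{*}c_2(S^{[2]})$ in $H^4(X_2(S);\QQ)$, where both sides become explicit polynomials in the classes $\tau_1^{*},\tau_2^{*}$ of $K3$ data and in $e$ (subject to the projective-bundle relation along $E$), and finally invoke the injectivity of $\rho^{*}$ on rational cohomology to descend the equality to $S^{[2]}$, giving~\eqref{discform}.
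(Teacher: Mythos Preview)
Your approach is correct in outline and genuinely different from the paper's. You exploit the identification $\cG\cong\rho_*(\tau_1^{*}\cE_1\otimes\tau_2^{*}\cE_2)$ and apply GRR to the finite flat map $\rho$, pushing forward to $S^{[2]}$; the paper instead works entirely upstairs via the elementary-modification exact sequence
\[
0\lra \rho^{*}\cG \lra \cF(\cE_1,\cE_2) \lra \iota_{*}\cR \lra 0
\]
and computes $\rho^{*}\ch(\cG)=\ch(\rho^{*}\cG)$ directly on $X_2(S)$, never evaluating any $\rho_{*}$ of a cohomology class. For part~(1) your route is arguably cleaner: the two pushforward identities you state give $c_1(\cG)$ in one line. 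For part~(2) the paper's route is more economical: it reads off $\rho^{*}\ch_2(\cG)$ from $\ch_2(\cF)$ minus a GRR expression for $\ch_2(\iota_{*}\cR)$, and then compares with the known formula $\rho^{*}c_2(S^{[2]})=24(\tau_1^{*}\eta+\tau_2^{*}\eta)-3e^2$, all on $X_2(S)$. Your plan, by contrast, must either compute the $\rho_{*}$ of degree-four monomials (which requires knowing a basis of $H^4(S^{[2]})$ and the ring structure) or---as your final paragraph suggests---pull back again. The second option does work, via the identity $\rho^{*}\rho_{*}(a)=a+\sigma^{*}a$ in rational cohomology for the degree-$2$ cover; applying this to your GRR expression recovers exactly $\ch_2(\cF)-\tfrac12 e\cdot c_1(\cF)+2r_1r_2\Td_2(X_2(S))-\tfrac{r_1r_2}{6}\rho^{*}c_2(S^{[2]})$, which matches the paper's formula once one checks $c_2(X_2(S))=\rho^{*}c_2(S^{[2]})+2e^2$ from the ramification sequence $0\to T_{X_2(S)}\to\rho^{*}T_{S^{[2]}}\to\iota_{*}\cO_E(2E)\to 0$. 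So your sketch can be completed, but the exact sequence for $\rho^{*}\cG$ is the shortcut that makes the paper's computation short; it bypasses both $\Td_2(X_2(S))$ and the $\rho^{*}\rho_{*}$ step.
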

\begin{rmk}
The most important result in Proposition~\ref{prp:gimodulare} is the assertion that, under suitable hypotheses the discriminant of 
$\cG(\cE_1,\cE_2)$ is a multiple of $c_2(S^{[2]})$, 
i.e.~that $\cG(\cE_1,\cE_2)$  is a modular sheaf.  In Subsection~\ref{subsec:ancoramod} we recall the definition of modular sheaf. The key features of modular sheaves are the following. First variation of stability behaves as if the hyperk\"ahler variety were a surface. Secondly, one may relate stability of a modular sheaf on a hyperk\"ahler with a Lagrangian fibration  and (semi)stability of its restriction to a general Lagrangian fiber, provided the polarization is sufficiently close to the boundary of the ample cone which corresponds to the Lagrangian fibration. These results, which are presented in Section~\ref{sec:ancorastab}, provide the theoretical basis of our proof of the main result, i.e.~Theorem~\ref{thm:belteo}.
\end{rmk}
\begin{rmk}\label{rmk:vunovudue}
Assume that the equality in~\eqref{sameslope} holds. Then  the equality in~\eqref{sommaquad} holds if and only if
\begin{equation}
\la v(\cE_1),v(\cE_2)\ra=0.
\end{equation}
Note that this is equivalent to the condition $\chi_S(\cE_1,\cE_2)=0$.
\end{rmk}
\begin{rmk}\label{rmk:cosgen}
Let $S$ be a $K3$ surface. The construction in Subsection~\ref{subsec:fasciogi} extends to $S^{[n]}$ and  it does  give modular sheaves under suitable hypotheses. 
Let $\cE_1,\ldots,\cE_n$ be  sheaves on $S$, locally free, with the possible exception of one which is torsion-free. 
Let $X_n(S)$ be the $n$-th isospectral scheme of $S$  (see Definition 3.2.4 in~\cite{haiman}), with maps 
$\tau\colon X_n(S)\to S^n$ (the blow-up of the big diagonal) and $\rho\colon X_n(S)\to S^{[n]}$. For $i\in\{1,\ldots,n\}$ let 
$\tau_i\colon X_n(S)\to S$ be $\tau$ followed by the $i$-th projection. Let 
\begin{equation*}
\cF:=\bigoplus\limits_{\sigma\in\cS_n}\tau_1^{*}\cE_{\sigma(1)}\otimes\ldots\otimes
\tau_i^{*}\cE_{\sigma(i)}\otimes\ldots\otimes\tau_n^{*}\cE_{\sigma(n)}.
\end{equation*}
The pushforward $\rho_{*}(\cF)$ is torsion-free because $\cF$ is torsion-free. If all the $\cE_i$'s are locally free then  
$\rho_{*}(\cF)$ is  locally free because 
 $\rho$ is finite and flat (the latter is a highly non trivial result of Haiman, see loc.~cit.). 
The  symmetric group $\cS_n$ acts on $X_n(S)$ compatibly with its permutation action on $S^n$, and hence 
we get   an $\cS_n$-action   on  $\cF$.  Thus   we also get
 an $\cS_n$-action  on $\rho_{*}(\cF)$:
we let $\cG=\cG(\cE_1,\ldots,\cE_n)\subset \rho_{*}(\cF)$ be the sheaf of $\cS_n$-invariants.
Let
$r_i:=\rk(\cE_i)$ and $\ov{r}:=r_1\cdot\ldots\cdot r_n$.
Then we have
\begin{equation*}
\rk(\cG)= n! \ov{r},\qquad c_1(\cG) =(n-1)! \ov{r} \left[\sum\limits_{i=1}^n  \mu\left(\frac{c_1(\cE_i)}{r_i}\right)-\frac{n}{2}\delta_n\right],
\end{equation*}
where $\mu_n\colon H^2(S)\to H^2(S^{[n]})$ is the analogue of the homomorphism $\mu\colon H^2(S)\to H^2(S^{[2]})$, and $\delta_n\in H^2(S^{[n]};\ZZ)$ is the unique class such that 
$\rho^{*}\delta_n$ is the class of the exceptional divisor of $\tau$.
Next assume that  each  sheaf
 $\cE_i$ 
is simple,  that
\begin{equation}\label{jazzfever}
r_j c_1(\cE_i)=r_i c_1(\cE_j),
%. 
\end{equation}
for all $i,j$ and that
\begin{equation}\label{aussie}
\sum_{i=1}^n  \frac{v(\cE_i)^2}{r_i^2}=0.
\end{equation}
Then  
\begin{equation}\label{discingen}
\Delta(\cG)  =  \frac{(n!)^2 \ov{r}^2}{12} c_2(S^{[n]}).
\end{equation}
\end{rmk}
\subsection{Proof of Proposition~\ref{prp:extdigi}}
\setcounter{equation}{0}
Since $S$ is a $K3$ surface, the Bridgeland-King-Reid (BKR) McKay correspondence gives an equivalence between the derived categories of $\cS_2$ equivariant coherent sheaves on $S^2$ and of coherent sheaves 
on $S^{[2]}$. In particular, since $\cG=\cG(\cE_1,\cE_2)$ is the sheaf on $S^{[2]}$ corresponding to the 
$\cS_2$ equivariant sheaf $\cF=\cF(\cE_1,\cE_2)$,
we have an isomorphism 
\begin{equation}\label{extinvariante}
\Ext^p_{S^{[2]}}(\cG,\cG)\cong \Ext^p_{S^2}(\cF,\cF)^{\cS_2}.
\end{equation}
By the K\"unneth decomposition   we have 
\begin{multline}\label{bigkun}
\Ext^p_{S^2}(\cF,\cF)\cong \bigoplus\limits_{i=1}^{2}\bigoplus\limits_{a+b=p}
\Ext^{a}_{S}(\cE_{i},\cE_{i})\otimes \Ext^{b}_{S}(\cE_{3-i},\cE_{3-i})\oplus \\
\oplus 
\bigoplus\limits_{i=1}^{2}\bigoplus\limits_{a+b=p}
\Ext^{a}_{S}(\cE_{i},\cE_{3-i})\otimes \Ext^{b}_{S}(\cE_{3-i},\cE_{i}).
\end{multline}
Since $\ext^0_{S}(\cE_i,\cE_i)=1$ for  $i\in\{1,2\}$, we get that $\ext^0_{S^2}(\cF,\cF)\ge 2$ and that the invariant subspace of $\Ext^0_{S^2}(\cF,\cF)$ has dimension $1$ if and only if~\eqref{noninter} holds. This shows that Item~(1)  holds. 

A similar argument proves  Items~(2)  and~(3). 

Lastly we prove that  Item~(4) holds. For  $i\in\{1,2\}$
 let $U_i$ be a ball with center $0$ and let $\cA_i$ be a sheaf on $S\times U_i$, flat over $U_i$, such that  $\cA_i(0):=\cA_{i|S\times\{0\}}$ is isomorphic to $\cE_i$, and the associated map $(U_i,0)\to\Def(\cE_i)$ is an isomorphism. 
 Let $\cG(\cA_1,\cA_2)$ be the sheaf on $S^{[2]}\times U_1\times U_2$ that one gets by working in families, see Subsection~\ref{subsec:fasciogi}. Then  $\cG(\cA_1,\cA_2)$ is flat over 
 $U_1\times  U_2$ by Proposition~\ref{prp:gipiatto}. Thus we have a morphism of schemes
\begin{equation}
U_1\times U_2\overset{\Phi}{\lra}  \Def(\cG)
\end{equation}
which maps the  point $s=(s_1,s_2)$ to the unique $\Phi(s)\in\Def(\cG)$  such that the corresponding sheaf on $S^{[2]}$ is isomorphic to 
$\cG(\cA_1(s_1),\cA_n(s_2))$ (here $\Def(\cG)$ is a representative of the deformation space of 
$\cG$, which is a universal deformation space because  $\cG$ is simple). Let  $i\in\{1,2\}$. Since $\cE_i$ is a simple sheaf on a $K3$ surface, its deformation space is unobstructed, i.e.~$\dim U_i=\ext^1_S(\cE_i,\cE_i)$.   By Item~(2) it follows that
\begin{equation}
\dim \left(U_1\times  U_2\right)=\ext^1_{S^{[2]}}(\cG,\cG).
\end{equation}
 Thus it suffices to prove that $\Phi$ is injective. 
  By shrinking the $U_i$'s around $0$ we may assume that the following holds: if 
  $s,t\in U_1\times  U_2$ and $\alpha,\beta\in\cS_2$ are such that   
\begin{equation}\label{nessunhom}
\Hom(\cA_{\alpha(i)}(s_{\alpha(i)}),\cB_{\beta(i)}(t_{\beta(i)}))\not=0
\end{equation}
for all $i\in\{1,2\}$ (where $\cA_{\alpha(i)}(s)$ equals $\cA_{\alpha(i)|S\times\{t\}}$ and similarly for $\cB_{\beta(i)}(t))$), then $s=t$ (and $\alpha=\beta$). In fact this follows from the hypothesis that each $\cE_i$ is simple and from the vanishing 
$\ext^0_S(\cE_1,\cE_2)=\ext^0_S(\cE_2,\cE_1)=0$. By an argument similar to 
those described above, this implies that if $\Hom(\cG(\cA_1(s_1),\cA_2(s_2)),\cG(\cA_1(t_1),\cA_2(t_2))$ is non zero then $s=t$. 
This shows that $\Phi$ is injective, and concludes the proof of Item~(4). 
\qed
\subsection{Pull-back of $\cG$ to $X_2(S)$}\label{subsec:ristretto}
\setcounter{equation}{0}
The sheaf
$\rho^{*}\cG$ is obtained from $\cF$ via an elementary modification along $E$, where $E$ is the exceptional divisor of the blow up map $\tau\colon X_2(S)\to S^2$. In order to explain this we introduce some notation.
Let $D\subset S^2$ be the  diagonal of $S^2$.
 Let
\begin{equation}\label{evidcoinc}
\ov{\epsilon}\colon D  \overset{\sim}{\lra}  S 
\end{equation}
be  the isomorphism given by restriction of either one of the projections. Let  $\tau_{E}\colon E\to D$ be the restriction of $\tau$ to $E$, and let
\begin{equation}
\epsilon\coloneq \ov{\epsilon}\circ \tau_E\colon E  \lra  S. 
\end{equation}
 Let $\cR$ be the locally free sheaf on 
$E$  defined by 
\begin{equation}\label{errejeikappa}
\cR:=\epsilon^{*}(\cE_1\otimes\cE_2).
\end{equation}
One can choose an  isomorphism
\begin{equation}\label{raddoppio}
\cF_{|E}\cong \cR\oplus \cR
\end{equation}
such that the eigensheaves of the action of the involution $\sigma$ on $\cF_{|E}$ (this makes sense because 
$\sigma$ is the identity on $E$)  are given by
\begin{equation}\label{lascia}
\cF_{|E}(U)^{\pm}=\{(s,\pm s)\mid s\in \cR(U)\}.
\end{equation}
(Here  $U\subset E$ is an open subset.)
Let 
\begin{equation}\label{phijeykappa}
\begin{matrix}
\cF_{|E} & \overset{\ov{\varphi}}{\lra} & \cR \\
(a,b) & \mapsto & a-b,
\end{matrix}
\end{equation}
where the notation makes sense because of the isomorphism
 in~\eqref{raddoppio} - we assume that it has been chosen so that the equalities in~\eqref{lascia} hold. 
Let $\iota\colon E\hra X_2(S)$ be the inclusion map, and , and let $\varphi\colon \cF \to
 \iota_{*}\cR$ be the  morphism defined by the morphism  $\ov{\varphi}$ in~\eqref{phijeykappa}. Arguing as in the proof of~\cite[Proposition~5.6]{ogfascimod} one gets that
 $\rho^{*}\cG$   fits into the exact sequence
\begin{equation}\label{gimodel}
0\lra \rho^{*}\cG \lra \cF\overset{\varphi}{\lra} 
 \iota_{*}\cR\lra 0.
\end{equation}
\subsection{Proof of Proposition~\ref{prp:gimodulare}} 
\setcounter{equation}{0}
The rank of $\cG$ can be computed away from  the branch locus of $\rho$, and it is equal to the rank of 
$\cF$, i.e.~$2 r_1 r_2$.  
By the exact sequence in~\eqref{gimodel} we can express $\ch(\rho^{*}\cG)$
   via  $\ch(\cF)$  and the Chern character of the sheaf $\iota_{*}\cR$.
 Applying the GRR Theorem we get that \emph{modulo} $H^6(X_2(S),\QQ)$ we have
\begin{equation}\label{tensoriem}
\ch(\iota_{*}\cR)=
r_1 r_2 e+\frac{1}{2} c_1(\cF)\cdot e
-\frac{r_1 r_2}{2} e^2
\end{equation}
Hence we get that
\begin{equation}
c_1(\rho^{*}\cG)=c_1(\cF)-c_1(\iota_{*}(\cR))=\left(\sum\limits_{1\le a,b\le 2}^2 \frac{r_1 r_2}{r_a} \tau_b^{*} c_1(\cE_a)\right)-r_1 r_2 e.
\end{equation}
By the equalities in~\eqref{tiromu} we get that 
\begin{equation}\label{beltrami}
\rho^{*}c_1(\cG)=
\rho^{*}( r_2\mu(c_1(\cE_1))+r_1\mu(c_1(\cE_2))-r_1 r_2\delta).
\end{equation}
 The pull-back homomorphism $\rho^{*}\colon H^2(S^{[n]})\to 
H^2(X_n(S))$ is injective because $\rho$ is a finite map. Hence the second equality in~\eqref{rangoeciuno}
 follows from the equality in~\eqref{beltrami}. 
 This finishes the proof of Item~(1) of Proposition~\ref{prp:gimodulare}. 
 
 Next we prove Item~(2). 
 It suffices to show    that 
\begin{equation}\label{ennio}
\rho^{*}\Delta(\cG)=\frac{ r_1^2 r_2^2}{3}\rho^{*} c_2(S^{[2]}). 
\end{equation}
By the equality in~\eqref{tensoriem} we have
\begin{multline}\label{formulona}
\rho^{*}\ch_2(\cG)=
\ch_2(\cF)-\ch_2\left(\iota_{*}(\cR)\right)=\\
=\sum\limits\limits_{1\le a,b\le 2} \left(\frac{r_1 r_2}{r_a}\tau_b^{*} \ch_2(\cE_a)+
\frac{r_1 r_2}{r_a r_b}\tau_1^{*} \ch_1(\cE_a)\tau_2^{*} \ch_1(\cE_b)\right)\restrict{X_2(S)}-\\
-\frac{1}{2} e\cdot\left( \sum\limits\limits_{1\le a,b\le 2} \frac{r_1 r_2}{r_a}\tau_b^{*} c_1(\cE_a)\right)
+\frac{r_1 r_2}{2} e^2.
\end{multline}
We recall that 
\begin{equation}\label{gelato}
\rho^{*}\Delta(\cG)=-4\ov{r}\rho^{*}\ch_2(\cG)+\rho^{*}\ch_1(\cG)^2.
\end{equation}
Let 
\begin{equation}
\lambda=\frac{c_1(\cE_1)}{r_1}=\frac{c_1(\cE_2)}{r_2}.
\end{equation}
(Recall the equality~\eqref{sameslope}.)  The equalities in~\eqref{formulona} and~\eqref{gelato}, together with a few computations, give that
\begin{equation}\label{difretta}
\rho^{*}\Delta(\cG)=
-4 r_1^2 r_2^2  \sum\limits_{1\le a,b\le 2} \tau_b^{*} \left(\frac{\ch_2(\cE_a)}{r_a}\right)+4 r_1^2 r_2^2\left(\sum\limits_{a=1}^2 \tau_a^{*}\lambda^2\right)-\frac{r_1^2 r_2^2}{2}e^2. 
\end{equation}
Let $\eta\in H^4(S)$ be the orientation class. By definition of Mukai pairing we have
\begin{equation}\label{utileq}
\frac{\ch_2(\cE_a)}{r_a}=-\frac{v(\cE_a)^2}{2 r^2_a}\eta-\eta+\frac{\lambda^2}{2}.
\end{equation}
Replacing in the right hand side of~\eqref{difretta} the above expression for $\ch_2(\cE_a)/r_a$, and recalling the equality in~\eqref{sommaquad}, we get that 
\begin{equation}\label{bambola}
\rho^{*}\Delta(\cG)=8 r_1^2 r_2^2  (\tau_1^{*}\eta+\tau_2^{*}\eta)
-\frac{r_1^2 r_2^2 }{2}e^2. 
\end{equation}
On the other hand we have (see~\cite[Proposition~2.1]{og-rigidi-su-k3n}) 
\begin{equation}\label{levicivita}
\rho^{*}c_2(S^{[2]}) = 24(\tau_1^{*}(\eta)+\tau_2^{*}(\eta))-3 e^2.
\end{equation}
The validity of the equality in~\eqref{ennio} follows from the equalities in~\eqref{bambola} and 
 in~\eqref{levicivita}.
\qed
\subsection{Modular sheaves}\label{subsec:ancoramod}
\setcounter{equation}{0}
Let $X$ be  a HK manifold  of dimension $2n$.  The \emph{discriminant} of a  torsion-free sheaf $\cF$ on $X$ is
\begin{equation}\label{eccodisc}
 \Delta(\cF) :=2\rk(\cF) c_2(\cF)-(\rk(\cF)-1) c_1(\cF)^2=-2\rk(\cF)\ch_2(\cF)+\ch_1(\cF)^2.
\end{equation}
 The sheaf $\cF$  is modular (see~\cite{ogfascimod}) if 
 there exists $d(\cF)\in\QQ$ such that  
\begin{equation}\label{fernand}
\int_X \Delta(\cF)  \alpha^{2n-2}=d(\cF) (2n-3)!! q_X(\alpha)^{n-1}
\end{equation}
for all $\alpha\in H^2(X)$, where $q_X$ is the Beauville-Bogomolov-Fujiki quadratic form of $X$. 
\begin{expl}\label{expl:didik3}
If  $X$ is a $K3$ surface, then every torsion-free sheaf $\cF$ on $X$ is modular and $d(\cF)=\int_X\Delta(\cF)=v(\cF)^2+2\rk(\cF)^2$.  
\end{expl}
\begin{expl}\label{expl:didigi}
Let $\cG=\cG(\cE_1,\ldots,\cE_n)$ be the sheaf  on $S^{[n]}$ appearing in Proposition~\ref{prp:gimodulare} for $n=2$ and in Remark~\ref{rmk:cosgen} for general $n$. Then $\cG$ is modular. In fact this follows from the equality in~\eqref{discform} and the formula $\int_{S^{[n]}}c_2(S^{[n]})\alpha^{2n-2}=6(n+3)(2n-3)!! q_{S^{[n]}}(\alpha)^{n-1}$.
Thus $d(\cG)=(n+3)(n!)^2 \ov{r}^2/2$. 
\end{expl}
We recall that the Fujiki constant of $X$ (sometimes called the small Fujiki constant) is characterized by the validity of the equality
\begin{equation}
\int_X\alpha^{2n}=(2n-1)!! c_X q_X(\alpha)^{n}
\end{equation}
for  all $\alpha\in H^2(X)$.
\begin{dfn}\label{dfn:adieffe}
Let $X$ be a HK manifold, and let $\cF$ be a modular torsion free sheaf  on $X$.
Then
\begin{equation}\label{esmeralda}
{\mathsf a}(\cF):=\frac{\rk(\cF)^2 \cdot d(\cF) }{4c_X},
\end{equation} 
where $d(\cF)$ is as in Definition~\ref{fernand} and $c_X$ is the Fujiki constant of $X$.
\end{dfn}
\begin{expl}\label{expl:adigi}
Let $\cG(\cE_1,\ldots,\cE_n)$ be as in Example~\ref{expl:didigi}. Then 
\begin{equation}\label{adimod}
{\mathsf a}(\cG(\cE_1,\ldots,\cE_n))=(n+3)(n!)^4 \ov{r}^4/8. 
\end{equation}
In fact this equality follows from the formula for $d(\cG)$ given in Example~\ref{expl:didigi} and the equality $c_{S^{[n]}}=1$.
\end{expl}
\begin{dfn} 
Let $S$ be a $K3$ surface, and let $v=(r,l,s)$ be a Mukai vector on $S$. We set ${\mathsf a}(v)\coloneq (r^2(v^2+2r^2)/4$.  
\end{dfn} 
\begin{dfn}\label{dfn:marina}
Let $X$ be a HK manifold of dimension $2n>2$. Let $w=(r,l,s)\in \NN_{+}\times \NS(X)\times H^{2,2}_{\ZZ}(X)$, and assume that  
 there exists $d\in\QQ$ such that  
\begin{equation}\label{coraline}
\int_X s \cdot \alpha^{2n-2}=d (2n-3)!! q_X(\alpha)^{n-1}
\end{equation}
for all $\alpha\in H^2(X)$. 
We set ${\mathsf a}(w)\coloneq r^2 d/4 c_X$.
\end{dfn} 
\begin{rmk}\label{rmk:numgenhk}
Let $v=(r,l,s)$ be a Mukai vector on a $K3$ surface $S$. If $\cF$ is a sheaf on $S$ such that $v(\cF)=v$, then  ${\mathsf a}(\cF)={\mathsf a}(v)$ (see Example~\ref{expl:didik3}).
Let $X$ be a HK manifold of dimension $2n>2$, and  let $w=(r,l,s)$ be as in Definition~\ref{dfn:marina}. 
If $\cF$ is a sheaf on $X$ such that $w(\cF)=w$, then  ${\mathsf a}(\cF)={\mathsf a}(w)$. 
\end{rmk}

\section{Examples}\label{sec:eccoesempi}
\subsection{Preliminaries} 
\setcounter{equation}{0}
Let $S$ be a $K3$ surface, and let $\cE_1,\cE_2$ be  sheaves on $S$. For $i\in\{1,2\}$ let
\begin{equation}
v(\cE_i)=(r_i,l_i,s_i)
\end{equation}
be the Mukai vector of $\cE_i$.
\begin{lmm}\label{lmm:alterpereconi}
If the hypotheses of Proposition~\ref{prp:gimodulare} (including~\eqref{sameslope} and~\eqref{sommaquad})  hold  
then 
\begin{enumerate}
\item
$v(\cE_i)^2=0$ for $i\in\{1,2\}$ and $v(\cE_1)$, $v(\cE_2)$ are proportional, or
\item
up to reindexing we have $v(\cE_1)^2=-2$ and $v(\cE_2)^2> 0$, and 
 there exists   $a\in\NN_{+}$ such that $r_2=a r_1$ and $l_2=a l_1$.
\end{enumerate}
\end{lmm}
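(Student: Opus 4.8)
The plan is to reduce everything to a single numerical constraint and combine it with the elementary bound $v(\cE_i)^2\ge -2$, which holds because the $\cE_i$ are simple (this is the standing assumption in the present setting; slope stability would also do). Writing $v(\cE_i)=(r_i,l_i,s_i)$, recall that $v(\cE_i)^2=l_i^2-2r_i s_i$; since $H^2(S;\ZZ)$ is an even lattice and $s_i\in\ZZ$, each $v(\cE_i)^2$ is an \emph{even} integer. Moreover, as $\cE_i$ is simple, Serre duality on the $K3$ surface gives $\ext^2_S(\cE_i,\cE_i)=\ext^0_S(\cE_i,\cE_i)=1$, so Riemann--Roch ($\chi_S(\cE_i,\cE_i)=-v(\cE_i)^2$) yields $v(\cE_i)^2=\ext^1_S(\cE_i,\cE_i)-2\ge -2$. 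The hypothesis \eqref{sommaquad} reads $r_2^2\,v(\cE_1)^2=-r_1^2\,v(\cE_2)^2$, and since $r_1,r_2>0$ this forces $v(\cE_1)^2$ and $v(\cE_2)^2$ either to vanish simultaneously or to have strictly opposite signs. These two alternatives will give conclusions (1) and (2) respectively.

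In the first case $v(\cE_1)^2=v(\cE_2)^2=0$, and I would prove proportionality directly. From \eqref{sameslope} we have $r_2 l_1=r_1 l_2$ in $\NS(S)$, hence $l_1/r_1=l_2/r_2$ in $\NS(S)\otimes\QQ$. The vanishing $v(\cE_i)^2=0$ means $s_i=l_i^2/(2r_i)$, so $s_1/r_1=(l_1/r_1)^2/2=(l_2/r_2)^2/2=s_2/r_2$. Thus the normalized Mukai vectors $v(\cE_i)/r_i=(1,l_i/r_i,s_i/r_i)$ coincide, i.e. $v(\cE_1)=\tfrac{r_1}{r_2}v(\cE_2)$, which is conclusion (1).

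In the second case, after exchanging the two indices if necessary (this only relabels the Mukai data, so it is harmless for the purely numerical conclusion, even though it may move the locally free factor), I may assume $v(\cE_1)^2<0<v(\cE_2)^2$. Being a negative even integer that is $\ge -2$, the square $v(\cE_1)^2$ must equal $-2$; this is precisely where simplicity is used. Substituting into \eqref{sommaquad} gives $v(\cE_2)^2=2r_2^2/r_1^2>0$. Setting $a:=r_2/r_1\in\QQ_{>0}$, this reads $v(\cE_2)^2=2a^2$; since $v(\cE_2)^2$ is a positive even integer we get $a^2\in\ZZ$, and a positive rational with integral square is an integer, so $a\in\NN_{+}$ and $r_2=ar_1$. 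Finally \eqref{sameslope} gives $l_2=(r_2/r_1)l_1=a l_1$, completing conclusion (2).

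The only genuinely structural ingredient is the bound $v(\cE_i)^2\ge -2$; the rest is arithmetic, and I expect the mild obstacle to lie exactly there. The dichotomy is false for arbitrary torsion-free sheaves: for instance $\cE_1=\cO_S^{\oplus 2}$ (with $v=(2,0,2)$, $v^2=-8$) and $\cE_2=\cI_Z$ of length $2$ (with $v=(1,0,-1)$, $v^2=2$) satisfy all the literal hypotheses of Proposition~\ref{prp:gimodulare} together with \eqref{sameslope} and \eqref{sommaquad}, yet neither (1) nor (2) holds. Hence one must invoke simplicity to exclude negative even squares below $-2$ and force $v(\cE_1)^2=-2$; once this is in place, the integrality of $a$ and the recovery of $l_2=al_1$ from \eqref{sameslope} are immediate.
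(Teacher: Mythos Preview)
Your proof is correct and follows the same dichotomy as the paper. The one genuine difference is in case~(2): to show $a\in\NN_{+}$, the paper observes that $v(\cE_1)^2=-2$ rewrites as $r_1 s_1-l_1^2/2=1$, so $\gcd(r_1,\divisore(l_1))=1$, and then the relation $r_2 l_1=r_1 l_2$ in $\NS(S)$ forces $r_1\mid r_2$; you instead note that $v(\cE_2)^2=2a^2$ is an even integer, hence $a^2\in\ZZ$, hence $a\in\ZZ$. Both are valid; your route is slightly more elementary and bypasses the lattice divisibility argument. Your closing remark that simplicity is essential (with the explicit counterexample $\cE_1=\cO_S^{\oplus 2}$, $\cE_2=\cI_Z$) is a worthwhile addition, since the literal hypotheses of Proposition~\ref{prp:gimodulare} do not include it, even though the paper's own proof of the lemma invokes it.
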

\begin{proof}
Suppose that  $v(\cE_1)^2=v(\cE_2)^2=0$. Then by  the equality $r_2 l_1=r_1 l_2$ (see Equation~\eqref{sameslope}) we may write $v(\cE_1)=t v(\cE_2)+(0,0,s)$ for some $t,s\in\QQ$. Note that  $t\not=0$. Since $v(\cE_1)^2=v(\cE_2)^2=0$ it follows that $s=0$, and hence $v(\cE_1)$, $v(\cE_2)$ are proportional. 

Suppose that  $v(\cE_1)^2$, $v(\cE_2)^2$ are not both zero.
Since $r_2^2 v(\cE_1)^2+r_1^2 v(\cE_2)^2=0$ there exists  $i\in\{1,2\}$ 
such that $v(\cE_i)^2<0$. Reindexing we may assume that $i=1$.  By simplicity of $\cE_1$ we get that $v(\cE_1)^2=-2$, i.e.~that 
\begin{equation}\label{radice}
r_1 s_1-l_1^2/2=1.
\end{equation}
 Hence $\divisore(l_1)$ and $r_1$ are coprime. 
 The relation
$r_2 l_1=r_1 l_2$
gives that there exists $a\in\NN_{+}$  such that  $r_2=a r_1$, $l_2=a l_1$ (because $\divisore(l_1)$ and $r_1$ are coprime). 
\end{proof}
\begin{rmk}
Let $\cE_1,\ldots,\cE_n$ be sheaves on a $K3$ surface $S$ as in Remark~\ref{rmk:cosgen}, and suppose that the equalities in~\eqref{jazzfever} and~\eqref{aussie} hold. For $j\in\{1,\ldots,n\}$ let $v(\cE_j)=(r_j,l_j,s_j)$. 
Then either 
$v(\cE_j)^2=0$ for all $j\in\{1,\ldots,n\}$ and the Mukai vectors $v(\cE_1),\ldots,v(\cE_n)$ are proportional 
or else, up to reindexing,
$v(\cE_1)^2=-2$ and for all $j>1$ we have $v(\cE_j)^2\ge 0$  and $r_j=a_j r_1$, $l_j=a_j l_1$ where $a_j\in\NN_{+}$.
\end{rmk}
\begin{rmk}
The modular sheaves given by $\cG(\cE_1,\ldots,\cE_n)$ where the Mukai vectors $v(\cE_1),\ldots,v(\cE_n)$ are isotropic (primitive) and  all equal are studied by Markman in~\cite{markman-1-obstructed} (see Section~11). 
\end{rmk}
\subsection{A series of examples}\label{subsec:esempiessedue}
\setcounter{equation}{0}
We discuss examples of $\cE_1,\cE_2$ such  that Item~(2) of 
Lemma~\ref{lmm:alterpereconi} holds. 
\begin{lmm}\label{lmm:vuquad}
Let $(S,h)$ be a polarized $K3$ surface. Let 
$\cE_1,\cE_2$ be Gieseker-Maruyama stable torsion free sheaves on $S$, with $\cE_1$ spherical (and hence locally free). Assume also that there exists  $a\in\NN_{+}$ such that
\begin{equation}\label{vuunovudue}
v(\cE_2)=a v(\cE_1)-\frac{2a}{r_1}\left(0, 0,1\right).
\end{equation}
Then $\cG=\cG(\cE_1,\cE_2)$ is a torsion free simple sheaf on $S^{[2]}$, and
\begin{equation}\label{sanmichele}
w(\cG) = ar_1 \left(2 r_1, 2  \mu(l_1)-r_1\delta,   \frac{ a r_1^3}{3} c_2(S^{[2]}) \right).
\end{equation}
If in addition 
\begin{equation}\label{boatto}
\Hom(\cE_2,\cE_1)=0,
\end{equation}
 then 
\begin{equation}\label{zerounodue}
 \ext^1_{S^{[2]}}(\cG,\cG)=2a^2+2,\quad 
\ext^2_{S^{[2]}}(\cG,\cG)=2,
\end{equation}
 $\cG$ has unobstructed deformations, and $\Def(\cG)$ is identified with 
$\Def(\cE_1)\times\Def(\cE_2)$ via the map in~\eqref{defoprod}.
\end{lmm}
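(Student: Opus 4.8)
The plan is to verify Lemma~\ref{lmm:vuquad} by checking that the hypotheses~\eqref{vuunovudue} and~\eqref{boatto} translate into the hypotheses of Proposition~\ref{prp:gimodulare} and Proposition~\ref{prp:extdigi}, and then to read off the invariants from the formulas already established. First I would record the numerics coming from~\eqref{vuunovudue}. Writing $v(\cE_i)=(r_i,l_i,s_i)$, the equation $v(\cE_2)=a\,v(\cE_1)-\tfrac{2a}{r_1}(0,0,1)$ gives immediately $r_2=a r_1$, $l_2=a l_1$, and $s_2=a s_1-\tfrac{2a}{r_1}$. From $\cE_1$ spherical we have $v(\cE_1)^2=-2$, i.e.\ $r_1 s_1-l_1^2/2=1$. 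The slope condition~\eqref{sameslope}, namely $r_2 c_1(\cE_1)=r_1 c_1(\cE_2)$, follows at once from $r_2=ar_1$, $l_2=al_1$. I would then compute $v(\cE_2)^2=\la a v(\cE_1)-\tfrac{2a}{r_1}(0,0,1),\,a v(\cE_1)-\tfrac{2a}{r_1}(0,0,1)\ra$; since $\la(0,0,1),(0,0,1)\ra=0$ and $\la v(\cE_1),(0,0,1)\ra=-r_1$ in the Mukai pairing, this gives $v(\cE_2)^2=a^2 v(\cE_1)^2+\tfrac{4a^2}{r_1}\cdot r_1\cdot(-1)\cdot(-1)=-2a^2+4a^2=2a^2$ (I would double-check the sign of $\la(r,l,s),(0,0,1)\ra$, which is $-r$, so $-2\cdot a\cdot\tfrac{2a}{r_1}\cdot\la v(\cE_1),(0,0,1)\ra=2a^2$). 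Hence $r_2^2 v(\cE_1)^2+r_1^2 v(\cE_2)^2=a^2r_1^2(-2)+r_1^2(2a^2)=0$, which is exactly~\eqref{sommaquad}. Thus both hypotheses of Proposition~\ref{prp:gimodulare} hold, and since $\cE_1,\cE_2$ are stable they are simple, so the hypotheses of Proposition~\ref{prp:extdigi} are also in place.

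Next I would establish that $\cG$ is simple and read off $w(\cG)$. Simplicity follows from Proposition~\ref{prp:extdigi}(1), using that under~\eqref{boatto} one of the two factors in~\eqref{noninter} already vanishes: $\ext^0_S(\cE_2,\cE_1)=\dimHom(\cE_2,\cE_1)=0$, so the product in~\eqref{noninter} is zero regardless of $\ext^0_S(\cE_1,\cE_2)$. (Before imposing~\eqref{boatto}, simplicity of $\cG$ already holds whenever~\eqref{noninter} is satisfied; the statement asserts $\cG$ is simple in general, so I should check that two stable sheaves with the Mukai vectors above cannot admit nonzero maps in both directions — this follows because a nonzero map between stable sheaves of the same slope but distinct Mukai vectors cannot exist in both directions unless they are isomorphic, and $v(\cE_1)\ne v(\cE_2)$ since $v(\cE_1)^2=-2\ne 2a^2=v(\cE_2)^2$ as $a>0$. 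I would phrase this carefully.) For $w(\cG)$, Proposition~\ref{prp:gimodulare}(1) gives $\rk(\cG)=2r_1r_2=2ar_1^2$ and $c_1(\cG)=r_2\mu(l_1)+r_1\mu(l_2)-r_1r_2\delta=ar_1\mu(l_1)+r_1\cdot al_1\mu(\cdot)-ar_1^2\delta$; collecting, $c_1(\cG)=2ar_1\mu(l_1)-ar_1^2\delta=ar_1(2\mu(l_1)-r_1\delta)$. By Proposition~\ref{prp:gimodulare}(2), $\Delta(\cG)=\tfrac{r_1^2r_2^2}{3}c_2(S^{[2]})=\tfrac{a^2r_1^4}{3}c_2(S^{[2]})=ar_1\cdot\tfrac{ar_1^3}{3}c_2(S^{[2]})$. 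Factoring out $ar_1$ everywhere reproduces~\eqref{sanmichele}.

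For the dimension count under~\eqref{boatto}, I would invoke Proposition~\ref{prp:extdigi}. The condition~\eqref{boatto} gives $\ext^0_S(\cE_2,\cE_1)=0$; I still need $\ext^0_S(\cE_1,\cE_2)=0$ to apply Items~(3) and~(4), which require~\eqref{duezeri}. This vanishing should follow from stability: $\cE_1,\cE_2$ are stable of the same slope $\lambda=l_1/r_1=l_2/r_2$, and a nonzero map between stable sheaves of equal slope forces them to be isomorphic, which is impossible since $v(\cE_1)\ne v(\cE_2)$; combined with~\eqref{boatto} this yields~\eqref{duezeri}. Then Proposition~\ref{prp:extdigi}(3) gives $\ext^2_{S^{[2]}}(\cG,\cG)=2+\ext^1_S(\cE_1,\cE_2)^2+\ext^1_S(\cE_1,\cE_1)\cdot\ext^1_S(\cE_2,\cE_2)$. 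Here $\ext^1_S(\cE_i,\cE_i)=v(\cE_i)^2+2$ by Riemann-Roch on the $K3$ together with $\ext^0=\ext^2=1$ (stable sheaves are simple), so $\ext^1_S(\cE_1,\cE_1)=0$ and $\ext^1_S(\cE_2,\cE_2)=2a^2+2$; and $\ext^1_S(\cE_1,\cE_2)=-\chi_S(\cE_1,\cE_2)=\la v(\cE_1),v(\cE_2)\ra=0$ by Remark~\ref{rmk:vunovudue} (using~\eqref{duezeri} to kill $\ext^0$ and $\ext^2=\ext^0_S(\cE_2,\cE_1)^\vee=0$). Substituting gives $\ext^2_{S^{[2]}}(\cG,\cG)=2+0+0\cdot(2a^2+2)=2$. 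For $\ext^1_{S^{[2]}}(\cG,\cG)$ I would use Item~(2): $\ext^1_{S^{[2]}}(\cG,\cG)=\ext^1_S(\cE_1,\cE_1)+\ext^1_S(\cE_2,\cE_2)+(\ext^0_S(\cE_1,\cE_2)+\ext^0_S(\cE_2,\cE_1))\ext^1_S(\cE_1,\cE_2)=0+(2a^2+2)+0=2a^2+2$, giving~\eqref{zerounodue}. Unobstructedness and the identification $\Def(\cG)\cong\Def(\cE_1)\times\Def(\cE_2)$ are then immediate from Proposition~\ref{prp:extdigi}(4), whose hypothesis~\eqref{duezeri} we have verified. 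The main obstacle, and the one deserving the most care, is establishing~\eqref{duezeri} cleanly — specifically arguing that stability plus the inequality $v(\cE_1)^2=-2<2a^2=v(\cE_2)^2$ rules out nonzero homomorphisms in \emph{both} directions and hence, together with~\eqref{boatto}, in either direction; everything else is bookkeeping built on the two propositions already proved.
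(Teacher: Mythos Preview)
Your approach matches the paper's: reduce everything to Propositions~\ref{prp:extdigi} and~\ref{prp:gimodulare} and compute. The numerics (verification of~\eqref{sameslope}, \eqref{sommaquad}, the formula for $w(\cG)$, and the $\ext^i$ counts) are all correct.

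The one genuine gap is your argument for $\Hom(\cE_1,\cE_2)=0$. You assert that ``a nonzero map between stable sheaves of equal slope forces them to be isomorphic,'' but this is false for \emph{Gieseker--Maruyama} stable sheaves: the inclusion $I_p\hookrightarrow\cO_S$ of an ideal sheaf of a point is a counterexample. The statement holds only for \emph{slope} stable sheaves, or for Gieseker stable sheaves with \emph{equal reduced Hilbert polynomial}. Here the reduced Hilbert polynomials are not equal: from~\eqref{vuunovudue} one reads off $s_2/r_2=s_1/r_1-2/r_1^2<s_1/r_1$, so $\chi(\cE_1(n))/r_1 \succ \chi(\cE_2(n))/r_2$. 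This is precisely the paper's argument: Gieseker stability then forces $\Hom(\cE_1,\cE_2)=0$. Your closing sentence also has the logic backwards: knowing that nonzero maps cannot exist in \emph{both} directions simultaneously, together with~\eqref{boatto} (which kills $\Hom(\cE_2,\cE_1)$), says nothing about $\Hom(\cE_1,\cE_2)$ --- that direction must be killed independently, and the Hilbert polynomial comparison does exactly that.

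Once you have $\Hom(\cE_1,\cE_2)=0$ this way, note that it holds \emph{unconditionally} (without~\eqref{boatto}); the paper uses this to obtain simplicity of $\cG$ directly from Proposition~\ref{prp:extdigi}(1), rather than via your ``maps in both directions'' detour. Your detour for simplicity does work (nonzero maps in both directions would force equal reduced Hilbert polynomials, hence isomorphism, contradicting $v(\cE_1)\ne v(\cE_2)$), but it is unnecessary once the cleaner vanishing is in hand.
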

\begin{proof}
 We claim that 
 $\Hom(\cE_1,\cE_2)=0$.
 In fact by the equality in~\eqref{vuunovudue} one gets that
 $\chi(S,\cE_1(n))/r(\cE_1)\succ\chi(S,\cE_2(n))/r(\cE_2)$ (here $\succ$ means that the left hand  is greater than the right hand  for $n\gg 0$, in fact for all $n$ in this specific case), and hence $\Hom(\cE_1,\cE_2)=0$ by stability. By Proposition~\ref{prp:extdigi} it follows that $\cG$ is simple. 
 The equalities in~\eqref{sameslope} and~\eqref{sommaquad} hold because of  the equality in~\eqref{vuunovudue}, and thus the equalities in~\eqref{sanmichele}
hold by Proposition~\ref{prp:gimodulare}. 

The validity of the remaining statements assuming the vanishing in~\eqref{boatto} follows from Proposition~\ref{prp:extdigi}, because 
 $\Hom(\cE_1,\cE_2)=0$. 
\end{proof}
\begin{rmk}\label{rmk:casoprim}
Let  $\cE_1,\cE_2$ be as in Lemma~\ref{lmm:alterpereconi}, and let $\cG\coloneq \cG(\cE_1,\cE_2)$.
If Item~(1) of Lemma~\ref{lmm:alterpereconi} holds, with $v(\cE_1)=v(\cE_2)$ and $\cE_1,\cE_2$ stable non isomorphic vector bundles,   then~\eqref{noodles} gives that $\ext^2_{S^{[2]}}(\cG,\cG)  = 6$. 
If Item~(2) of Lemma~\ref{lmm:alterpereconi} holds, with  $\cE_1,\cE_2$ slope stable,  then~\eqref{noodles} gives that 
$\ext^2_{S^{[2]}}(\cG,\cG)  = 2$.  There are examples with  $\cE_2$ Gieseker-Maruyama  stable but not slope stable with 
$\ext^1_{S^{[2]}}(\cE_1,\cE_2)$ arbitrarily large, and hence $\ext^2_{S^{[2]}}(\cG,\cG)$ arbitrarily large by~\eqref{noodles}. However in these cases $\cG$ is  unstable. This motivates our expectation that $\cG$ belongs to a connected smooth component of the corresponding moduli space of (semi)stable sheaves on $S^{[2]}$. The main result in~\cite{bottini-og10-reloaded} gives further evidence towards the expectation that moduli spaces of modular sheaves (or at least of projectivelt hyperholomorphic sheaves) often smooth.
\end{rmk}
\begin{rmk}\label{rmk:casoprim}
Set $r_1=2a$ or $r_1=a$ with $a$ odd in Lemma~\ref{lmm:vuquad}. In other words
let
\begin{enumerate}
\item
$v(\cE_1)=( 2a,  l_1, s_1 )$ and $v(\cE_2)=( 2a^2, a l_1,a s_1 -1)$, or
\item
$v(\cE_1)=( a,  l_1, s_1 )$ and $v(\cE_2)=( a^2, a l_1,a s_1 -2)$ and $a$ is odd.
\end{enumerate}
Then the  vector $v(\cE_2)$ is primitive. Conversely, if in Lemma~\ref{lmm:vuquad} the vector $v(\cE_2)$ is primitive, then  either Item~(1) or Item~(2) holds. 
\end{rmk}
\subsection{Fatighenti's example}\label{subsec:fatighenti}
\setcounter{equation}{0}
Let $Y\subset\PP^5$ be a smooth cubic hypersurface. Let $X\subset\GR(1,\PP^5)$ be the variety parametrizing lines in $Y$, and let 
$h$ be the Pl\"ucker polarization of $X$. Then $(X,h)$ is a general  HK of type $K3^{[2]}$ with polarization of square $6$ and divisibility $2$.
Let $\cQ$ be the restriction to $X$ of the tautological quotient rank $4$ vector bundle on $\GR(1,\PP^5)$. 
Then  $\cQ$ is a rigid modular vector bundle which is stable if $Y$ is general, and belongs to the class of vector bundles studied
 in~\cite{ogfascimod,og-rigidi-su-k3n}. In~\cite{fatighenti-esempi} it is shown that 
$\bigwedge^2\cQ$ is stable, and that 
\begin{equation}\label{accaunodue}
h^1(X,End(\bigwedge\nolimits^2\cQ))=20,\quad h^2(X,End(\bigwedge\nolimits^2\cQ))=2.
\end{equation}
 Let
\begin{equation*}
w\coloneq 3(2,h,c_2(X))=\left(\rk\left(\bigwedge\nolimits^2\cQ\right),c_1\left(\bigwedge\nolimits^2\cQ\right),
\Delta\left(\bigwedge\nolimits^2\cQ\right)\right).
\end{equation*}
A computation gives that $[\bigwedge\nolimits^2\cQ]\in M_w(X,h)$. 
Here we show that $\bigwedge\nolimits^2\cQ$ is a deformation of $\cG(\cE_1,\cE_2)$ for suitable $\cE_1,\cE_2$. More precisely, let 
$(S,D)$ be a polarized $K3$ surface with $D\cdot D\equiv 2\pmod{4}$, and let $\cF$ be the  stable spherical vector bundle on $S$ with Mukai vector
\begin{equation}
v(\cF)=(2,D,(D\cdot D+2)/4).
\end{equation}
(Abusing notation we denote by the same symbol $D$ and its  Poincar\'e dual.) A straightforward computation gives that
\begin{equation}
v(\Sym\nolimits^2\cF)=\left(3,3D,\frac{3D\cdot D}{2}-3\right)=3v\left(\bigwedge\nolimits^2\cF\right)-(0,0,6).
\end{equation}
Hence the equality in~\eqref{vuunovudue} is satisfied  (with $r_1=1$ and $a=3$) by
\begin{equation} 
\cE_1\coloneq\bigwedge\nolimits^2\cF,\qquad \cE_2\coloneq\Sym^2\cF.
\end{equation}
 Here we prove that the remaining hypotheses of Lemma~\ref{lmm:vuquad} (i.e.~stability of $\Sym^2\cF$ and the validity 
 of~\eqref{boatto}) hold  under additional hypotheses on $(S,D)$.
The result below follows from surjectivity of the period map for $K3$ surfaces.
\begin{clm}\label{clm:neronsevero}
Let $m_0,d_0$ be positive natural numbers. There exist $K3$ surfaces $S$  with an elliptic fibration 
$\varepsilon\colon S\to \PP^1$  and  elliptic fiber $C\subset S$ 
such that 
\begin{equation}\label{neronsevero}
\NS(S)=\ZZ[D]\oplus\ZZ[C], \quad
D\cdot D=2m_0,\quad D\cdot C=d_0.
\end{equation}
\end{clm}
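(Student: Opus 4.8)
The plan is to realize $S$ through the surjectivity of the period map, after first producing the desired data as an abstract Néron--Severi lattice and then arranging the elliptic fibration by hand. Consider the rank two lattice $L := \ZZ D \oplus \ZZ C$ equipped with the symmetric bilinear form whose Gram matrix in the basis $(D,C)$ is $\left(\begin{smallmatrix} 2m_0 & d_0 \\ d_0 & 0\end{smallmatrix}\right)$. Since $2x(m_0 x + d_0 y)$ is even for all $x,y$, the lattice $L$ is even, and its determinant is $-d_0^2<0$, so $L$ has signature $(1,1)$. First I would embed $L$ primitively into the $K3$ lattice $\Lambda = U^{\oplus 3}\oplus E_8(-1)^{\oplus 2}$; such an embedding exists either by Nikulin's criterion for primitive embeddings (here $\rk(L)=2$ and the length of the discriminant group is at most $2$, comfortably within the numerical bounds for a primitive embedding into $\Lambda$) or explicitly, e.g.\ by sending $C \mapsto e_1$ and $D \mapsto d_0 f_1 + m_0 e_2 + f_2$ in two hyperbolic summands $U_i = \langle e_i, f_i\rangle$.

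Next I would choose a very general period. Let $\Omega_{L^\perp} = \{\omega \in \PP(L^\perp\otimes\CC) : \omega^2 = 0,\ \omega\bar\omega > 0\}$; for $\omega$ outside the countably many hyperplanes $\{v\cdot\omega = 0\}$ indexed by $v \in \Lambda\setminus L$, one has $\omega^\perp\cap\Lambda = L$ exactly (primitivity of $L$ guarantees that the $L^\perp$-component of any such $v$ is nonzero, so each condition is a proper hyperplane). By the surjectivity of the period map there is a marked $K3$ surface $(S,\phi)$ with period $\omega$, and then $\NS(S) = \phi^{-1}(\omega^\perp\cap\Lambda) = \phi^{-1}(L) \cong L$. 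As $L$ represents the positive class with $D^2 = 2m_0 > 0$, the surface $S$ is projective. This already gives $\NS(S) = \ZZ D \oplus \ZZ C$ with the prescribed intersection numbers; what remains is to ensure that the distinguished isotropic class $C$ is the fiber class of an elliptic fibration.

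The main obstacle is precisely this last point: a primitive isotropic class yields an elliptic fibration only once it is nef, and $L$ has two isotropic rays (spanned by $C$ and by $d_0 D - m_0 C$), which are in general inequivalent, so I must make sure the nef one is $C$ rather than the other. I would argue as follows. The ample cone $\Amp(S)$ maps under $\phi$ to one chamber of the positive cone of $L_\RR$, the chambers being cut out by the hyperplanes $\delta^\perp$ with $\delta \in L$, $\delta^2 = -2$; a short computation shows that such $\delta$ exist only when $d_0 \mid m_0 + 1$, in which case there is a single pair $\pm\delta$, with $\delta = D - \tfrac{m_0+1}{d_0}C$. Now each reflection $s_\delta$ (for $\delta \in L$ a $(-2)$-class) and the isometry $-\mathrm{id}$ lie in $\mathrm{O}(\Lambda)$ and fix $\omega$, since $\delta \in L \perp \omega$; hence replacing $\phi$ by $g\circ\phi$, for $g$ in the group they generate, produces another marking of the same surface $S$ and transforms the image of $\Amp(S)$ by $g$. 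This group acts transitively on the (at most four) chambers, so I may arrange the marking so that the closure of the image of $\Amp(S)$ contains the ray $\RR_{\ge 0}C$; then $C$ is nef. Being a primitive nef isotropic class, $C$ satisfies $h^0(S,C)=2$ and $\lvert C\rvert$ is base point free, so it defines an elliptic fibration $\varepsilon\colon S \to \PP^1$ with fiber class $C$ (Pjateckii-Shapiro--Shafarevich). Combined with $\NS(S) = \ZZ D\oplus\ZZ C$, $D^2 = 2m_0$, $D\cdot C = d_0$, this yields the claim. I expect the transitivity/chamber bookkeeping in this final step---rather than the lattice embedding or the appeal to surjectivity---to be the only delicate part.
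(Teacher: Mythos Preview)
Your argument is correct and follows exactly the route the paper indicates: the paper does not spell out a proof but simply states that the claim ``follows from surjectivity of the period map for $K3$ surfaces,'' and you have supplied precisely those details---the primitive embedding of the rank-two lattice, the very general choice of period, and the Weyl-group/marking adjustment to make $C$ nef. Your chamber bookkeeping (at most one pair $\pm\delta$ of $(-2)$-classes, hence at most four chambers permuted transitively by $\langle s_\delta,-\mathrm{id}\rangle$) is accurate, so the step you flagged as delicate goes through without difficulty.
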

\begin{lmm}\label{lmm:symok}
Let $S$ be an elliptic $K3$ surface as in Claim~\ref{clm:neronsevero}, and suppose that $d_0$ is odd and $d_0>3(2m_0+1)$.
 Then 
\begin{enumerate}
\item
 $\Sym^2\cF$ is a slope stable vector bundle, and
\item
there is no non zero map $\Sym^2\cF\lra \bigwedge\nolimits^2\cF$
\end{enumerate}
\end{lmm}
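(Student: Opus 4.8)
The plan is to reduce both items to cohomological statements about the stable spherical bundle $\cF$ and its restriction to a general elliptic fiber, and to invoke the Lagrangian–fibration machinery of Section~\ref{sec:ancorastab} together with Appendix~\ref{sec:sheavesonellk3} to handle the one genuinely delicate point: $\Sym\nolimits^2\cF$ restricts to a \emph{strictly} semistable bundle on the general fiber of $\varepsilon\colon S\to\PP^1$, so the naive ``restriction is stable'' criterion is unavailable. Throughout I would work with polarizations $h=h_N:=D+NC$ for $N\gg 0$, i.e.\ close to the boundary ray $\RR_{\ge 0}[C]$ of the positive cone cut out by the fibration; note that the bound $d_0>3(2m_0+1)$ forces $S$ to have \emph{no} $(-2)$-curves (a class $\alpha D+\beta C$ with square $-2$ forces $\alpha=\pm1$ and $d_0\mid m_0+1$, impossible here), so the nef cone is the closure of the positive cone, $h_N$ is ample for all $N$, and $\cF$ is $h_N$-stable.

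Since $d_0=D\cdot C$ is odd, $\cF|_C$ has rank $2$ and odd degree $d_0$ and is therefore stable for the general fiber $C$: otherwise a maximal destabilizing sub-line bundle, varying over an open subset of $\PP^1$, would produce a saturated sub-line bundle of $\cF$ of $h_N$-slope exceeding $\mu_{h_N}(\cF)$ for $N\gg 0$, contradicting stability of $\cF$. Hence $(\Sym\nolimits^2\cF)|_C=\Sym\nolimits^2(\cF|_C)$ is semistable of integral slope $d_0$; as the only stable sheaves of integral slope $d_0$ on the elliptic curve $C$ are line bundles, this restriction is strictly semistable, S-equivalent to a sum of three degree-$d_0$ line bundles. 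This is precisely the configuration treated in Appendix~\ref{sec:sheavesonellk3}.

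Granting the fiberwise picture, I would prove slope stability of the rank-$3$ bundle $\Sym\nolimits^2\cF$ by ruling out destabilizing \emph{line} subsheaves. A maximal destabilizing subsheaf is saturated of rank $1$ or $2$; using the self-duality $(\Sym\nolimits^2\cF)^\vee\cong\Sym\nolimits^2(\cF^\vee)$, which holds since $\cF^\vee\cong\cF\otimes\cO_S(-D)$ for rank $2$, the rank-$2$ case dualizes to a rank-$1$ destabilizer of $\Sym\nolimits^2(\cF^\vee)$. So it suffices to exclude saturated sub-line bundles $\cL=\cO_S(\alpha D+\beta C)\hookrightarrow\Sym\nolimits^2\cF$ with $\mu_{h_N}(\cL)\ge\mu_{h_N}(\Sym\nolimits^2\cF)$. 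Restricting to a general fiber, semistability of $\Sym\nolimits^2(\cF|_C)$ forces $\deg(\cL|_C)=\alpha d_0\le d_0$, i.e.\ $\alpha\le 1$, while the destabilizing inequality for $N\gg0$ forces $\alpha\ge 1$ and then $\beta\ge0$; thus $\cL=\cO_S(D+\beta C)$ with $\beta\ge0$. The key vanishing is
\[
\Hom\big(\cO_S(D+\beta C),\Sym\nolimits^2\cF\big)=H^0\big(\Sym\nolimits^2\cF\otimes\cO_S(-D-\beta C)\big)=0\qquad(\beta\ge0),
\]
using that $\Sym\nolimits^2\cF\otimes\cO_S(-D)$ is the bundle of trace-free endomorphisms of $\cF$: for $\beta=0$ this is zero by simplicity of $\cF$, and for $\beta\ge1$ because a nonzero map $\cF\to\cF\otimes\cO_S(-\beta C)$ between stable bundles of equal rank is impossible, the target having strictly smaller $h_N$-slope ($\mu_{h_N}(\cF\otimes\cO_S(-\beta C))=\mu_{h_N}(\cF)-\beta d_0/2$). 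The same computation applied to $\cF^\vee$ disposes of the dualized rank-$2$ case. The hypothesis $d_0>3(2m_0+1)$ enters here to supply the quantitative input of the stability criterion of Appendix~\ref{sec:sheavesonellk3}: it guarantees that $\cF$ and the relevant sub-objects restrict well on every fiber and that no global destabilizer can be assembled from the fiberwise slope-$d_0$ sub-line bundles over the finitely many non-general fibers, so that the generic vanishing above is genuinely global. This gives Item~(1), i.e.\ the stability hypothesis of Lemma~\ref{lmm:vuquad} for $\cE_2=\Sym\nolimits^2\cF$.

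For Item~(2) the computation is immediate and independent of the elliptic structure: since $\bigwedge\nolimits^2\cF=\det\cF=\cO_S(D)$ and $\cF^\vee\cong\cF\otimes\cO_S(-D)$,
\[
\Hom\big(\Sym\nolimits^2\cF,\bigwedge\nolimits^2\cF\big)=H^0\big(\Sym\nolimits^2\cF\otimes\cO_S(-D)\big)=0,
\]
again by simplicity of $\cF$ (equivalently, up to scalar the only map $\cF\otimes\cF\to\bigwedge\nolimits^2\cF$ is the antisymmetrization, which is not symmetric). This is exactly the vanishing \eqref{boatto} with $\cE_1=\bigwedge\nolimits^2\cF$. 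I expect the main obstacle to be Item~(1), and specifically the passage from the strictly semistable restriction on the general fiber to global slope stability: one cannot argue fiberwise, but must instead control destabilizing subsheaves whose fiber-slope equals the generic slope $d_0$, which is what Section~\ref{sec:ancorastab} and Appendix~\ref{sec:sheavesonellk3} are designed to provide, with $d_0>3(2m_0+1)$ furnishing the numerical margin needed to apply them.
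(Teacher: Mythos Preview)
Your approach is correct and genuinely different from the paper's. The paper proves Item~(1) by first computing that on a smooth fiber $C_t$ one has $\Sym^2\cF_t\cong\bigoplus_{0\neq\alpha\in C_t[2]}\cO_{C_t}(D+\alpha)$ (via the action of $2$-torsion translations on $\cF_t\otimes\cF_t$), then invoking the general fact that a symmetric power of a slope-stable bundle is slope polystable; hence if $\Sym^2\cF$ is not stable it splits off a line-bundle summand, which one pins down as $\cO_S(D)$, and this is incompatible with the fiber decomposition since $\cO_{C_t}(D)$ admits no nonzero map into $\bigoplus_{\alpha\neq 0}\cO_{C_t}(D+\alpha)$. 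Item~(2) is again handled by restriction to fibers. Your route bypasses both the polystability theorem and the $2$-torsion computation: the identification $\Sym^2\cF\otimes\cO_S(-D)\cong\End_0(\cF)$ reduces both items to simplicity of $\cF$ together with $\Hom(\cF,\cF(-\beta C))=0$ for $\beta\ge 1$, which is immediate from stability. Your treatment of Item~(2) is particularly clean.

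Two small points. First, the clause ``so the nef cone is the closure of the positive cone, $h_N$ is ample for all $N$, and $\cF$ is $h_N$-stable'' conflates two computations: absence of $(-2)$-classes yields ampleness of $h_N$, but $h_N$-stability of $\cF$ requires the absence of ${\mathsf a}(\cF)$-\emph{walls}, i.e.\ of classes $\lambda\in\NS(S)$ with $-6\le q(\lambda)<0$. The same divisibility argument (and the hypothesis $d_0>3(2m_0+1)$) does give this, and it is exactly what the paper obtains from Proposition~6.2 of \cite{ogfascimod}, but it is not the $(-2)$-curve statement. Second, Appendix~\ref{sec:sheavesonellk3} concerns decomposition curves and is not what you need here; the relevant input is Proposition~\ref{prp:lagstab}(a), which for an ${\mathsf a}(\Sym^2\cF)$-suitable $h_N$ forces any destabilizing $\cH$ to have $\mu(\cH_t)=\mu((\Sym^2\cF)_t)$ on the general fiber, hence $\alpha=r(\cH)$ directly --- this removes the uniformity-in-$N$ ambiguity in your ``for $N\gg 0$ forces $\alpha\ge 1$'' step.
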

\begin{proof} 
By Proposition~6.2 in~\cite{ogfascimod} the vector bundle $\cF$ is slope stable for any polarization, in particular for $D$. 

\n
(1): Let $t\in\PP^1$ and let
$C_t\coloneq\varepsilon^{-1}(t)$ be the corresponding elliptic fiber. The restriction $\cF_t \coloneq \cF_{|C_t}$ is (slope) stable by 
Proposition~6.2 loc.cit. If $C_t$ is smooth it follows that 
\begin{equation}\label{threemice}
\Sym\nolimits^2\cF_t\cong \bigoplus_{0\not=\alpha\in C_t[2]}\cO_{C_t}(D+\alpha),
\end{equation}
where $C_t[2]<\Pic^0(C_t)$ is the $2$-torsion subgroup. 
In fact this can be proved as follows. By general results $\cF_t\otimes\cF_t\cong L_0\oplus L_1\oplus L_2\oplus L_3$ where each $L_i$ is a line bundle. Of course we may assume that $L_0=\bigwedge^2\cF_t\cong\cO_{C_t}(D)$.  
Let $\alpha\in C_t[2]$ and let $\tau_{\alpha}\colon C_t\to C_t$ be translation by $\alpha$. Then 
$\tau_{\alpha}^{*}(\cF_t\otimes\cF_t)\cong \cF_t\otimes\cF_t$. Hence the translation action of $C_t[2]$  on $\Pic(C_t)$  permutes
the isomorphism classes $[L_0], [L_1], [L_2], [L_3]$. This forces  
\begin{equation*}
\{[L_1], [L_2], [L_3]\}=\{[\cO_{C_t}(D+\alpha)], 
[\cO_{C_t}(D+\beta)], [\cO_{C_t}(D+\alpha+\beta)]\},
\end{equation*}
where $\alpha,\beta\in C_t[2]$ are non zero and distinct.
We have proved the validity of~\eqref{threemice}. 

Suppose that 
$\Sym\nolimits^2\cF$  is not slope stable. 
Since it  is slope polystable by general results (see for example~\cite[Theorem~3.2.11]{huy-lehn-book}) and it has rank $3$, it follows that 
\begin{equation}\label{nicowill}
\Sym\nolimits^2\cF\cong \cO_S(D)\oplus\cV,
\end{equation}
where  $\cV$ is a rank $2$ vector bundle. 
The above decomposition is incompatible with the direct sum decomposition in~\eqref{threemice}  because there is no nonzero map
\begin{equation}
\cO_S(D)_{|C_t}=\cO_{C_t}(D)\lra \bigoplus_{0\not=\alpha\in C_t[2]}\cO_{C_t}(D+\alpha).
\end{equation}
It follows that $\Sym\nolimits^2\cF$  is slope stable. 

\n
(2): Restricting a map  $\varphi\colon\Sym^2\cF\lra \bigwedge\nolimits^2\cF$ to a smooth fiber $C_t$ and recalling the decomposition 
in~\eqref{threemice} we get that the restriction of $\varphi$ to $C_t$ is zero. Since $\Sym^2\cF$ is locally free it follows that 
$\varphi=0$.
\end{proof}
Let hypotheses be as in Lemma~\ref{lmm:symok}. 
 By Lemma~\ref{lmm:vuquad} the vector bundle 
\begin{equation}
\cG_0\coloneq\cG(\bigwedge^2\cF,\Sym^2\cF)
\end{equation}
 is modular,  one has
\begin{equation*}
(\rk(\cG_0),c_1(\cG_0),\Delta(\cG_0))=3(2,(2\mu(D)-\delta),3c_2(S^{[2]}),
\end{equation*}
 and  $\cG_0$ has unobstructed deformations given by $\cG(\bigwedge^2\cF,\cA)$, where $\cA$ is a (nearby) deformation of 
 $\Sym^2\cF$. Now notice that if $D\cdot D=2$ then $2\mu(D)-\delta$ has square $6$ and divisibility $2$. It follows that if $(X,h)$ is a general deformation of $(S^{[2]},2\mu(D)-\delta)$ then $(X,h)$ is isomorphic to the variety of lines on a general cubic hypersurface in  $\PP^5$ polarized by the Pl\"ucker line bundle (note: $2\mu(D)-\delta)$ is not ample). 
\begin{prp}
Let hypotheses be as in Lemma~\ref{lmm:symok}. 
Assume in addition that $D\cdot D=2$, and let $(X,h)$ be a general deformation of $(S^{[2]},2\mu(D)-\delta)$. Then the 
couple $(S^{[2]},\cG(\bigwedge^2\cF,\Sym^2\cF))$  deforms to the couple $(X,\bigwedge^2\cQ)$ where  
$\cQ$ is the restriction to $X$ of the tautological quotient rank $4$ vector bundle on $\GR(1,\PP^5)$. 
\end{prp}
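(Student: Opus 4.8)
The plan is to realize both couples as points of a single irreducible family of pairs (HK fourfold, projectively hyperholomorphic bundle) lying over the moduli space of the underlying polarized HK varieties, and then to invoke the canonicity of Verbitsky's deformation to identify them. First I would record the numerology. For $D\cdot D=2$ the class $h=2\mu(D)-\delta$ satisfies $q_{S^{[2]}}(h)=6$ with $\divisore(h)=2$, so $(S^{[2]},h)$ is a point of the irreducible $20$-dimensional moduli space $\cB$ of polarized HK varieties of type $K3^{[2]}$ with polarization of square $6$ and divisibility $2$. By the Beauville--Donagi description together with surjectivity of the period map, the general point of $\cB$ is $(F(Y),h)$ with $h$ the Pl\"ucker class, which is exactly the general deformation $(X,h)$ of the statement. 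By Lemma~\ref{lmm:vuquad} (applied with $r_1=1$, $a=3$) the bundle $\cG_0$ is modular with $w(\cG_0)=3(2,h,c_2(S^{[2]}))$, and by~\eqref{zerounodue} it has $\ext^1_{S^{[2]}}(\cG_0,\cG_0)=20$, $\ext^2_{S^{[2]}}(\cG_0,\cG_0)=2$; by~\eqref{accaunodue} Fatighenti's bundle $\bigwedge^2\cQ$ carries the same invariant $w$ and the same pair $(h^1,h^2)=(20,2)$ of its endomorphism bundle.

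Next I would transport $\cG_0$ across $\cB$. Granting the slope stability of $\cG_0$ — proved in Section~\ref{sec:modbir} by restricting to a general Lagrangian fibre of the fibration induced on $S^{[2]}$ by the elliptic fibration of $S$, using the K\"ahler-class stability results of Section~\ref{sec:ancorastab} — the modular bundle $\cG_0$ is projectively hyperholomorphic, and likewise for $\bigwedge^2\cQ$. Verbitsky's theorem then lets each deform canonically, via the hyperholomorphic connection, over the whole connected locus of the period space along which $h$ stays of type $(1,1)$, the deformed sheaves remaining modular and slope stable with the same $w$. Packaging these deformations produces a relative moduli space of pairs $\cM\to\cB$ which, by the unobstructedness built into Verbitsky's theorem, is smooth over the general point of $\cB$ of relative dimension $20=\ext^1$, and dominates $\cB$; both $(S^{[2]},\cG_0)$ and $(F(Y),\bigwedge^2\cQ)$ are points of $\cM$ with projectively hyperholomorphic fibre.

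It then remains to see that these two points lie on the same irreducible component of $\cM$. Over $[S^{[2]}]$ the fibre of $\cM$ contains the family $\{\cG(\bigwedge^2\cF,\cA)\}$ as $\cA$ ranges over $\Def(\Sym^2\cF)$; since $\cE_1=\bigwedge^2\cF=\cO_S(D)$ is rigid, Proposition~\ref{prp:extdigi}(4) identifies this family with the moduli space $M_{v(\Sym^2\cF)}(S)$ of stable sheaves on $S$, which (as $v(\Sym^2\cF)^2=18$) is irreducible of dimension $20$, and $\cG_0$ is a smooth point of it. Because $\cB$ is irreducible and $\cM\to\cB$ is smooth over its general point with this irreducible fibre, there is a unique irreducible component $\cM_0\subset\cM$ dominating $\cB$ that contains the $\cG(\bigwedge^2\cF,\cA)$; in particular $(S^{[2]},\cG_0)\in\cM_0$.

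The hard part will be to show that $(F(Y),\bigwedge^2\cQ)\in\cM_0$, i.e.\ that Fatighenti's bundle is genuinely the hyperholomorphic transport of $\cG_0$ and not some other slope stable modular bundle with the same $w$ sitting on a competing component. The cleanest route I foresee is a specialization: degenerate $Y$ to a cubic $Y_0$ on the relevant Hassett divisor so that $F(Y_0)\cong S^{[2]}$. On this special member $\bigwedge^2\cQ$ restricts to a slope stable modular bundle with invariant $w$, and the coincidence $(\ext^1,\ext^2)=(20,2)$ pins it inside the unique $20$-dimensional component of $M_w(S^{[2]})$ — the fibre of $\cM_0$ — hence forces it to be of the form $\cG(\bigwedge^2\cF,\cA)$. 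Transporting back along the hyperholomorphic connection to general $Y$ places $(F(Y),\bigwedge^2\cQ)$ in $\cM_0$, and joining $\cA$ to $\Sym^2\cF$ inside the irreducible space $\Def(\Sym^2\cF)$ then yields a connected family of couples deforming $(S^{[2]},\cG_0)$ to $(X,\bigwedge^2\cQ)$, as required.
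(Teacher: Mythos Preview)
Your strategy is very different from the paper's, and the ``hard part'' you identify is not actually resolved by the specialization argument you sketch. You want to conclude that on the special fibre $F(Y_0)\cong S^{[2]}$ the bundle $\bigwedge^2\cQ$ lands in ``the unique $20$-dimensional component of $M_w(S^{[2]})$'', but nothing in the paper establishes uniqueness of such a component --- Theorem~\ref{thm:belteo} and Theorem~\ref{thm:dadueauno} only give \emph{an} irreducible component of the right dimension, and uniqueness is left open throughout. Without that, knowing $(\ext^1,\ext^2)=(20,2)$ does not pin $\bigwedge^2\cQ$ inside the component swept out by the $\cG(\bigwedge^2\cF,\cA)$'s; it could sit on an entirely different $20$-dimensional component, and the argument collapses. (A smaller issue: Section~\ref{sec:modbir} proves stability of $\cG(\cE_1,\cE_2)$ for $\cE_2$ a \emph{general} point of the moduli space, not for the specific $\cE_2=\Sym^2\cF$; this can be patched, but you should not cite it as written.)

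The paper's argument sidesteps all of this by a direct algebraic identification on $S^{[2]}$: it shows
\[
\bigwedge\nolimits^2\cF[2]^{+}\ \cong\ \cG\!\left(\bigwedge\nolimits^2\cF,\ \Sym^2\cF\right)
\]
by taking the second exterior power of the exact sequence~\eqref{sollevo} (which presents $\rho^{*}\cF[2]^{+}$ as an elementary modification of $\tau_1^{*}\cF\otimes\tau_2^{*}\cF$), using the natural isomorphism $\bigwedge^2(V\otimes W)\cong \Sym^2 V\otimes\bigwedge^2 W\oplus\bigwedge^2 V\otimes\Sym^2 W$, and recognizing the result as the exact sequence~\eqref{gimodel} for $\cG(\bigwedge^2\cF,\Sym^2\cF)$. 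Since the couple $(S^{[2]},\cF[2]^{+})$ is already known (from~\cite{ogfascimod,og-rigidi-su-k3n}) to deform to $(X,\cQ)$, taking $\bigwedge^2$ along that deformation gives the claim immediately --- no moduli-theoretic irreducibility is needed at all.
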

\begin{proof}
Let $\cF[2]^{+}$ be the (modular) vector bundle on $S^{[2]}$ associated to $\cF$ according to Definition~5.1 in~\cite{ogfascimod}, see Subsection~\ref{subsec:fasciogi}. Then 
\begin{equation}
(\rk(\cF[2]^{+}),c_1(\cF[2]^{+}),\Delta(\cF[2]^{+}))=(4,2\mu(D)-\delta,c_2(S^{[2]})).
\end{equation}
(See Proposition~5.2 in loc.~cit.) Moreover the couple $(S^{[2]},\cF[2]^{+})$ deforms to $(X,\cQ)$ where $(X,h)$ is a general deformation of $(S^{[2]},2\mu(D)-\delta)$. Thus it suffices to prove that there is an isomorphism
\begin{equation}\label{trombascala}
\bigwedge^2\cF[2]^{+}\cong \cG\left(\bigwedge^2\cF,\Sym^2\cF\right).
\end{equation}
Let notation be as in Subsection~\ref{subsec:ristretto}. We have the exact sequence 
\begin{equation}\label{sollevo}
0  \lra  \rho^{*}\cF[2]^{+}  \lra   \tau_1^{*}(\cF)\otimes \tau_2^{*}(\cF)   \lra \iota_{*}\left(\epsilon^{*}\bigwedge^2 \cF\right)  \lra   0,
\end{equation}
see Equation~(5.2.2) in~\cite{ogfascimod}. Taking the second exterior product we get an exact sequence described as follows. If $V,W$ are (complex) vector spaces we may define an isomorphism
\begin{equation*}
f\colon \Sym^2 V\otimes\bigwedge^2 W\oplus \bigwedge^2 V\otimes \Sym^2 W  \overset{\sim}{\lra}   \bigwedge^2 (V\otimes W) 
\end{equation*}
 by letting
\begin{equation*}
f(v_1 v_2\otimes w_1\wedge w_2, 0) \coloneq v_1\otimes w_1\wedge v_2\otimes w_2-
v_1\otimes w_2\wedge v_2\otimes w_1,
\end{equation*}
and similarly for $f(0,v'_1 v'_2\otimes w'_1\wedge w'_2)$.

It follows that by taking the second exterior product of the terms in the Exact Sequence~\eqref{sollevo} we get the exact sequence
\begin{multline}\label{esternodue}
0  \lra  \rho^{*}\left(\bigwedge^2\cF[2]^{+}\right)  \lra  \tau_1^{*}(\Sym^2\cF)\otimes \tau_2^{*}\left(\bigwedge^2\cF\right)\oplus  
\tau_1^{*}\left(\bigwedge^2\cF\right)\otimes \tau_2^{*}(\Sym^2\cF)
  \overset{\psi}{\lra} \\
 \overset{\psi}{\lra} \iota_{*}\left(\epsilon^{*}\Sym^2\cF\otimes\bigwedge^2 \cF\right)  \lra   0.
\end{multline}
Now compare the above exact sequence with the one in~\eqref{gimodel} for $\cG=\cG(\bigwedge^2\cF,\Sym^2\cF)$. The middle terms are equal, and the quotient map $\psi$ in~\eqref{esternodue} is identified with the quotient map $\varphi$ in~\eqref{gimodel}. Hence we get an isomorphism between 
$ \rho^{*}\left(\bigwedge^2\cF[2]^{+}\right)$ and  $\rho^{*}\cG\left(\bigwedge^2\cF,\Sym^2\cF\right)$. The isomorphism descends to an isomorphism~\eqref{trombascala}. 

\end{proof}

\section{Atomicity/Non atomicity}\label{sec:nonatomico}
\subsection{The main result} 
\setcounter{equation}{0}
Let $\cE_1,\cE_2$ be sheaves on a $K3$ surface $S$ satisfying the hypotheses of Proposition~\ref{prp:gimodulare}. Hence the sheaf $\cG=\cG(\cE_1,\cE_2)$ is modular. 
The main result of the present section is the following. 
\begin{prp}\label{prp:atomicose}
The sheaf $\cG=\cG(\cE_1,\cE_2)$ is atomic (see Subsection~\ref{subsec:richiami}) if and only if
\begin{equation}\label{dueisotropi}
v(\cE_1)^2=v(\cE_2)^2=0. 
\end{equation}
\end{prp}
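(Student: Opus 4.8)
The plan is to reduce atomicity of $\cG$ to a single condition on its degree-six Chern character and then to compute that class explicitly, in the same spirit as the proof of Proposition~\ref{prp:gimodulare}. By the material recalled in Subsection~\ref{subsec:richiami}, an atomic sheaf on a HK fourfold is in particular modular, and the atomic condition amounts to asking that the projection of its Mukai vector onto the Verbitsky component be the canonical lift of an extended Mukai vector. In degrees $0$ and $2$ this is automatic ($\rk$ and $c_1$), and in degree $4$ it is guaranteed here in \emph{all} cases: by~\eqref{discform} the discriminant $\Delta(\cG)$ is an exact multiple of $c_2(S^{[2]})$, which is the canonical degree-four profile, and crucially this multiple is \emph{independent} of the individual Mukai squares $v(\cE_1)^2$, $v(\cE_2)^2$ (it sees only the combination~\eqref{sommaquad}, which vanishes). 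The same independence holds for $\rk(\cG)$ and $c_1(\cG)$ by~\eqref{rangoeciuno}. Hence the extended Mukai vector attached to $(\rk(\cG),c_1(\cG),\Delta(\cG))$, and therefore the atomic prediction for the degree-six component $\ch_3(\cG)$, is one fixed class regardless of the $v(\cE_i)^2$; the whole proof comes down to isolating the $v(\cE_i)^2$-dependence of the actual $\ch_3(\cG)$ and comparing. (The top component $\ch_4(\cG)\in H^8$ is then forced by Hirzebruch--Riemann--Roch, so degree six is the only genuine obstruction.)

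First I would compute $\rho^{*}\ch(\cG)$ to all orders on $X_2(S)$, extending formula~\eqref{tensoriem}: from the exact sequence~\eqref{gimodel} one has $\rho^{*}\ch(\cG)=\ch(\cF)-\ch(\iota_{*}\cR)$, and GRR applied to $\iota\colon E\hra X_2(S)$ gives $\ch(\iota_{*}\cR)$ through degree six as a polynomial in $e$, in $c_1(\cF)$, and in the Chern classes of $\cR=\epsilon^{*}(\cE_1\otimes\cE_2)$. Because $\rho$ is finite and flat of degree $2$, the projection formula gives $\rho_{*}\rho^{*}=2\cdot\mathrm{id}$ on $H^{*}(S^{[2]};\QQ)$, so $\rho^{*}$ is injective and the entire comparison may be carried out upstairs on $X_2(S)$; in particular $\ch_3(\cG)$ is determined by $\rho^{*}\ch_3(\cG)$. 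I would then substitute~\eqref{utileq} for $\ch_2(\cE_a)/r_a$ and collect the terms carrying the factor $v(\cE_a)^2$. In degree four these terms cancelled against one another via~\eqref{sommaquad} to produce the multiple of $c_2(S^{[2]})$ in~\eqref{discform}; the point is that in degree six the analogous terms appear multiplied by classes $\tau_b^{*}c_1(\cE_a)$ and by $e$, and no longer cancel.

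Using the modular relation $r_2^2\,v(\cE_1)^2=-r_1^2\,v(\cE_2)^2$ to eliminate $v(\cE_2)^2$, I expect these surviving contributions to assemble into $v(\cE_1)^2$ times a single explicit nonzero degree-six class (a $\rho^{*}$ of a monomial in $\mu(c_1(\cE_1))$ and $\delta$), added to a $v(\cE_i)^2$-independent part that is precisely the atomic prediction identified in the first paragraph. Consequently $\rho^{*}\ch_3(\cG)$ equals the atomic value if and only if $v(\cE_1)^2=0$, equivalently $v(\cE_2)^2=0$, which settles both implications of Proposition~\ref{prp:atomicose} simultaneously. In the isotropic case $v(\cE_1)^2=v(\cE_2)^2=0$, which is exactly Item~(1) of Lemma~\ref{lmm:alterpereconi}, atomicity can additionally be cross-checked against Markman's examples in~\cite{markman-1-obstructed}.

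The main obstacle is twofold. The heavier part is the degree-six GRR computation itself: unlike the degree-four computation of Proposition~\ref{prp:gimodulare}, the contributions of $\ch(\iota_{*}\cR)$ now involve higher terms of the Todd class of the normal bundle of $E$ together with products such as $e\cdot c_1(\cF)$ and $e^2\cdot c_1(\cF)$, and one must verify that the coefficient of the distinguished degree-six class is genuinely nonzero, so that non-atomicity really does occur whenever $v(\cE_1)^2\neq 0$. The more delicate conceptual point is to pin down the precise degree-six atomicity criterion from Subsection~\ref{subsec:richiami} and to check that the $v(\cE_i)^2$-independent part of $\ch_3(\cG)$ coincides with the canonical lift; once this identification is in place, the computation above converts atomicity into the vanishing of $v(\cE_1)^2$.
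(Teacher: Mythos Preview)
Your localization of the obstruction is wrong. The paper computes $\ch_3(\cG)$ explicitly (Proposition~\ref{prp:chitregi}) and obtains
\[
\ch_3(\cG)=\frac{d-3}{2}\,c_1(\cG)^{\vee},\qquad d=\int_S\lambda^2,\ \ \lambda=\frac{c_1(\cE_1)}{r_1}=\frac{c_1(\cE_2)}{r_2}.
\]
This depends only on $\lambda$ (hence on $r_i$ and $c_1(\cE_i)$), and not on $s_1,s_2$ at all; in particular the ``surviving $v(\cE_1)^2$-term'' you anticipate in degree six is simply not there. The paper then checks (Remark~\ref{rmk:deg3ok}) that this value of $\ch_3(\cG)$ coincides exactly with the atomic prediction~\eqref{blistering} for the candidate extended Mukai vector determined by $(\rk(\cG),c_1(\cG),\Delta(\cG))$. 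So degrees $0$ through $6$ always match, whether or not $v(\cE_i)^2=0$.

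The genuine obstruction lives in degree $8$. Your claim that $\ch_4(\cG)$ is ``forced by Hirzebruch--Riemann--Roch'' is not correct: HRR relates $\int_X\ch(\cG)\Td(X)$ to $\chi(\cG)$, but atomicity is the requirement that $v(\cG)=\rho\,T(\wt v^{\,2})$, which imposes a specific value on $\int_{S^{[2]}}v_4(\cG)$ that is not implied by the lower-degree data. The paper's proof computes $\int_{S^{[2]}}\ch_4(\cG)=s_1s_2-\tfrac{\ov r(3d-4)}{2}$ (Proposition~\ref{prp:chiquattrogi}) and compares it with the atomic prediction $\tfrac{\ov r(2d-1)^2}{16}$; using the constraint $s_1/r_1+s_2/r_2=d$ coming from~\eqref{sommaquad}, equality holds iff $s_1/r_1=s_2/r_2$, i.e.\ iff $v(\cE_1)^2=v(\cE_2)^2=0$. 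To fix your argument you must push the GRR computation one step further to $\ch_4$ and drop the degree-six strategy entirely.
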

\begin{rmk}
Markman showed in~\cite{{markman-1-obstructed}}   that $\cG(\cE_1,\cE_2)$ is  numerically $1$-obstructed (i.e.~atomic) if the equalities in~\eqref{dueisotropi} hold  (or more generally that $\cG(\cE_1,\ldots,\cE_n)$ is atomic if $v(\cE_1)^2=\ldots=v(\cE_n)^2=0$). The point of our computation is to show the reverse implication: if  $\cG(\cE_1,\cE_2)$ is  atomic then the equalities in~\eqref{dueisotropi} hold.
\end{rmk}
\subsection{Recap of work by Taelman, Markman and Beckmann}\label{subsec:richiami}
\setcounter{equation}{0}
 In the present subsection we recall the notion, introduced by Beckmann, of \lq\lq extended Mukai vector\rq\rq\ of  a sheaf on a HK manifold $X$ of dimension $2n$. References are~\cite{taelman-der-hks,markman-1-obstructed,beckmann-ext-muk,beckmann-atomic}.
 
 Let $ H^2(X):=H^2(X;\QQ)$. The extended rational Mukai lattice of $X$ is given by the rational vector space
\begin{equation}\label{reticoloesteso}
\wt{H}(X):=\QQ\alpha\oplus H^2(X)\oplus \QQ\beta
\end{equation}
with the bilinear symmetric form  $\wt{b}$ defined as follows. The direct sum decomposition in~\eqref{reticoloesteso} is orthogonal for $\wt{b}$, the restriction to $H^2(X)$ equals the BBF bilinear symmetric form, and 
\begin{equation}
\wt{b}(\alpha,\alpha)=\wt{b}(\beta,\beta)=0,\quad \wt{b}(\alpha,\beta)=-1.
\end{equation}
For $v\in \wt{H}(X)$ we let $\wt{q}(v)=\wt{b}(v,v)$. Let ${\mathfrak g}_{\QQ}(X)$ be the rational Looijenga-Lunts-Verbitsky algebra of $X$, and let $\SH(X)\subset H(X)$ be the Verbitsky subalgebra, generated over $\QQ$ by $H^2(X)$. One has an isomorphism of Lie algebras $ {\mathfrak g}_{\QQ}(X)\cong {\mathfrak s}{\mathfrak o}(\wt{H}(X))$ such that there is an embedding of 
${\mathfrak s}{\mathfrak o}(\wt{H}(X))$-modules
\begin{equation}
\SH(X)\overset{\Psi}{\lra}\Sym^n \wt{H}(X)
\end{equation}
described as follows. Associate to $\lambda\in H^2(X)$ the element $e_{\lambda}\in {\mathfrak s}{\mathfrak o}(\wt{H}(X))$ defined by
\begin{equation}
e_{\lambda}(\alpha)=\lambda,\qquad e_{\lambda}(\mu)=\wt{b}(\lambda,\mu)\beta\ \ \forall \mu\in H^2(X),\qquad e_{\lambda}(\beta)=0.
\end{equation}
Note that $e_{\lambda}$ and $e_{\mu}$ commute for any  $\lambda,\mu\in H^2(X)$. One defines $\Psi$ by letting
\begin{equation}
\Psi(\lambda_1\ldots\lambda_k):=e_{\lambda_1}\ldots e_{\lambda_k}\left(\frac{\alpha^n}{n!}\right)
\end{equation}
The map $\Psi$ is an isometric embedding with respect to the  non degenerate bilinear forms on $\SH(X)$  and $\Sym^n \wt{H}(X)$ defined as follows. 
The Mukai pairing $(,)_{\rm M}$ on  $\SH(X)$ is defined by requiring that 
\begin{equation}
(\xi,\eta)_{\rm M}:=(-1)^p\int_X \xi\cdot\eta  
\end{equation}
if $\xi\in \SH^{2p}(X),\eta\in \SH^{4n-2p}(X)$ (we let $\SH^{2d}(X):= \SH(X)\cap H^{2d}(X)$),
and that 
\begin{equation}
\SH^{2p}(X)\bot \SH^{2q}(X)\ \ \text{if $p+q\not=2n$.}
\end{equation}
Note: if $X$ is a $K3$ surface, then $(,)_{\rm M}$ is the opposite of the classical Mukai pairing. The bilinear form $\wt{b}_{[n]}$ on  $\Sym^n \wt{H}(X)$ is defined (following Beckmann, see~[p.119]\cite{beckmann-ext-muk})
\begin{equation}
\wt{b}_{[n]}(x_1\ldots x_n,y_1\ldots y_n):=c_X(-1)^n\sum_{\sigma\in\cS_n}\prod_{i=1}^n \wt{b}(x_i,y_{\sigma(i)}).
\end{equation}
As stated above, one has
\begin{equation}
\wt{b}_{[n]}(\Psi(\xi),\Psi(\eta))=(\xi,\eta)_{\rm M}\quad \forall \xi,\eta\in\SH(X).
\end{equation}
Since  $(\xi,\eta)_{\rm M}$ and $\wt{b}_{[n]}$ are non degenerate, the orthogonal projection
\begin{equation}
 \Sym^n \wt{H}(X)\overset{T}{\lra} \SH(X)
\end{equation}
is well defined.  

There is also a well defined orthogonal projection
\begin{equation}
\begin{matrix}
H(X) & \lra & \SH(X) \\
\eta & \mapsto & \ov{\eta}
\end{matrix}
\end{equation}
because the intersection form on $H(X)$ and its restriction  to $\SH(X)$ are non degenerate. 
 Let $\cF$ be a sheaf on $X$ (or an object of $D^{b}(X)$). The associated Mukai vector is given by
\begin{equation}
v(\cF):=\ch(\cF)\sqrt{\Td(X)}.
\end{equation}
Note that $v(\cF)\in H(X)$, i.e.~it's a  rational cohomology class. Note that $\ov{v(\cF)}\in \SH(X)$. 
\begin{dfn}[Definition 4.15 in~\cite{beckmann-ext-muk}]
Let $v\in \wt{X}$. Then $v$ is 
 an extended Mukai vector of $\cF$ if 
\begin{equation}
\Span(\ov{v(\cF)})=\Span\{T(v^n)\}.
\end{equation}
\end{dfn}
Abusing notation, if an extended Mukai vector of $\cF$ exists  we denote it by $\wt{v}(\cF)$. 
\begin{rmk}
If $\cF$ is atomic (see~\cite{beckmann-atomic}), i.e.~numerically $1$-obstructed (see~\cite{markman-1-obstructed}), then it has an extended Mukai vector. For HK manifolds of type $K3^{[2]}$ the two notions are equivalent, but in general there is no reason why 
having an extended Mukai vector should imply atomic.
\end{rmk}
\subsection{An  \lq\lq extended Mukai vector\rq\rq\ of $\cF$ is determined by $r(\cF)$, $c_1(\cF)$ and $\Delta(\cF)$} 
\setcounter{equation}{0}
Let $\cF$ be a sheaf with positive rank $r$. Suppose that an  extended Mukai vector of $\cF$. Markman and Beckmann showed that one may assume that $\wt{v}(\cF)=r\alpha+c_1(\cF)+s\beta$. They also showed that $\cF$ is modular. 
Bottini, see~\cite[Corollary~3.10]{bottini-og10}, showed how to determine $s$ from $r$, $c_1(\cF)$ and $\Delta(\cF)$ if $\dim X=4$. 

In the present subsection we extend Bottini's computation to arbitrary $X$. First we recall a few definitions. Following Beckmann we let 
${\mathsf q}_{2i}\in\SH^{4i}(X)$ be the element characterized by the requirement that
\begin{equation}
\int_X {\mathsf q}_{2i}\cdot \xi^{2n-2i} =c_X (2n-2i-1)!! q_X(\xi)^{n-i}
\end{equation}
for all $\xi\in H^2(X)$. In~\cite{bottini-og10}  classes ${\mathfrak g}_{2i}$ are defined: one has the relation
  ${\mathsf q}_{2i}=c_X {\mathfrak g}_{2i}$.
We let $C(c_2(X))$ be the rational number such that
\begin{equation}
\int_X c_2(X)\cdot \xi^{2n-2} =C(c_2(X)) q_X(\xi)^{n-1}
\end{equation}
for all $\xi\in H^2(X)$. A simple argument shows that
\begin{equation}\label{barracidue}
\ov{c_2(X)}=\frac{C(c_2(X)) }{ c_X (2n-3)!!}\qbeck_2.
\end{equation}
\begin{prp}[Beckmann, Markman, Bottini $+\ \epsilon$]\label{prp:eccoesse}
Suppose that $\cF$ is a sheaf with positive rank $r$ which has an  extended Mukai vector. Then $\cF$ is modular,  
i.e.~$\ov{\Delta(\cF)}$ is a multiple of $\qbeck_2$,
 and we may assume that $\wt{v}(\cF)=r\alpha+c_1(\cF)+s\beta$ where $s$ is determined by the equality
\begin{equation}\label{equaesse}
\ov{\Delta(\cF)}=\left(\frac{C(c_2(X)) r^2}{12 c_X (2n-3)!!}-2rs+q_X(c_1(\cF))\right)\qbeck_2.
\end{equation}
\end{prp}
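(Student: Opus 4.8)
The plan is to reduce Proposition~\ref{prp:eccoesse} to a computation inside the Verbitsky subalgebra $\SH(X)$, using the extended Mukai vector $\wt{v}(\cF)=r\alpha+c_1(\cF)+s\beta$ to pin down $\ov{\Delta(\cF)}$. First I would recall (from Markman and Beckmann, as cited) that once $\cF$ admits an extended Mukai vector one may normalise it so that the $\alpha$-coefficient is $r$ and the $H^2$-component is $c_1(\cF)$; this is where the normalisation $\wt{v}(\cF)=r\alpha+c_1(\cF)+s\beta$ comes from, with $s$ still to be determined. The key structural input is that $\ov{v(\cF)}$ spans the same line as $T(\wt{v}(\cF)^n)$. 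So I would compute $T(\wt{v}(\cF)^n)\in\SH(X)$ explicitly through degree $4$, i.e.~through its components in $\SH^0$, $\SH^2$ and $\SH^4$, and match these against the low-degree part of $\ov{v(\cF)}=\ov{\ch(\cF)\sqrt{\Td(X)}}$.

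The heart of the argument is the degree-$4$ matching. On the extended side, expanding $\wt{v}(\cF)^n=(r\alpha+c_1(\cF)+s\beta)^n$ and applying the projection $T$, the $\SH^4$-component will be a combination of $\qbeck_2$ (coming from the $q_X$-structure encoded by $T$ via the defining property of $\qbeck_{2i}$) and of $c_1(\cF)^2$; the coefficients involve $r$, $s$, $q_X(c_1(\cF))$ and the constant $c_X$. On the geometric side, $\ov{v(\cF)}$ in degree $4$ is $\ov{\ch_2(\cF)+\frac{r}{24}c_2(X)+\tfrac{?}{}\dots}$; the relevant point is that $\ov{\ch_0}=r$, $\ov{\ch_1}=c_1(\cF)$, and in degree $4$ one gets $\ov{\ch_2(\cF)}$ together with the contribution $\frac{r}{24}\ov{c_2(X)}$ from $\sqrt{\Td(X)}$ (recall $\Td(X)=1+\frac{1}{12}c_2(X)+\dots$ and $\sqrt{\Td}=1+\frac{1}{24}c_2(X)+\dots$). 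Using the definition $\Delta(\cF)=-2r\ch_2(\cF)+c_1(\cF)^2$ from~\eqref{eccodisc}, I would solve for $\ov{\Delta(\cF)}$ and substitute $\ov{c_2(X)}$ via the already-proven relation~\eqref{barracidue}, $\ov{c_2(X)}=\frac{C(c_2(X))}{c_X(2n-3)!!}\qbeck_2$. The requirement that the two sides be proportional, combined with the fact that $\ov{c_1(\cF)^2}$ contributes $q_X(c_1(\cF))\qbeck_2$ plus possibly a piece proportional to $\qbeck_2$ (one checks $\ov{c_1(\cF)^2}=q_X(c_1(\cF))\qbeck_2$ in $\SH^4$ by the defining property of $\qbeck_2$ applied to $\xi=c_1(\cF)$), forces $\ov{\Delta(\cF)}$ to be a multiple of $\qbeck_2$ — establishing modularity — and reading off the coefficient yields exactly~\eqref{equaesse}.

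Concretely, the modularity assertion follows because every term in the degree-$4$ projection lands in $\QQ\,\qbeck_2$ once the proportionality with $T(\wt{v}^n)$ is imposed; the coefficient of $\qbeck_2$ then packages the three contributions: $\frac{C(c_2(X))r^2}{12c_X(2n-3)!!}$ from the Todd correction (after inserting~\eqref{barracidue} and tracking the factor of $r$ coming from $\ch_0$ in the square), $-2rs$ from the $\ch_2$/extended-vector cross term, and $q_X(c_1(\cF))$ from $\ov{c_1(\cF)^2}$. I would organise this so that $s$ is the unique unknown and~\eqref{equaesse} is its defining equation.

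The main obstacle I anticipate is getting the normalisation constants right in the degree-$4$ computation of $T(\wt{v}(\cF)^n)$: the projection $T$ is defined via the nondegenerate pairings $(,)_{\rm M}$ and $\wt{b}_{[n]}$, and extracting its $\SH^4$-component requires carefully pairing $\wt{v}^n$ against $\Psi$ of a basis of $\SH^{4n-4}(X)$, which brings in the combinatorial factors in $\wt{b}_{[n]}$ (the sum over $\cS_n$) and the $\qbeck_{2i}$ normalisations. Bottini's $\dim X=4$ case in~\cite{bottini-og10} handles $n=2$; the task here is to verify that the same bookkeeping goes through for general $n$, with the factor $\frac{1}{12(2n-3)!!}$ emerging correctly from the interplay of the Todd class coefficient $\frac{1}{24}$, the definition of $C(c_2(X))$, and the double-factorial normalisation of $\qbeck_2$. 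I expect the algebra to be routine once the pairing conventions are fixed, so the real care is in the constants rather than in any conceptual difficulty.
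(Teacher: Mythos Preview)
Your approach matches the paper's: expand $T((r\alpha+c_1(\cF)+s\beta)^n)$ through degree $4$, match against $\ov{v(\cF)}$, and solve for $\ov{\Delta(\cF)}$. The paper does exactly this, quoting the explicit formulas $T(\alpha^n)=n!$, $T(\alpha^{n-1}\lambda)=(n-1)!\lambda$, $T(\alpha^{n-1}\beta)=(n-1)!\qbeck_2$ and $T(\alpha^{n-2}\lambda^2)=(n-2)!(\lambda^2-q_X(\lambda)\qbeck_2)$ from Beckmann and Bottini, after which the computation is one line.

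One correction: your parenthetical claim that $\ov{c_1(\cF)^2}=q_X(c_1(\cF))\qbeck_2$ in $\SH^4$ is false. Since $c_1(\cF)\in H^2(X)$, the square $c_1(\cF)^2$ already lies in $\SH^4(X)$, so the projection does nothing; and $\SH^4(X)$ is not one-dimensional when $b_2(X)>1$. The defining property of $\qbeck_2$ is an integral identity against $\xi^{2n-2}$, not an equality of degree-$4$ classes. The actual mechanism for modularity is that the formula $T(\alpha^{n-2}\lambda^2)=(n-2)!(\lambda^2-q_X(\lambda)\qbeck_2)$ produces a genuine $c_1(\cF)^2$ term on the extended side (with coefficient $\frac{1}{2r}$ after normalising $\rho=\frac{1}{n!r^{n-1}}$), and this cancels against the $c_1(\cF)^2$ in $\Delta(\cF)=-2r\ch_2(\cF)+c_1(\cF)^2$ on the geometric side. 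Only after that cancellation does everything remaining in degree $4$ lie in $\QQ\qbeck_2$, and reading off the coefficient gives~\eqref{equaesse}.
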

\begin{proof}
By hypothesis there exist $x,s,\rho\in\QQ$ and $\lambda\in H^2(X)$ such that
\begin{equation}\label{chiave}
\ov{v(\cF)}=\rho T((x\alpha+\lambda+s\beta)^n).
\end{equation}
We  define a grading 
\begin{equation}
\Sym^n\wt{H}(X)=\bigoplus_{d} [\Sym^n\wt{H}(X)]_{2d}
\end{equation}
by letting $\alpha^i\omega_1\ldots\omega_k\beta^j\in  [\Sym^n\wt{H}(X)]_{2k+4j}$. The inclusion $\Psi$ maps $\SH^{2d}(X)$ 
into $[\Sym^n\wt{H}(X)]_{2d}$ and the orthogonal projection maps $[\Sym^n\wt{H}(X)]_{2d}$  
onto  $\SH^{2d}(X)$. Hence the equality matches homogeneous elements of the same degrees. Developing up to degree $4$ we get that
\begin{multline*}
r+c_1(\cF)+\ov{\ch_2(\cF)}+\frac{r C(c_2(X))}{24 c_X(2n-3)!!}\qbeck_2=\\
=\rho\cdot\left(x^n T(\alpha^n)+nx^{n-1}T(\alpha^{n-1}\lambda)+nx^{n-1}sT(\alpha^{n-1}\beta)+{n\choose 2}x^{n-2}T(\alpha^{n-2}\lambda^2)\right).
\end{multline*}
(recall the equality in~\eqref{barracidue}.) We have
\begin{equation}
T(\alpha^n)=n!,\quad T(\alpha^{n-1}\lambda)=(n-1)!\lambda
\end{equation}
because $\Psi(n!)=\alpha^n$ and $\Psi((n-1)!\lambda)=\alpha^{n-1}\lambda$. 
On the other hand by~\cite[Lemma~3.5]{beckmann-ext-muk} and~\cite[Lemma~3.8]{bottini-og10} (note that $c_X{\mathfrak g}_2=\qbeck_2$) we have
\begin{equation}
 T(\alpha^{n-1}\beta)=(n-1)!\qbeck_2,\quad 
T(\alpha^{n-2}\lambda^2)=(n-2)!(\lambda^2-q_X(\lambda)\qbeck_2).
\end{equation}
Hence we get the equality 
\begin{multline*}
r+c_1(\cF)+\ov{\ch_2(\cF)}+\frac{r C(c_2(X))}{24 c_X(2n-3)!!}\qbeck_2=\\
=\rho\cdot\left(n! x^n +n! x^{n-1}\lambda+n! x^{n-1}s\qbeck_2+\frac{n!}{2}x^{n-2}(\lambda^2-q_X(\lambda)\qbeck_2)\right).
\end{multline*}
Since $r>0$ we may choose $x=r$ and 
\begin{equation}\label{fattoreprop}
\rho=\frac{1}{n! r^{n-1}}.
\end{equation}
The proposition follows.
\end{proof}
\begin{expl}\label{expl:livorno}
Let $X$ be of type $K3^{[2]}$, and hence $\SH(X)=H(X)$. Then $c_X=1$ and $C(c_2(X))=30$, i.e.~$c_2(X)=30\qbeck_2$. Hence $\wt{v}(\cF)=r\alpha+c_1(\cF)+s\beta$ where $s$ is the solution of the equation
\begin{equation}\label{essek3[2]}
\Delta(\cF)=\left(\frac{r^2}{12}-\frac{rs}{15}+\frac{q_X(c_1(\cF))}{30}\right)c_2(X).
\end{equation}
\end{expl}
\subsection{The  four dimensional case} 
\setcounter{equation}{0}
In the present subsection we assume that  $\dim X=4$. If $\lambda\in H^2(X)$ we let $\lambda^{\vee}\in H^6(X)\cong H^2(X)^{\vee}$ be the linear form associated to $\lambda$ by $q_X$, i.e.~such that for all $\mu\in H^2(X)$ we have
\begin{equation}
\int_X\lambda^{\vee}\cdot\mu=q_X(\lambda,\mu).
\end{equation}
Let $\cF$ be a sheaf of positive rank $r$ with an extended Mukai vector $\wt{v}(\cF)$. Thus by Proposition~\ref{prp:eccoesse} we may assume that 
$\wt{v}(\cF)=r\alpha+c_1(\cF)+s\beta$ where $s$ is the solution of the linear equation in~\eqref{equaesse}. The proposition below is essentially contained in~\cite[Corollary 3.10]{bottini-og10}. More precisely the addend  $s\lambda^{\vee}/r$ in the right hand side of the 
equation in the statement of Bottini's Corollary should be multiplied by $c_X$.
\begin{prp}\label{prp:chichichi}
Keep assumptions and notation as above. Then
\begin{equation}
\ov{\ch_3(\cF)}=\left(  \frac{s c_X}{r}-\frac{C(c_2(X))}{24}\right)c_1(\cF)^{\vee}.
\end{equation}
\end{prp}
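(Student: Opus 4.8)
The plan is to extract the degree-$6$ component of the identity $\ov{v(\cF)}=\rho\,T((x\alpha+\lambda+s\beta)^n)$ established in the proof of Proposition~\ref{prp:eccoesse}, specialized to $n=2$, $x=r$, $\lambda=c_1(\cF)$, and $\rho=1/(2r)$ (cf.~\eqref{fattoreprop} with $n=2$). In the proof of Proposition~\ref{prp:eccoesse} the authors matched the components in degrees $0,2,4$; here I would push the same expansion one degree higher and compare the $\SH^6(X)$-components of both sides. Since $\dim X=4$ we have $2n-2=2$, so $H^6(X)\cong H^2(X)^{\vee}$ under $q_X$, and $\ov{\ch_3(\cF)}$ is the projection of $\ch_3(\cF)$ to $\SH^6(X)$; the goal is to show it equals a scalar multiple of $c_1(\cF)^{\vee}$.

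First I would write out $(r\alpha+\lambda+s\beta)^2=r^2\alpha^2+2r\alpha\lambda+(2rs\,\alpha\beta+\lambda^2)+2s\lambda\beta+s^2\beta^2$ and collect the terms that land in degree $6$ after applying $T$, namely the contributions of $\alpha\lambda\beta$-type and $\lambda^3$-type monomials. Concretely, the degree-$6$ part of the right-hand side comes from $T(\alpha\beta\lambda)$ and from the appropriate multiple of $T(\lambda\beta)$-style terms; the key computational input is the value of $T$ on monomials of the form $\alpha^{n-j}\lambda^{k}\beta^{j}$ of total degree $6$, analogous to the formulas $T(\alpha^{n-1}\beta)=(n-1)!\,\qbeck_2$ and $T(\alpha^{n-2}\lambda^2)=(n-2)!(\lambda^2-q_X(\lambda)\qbeck_2)$ quoted from~\cite{beckmann-ext-muk} and~\cite{bottini-og10}. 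I expect the needed degree-$6$ evaluation to be of the shape $T(\alpha\lambda\beta)=(\text{const})\cdot\lambda^{\vee}$ (up to lower-complexity corrections), so that matching degree-$6$ components directly yields $\ov{\ch_3(\cF)}$ as a multiple of $c_1(\cF)^{\vee}=\lambda^{\vee}$.

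The main obstacle will be pinning down the correct degree-$6$ analogue of Beckmann's $T$-formulas: I need the precise constant in $T(\alpha\lambda\beta)$ (and to verify that the $\lambda^3$-type term either vanishes under projection or reorganizes into a multiple of $\lambda^{\vee}$). This is exactly the place where the stated correction to Bottini's Corollary~3.10 enters — the factor of $c_X$ in the coefficient $s c_X/r$ — so I would derive the $T$-value carefully from the definition of $\Psi$ and $\wt b_{[n]}$ rather than quoting, tracking $c_X$ through every step. Once the degree-$6$ evaluation is secured, the conclusion follows by substituting $\rho=1/(2r)$ and simplifying the coefficient to $\bigl(s c_X/r-C(c_2(X))/24\bigr)$; the constant $C(c_2(X))/24$ should emerge from the contribution of the $\qbeck_2$-correction terms, matching the degree-$4$ bookkeeping already present in the proof of Proposition~\ref{prp:eccoesse}.
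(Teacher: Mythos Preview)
Your approach is the same as the paper's: match the degree-$6$ components of $\ov{v(\cF)}=\frac{1}{2r}T((r\alpha+c_1(\cF)+s\beta)^2)$. However, two details in your write-up are off and would cause trouble if you carried them through as written.

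First, since $n=2$ you are in $\Sym^2\wt H(X)$, so the only monomial of cohomological degree $6$ in the expansion of $(r\alpha+\lambda+s\beta)^2$ is $2s\,\lambda\beta$. There is no $\alpha\lambda\beta$ term (that would live in $\Sym^3$) and no $\lambda^3$ term; you should drop those from your plan. The single $T$-evaluation you need is $T(\lambda\beta)=c_X\lambda^{\vee}$, which follows directly from the definitions of $\Psi$ and $\wt b_{[2]}$ and is exactly where the factor $c_X$ enters. This gives the right-hand degree-$6$ piece as $\frac{1}{2r}\cdot 2s\,T(\lambda\beta)=\frac{s c_X}{r}\,c_1(\cF)^{\vee}$.

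Second, the constant $C(c_2(X))/24$ does not come from any $\qbeck_2$-correction inside the $T$-formulas on the right; it comes from the \emph{left} side, because you are projecting $v_3(\cF)$ rather than $\ch_3(\cF)$. Since $\sqrt{\Td(X)}=1+\tfrac{c_2(X)}{24}+\cdots$ one has $v_3(\cF)=\ch_3(\cF)+\tfrac{1}{24}c_2(X)\cdot c_1(\cF)$, and the paper uses the identity $\ov{c_2(X)\cdot\lambda}=C(c_2(X))\lambda^{\vee}$ to rewrite the projection of that second term. Equating the two sides then gives the stated formula immediately.
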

\begin{proof}
Let $\lambda\in H^2(X)$. A straightforward computation gives that
\begin{equation}
T(\lambda\beta)=c_X\lambda^{\vee},\qquad \ov{c_2(X)\lambda}=C(c_2(X))\lambda^{\vee}.
\end{equation}
Since $r\alpha+c_1(\cF)+s\beta$ is an extended Mukai vector of $\cF$ with proportionality factor $1/2r$ (see~\eqref{fattoreprop}) we have 
\begin{multline}
\ov{\ch_3(\cF)}+\frac{C(c_2(X))}{24}c_1(\cF)^{\vee}=\ov{\ch_3(\cF)}+\frac{\ov{c_2(X) c_1(\cF)}}{24}c_1(\cF)^{\vee}= \\
\ov{v_3(\cF)}=\frac{1}{2r}T\left([(r\alpha+c_1(\cF)+s\beta)^2]_6\right)=\frac{c_X s}{r} c_1(\cF)^{\vee}.
\end{multline}
The proposition follows.

\end{proof}
\begin{expl}\label{expl:haifa}
Let $X$ be of type $K3^{[2]}$, in particular $\SH(X)=H(X)$. Then the equation in Proposition~\ref{prp:chichichi} reads 
\begin{equation}\label{blistering}
\ch_3(\cF)=\left( \frac{s}{r}-\frac{5}{4}\right)c_1(\cF)^{\vee}.
\end{equation}
\end{expl}
\subsection{Computation of $\ch_3(\cG)$}\label{subsec:benedetta}
\setcounter{equation}{0}
Let $\cE_1,\cE_2$ be sheaves on a $K3$ surface $S$ satisfying the hypotheses of Proposition~\ref{prp:gimodulare}. Let 
we let $\lambda\in H^2(S;\QQ)$ be given by
\begin{equation}
\lambda\coloneq\frac{c_1(\cE_1)}{r_1}=\frac{c_1(\cE_2)}{r_2}\in  H^2(S;\QQ),\qquad d\coloneq\int_S\lambda^2.
\end{equation}
\begin{prp}\label{prp:chitregi}
Let $\cG=\cG(\cE_1,\cE_2)$. Then
\begin{equation}
\ch_3(\cG)=\frac{d-3}{2}c_1(\cG)^{\vee}.
\end{equation}
\end{prp}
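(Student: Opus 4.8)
The plan is to compute $\ch_3(\cG)$ by pulling it back to $X_2(S)$ through the exact sequence~\eqref{gimodel} and then recognising the resulting class inside $H^6(S^{[2]})$ by testing it against $H^2(S^{[2]})$. Since the Beauville--Bogomolov--Fujiki form and Poincar\'e duality are nondegenerate, the assignment $\lambda\mapsto\lambda^{\vee}$ is an isomorphism $H^2(S^{[2]};\QQ)\overset{\sim}{\lra}H^6(S^{[2]};\QQ)$; hence it suffices to prove the numerical identity $\int_{S^{[2]}}\ch_3(\cG)\cdot\mu=\tfrac{d-3}{2}\,q_X(c_1(\cG),\mu)$ for every $\mu\in H^2(S^{[2]})$. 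By the orthogonal decomposition $H^2(S^{[2]})=\mu(H^2(S))\oplus\QQ\delta$ I only need the two cases $\mu=\mu(m)$ (with $m\in H^2(S)$) and $\mu=\delta$. As $\rho$ is finite and flat of degree $2$, each test integral equals $\tfrac12\int_{X_2(S)}\rho^{*}\ch_3(\cG)\cdot\rho^{*}\mu$, where $\rho^{*}\mu(m)=\tau_1^{*}m+\tau_2^{*}m$ and $\rho^{*}\delta=e$ by~\eqref{tiromu}. Using $c_1(\cG)=r_1r_2(2\mu(\lambda)-\delta)$, which follows from~\eqref{rangoeciuno} together with~\eqref{sameslope}, the two targets become $(d-3)r_1r_2\int_S\lambda m$ and $(d-3)r_1r_2$.

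Next I would pull back the Chern character. From~\eqref{gimodel} one has $\rho^{*}\ch(\cG)=\ch(\cF)-\ch(\iota_{*}\cR)$, so it remains to push~\eqref{tensoriem} one degree further. Because $N_{E/X_2(S)}=\cO_E(E)$ is a line bundle and $\cF_{|E}\cong\cR\oplus\cR$ by~\eqref{raddoppio} (so that $\ch_j(\cR)=\tfrac12\iota^{*}\ch_j(\cF)$), the formula $\ch(\iota_{*}\cR)=\iota_{*}\bigl(\ch(\cR)\,\Td(N_{E/X_2(S)})^{-1}\bigr)$ reproduces~\eqref{tensoriem} in low degrees and gives, in degree $6$, $\ch_3(\iota_{*}\cR)=\tfrac12\ch_2(\cF)\,e-\tfrac14\ch_1(\cF)\,e^2+\tfrac{r_1r_2}{6}e^3$. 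Consequently $\rho^{*}\ch_3(\cG)=\ch_3(\cF)-\tfrac12\ch_2(\cF)\,e+\tfrac14\ch_1(\cF)\,e^2-\tfrac{r_1r_2}{6}e^3$.

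To evaluate the test integrals I would expand $\ch_j(\cF)$ from the product structure of $\cF$, using $\ell_i=r_i\lambda$ from~\eqref{sameslope}; in particular $\ch_1(\cF)=2r_1r_2(\tau_1^{*}\lambda+\tau_2^{*}\lambda)$, while $\ch_2(\cF)$ and $\ch_3(\cF)$ are explicit combinations of the classes $\tau_b^{*}\lambda$ and $\tau_b^{*}u_a$ with $u_a:=\ch_2(\cE_a)$. The integrals are then computed from the projective-bundle geometry of $E=\PP(T_S)$: one has $\iota^{*}e=-\zeta$ with $\zeta=c_1(\cO_E(1))$, the Grothendieck relation reduces to $\zeta^2=-24\,\epsilon^{*}\eta$ (since $c_1(T_S)=0$ and $c_2(T_S)=24\eta$), and $\iota^{*}\tau_i^{*}=\epsilon^{*}$, $\int_E\zeta\cdot\epsilon^{*}x=\int_S x$, $\int_{X_2(S)}\tau_1^{*}x\,\tau_2^{*}y=(\int_S x)(\int_S y)$. (As a check these yield $\int_{X_2(S)}e^4=24$, compatible with the Fujiki relation $\int_{S^{[2]}}\delta^4=3q_X(\delta)^2$ and~\eqref{levicivita}.) Many terms drop out because $H^6(S)=0$ annihilates every $\epsilon^{*}$-pullback of a class of degree exceeding $4$; moreover, since all terms are $\sigma$-invariant, the two summands of $\rho^{*}\mu(m)$ contribute equally. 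After inserting $\int_S u_a=s_a-r_a$ (from the definition of the Mukai vector) and $\int_S\lambda^2=d$, the surviving contributions are $\int_{X_2(S)}\rho^{*}\ch_3(\cG)\cdot\tau_1^{*}m=\bigl(r_1s_2+r_2s_1-3r_1r_2\bigr)\int_S\lambda m$ for the first case and $\tfrac12\int_{X_2(S)}\rho^{*}\ch_3(\cG)\cdot e=\tfrac12\bigl(r_1s_2+r_2s_1+r_1r_2 d-6r_1r_2\bigr)$ for the second.

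Finally I would invoke the orthogonality hypothesis~\eqref{sommaquad}. Writing $s_a=(\ell_a^2-v(\cE_a)^2)/2r_a=\tfrac{r_a d}{2}-\tfrac{v(\cE_a)^2}{2r_a}$ gives $r_1s_2+r_2s_1=r_1r_2 d-\tfrac{1}{2r_1r_2}\bigl(r_2^2v(\cE_1)^2+r_1^2v(\cE_2)^2\bigr)$, and the bracket vanishes by~\eqref{sommaquad}, so $r_1s_2+r_2s_1=r_1r_2 d$. Substituting this into the two surviving integrals produces exactly $(d-3)r_1r_2\int_S\lambda m$ and $(d-3)r_1r_2$, matching the targets and proving the proposition. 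The main obstacle is the degree-$6$ intersection theory on the blow-up $X_2(S)$: fixing the signs in the Grothendieck relation and in the self-intersection $\iota^{*}\iota_{*}(\cdot)=-\zeta\cdot(\cdot)$, extending~\eqref{tensoriem} correctly to the $e^3$ term, and organising the bookkeeping so that precisely the combination controlled by~\eqref{sommaquad} is what cancels, leaving a clean dependence on $d$.
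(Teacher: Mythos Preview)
Your proposal is correct and follows essentially the same route as the paper's proof: pull back via $\rho$ using the exact sequence~\eqref{gimodel}, compute the degree-$6$ piece of $\ch(\iota_{*}\cR)$ by GRR, and identify $\ch_3(\cG)$ by testing against $H^2(S^{[2]})$. The organisational difference is minor: the paper applies the orthogonality hypothesis early (via~\eqref{utileq}) to simplify $\ch_3(\cF)$ and $\ch_2(\cF)$, and then invokes the cohomological identities $\frac{e^2}{2}(\tau_1^{*}\lambda+\tau_2^{*}\lambda)=-\tau_1^{*}\eta\,\tau_2^{*}\lambda-\tau_1^{*}\lambda\,\tau_2^{*}\eta$ and $e^3=-12e(\tau_1^{*}\eta+\tau_2^{*}\eta)$ to obtain a closed form for $\rho^{*}\ch_3(\cG)$ before pairing; you instead carry $s_1,s_2$ through, evaluate the pairings directly from the projective-bundle description of $E=\PP(T_S)$ (which is exactly what underlies those identities), and invoke~\eqref{sommaquad} only at the end to collapse $r_1s_2+r_2s_1$ to $r_1r_2 d$.
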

\begin{proof}
We adopt the notation introduced in Section~\ref{sec:allagrande}. In particular  $r_i$ is the rank of $\cE_i$, $\eta\in H^4(S)$ is the fundamental class of $S$, and $e\in H^2(X_2(S);\ZZ)$ is the Poincar\`e dual of  the  exceptional 
divisor of the blow up map $\tau\colon X_2(S)\to S^2$.  We let $\ov{r}=r_1 r_2$ (as in Remark~\ref{rmk:cosgen}).

The exact sequence in~\eqref{gimodel} and  the GRR Theorem applied to $\iota_{1,2,*}(\cR_{1,2})$ give that
\begin{multline}\label{indianajones}
\rho^{*}\ch_3(\cG)=\ov{r}\biggl[\tau_1^{*}\lambda\cdot\tau_2^{*}(\lambda^2-2\eta)+\tau_1^{*}(\lambda^2-2\eta)\cdot\tau_2^{*}\lambda +  \\
 +e\cdot(\tau_1^{*}\eta+\tau_2^{*}\eta)-\frac{e}{2}\cdot(\tau_1^{*}\lambda+\tau_2^{*}\lambda)^2+\frac{e^2}{2}\cdot(\tau_1^{*}\lambda+\tau_2^{*}\lambda)
-\frac{e^3}{6}\biggr].
\end{multline}
(Note: one must use the equality in~\eqref{utileq}.) Next we note that we have the following relations in the cohomology of $X_2(S)$:
\begin{equation}\label{ecubo}
\frac{e^2}{2}\cdot(\tau_1^{*}\lambda+\tau_2^{*}\lambda)=-\tau_1^{*}\eta\cdot \tau_2^{*}\lambda-\tau_1^{*}\lambda\cdot \tau_2^{*}\eta,\qquad e^3=-12 e(\tau_1^{*}\eta+\tau_2^{*}\eta).
\end{equation}
(To prove them intersect both sides of the equalities with generators of $H^2(X_2(S))$.) Feeding the equalities in~\eqref{ecubo} into the equality
 in~\eqref{indianajones} one gets that
\begin{equation}
\rho^{*}\ch_3(\cG)=(d-3)\ov{r}\left[\tau_1^{*}\lambda\cdot\tau_2^{*}\eta+\tau_1^{*}\eta\cdot\tau_2^{*}\lambda
-e\cdot(\tau_1^{*}\eta+\tau_2^{*}\eta)\right].
\end{equation}
On the other hand, using the equalities in~\eqref{ecubo} one gets that
\begin{multline}
2\int_{S^{[2]}}\ch_3(\cG)\cdot(\mu(\alpha)+t\delta)=\int_{X_2(S)}\rho^{*}\ch_3(\cG)\cdot \rho^{*}(\mu(\alpha)+t\delta)= \\
=(d-3)q_{S^{[2]}}(c_1(\cG),\mu(\alpha)+t\delta).
\end{multline}
Proposition~\ref{prp:chitregi} follows.
\end{proof}
\begin{rmk}\label{rmk:deg3ok}
Let us assume  that the sheaf $\cG=\cG(\cE_1,\cE_2)$ has an extended Mukai vector $\wt{v}(\cG)$. By Example~\ref{expl:livorno} we may set
$\wt{v}(\cG):=(2\ov{r}\alpha+\ov{r}(2\mu(\lambda)-\delta)+s_{\cG}\beta)$, 
where  $s=s_{\cG}$ is the solution of the equation in~\eqref{essek3[2]} with $\cF=\cG$.   
We claim that the equality in~\eqref{blistering} holds for $\cF=\cG$.
In fact by Proposition~\ref{prp:gimodulare} we get that
\begin{equation}
s_{\cG}=\frac{\ov{r}(2d-1)}{2},
\end{equation}
and hence
\begin{equation}\label{extmukvect}
\wt{v}(\cG)=2\ov{r}\alpha+2\ov{r}\mu(\lambda)-\ov{r}\delta+\frac{\ov{r}(2d-1)}{2}\beta.
\end{equation}
A straightforward computation shows that the equality in~\eqref{blistering} holds. 
\end{rmk}
\subsection{Proof of Proposition~\ref{prp:atomicose}} 
\setcounter{equation}{0}
In order to prove Proposition~\ref{prp:atomicose} we compute $\ch_4(\cG)$. 
We adopt the notation of Subsection~\ref{subsec:benedetta}, and for $i\in\{1,2\}$ we let $v(\cE_i)=(r_i,l_i,s_i)$ be the Mukai vector of $\cE_i$.
\begin{prp}\label{prp:chiquattrogi}
Let $\cG=\cG(\cE_1,\cE_2)$. Then
\begin{equation}
\int_{S^{[2]}}\ch_4(\cG)=s_1 s_2- \frac{\ov{r}(3d-4)}{2}.
\end{equation}
\end{prp}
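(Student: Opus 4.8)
The plan is to pull everything back to $X_2(S)$ and use the exact sequence~\eqref{gimodel}. Since $\rho$ is finite and flat of degree $2$, we have $\int_{S^{[2]}}\ch_4(\cG)=\tfrac12\int_{X_2(S)}\rho^{*}\ch_4(\cG)$, and~\eqref{gimodel} gives $\rho^{*}\ch_4(\cG)=\ch_4(\cF)-\ch_4(\iota_{*}\cR)$. So it suffices to compute $\int_{X_2(S)}\ch_4(\cF)$ and $\int_{X_2(S)}\ch_4(\iota_{*}\cR)$ separately, and then to simplify the difference using the hypotheses~\eqref{sameslope} and~\eqref{sommaquad}.

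For the first integral I would use multiplicativity of $\ch$ together with the K\"unneth decomposition. Writing $\cF$ as the direct sum of $\tau_1^{*}\cE_1\otimes\tau_2^{*}\cE_2$ and its $\sigma$-transform, the degree $8$ component of $\tau_1^{*}\ch(\cE_1)\cdot\tau_2^{*}\ch(\cE_2)$ receives a contribution only from $\tau_1^{*}\ch_2(\cE_1)\cdot\tau_2^{*}\ch_2(\cE_2)$, since each $\ch(\cE_i)$ lives on the surface $S$ and so has components only in degrees $0,2,4$. Recalling that $\int_S\ch_2(\cE_i)=s_i-r_i$ (the degree $4$ part of $v(\cE_i)=\ch(\cE_i)\sqrt{\Td(S)}$) and that $\int_{X_2(S)}\tau_1^{*}\eta\cdot\tau_2^{*}\eta=1$, this yields $\int_{X_2(S)}\ch_4(\cF)=2(s_1-r_1)(s_2-r_2)$.

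The substantial part is the second integral, which I would obtain from the GRR Theorem exactly as in the computation of $\ch_3(\cG)$ leading to~\eqref{indianajones}, but pushed one degree higher. Since $N:=N_{E/X_2(S)}=\cO_E(E)$ is a line bundle with $c_1(N)=\iota^{*}e$, GRR gives $\ch(\iota_{*}\cR)=\iota_{*}\bigl(\epsilon^{*}\ch(\cE_1\otimes\cE_2)\cdot\Td(N)^{-1}\bigr)$, and the degree $8$ part comes from the degree $6$ part of the integrand on $E$. As $\epsilon^{*}\ch(\cE_1\otimes\cE_2)$ has components only in degrees $0,2,4$, this degree $6$ part involves the new Todd coefficient $[\Td(N)^{-1}]_6=-\tfrac1{24}(\iota^{*}e)^3$ together with the lower order terms already present in~\eqref{indianajones}. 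After applying $\iota_{*}$ this produces a combination of $\int_{X_2(S)}e^4$, $\int_{X_2(S)}\tau_i^{*}\lambda\cdot e^3$ and $\int_{X_2(S)}\tau_i^{*}\eta\cdot e^2$, which I would evaluate using~\eqref{ecubo}: from $e^3=-12e(\tau_1^{*}\eta+\tau_2^{*}\eta)$ one gets $\int_{X_2(S)}e^4=24$ and $\int_{X_2(S)}\tau_i^{*}\eta\cdot e^2=-1$, while the middle term drops out because $\lambda\cdot\eta=0$ in $H^6(S)$. The outcome is $\int_{X_2(S)}\ch_4(\iota_{*}\cR)=-\ov{r}+\tfrac12\bigl(r_1 s_2+r_2 s_1+\ov{r}(d-2)\bigr)$, where the parenthesized quantity equals $\int_S[\ch(\cE_1)\ch(\cE_2)]_4$.

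Finally I would assemble the difference: subtracting the two contributions and dividing by $2$ gives $\int_{S^{[2]}}\ch_4(\cG)$ as a polynomial in $s_1,s_2,\ov{r},d$ and in the quantity $r_1 s_2+r_2 s_1$. The key simplifying input is~\eqref{sommaquad}: rewriting $v(\cE_i)^2=r_i^2 d-2r_i s_i$ (using $c_1(\cE_i)=r_i\lambda$ from~\eqref{sameslope}), the equality $r_2^2 v(\cE_1)^2+r_1^2 v(\cE_2)^2=0$ is equivalent to $r_1 s_2+r_2 s_1=\ov{r}d$. Substituting this relation collapses the $(r_1 s_2+r_2 s_1)$ terms and the surplus multiples of $\ov{r}d$, leaving precisely $s_1 s_2-\tfrac{\ov{r}(3d-4)}2$. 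I expect the main obstacle to be the careful GRR bookkeeping of the degree $8$ term for $\iota_{*}\cR$ (obtaining the new $-\tfrac1{24}e^4$ contribution and fixing the signs of the intersection numbers); everything else is forced once the relation $r_1 s_2+r_2 s_1=\ov{r}d$ is in hand.
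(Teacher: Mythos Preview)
Your proposal is correct and follows essentially the same route as the paper: split via the exact sequence~\eqref{gimodel}, compute $\int_{X_2(S)}\ch_4(\cF)=2(s_1-r_1)(s_2-r_2)$ by K\"unneth, compute $\int_{X_2(S)}\ch_4(\iota_*\cR)$ by GRR using the $-\tfrac{1}{24}e^4$ term and the relations in~\eqref{ecubo}, and simplify using $r_1s_2+r_2s_1=\ov r d$ (which is exactly the paper's equation~\eqref{vincolo}). The paper presents the GRR output more tersely, but the computation and the order of the steps are the same as yours.
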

\begin{proof}
 The exact sequence in~\eqref{gimodel} gives that $\rho^{*}\ch_4(\cG)=\ch_4(\cF)-\ch_4(\iota_{1,2,*}\cR_{1,2})$ A straightforward computation gives that
\begin{equation}
\int_{X_2(S)}\ch_4(\cF)=2(s_1-r_1)(s_2-r_2)=2s_1 s_2-2\ov{r} d +2\ov{r}.
\end{equation}
The last equality holds because~\eqref{sommaquad} reads
\begin{equation}\label{vincolo}
\frac{s_1}{r_1}+\frac{s_2}{r_2}=d.
\end{equation}
The GRR Theorem applied to $\iota_{1,2,*}(\cR_{1,2})$ gives that
\begin{equation*}
\int_{X_2(S)}\ch_4(\iota_{*}\cR)=\int_{X_2(S)}\left(-\frac{\ov{r}e^4}{24}+\frac{r_2(s_1-r_1)}{2}+\frac{r_1(s_2-r_2)}{2}+\frac{\ov{r}d}{2}\right)=\ov{r}(d-2).
\end{equation*}
It follows that
\begin{equation}
\int_{X_2(S)}\rho^{*}\ch_4(\cG)=2s_1 s_2- \ov{r}(3d-4).
\end{equation}
This proves the proposition because $\rho$ has degree $2$.
\end{proof}
\begin{proof}[Proof of Proposition~\ref{prp:atomicose}]
Since the Verbitsky subalgebra of $S^{[2]}$ is equal to the whole cohomology algebra, 
 $\cG$ is atomic if and only if $v(\cG)=(4\ov{r})^{-1}T(\wt{v}^2)$, where $\wt{v}$ is given by~\eqref{extmukvect} (see~\eqref{fattoreprop} for the factor $(4\ov{r})^{-1}$).  By Remark~\ref{rmk:deg3ok}  the equality holds except possibly for the degree $8$ components. Hence  $\cG$ is atomic if and only if  
\begin{equation}\label{vu4previsto}
\int_{S^{[2]}}v_4(\cG)=\frac{\ov{r}(2d-1)^2}{16}\int_{S^{[2]}} T(\beta^2)=\frac{\ov{r}(2d-1)^2}{16},
\end{equation}
where $v_4(\cG)$ is the component of degree $8$ of 
the Mukai vector of $\cG$. 
We compute $v_4(\cG)$, the component of degree $8$ of 
the Mukai vector of $\cG$. We have
\begin{equation}
\sqrt{\Td(S^{[2]})}=1+\frac{5}{4}{\mathsf q}_{2}+\frac{25}{32}{\mathsf q}_{4}.
\end{equation}
The equalities in~\eqref{rangoeciuno} and~\eqref{discform} give that
\begin{equation}
\ch_2(\cG)=\frac{\ov{r}(2\mu(\lambda)-\delta)^2}{4}-\frac{\ov{r} c_2(S^{[2]})}{12}.
\end{equation}
Thus we have computed $\ch(\cG)$, and we get that 
\begin{equation*}
\int_{S^{[2]}}v_4(\cG)=s_1 s_2- \frac{\ov{r}(3d-4)}{2}+\frac{25\ov{r}}{16}+
\frac{5\ov{r}}{48}\int_{S^{[2]}}{\mathsf q}_2\cdot (3(2\mu(\lambda)-\delta)^2-c_2(S^{[2]})).
\end{equation*}
The last integral is computed by invoking the defining property of ${\mathsf q}_2$, the equalities $c_2(X)=30{\mathsf q}_2$ (see 
Example~\ref{expl:livorno}) and $\int_{S^{[2]}}c_2(X)^2=828$. Summing up, one gets that 
\begin{equation}\label{vu4vero}
\int_{S^{[2]}}v_4(\cG)=s_1 s_2-\frac{\ov{r}(4d-1)}{16}.
\end{equation}
Thus $\cG$ is atomic if and only if both the equalities in~\eqref{vu4previsto} and~\eqref{vu4vero} hold, i.e.~if and only if
\begin{equation}
s_1 s_2=\frac{\ov{r}d^2}{4}.
\end{equation}
Since the equality in~\eqref{vincolo} holds, it follows that $\cG$ is atomic if and only if $s_1/r_1=s_2/r_2$. The latter equation holds if and only if $v(\cE_1)^2=v(\cE_2)^2=0$. 
\end{proof}
\section{Modular sheaves and stability}\label{sec:ancorastab}
\subsection{Main results}
\setcounter{equation}{0}
In the present section we note that the results in~\cite{ogfascimod} on variation of slope semistability of modular sheaves  with respect to polarizations hold also when considering slope semistability with respect to K\"ahler classes. We also extend  the results in~\cite{ogfascimod} on suitable polarizations of 
 Lagrangian fibrations in order to deal with sheaves whose restriction to a general Lagrangian fiber is slope semistable but not stable.

\subsection{Variation of stability  with respect to K\"ahler classes}\label{subsec:varkal}
\setcounter{equation}{0}
Let $X$ be a compact K\"ahler manifold of dimension $m$. Let $\cK(X)\subset H^{1,1}_{\RR}(X)$ be the K\"ahler cone 
(whose elements are the cohomology classes of K\"ahler metrics).  Let  $\omega\in\cK(X)$. If $\cA$ is a (non zero)  torsion free sheaf on $X$ the \emph{$\omega$ slope of $\cA$} is given by
\begin{equation}
\mu_{\omega}(\cA)\coloneq\frac{c_1(\cA)\cdot\omega^{m-1}}{\rk(\cA)}.
\end{equation}
A torsion free sheaf $\cF$ on $X$ is \emph{$\omega$ slope semistable} if  for all non zero subsheaves $\cH\subset\cF$ with $0<\rk(\cH)<\rk(\cF)$ we have
\begin{equation}\label{disegpen}
\mu_{\omega}(\cH)\le \mu_{\omega}(\cF),
\end{equation}
and it is  \emph{$\omega$ slope stable} if strict inequality holds for all such $\cH$.
If
$\cH,\cF$  are sheaves on an irreducible smooth  variety $X$ we let
\begin{equation}\label{pecora}
\lambda_{\cH,\cF}:=r(\cF) c_1(\cH)-r(\cH) c_1(\cF).
\end{equation}
The proof of Lemma~3.7 in~\cite{ogfascimod} extends with no changes in the more general framework.
\begin{lmm}\label{lmm:comesup}
Let $X$ be a HK manifold, and let  $\omega$ be a K\"ahler class on $X$.
Let $\cH,\cF$  be non zero torsion free sheaves on  $X$.
Then
$\mu_{\omega}(\cH)\ge\mu_{\omega}(\cF)$ if and only if 
\begin{equation}\label{disepend}
q_X\left(\lambda_{\cH,\cF},{\omega}\right)\ge 0.
\end{equation}
Moreover equality in~\eqref{disepend} holds if and only if $\mu_{\omega}(\cH)=\mu_{\omega}(\cF)$.
\end{lmm}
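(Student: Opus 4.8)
The plan is to reduce the slope comparison to an intersection number and then convert its sign into a statement about the BBF form by means of the Fujiki relation. First I would unwind the definition of $\mu_{\omega}$. Since $\rk(\cH),\rk(\cF)>0$, multiplying the inequality $\mu_{\omega}(\cH)\ge\mu_{\omega}(\cF)$ through by $\rk(\cH)\rk(\cF)$ shows it is equivalent to
\begin{equation*}
\lambda_{\cH,\cF}\cdot\omega^{m-1}=\bigl(\rk(\cF)c_1(\cH)-\rk(\cH)c_1(\cF)\bigr)\cdot\omega^{m-1}\ge 0,
\end{equation*}
with equality on one side precisely when equality holds on the other. So everything reduces to reading off the sign of the intersection number $\lambda\cdot\omega^{m-1}$ for a class $\lambda\in H^2(X)$.

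The key step is the polarized (bilinear) form of the Fujiki relation. Writing $m=2n$ and polarizing the identity $\int_X\alpha^{2n}=(2n-1)!!\,c_X\,q_X(\alpha)^n$ — that is, substituting $\alpha=\omega+t\lambda$ and comparing the coefficients of $t$ on the two sides — yields
\begin{equation*}
\int_X\lambda\cdot\omega^{2n-1}=(2n-1)!!\,c_X\,q_X(\omega)^{n-1}\,q_X(\lambda,\omega).
\end{equation*}
Because $\omega$ is a K\"ahler class it lies in the positive cone, so $q_X(\omega)>0$; together with $c_X>0$ and $(2n-1)!!>0$ this makes the scalar $(2n-1)!!\,c_X\,q_X(\omega)^{n-1}$ strictly positive. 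Hence the sign of $\lambda\cdot\omega^{m-1}$ agrees with the sign of $q_X(\lambda,\omega)$, and the two quantities vanish simultaneously.

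Taking $\lambda=\lambda_{\cH,\cF}$ and combining the two steps gives both assertions at once: $\mu_{\omega}(\cH)\ge\mu_{\omega}(\cF)$ if and only if $q_X(\lambda_{\cH,\cF},\omega)\ge 0$, with the equality case corresponding to $\mu_{\omega}(\cH)=\mu_{\omega}(\cF)$. The only place where the argument uses the K\"ahler (rather than ample) hypothesis — and the point to flag, since it is exactly what lets the proof \lq\lq extend with no changes\rq\rq\ from the polarized case of~\cite{ogfascimod} — is the positivity $q_X(\omega)>0$: the Fujiki relation is a cohomological identity valid on all of $H^2(X;\RR)$, so it applies verbatim, and K\"ahler classes share with ample classes the property of lying in the positive cone of $q_X$. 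There is thus no genuine obstacle here; the entire content sits in the positivity of the Fujiki constant and of $q_X$ on the K\"ahler cone.
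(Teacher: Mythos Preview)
Your proof is correct and is precisely the argument the paper has in mind: it defers to Lemma~3.7 in~\cite{ogfascimod}, whose proof is exactly the Fujiki-relation computation you spell out, and notes that nothing changes when $\omega$ is merely K\"ahler since the only input is $q_X(\omega)>0$.
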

Let $\cF$ be a torsion-free modular sheaf on   a HK manifold $X$. 
We define a
 decomposition of  $\cK(X)$ 
 into walls and chambers  related to slope stability of $\cF$. 
\begin{dfn}
Let ${\mathsf a}$ be a positive real number. An \emph{${\mathsf a}$-wall} of $\cK(X)$ is the intersection 
$\lambda^{\bot}\cap \cK(X)$, where 
  $\lambda\in H^{1,1}_{\ZZ}(X)$ is a class such that  
$ -{\mathsf a} \le q_X(\lambda)< 0$
(orthogonality is with respect to the BBF quadratic form $q_X$).
\end{dfn}
As is well-known, the set of ${\mathsf a}$-walls  is  locally finite, in particular the union of all the ${\mathsf a}$-walls  is closed in $\cK(X)$. 
\begin{dfn}
An \emph{open ${\mathsf a}$-chamber} of $\cK(X)$  is a connected component of the complement of   the union of all the 
${\mathsf a}$-walls of $\cK(X)$. A K\"ahler class is ${\mathsf a}$-generic if it belongs to an open ${\mathsf a}$-chamber.
\end{dfn}
\begin{prp}\label{prp:campol}
Let $X$ be a  HK manifold, and let $\omega_0,\omega_1\in\cK(X)$.
Suppose that $\cF$ is a   torsion free modular sheaf  on $X$ which is $\omega_0$ slope stable and not $\omega_1$ slope stable. 
Then there exists a real $t$ with $0<t\le 1$ such that $t\omega_0 +(1-t)\omega_1$ belongs to an 
 ${\mathsf a}(\cF)$-wall. 
\end{prp}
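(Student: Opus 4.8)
The plan is to run the classical variation‑of‑stability argument, checking that its only inputs survive the passage from polarizations to K\"ahler classes, exactly as in the polarized case treated in~\cite{ogfascimod}. First I would locate the relevant wall. Parametrise the segment by $\omega_t:=t\omega_0+(1-t)\omega_1$ and consider the boundary of the stable locus of $\cF$ along this segment. Since $\cF$ is $\omega_0$ slope stable and slope stability is an open condition away from the locally finite union of walls, $\cF$ is $\omega_t$ slope stable for $t$ close to $1$; since $\cF$ is not $\omega_1$ slope stable, there is a boundary point $\omega:=\omega_{t^\ast}$ of the stable locus on the segment at which $\cF$ is $\omega$ slope semistable but not $\omega$ slope stable. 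I would then choose a subsheaf $\cH\subset\cF$ arising from a Jordan--H\"older factor at $\omega$, so that $\cH$ and $\cQ:=\cF/\cH$ are both $\omega$ slope semistable with $\mu_\omega(\cH)=\mu_\omega(\cQ)=\mu_\omega(\cF)$ and $0<\rk(\cH)<\rk(\cF)$. Writing $\lambda:=\lambda_{\cH,\cF}$, Lemma~\ref{lmm:comesup} gives $q_X(\lambda,\omega)=0$, i.e.\ $\omega\in\lambda^{\bot}$; moreover $\lambda\neq 0$, for otherwise $q_X(\lambda,\omega_0)=0$ would force $\mu_{\omega_0}(\cH)=\mu_{\omega_0}(\cF)$, contradicting $\omega_0$ slope stability.

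Next I would pin down the sign and size of $q_X(\lambda)$. Because $\omega$ is a K\"ahler class we have $q_X(\omega)>0$, and the BBF form restricted to $H^{1,1}_{\RR}(X)$ has signature $(1,\ast)$; since $\lambda\neq 0$ is a nonzero integral $(1,1)$‑class orthogonal to the positive‑norm class $\omega$, the Hodge index theorem yields $q_X(\lambda)<0$. It remains to bound $q_X(\lambda)$ from below by $-{\mathsf a}(\cF)$, which is the heart of the matter.

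For the lower bound I would use the discriminant identity
\begin{equation*}
\lambda^2 = r r_2\,\Delta(\cH)+r r_1\,\Delta(\cQ)-r_1 r_2\,\Delta(\cF),
\end{equation*}
where $r=\rk(\cF)$, $r_1=\rk(\cH)$, $r_2=\rk(\cQ)=r-r_1$ (a purely formal consequence of $\ch(\cF)=\ch(\cH)+\ch(\cQ)$). Integrating against $\omega^{2n-2}$, where $\dim X=2n$, and invoking the Bogomolov inequality $\int_X\Delta(\cH)\,\omega^{2n-2}\ge 0$ and $\int_X\Delta(\cQ)\,\omega^{2n-2}\ge 0$ for the $\omega$ slope semistable sheaves $\cH,\cQ$, I obtain
\begin{equation*}
\int_X \lambda^2\,\omega^{2n-2}\ \ge\ -r_1 r_2\int_X\Delta(\cF)\,\omega^{2n-2}.
\end{equation*}
Now modularity of $\cF$ gives $\int_X\Delta(\cF)\,\omega^{2n-2}=d(\cF)(2n-3)!!\,q_X(\omega)^{n-1}$ with $d(\cF)\ge 0$ (Bogomolov again, applied to the $\omega_0$ stable $\cF$), while the Fujiki relation together with $q_X(\lambda,\omega)=0$ gives $\int_X\lambda^2\,\omega^{2n-2}=c_X(2n-3)!!\,q_X(\lambda)\,q_X(\omega)^{n-1}$. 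Dividing by the positive quantity $c_X(2n-3)!!\,q_X(\omega)^{n-1}$ and using $r_1 r_2\le r^2/4$ yields
\begin{equation*}
q_X(\lambda)\ \ge\ -\frac{r_1 r_2\,d(\cF)}{c_X}\ \ge\ -\frac{r^2 d(\cF)}{4c_X}\ =\ -{\mathsf a}(\cF).
\end{equation*}
Hence $-{\mathsf a}(\cF)\le q_X(\lambda)<0$, so $\lambda^{\bot}\cap\cK(X)$ is an ${\mathsf a}(\cF)$‑wall; it contains $\omega=\omega_{t^\ast}$, which is the value of $t$ required by the statement.

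The main obstacle is not the wall‑crossing bookkeeping (which mirrors~\cite{ogfascimod}) but securing its two analytic inputs for \emph{K\"ahler} rather than ample classes: the Bogomolov inequality $\int_X\Delta(\cE)\,\omega^{\dim X-2}\ge 0$ for $\omega$ slope semistable $\cE$, and the existence of a Jordan--H\"older factor making both $\cH$ and $\cQ=\cF/\cH$ $\omega$ slope semistable of slope $\mu_\omega(\cF)$. Granting these, the remaining ingredients---Lemma~\ref{lmm:comesup}, the topological discriminant identity, the Fujiki relation, the defining equation of ${\mathsf a}(\cF)$, and the numerical inequality $r_1 r_2\le r^2/4$---are insensitive to the distinction between ample and K\"ahler classes, which is precisely why the polarized argument carries over verbatim.
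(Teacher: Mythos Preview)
Your proposal is correct and follows essentially the same route as the paper. The paper supplies the two references you flagged as the analytic inputs: Lemma~6.2 of Greb--Toma~\cite{greb-toma-cmpct-mod-sheaves} for the existence of the strictly semistable boundary point $\omega_{t^\ast}$ on the segment, and Theorem~1.1 of Li--Zhang--Zhang~\cite{lizhangzhang} for the Bogomolov inequality with respect to a K\"ahler class (the paper also notes the small extra step needed to pass from reflexive to torsion free sheaves). The remaining computation you wrote out explicitly is exactly the content of~\cite[Proposition~3.10]{ogfascimod}, which the paper simply cites.
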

\begin{proof}
One needs to prove the versions of Propositions~3.8 and~3.10 in~\cite{ogfascimod}  that one gets upon replacing  the ample cone by the K\"ahler cone. We show how to adapt the proofs   in the present context. 
By  Lemma~6.2 in~\cite{greb-toma-cmpct-mod-sheaves} (the proof
 is in~\cite{greb-toma-cmpct-mod-sheaves-arxiv})  there exists  a real $t$ with $0<t\le 1$ such that, letting 
 $\omega_t\coloneq t\omega_0+(1-t)\omega_1$, the sheaf 
 $\cF$ is strictly $\omega_t$ slope semistable, i.e.~$\omega_t$  slope semistable but not $\omega_t$  slope stable. Hence there exists an exact sequence
of torsion free non zero sheaves
\begin{equation}\label{aeffebi}
0\lra \cA\lra \cF\lra \cB\lra 0
\end{equation}
with $0<\rk(\cE)<\rk(\cF)$ and $\mu_{\omega_t}(\cA)=\mu_{\omega_t}(\cF)$, i.e.~(by Lemma~\ref{lmm:comesup})
\begin{equation}\label{vittoria}
q_X\left(\lambda_{\cA,\cF},{\omega_t}\right)= 0
\end{equation}
We may assume that $\cB$ is torsion free (note that $\cA$   is torsion free because $\cF$ is   torsion free). Moreover both $\cA$ and $\cB$ are $\omega_t$  slope semistable.
By Theorem~1.1 in~\cite{lizhangzhang} it follows that Bogomolov's inequality holds for $\cA$ and $\cB$, i.e.~that
  $\Delta(\cA)\cdot\omega^{2n-2}\ge 0$ and 
$\Delta(\cB)\cdot\omega^{2n-2}\ge 0$, where $2n$ is the  dimension of $X$.
To be precise  Theorem~1.1 in loc.cit.~states that Bogomolov's inequality  holds for slope semistable reflexive sheaves. 
From this one gets Bogomolov's inequality for a torsion free slope semistable sheaf $\cH$ arguing as follows. 
 We have a canonical exact sequence
\begin{equation*}
0\lra \cH\lra\cH^{\vee\vee}\lra\cQ\lra 0,
\end{equation*}
where $\cQ$ is supported on an analytic subspace of codimension at least $2$. Since $\cH$ is slope semistable, so is 
the double dual $\cH^{\vee\vee}$. Since the double dual is reflexive we have $\Delta(\cH^{\vee\vee})\cdot\omega^{2n-2}\ge 0$ by Theorem~1.1 in~\cite{lizhangzhang}. 
Since 
$c_1(\cH)=c_1(\cH^{\vee\vee})$ and $c_2(\cH)=c_2(\cH^{\vee\vee})+Z$, where $Z$ is an effective codimension-$2$ cycle supported on the codimension-$2$ components of $\supp(\cQ)$ we get that 
$ \Delta(\cH)\cdot\omega^{2n-2}\ge 0$. Since  Bogomolov's inequality holds for $\cA$ and $\cB$, the proof 
of Proposition~3.10 in~\cite{ogfascimod} extends to our case and hence we get that 
\begin{equation}\label{scarlatti}
-{\mathsf a}(\cF)\le q_X\left(\lambda_{\cA,\cF}\right)\le 0.
\end{equation}
Suppose that $q_X\left(\lambda_{\cA,\cF}\right)= 0$. Then $\lambda_{\cA,\cF}=0$ and it follows that 
$\mu_{\omega}(\cA)=\mu_{\omega}(\cF)$ for all K\"ahler classes $\omega$. By the exact sequence in~\eqref{aeffebi} this contradicts the assumption that 
$\cF$ is  $\omega_0$ slope stable. Thus $q_X\left(\lambda_{\cA,\cF}\right)< 0$, and hence $\lambda_{\cA,\cF}^{\bot}\cap\cK(X)$ is an ${\mathsf a}(\cF)$-wall.  We are done by~\eqref{vittoria}.
\end{proof}
\begin{crl}\label{crl:campol}
Let $X$ be a  HK manifold, and let  $\cF$ be a   torsion free modular sheaf  on $X$. Then the following hold:
\begin{enumerate}
\item
Suppose that $\omega$ is a K\"ahler class on $X$ which is ${\mathsf a}(\cF)$-generic. If  $\cF$ is  strictly 
$\omega$ slope semistable  there exists an exact sequence
of torsion free non zero sheaves
\begin{equation}\label{cetraro}
0\lra \cA\lra \cF\lra \cB\lra 0
\end{equation}
such that $r(\cF) c_1(\cA)-r(\cA) c_1(\cF)=0$.
\item
Suppose that $\omega_0,\omega_1$ are K\"ahler classes on $X$  belonging to the same open ${\mathsf a}(\cF)$-chamber. Then $\cF$ is 
 $\omega_0$ slope-stable if and only if it is $\omega_1$ slope-stable. 
\end{enumerate}
\end{crl}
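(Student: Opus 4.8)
The plan is to deduce both items directly from the wall-and-Bogomolov analysis underlying Proposition~\ref{prp:campol}, with Part~(1) refining the dichotomy that appears in its proof and Part~(2) being a formal consequence of the geometry of chambers.

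For Part~(1), I would start from the hypothesis that $\cF$ is strictly $\omega$ slope semistable: there is a subsheaf $\cH\subset\cF$ with $0<\rk(\cH)<\rk(\cF)$ and $\mu_\omega(\cH)=\mu_\omega(\cF)$. Replacing $\cH$ by its saturation in $\cF$ I obtain an exact sequence $0\to\cA\to\cF\to\cB\to 0$ with $\cA,\cB$ torsion free and $\mu_\omega(\cA)=\mu_\omega(\cF)$; by Lemma~\ref{lmm:comesup} this says exactly $q_X(\lambda_{\cA,\cF},\omega)=0$, where $\lambda_{\cA,\cF}$ is as in~\eqref{pecora}. Since $\cF$ is $\omega$ slope semistable, so are $\cA$ and $\cB$, and hence the same Bogomolov argument used in the proof of Proposition~\ref{prp:campol} (passing to double duals and invoking Theorem~1.1 of~\cite{lizhangzhang}) yields $-{\mathsf a}(\cF)\le q_X(\lambda_{\cA,\cF})\le 0$. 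Now comes the dichotomy: the class $\lambda_{\cA,\cF}\in H^{1,1}_{\ZZ}(X)$ is orthogonal to the K\"ahler (hence positive) class $\omega$, and the BBF form is negative definite on $\omega^{\bot}\cap H^{1,1}_{\RR}(X)$; therefore either $\lambda_{\cA,\cF}=0$, which is the desired conclusion $r(\cF)c_1(\cA)-r(\cA)c_1(\cF)=0$, or $q_X(\lambda_{\cA,\cF})<0$. In the latter case $-{\mathsf a}(\cF)\le q_X(\lambda_{\cA,\cF})<0$ together with $q_X(\lambda_{\cA,\cF},\omega)=0$ shows that $\omega$ lies on the ${\mathsf a}(\cF)$-wall $\lambda_{\cA,\cF}^{\bot}\cap\cK(X)$, contradicting the assumption that $\omega$ is ${\mathsf a}(\cF)$-generic. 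Hence $\lambda_{\cA,\cF}=0$.

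For Part~(2), I would first record that each open ${\mathsf a}(\cF)$-chamber is convex. Indeed, for any wall-defining class $\lambda$ the function $\eta\mapsto q_X(\lambda,\eta)$ is linear and has constant sign on any connected component of the complement of the walls; thus an open chamber is the intersection of $\cK(X)$ with the open half-spaces selected, for each relevant $\lambda$, among $\{q_X(\lambda,\cdot)>0\}$ and $\{q_X(\lambda,\cdot)<0\}$, and being an intersection of convex sets it is convex. Consequently, if $\omega_0,\omega_1$ lie in the same open chamber, then the whole segment $\omega_t:=t\omega_0+(1-t)\omega_1$, $t\in[0,1]$, stays in that chamber and in particular meets no ${\mathsf a}(\cF)$-wall. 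Suppose now $\cF$ is $\omega_0$ slope stable but not $\omega_1$ slope stable. By Proposition~\ref{prp:campol} there is a $t$ with $0<t\le 1$ such that $\omega_t$ lies on an ${\mathsf a}(\cF)$-wall, contradicting what we just proved. Hence $\omega_0$ slope stability implies $\omega_1$ slope stability, and the reverse implication follows by symmetry.

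The bulk of the work is already contained in Proposition~\ref{prp:campol}, so I expect no serious obstacle; the two points requiring care are the signature computation isolating the alternative $\lambda_{\cA,\cF}=0$ in Part~(1), and the convexity of open chambers in Part~(2). Of these the latter is the pivot of the argument, since it is precisely convexity that lets me feed a single straight segment into Proposition~\ref{prp:campol}: without it one would only control stability along arbitrary paths and could not conclude.
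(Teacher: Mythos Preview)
Your proposal is correct and follows essentially the same approach as the paper's proof: Part~(1) is extracted from the Bogomolov/\lq\lq scarlatti\rq\rq\ inequalities established in the proof of Proposition~\ref{prp:campol}, with the dichotomy $\lambda_{\cA,\cF}=0$ versus \lq\lq $\omega$ lies on a wall\rq\rq\ resolved by genericity, and Part~(2) is a direct application of Proposition~\ref{prp:campol} along the segment joining $\omega_0$ to $\omega_1$. The paper's own proof is terser and leaves the convexity of open chambers implicit; you have spelled this out carefully (signs of the linear forms $q_X(\lambda,\cdot)$ are constant on a connected component, hence the segment stays in the chamber), which is a useful addition but not a different idea.
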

\begin{proof}
 Item~(1) follows from the proof of Proposition~\ref{prp:campol}. In fact let~\eqref{cetraro} be an exact sequence with $0<\rk(\cA)<\rk(\cF)$ and 
$\mu_{\omega}(\cA)=\mu_{\omega}(\cF)$, i.e.~$q_X\left(\lambda_{\cA,\cF},\omega\right)= 0$. Then the inequalities in~\eqref{scarlatti} hold, and hence 
$\lambda_{\cA,\cF}=0$ because  
$\omega_0$, $\omega_1$   belong to the same open $a(\cF)$-chamber.  
 Item~(2) follows from the statement of Proposition~\ref{prp:campol} because $\omega_0$, $\omega_1$   belong to the same open ${\mathsf a}(\cF)$-chamber.
\end{proof}
\subsection{Modular sheaves on  Lagrangian fibrations}\label{subsec:lagfib}
\setcounter{equation}{0}
Let $\pi\colon X\to \PP^n$ be a Lagrangian fibration of a HK  manifold of dimension $2n$. We let
\begin{equation}\label{classeffe}
f:=\pi^{*}c_1(\cO_{\PP^n}(1)).
\end{equation}
Let $\cF$ be a  sheaf on $X$. If $t\in\PP^n$ we let $X_t=\pi^{-1}(t)$ and  $\cF_t:=\cF_{|X_t}$. Whenever we consider a \lq\lq general $t\in\PP^n$\rq\rq\ we may (and will) assume  that $X_t$ is smooth. 
\begin{rmk}\label{rmk:slopeonlagr}
Suppose that $X_t$ is smooth. Then the image of the restriction map 
$H^2(X;\RR)\to H^2(X_t;\RR)$ is of dimension $1$ and is generated by the class of an ample  class $\theta_t\in H^{1,1}_{\ZZ}(X_t)$, see~\cite{wieneck1}; by slope  (semi)stability of a sheaf on $X_t$ we mean stability with respect to $\theta_t$. 
\end{rmk}
\begin{dfn}\label{dfn:poladatta}
Let $\pi\colon X\to \PP^n$ be a Lagrangian fibration of a HK  manifold of dimension $2n$, and let
  ${\mathsf a}> 0$. A polarization $h$ of $X$ is \emph{${\mathsf a}$-suitable with respect to $\pi$} (or simply ${\mathsf a}$-suitable whenever there is no ambiguity regarding the fibration) if the following holds.  If 
 $\lambda\in H^{1,1}_{\ZZ}(X)$  is such that $ -{\mathsf a}\le q_X(\lambda)<0$, 
then
\begin{enumerate}
\item
$q_X(\lambda,h)>0$ implies that $q_X(\lambda,f)\ge 0$,
\item
 $q_X(\lambda,h)=0$  implies that $q_X(\lambda,f)= 0$, and
\item
 $q_X(\lambda,h)<0$  implies that $q_X(\lambda,f)\le 0$.
\end{enumerate}
\end{dfn}
\begin{rmk}
In~\cite[Def.~3.5]{ogfascimod}  a polarization $h$ is ${\mathsf a}$-suitable if it is ${\mathsf a}$-suitable according to Definition~\ref{dfn:poladatta} and in addition $q_X(\lambda,f)=0$ implies that $q_X(\lambda,h)=0$ (where $\lambda$ is as in Definition~\ref{dfn:poladatta}).
 To avoid confusion let us say that $h$ is \emph{strongly ${\mathsf a}$-suitable} if it is  ${\mathsf a}$-suitable according to~\cite[Def.~3.5]{ogfascimod}. This definition is useful  only if the Picard rank $\rho(X)$ is $2$. In fact if $\rho(X)>2$ and 
 ${\mathsf a}\gg 0$ then there is no strongly ${\mathsf a}$-suitable polarization because the 
quadratic form $q_X$ defines a negative definite quadratic form on the non zero quotient $f^{\bot}/\ZZ f$ and hence a general
 $\lambda\in f^{\bot}$ has negative square and non zero intersection with $h$.  
\end{rmk}
\begin{prp}\label{prp:lagstab}
Let $\pi\colon X\to\PP^n$ be a  Lagrangian fibration of a HK variety. Let ${\mathsf a}>0$, and let $h\in\Amp(X)$ 
 be an ${\mathsf a}$-suitable polarization. Let  $\cF$ be  a torsion free  modular sheaf  on $X$ such that ${\mathsf a}(\cF)\le {\mathsf a}$.  
\begin{enumerate}
\item[(a)]
 Suppose that $\cF$ is not $h$-slope stable, and that   $\cF_t$ is slope semistable for general $t\in\PP^n$ 
 (see Remark~\ref{rmk:slopeonlagr}). Then there exists a subsheaf  $\cH\subset \cF$ with $0<r(\cH)<r(\cF)$ such that $\mu_h(\cH)\ge\mu_h(\cF)$ and $\mu(\cH_t)=\mu(\cF_t)$ for general $t\in\PP^n$.
\item[(b)]
If  $\cF$ is $h$-slope stable, then $\cF_t$ is slope semistable 
 for general $t\in\PP^n$ (see Remark~\ref{rmk:slopeonlagr}). 
\end{enumerate}
\end{prp}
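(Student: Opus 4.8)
The plan is to convert every slope comparison into a sign condition for the Beauville--Bogomolov--Fujiki (BBF) form and then to play $q_X(\cdot,h)$ against $q_X(\cdot,f)$ through ${\mathsf a}$-suitability. For a saturated subsheaf $\cH\subset\cF$ put $\lambda=\lambda_{\cH,\cF}=\rk(\cF)c_1(\cH)-\rk(\cH)c_1(\cF)$. Lemma~\ref{lmm:comesup} gives that the sign of $\mu_h(\cH)-\mu_h(\cF)$ is the sign of $q_X(\lambda,h)$. I would first record the \emph{fibre dictionary}: for general $t$ the sign of $\mu(\cH_t)-\mu(\cF_t)$ equals the sign of $q_X(\lambda,f)$. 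Indeed, by Remark~\ref{rmk:slopeonlagr} the restriction map sends $\alpha$ to $\phi_t(\alpha)\theta_t$ for a linear form $\phi_t$; taking $\theta_t$ to be the restriction of a fixed ample class, using that the fibre class is proportional to $f^n$, and expanding $\int_X\alpha\cdot h^{n-1}\cdot f^n$ by the Fujiki relation with $q_X(f)=0$ and $q_X(h,f)>0$, one finds $\phi_t(\alpha)=c_t\,q_X(\alpha,f)$ with $c_t>0$. Since $\mu(\cH_t)-\mu(\cF_t)$ has the sign of $\phi_t(\lambda)$, the dictionary follows.

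Two inputs then drive the argument. The first is the \emph{discriminant bound}: for the destabilizing subsheaves produced below, $-{\mathsf a}(\cF)\le q_X(\lambda)<0$. This is the estimate isolated inside the proof of Proposition~\ref{prp:campol}: from the additivity identity $\rk(\cF)^{-1}\Delta(\cF)=\rk(\cH)^{-1}\Delta(\cH)+\rk(\cB)^{-1}\Delta(\cB)-\big(\rk(\cH)\rk(\cB)\rk(\cF)\big)^{-1}\lambda^2$ for a sequence $0\to\cH\to\cF\to\cB\to0$ in which $\cH$ and $\cB$ are slope semistable of equal slope for a Kähler class $\omega$, the Bogomolov inequality for $\cH,\cB$ (via Li--Zhang--Zhang, as in Proposition~\ref{prp:campol}), modularity of $\cF$, the Fujiki relation and the numerical inequality $\rk(\cH)\rk(\cB)\le\rk(\cF)^2/4$ yield $q_X(\lambda)\ge-{\mathsf a}(\cF)$ once $q_X(\lambda,\omega)=0$. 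Equal slope is available because a subsheaf of equal slope inside a semistable sheaf has semistable quotient. The second input is elementary cone geometry: the closed positive cone of $(H^{1,1}_\RR(X),q_X)$ is self-dual with respect to $q_X$, and $f$ is nef, hence lies in it; therefore $q_X(\lambda)\ge0$ together with $q_X(\lambda,h)\ge0$ forces $q_X(\lambda,f)\ge0$, and with the signs reversed one gets $q_X(\lambda,f)\le0$.

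For (a), since $\cF$ is not $h$-slope stable there is a saturated $\cH$ with $0<\rk(\cH)<\rk(\cF)$ and $q_X(\lambda,h)\ge0$, and the fibre dictionary together with semistability of $\cF_t$ gives $q_X(\lambda,f)\le0$. If $q_X(\lambda)\ge0$ the cone fact gives $q_X(\lambda,f)\ge0$, hence $q_X(\lambda,f)=0$; if $q_X(\lambda)<0$ the discriminant bound places $\lambda$ in the range $-{\mathsf a}\le q_X(\lambda)<0$ (using ${\mathsf a}(\cF)\le{\mathsf a}$), so Definition~\ref{dfn:poladatta} applied to $q_X(\lambda,h)\ge0$ gives $q_X(\lambda,f)\ge0$, again $q_X(\lambda,f)=0$. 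Thus $\mu(\cH_t)=\mu(\cF_t)$ for general $t$ while $\mu_h(\cH)\ge\mu_h(\cF)$, as required. For (b), assume $\cF$ is $h$-slope stable but $\cF_t$ is not slope semistable for general $t$; a relative Harder--Narasimhan filtration over a dense open of $\PP^n$, saturated to $X$, yields $\cH\subset\cF$ with $q_X(\lambda,f)>0$, while stability forces $q_X(\lambda,h)<0$. The cone fact excludes $q_X(\lambda)\ge0$, so $q_X(\lambda)<0$; the discriminant bound then gives $-{\mathsf a}\le q_X(\lambda)<0$, and Definition~\ref{dfn:poladatta} applied to $q_X(\lambda,h)<0$ gives $q_X(\lambda,f)\le0$, a contradiction.

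The step I expect to be the main obstacle is making the discriminant bound legitimate in the present \emph{fibred} setting, precisely because $\cF_t$ is only assumed semistable, not stable. The quotient of the maximal fibre-destabilizer need not be semistable, so the clean two-term estimate cannot be applied to it verbatim; instead one must run the bound for a Kähler class $\omega$ adapted to the fibration---lying in the ${\mathsf a}(\cF)$-chamber bordering the face of $\cK(X)$ cut out by $f$---using the full $\omega$-Harder--Narasimhan filtration, whose graded pieces are genuinely $\omega$-semistable and hence satisfy Li--Zhang--Zhang, and then transferring the resulting inequality on $q_X(\lambda)$ back to the fibre. Here the modular hypothesis is essential, and it is sharpened by the fact that the fibres are abelian varieties on which $c_2(X)$ restricts to zero, since the normal bundle of a Lagrangian fibre is its cotangent bundle. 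Supplying this estimate is exactly the content that goes beyond the ample-class results of~\cite{ogfascimod}; the extension of the wall-crossing machinery to Kähler classes in Proposition~\ref{prp:campol} and Corollary~\ref{crl:campol}, proved above, is what makes it available.
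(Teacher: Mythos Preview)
Your framework is right---the fibre dictionary, the cone dichotomy, and ${\mathsf a}$-suitability are exactly the ingredients---but the invocation of the discriminant bound in part~(a) is not justified, and your last paragraph does not repair it.

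The bound $-{\mathsf a}(\cF)\le q_X(\lambda_{\cH,\cF})$ (Proposition~3.10 of~\cite{ogfascimod}, or the proof of Proposition~\ref{prp:campol}) requires a K\"ahler class $\omega$ with $q_X(\lambda_{\cH,\cF},\omega)=0$ for which \emph{both} $\cH$ and $\cF/\cH$ are $\omega$-slope semistable. When $\cF$ is not $h$-semistable and you pick an $h$-destabilizer $\cH$, there is no reason such an $\omega$ exists: along the segment $[h,f]$ there is a class with $q_X(\lambda_{\cH,\cF},\cdot)=0$, but $\cF$ need not be semistable there, and the HN filtration there has no relation to your $\cH$. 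Your proposed fix---running the full HN filtration at a class near $f$---bounds the wrong $\lambda$: it controls $q_X(\lambda_{\cH',\cF})$ for the HN pieces $\cH'$ at that class, not for your original $\cH$.

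The paper's argument closes this gap by an interpolation (``last wall'') step that you are missing. When $\cF$ is not $h$-semistable, one does \emph{not} insist on the initial destabilizer. Instead, consider the set ${\mathsf S}\subset(0,1)$ of parameters $s$ for which some subsheaf $\cH\subset\cF$ satisfies $q_X(\lambda_{\cH,\cF},(1-s)h+sf)=0$; it is nonempty (by your $\cG$) and finite (by boundedness of destabilizers), so it has a maximum $s_*$. At $h_*=(1-s_*)h+s_*f$ two things can happen. If $\cF$ is still $h_*$-unstable, any destabilizer $\cH$ at $h_*$ has $q_X(\lambda_{\cH,\cF},h_*)>0$; maximality of $s_*$ forbids $q_X(\lambda_{\cH,\cF},f)<0$, so $q_X(\lambda_{\cH,\cF},f)=0$ by fibre semistability, and then also $q_X(\lambda_{\cH,\cF},h)>0$---done, with \emph{no discriminant bound needed}. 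If $\cF$ is strictly $h_*$-semistable, then a subsheaf $\cH$ with $q_X(\lambda_{\cH,\cF},h_*)=0$ exists, and now $\cH$ and $\cF/\cH$ are genuinely $h_*$-semistable of equal slope, so the discriminant bound applies at $h_*$; ${\mathsf a}$-suitability then forces $q_X(\lambda_{\cH,\cF},f)=0$, whence $q_X(\lambda_{\cH,\cF},h)=0$ as well. Either way one obtains the required $\cH$, but it is produced by the interpolation, not chosen at the outset. The case where $\cF$ is $h$-semistable (but not stable) is the easy one: take $\cH$ with $q_X(\lambda_{\cH,\cF},h)=0$ and apply the bound directly at $h$. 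Your part~(b) has the analogous gap; the paper dispatches it by citing~\cite[Prop.~3.6(ii)]{ogfascimod}, whose proof is the same interpolation, starting from $h$-stability.
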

\begin{crl}\label{crl:lagstab}
Let $\pi\colon X\to\PP^n$ be a  Lagrangian fibration of a HK variety. Let ${\mathsf a}>0$, and let $h\in\Amp(X)$ 
 be an ${\mathsf a}$-suitable polarization. Let  $\cF$ be  a torsion free  modular sheaf  on $X$ such that 
 ${\mathsf a}(\cF)\le {\mathsf a}$.  
If $\cF_t$ is slope stable for general $t$
then  $\cF$ is $h$ slope stable. 
\end{crl}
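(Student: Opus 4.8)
The plan is to argue by contradiction, feeding the hypotheses directly into Proposition~\ref{prp:lagstab}(a). Since $\cF_t$ is assumed slope stable for general $t\in\PP^n$, it is in particular slope semistable for general $t$, so the standing hypotheses of Proposition~\ref{prp:lagstab} (namely $h$ being ${\mathsf a}$-suitable, $\cF$ modular with ${\mathsf a}(\cF)\le{\mathsf a}$, and $\cF_t$ slope semistable for general $t$) are all in force. Suppose, contrary to the assertion, that $\cF$ is \emph{not} $h$-slope stable. Then part~(a) of that proposition produces a subsheaf $\cH\subset\cF$ with $0<r(\cH)<r(\cF)$ satisfying $\mu_h(\cH)\ge\mu_h(\cF)$ and, crucially, $\mu(\cH_t)=\mu(\cF_t)$ for general $t\in\PP^n$ (slopes on the fiber being taken with respect to the class $\theta_t$ of Remark~\ref{rmk:slopeonlagr}).

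The next step is to verify that for general $t$ the restriction $\cH_t=\cH_{|X_t}$ is a genuine nonzero proper subsheaf of $\cF_t$, so that the slope equality can be played against stability of $\cF_t$. Writing $\cB=\cF/\cH$, generic flatness ensures that $\cB$ is flat over the base away from a proper closed subset of $\PP^n$; for $t$ outside this locus the exact sequence $0\to\cH\to\cF\to\cB\to0$ restricts to an exact sequence $0\to\cH_t\to\cF_t\to\cB_t\to0$ on $X_t$, so that $\cH_t\hookrightarrow\cF_t$ is injective of rank $r(\cH)$. Choosing $t$ in the (nonempty) intersection of this flatness locus, the locus where $\cF_t$ is slope stable, and the locus where $\mu(\cH_t)=\mu(\cF_t)$, we obtain a subsheaf $\cH_t\subset\cF_t$ with $0<r(\cH_t)=r(\cH)<r(\cF)=r(\cF_t)$ and $\mu(\cH_t)=\mu(\cF_t)$. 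This contradicts the slope stability of $\cF_t$, which demands the strict inequality $\mu(\cH_t)<\mu(\cF_t)$ for every such subsheaf. Hence $\cF$ must be $h$-slope stable.

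Since Proposition~\ref{prp:lagstab} already carries out the substantive work — relating slope stability on $X$ to semistability on the general Lagrangian fiber through the ${\mathsf a}$-suitability of $h$ and the modularity bound ${\mathsf a}(\cF)\le{\mathsf a}$ — the corollary itself is a short deduction. The only point requiring any care is the generic-flatness step guaranteeing that the destabilizing subsheaf $\cH$ survives restriction to a general fiber as an honest proper subsheaf of $\cF_t$; this is the mild bookkeeping obstacle, and everything else is immediate.
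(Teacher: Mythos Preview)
Your proof is correct and follows the route the paper intends: the corollary is placed immediately after Proposition~\ref{prp:lagstab} precisely because item~(a) of that proposition does all the work, and your deduction from it is the natural one. The paper does not write out the argument, noting only in the subsequent remark that the statement coincides with an item of~\cite[Prop.~3.6]{ogfascimod} under the present definition of ${\mathsf a}$-suitability; your contradiction argument via the destabilizing subsheaf $\cH$ restricted to a general fiber is exactly what is implicit in calling it a corollary.
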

\begin{rmk}
Corollary~\ref{crl:lagstab} is the same as Item~(i) of~\cite[Prop.~3.6]{ogfascimod}, but with the \lq\lq correct\rq\rq\ definition of $a$-suitability. 
\end{rmk}
 Before proving  Proposition~\ref{prp:lagstab} we go through a few  preliminaries.
The result below is somewhat technical.
\begin{lmm}[K.~Yoshioka]\label{lmm:stimakota}
Let $X$ be a hyperk\"ahler manifold. Let $h,f\in H^{1,1}_{\ZZ}(X)$ be such that $q_X(h)>0$ and  $q_X(f)=0$. Suppose that 
$\lambda\in H^{1,1}_{\ZZ}(X)$ and that
\begin{equation}
q_X(\lambda,h)=0,\qquad q_X(\lambda,f)\not=0.
\end{equation}
Then
\begin{equation}\label{terrekobe}
q_X(\lambda)\le-\frac{q_X(h)}{q_X(h,f)^2}.
\end{equation}
\end{lmm}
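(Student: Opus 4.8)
The plan is to reduce the statement to the Hodge-index theorem for the BBF form together with an elementary Cauchy--Schwarz estimate on a negative definite subspace, and then to invoke integrality of $q_X$ on $H^{1,1}_{\ZZ}(X)$ to sharpen the inequality into the clean bound~\eqref{terrekobe}.

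First I would record the structural fact that the restriction of $q_X$ to $H^{1,1}_{\RR}(X)$ has signature $(1,h^{1,1}(X)-1)$. Indeed $q_X$ has signature $(3,b_2(X)-3)$ on all of $H^2(X;\RR)$, and the real and imaginary parts of a holomorphic symplectic form span a positive definite plane inside $H^{2,0}\oplus H^{0,2}$, so exactly one positive direction survives in the $(1,1)$-part. Since $q_X(h)>0$, the class $h$ spans this positive part, and consequently the orthogonal complement $h^{\bot}\cap H^{1,1}_{\RR}(X)$ is \emph{negative definite}. In particular, because $f\neq 0$ (as $q_X(\lambda,f)\neq 0$) and $f$ is isotropic, $f$ cannot lie in the negative definite subspace $h^{\bot}$; hence $q_X(h,f)\neq 0$, so the denominator in~\eqref{terrekobe} is nonzero and the right-hand side is well defined.

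Next I would project $f$ along the orthogonal decomposition $H^{1,1}_{\RR}(X)=\RR h\oplus\bigl(h^{\bot}\cap H^{1,1}_{\RR}(X)\bigr)$, writing $f=\tfrac{q_X(f,h)}{q_X(h)}h+f'$ with $f'$ orthogonal to $h$. A direct computation using $q_X(f)=0$ gives $q_X(f')=-q_X(h,f)^2/q_X(h)<0$. The hypothesis $q_X(\lambda,h)=0$ places $\lambda$ in the same negative definite subspace, and moreover forces $q_X(\lambda,f')=q_X(\lambda,f)$, which is nonzero. I would then apply the Cauchy--Schwarz inequality for the positive definite form $-q_X$ on $h^{\bot}\cap H^{1,1}_{\RR}(X)$ to the pair $\lambda,f'$:
\[
q_X(\lambda,f)^2=q_X(\lambda,f')^2\le q_X(\lambda)\,q_X(f')=-\frac{q_X(h,f)^2}{q_X(h)}\,q_X(\lambda),
\]
which rearranges to $q_X(\lambda)\le -q_X(h)\,q_X(\lambda,f)^2/q_X(h,f)^2$.

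Finally I would pass from this to~\eqref{terrekobe} by integrality: since $\lambda,f\in H^{1,1}_{\ZZ}(X)$ and $q_X$ is an integral form on $H^2(X;\ZZ)$, the intersection number $q_X(\lambda,f)$ is a nonzero integer, whence $q_X(\lambda,f)^2\ge 1$; combined with $q_X(h)>0$ this yields $q_X(\lambda)\le -q_X(h)/q_X(h,f)^2$. The only genuinely delicate point is the first one---pinning down the signature of $q_X$ on the $(1,1)$-part so as to guarantee that $h^{\bot}$ is negative definite---but this is the standard Hodge-index statement for HK manifolds; everything after it is linear algebra together with the integrality of the BBF lattice.
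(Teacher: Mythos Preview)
Your proof is correct and follows essentially the same approach as the paper: both arguments rest on the negative definiteness of $h^{\bot}\cap H^{1,1}_{\RR}(X)$ and the integrality of $q_X(\lambda,f)$. The only cosmetic difference is that the paper decomposes $\lambda=ah+bf+\xi$ with $\xi\in\{h,f\}^{\bot}$ and drops the negative term $q_X(\xi)$ explicitly, whereas you project $f$ onto $\RR h$ and package the same estimate as Cauchy--Schwarz on $h^{\bot}$; the resulting inequality $q_X(\lambda)\le -q_X(h)\,q_X(\lambda,f)^2/q_X(h,f)^2$ is identical in both cases.
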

\begin{proof}
The proof is analogous to the proof of~\cite[Lemma~1.1]{yoshi-ellsurf}. Write $\lambda=ah+bf+\xi$ where 
$\xi\in\{h,f\}^{\bot}\cap H^{1,1}_{\ZZ}(X)$. Since $q_X$ is negative definite on $h^{\bot}\cap H^{1,1}_{\ZZ}(X)$ 
\begin{equation}\label{bellamora}
q_X(\lambda)\le q_X(ah+bf)=a^2 q_X(h)+2ab q_X(h,f)=-a^2 q_X(h),
\end{equation}
where the last equality follows from $0=q_X(\lambda,h)=a q_X(h)+bq_X(h,f)$. By hypothesis $q_X(\lambda,f)=aq_X(h,f)$ is non zero, and since it is an integer we get that $a^2q_X(h,f)^2\ge 1$, i.e.~$a^2\ge q_X(h,f)^{-2}$. Plugging this inequality in~\eqref{bellamora} we get 
the inequality in~\eqref{terrekobe}.
\end{proof}
The next results  guarantee the existence of $a$-suitable polarizations.
\begin{prp}\label{prp:suitif}
Let ${\mathsf a}>0$. Let $\pi\colon X\to \PP^n$ be a Lagrangian fibration of a HK manifold of dimension $2n$. Let $h$ be a polarization of $X$. If 
  $ q_X(h)> {\mathsf a}\cdot q_X(h,f)^2$ 
 then $h$ is  ${\mathsf a}$-suitable.
\end{prp}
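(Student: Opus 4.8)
The plan is to reduce the three implications of Definition~\ref{dfn:poladatta} to a single sign‑compatibility statement between the two integers $A:=q_X(\lambda,h)$ and $B:=q_X(\lambda,f)$, and then to extract that statement from the geometry of the plane $P:=\Span_{\RR}(h,f)\subset H^{1,1}_{\RR}(X)$. Since $h$ is a polarization and $f=\pi^{*}c_1(\cO_{\PP^n}(1))$ is nef and nonzero with $q_X(f)=0$, setting $c:=q_X(h,f)$ the Gram matrix of $q_X|_P$ in the basis $(h,f)$ is $\left(\begin{smallmatrix} q_X(h) & c \\ c & 0 \end{smallmatrix}\right)$, of determinant $-c^2$. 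First I would record that $c\neq 0$ (otherwise $f\in h^{\bot}$ would be an isotropic vector in a subspace on which $q_X$ is negative definite, forcing $f=0$) and in fact $c>0$ (because $h$ lies in the positive cone and $f$ in its closure). Hence $q_X|_P$ is non‑degenerate of signature $(1,1)$, and $H^{1,1}_{\RR}(X)=P\oplus P^{\bot}$ with $q_X|_{P^{\bot}}$ negative definite.

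Next I would exploit the symmetry $\lambda\mapsto-\lambda$, which fixes $q_X(\lambda)$ and changes the signs of both $A$ and $B$; this interchanges conditions~(1) and~(3) and preserves condition~(2). Thus it suffices to prove condition~(2), i.e. $A=0\Rightarrow B=0$, together with condition~(1), i.e. $A>0\Rightarrow B\ge 0$ for every admissible $\lambda$. Writing $\lambda=\mu+\nu$ with $\mu=xh+yf\in P$ and $\nu\in P^{\bot}$, orthogonality gives $A=x\,q_X(h)+yc$ and $B=xc$, and a short substitution yields the key identity
\begin{equation*}
q_X(\mu)=x^2 q_X(h)+2xyc=2xA-x^2 q_X(h).
\end{equation*}
Since $q_X|_{P^{\bot}}$ is negative definite we have $q_X(\nu)\le 0$, whence
\begin{equation*}
q_X(\mu)=q_X(\lambda)-q_X(\nu)\ge q_X(\lambda)\ge -{\mathsf a}.
\end{equation*}

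Finally I would run the two cases. If $A=0$ then $q_X(\mu)=-x^2 q_X(h)$, so $q_X(\mu)\ge -{\mathsf a}$ gives $x^2 q_X(h)\le {\mathsf a}$, hence $B^2=x^2c^2\le {\mathsf a}c^2/q_X(h)<1$ by the hypothesis $q_X(h)>{\mathsf a}\,c^2$; as $B\in\ZZ$ this forces $B=0$. If $A>0$, suppose for contradiction $B<0$; integrality of the BBF form on $H^2(X;\ZZ)$ promotes this to $A\ge 1$ and $B\le -1$, and with $x=B/c<0$ the identity gives $q_X(\mu)=2AB/c-B^2 q_X(h)/c^2\le -2/c-q_X(h)/c^2<-q_X(h)/c^2<-{\mathsf a}$, again using $q_X(h)>{\mathsf a}\,c^2$. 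This contradicts $q_X(\mu)\ge -{\mathsf a}$, so $B\ge 0$ and condition~(1) holds.

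The estimates are immediate once the decomposition $H^{1,1}_{\RR}(X)=P\oplus P^{\bot}$ is in place; the only delicate points are the two uses of positivity, namely the sign $c>0$ (so that $B=xc$ tracks the sign of $x$ and the inequalities point the right way) and the appeal to integrality promoting $A>0,\ B<0$ to $A\ge 1,\ B\le -1$, which is precisely what turns the hypothesis $q_X(h)>{\mathsf a}\,c^2$ into a strict contradiction. I expect establishing $c=q_X(h,f)>0$ cleanly — the nef/positive‑cone positivity, in the spirit of Lemma~\ref{lmm:stimakota} — to be the main thing to get right, the rest being bookkeeping.
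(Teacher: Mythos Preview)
Your proof is correct and follows essentially the same route as the paper's: decompose $\lambda$ orthogonally over the plane $\Span(h,f)$, use negative definiteness on the complement together with integrality of $q_X(\lambda,f)$ and $q_X(\lambda,h)$ to force the required sign compatibilities. The only cosmetic difference is that the paper packages the $A=0$ case as Lemma~\ref{lmm:stimakota} and then reduces the opposite-signs case to it by passing to the auxiliary class $h_0=Af+(-B)h$ (so that $q_X(\lambda,h_0)=0$), whereas you handle both cases directly via the identity $q_X(\mu)=2xA-x^2q_X(h)$; the underlying estimate is the same.
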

\begin{proof}
Let  $\lambda\in H^{1,1}_{\ZZ}(X)$ be as in Definition~\ref{dfn:poladatta}. Suppose that $q_X(\lambda,h)=0$.  Then $q_X(\lambda,f)=0$ by Lemma~\ref{lmm:stimakota}. To finish the proof we assume that $q_X(\lambda,h)$, $q_X(\lambda,f)$ are non zero of opposite signs and we get a contradiction.   
Let  
$m_0\coloneq q_X(\lambda,h)$ and $n_0\coloneq -q_X(\lambda,f)$. Let $h_0\coloneq m_0f+n_0h$. Then $q_X(\lambda,h_0)=0$. 
Since $m_0,n_0$ are both non zero of the same sign we have $q_X(h_0)>0$ and $q_X(h_0,f)\not=0$. By Lemma~\ref{lmm:stimakota} (with $h=h_0$) we get that
\begin{equation}\label{terrekobe}
q_X(\lambda)\le-\frac{q_X(h_0)}{q_X(h_0,f)^2}=-\frac{q_X(h)}{q_X(h,f)^2}-\frac{2m_0}{n_0 q_X(h,f)}<-{\mathsf a}.
\end{equation}
This is a contradiction.
\end{proof}
\begin{crl}[R.~Friedman]\label{crl:suitif}
Let $\pi\colon X\to \PP^n$ be a Lagrangian fibration of a HK manifold of dimension $2n$. Let $h_0$ be a polarization of $X$ and let ${\mathsf a}> 0$.
Let $N\in\NN$ be such that  $2N> {\mathsf a}\cdot q_X(h_0,f)$.
 Then $h_0+Nf$ is an ${\mathsf a}$-suitable polarization.
\end{crl}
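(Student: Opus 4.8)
The plan is to deduce this directly from Proposition~\ref{prp:suitif}: I would set $h:=h_0+Nf$ and simply verify the numerical sufficient condition $q_X(h)>{\mathsf a}\cdot q_X(h,f)^2$ appearing there, after which ${\mathsf a}$-suitability is automatic. First I would record that $h_0+Nf$ is genuinely a polarization, since $h_0$ is ample and $f=\pi^{*}c_1(\cO_{\PP^n}(1))$ is nef, so the sum is ample for every $N\in\NN$. The other input I would use repeatedly is that $f$ arises from a Lagrangian fibration, hence $q_X(f)=0$, exactly as in the hypotheses of Lemma~\ref{lmm:stimakota}.

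The core is then a one-line expansion. Using $q_X(f)=0$ I get $q_X(h,f)=q_X(h_0+Nf,f)=q_X(h_0,f)$, which is independent of $N$, together with $q_X(h)=q_X(h_0)+2N\,q_X(h_0,f)$. Writing $A:=q_X(h_0,f)$, I would note $A>0$, since $h_0$ lies in the positive cone while $f\neq 0$ lies on its boundary ($q_X(f)=0$), so the (Lorentzian) BBF form pairs them strictly positively; indeed $q_X(h_0,f)=0$ would force $f\in h_0^{\bot}$, a negative definite subspace, contradicting $q_X(f)=0$ with $f\neq0$. Multiplying the hypothesis $2N>{\mathsf a}\cdot A$ by $A>0$ gives $2NA>{\mathsf a}A^2$, whence
\[
q_X(h)=q_X(h_0)+2NA>2NA>{\mathsf a}A^2={\mathsf a}\,q_X(h,f)^2,
\]
using also $q_X(h_0)>0$ from ampleness of $h_0$. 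This is precisely the hypothesis of Proposition~\ref{prp:suitif}, so $h=h_0+Nf$ is ${\mathsf a}$-suitable.

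Honestly there is no real obstacle here: the statement is just a packaging of Proposition~\ref{prp:suitif} into a form guaranteeing that ${\mathsf a}$-suitable polarizations exist, obtained by pushing any fixed ample $h_0$ far enough toward the fibration boundary direction $f$. The only points that merit a word of justification rather than computation are the three standing facts $q_X(f)=0$, $q_X(h_0,f)>0$, and $q_X(h_0)>0$; the first two are what make the estimate go through, and all three are standard for Lagrangian fibrations of HK manifolds.
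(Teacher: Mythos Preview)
Your proof is correct and follows essentially the same approach as the paper: both verify the numerical hypothesis of Proposition~\ref{prp:suitif} via the identical computation $q_X(h_0+Nf)=q_X(h_0)+2N\,q_X(h_0,f)>{\mathsf a}\,q_X(h_0,f)^2={\mathsf a}\,q_X(h_0+Nf,f)^2$. Your write-up is more explicit about the auxiliary facts $q_X(f)=0$, $q_X(h_0,f)>0$, and $q_X(h_0)>0$, which the paper leaves implicit, but the argument is the same.
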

\begin{proof}
Clearly $h_0+Nf$ is a polarization, and since
\begin{equation*}
q_X(h_0+Nf)=q_X(h_0)+2N\cdot q_X(h_0,f)>{\mathsf a}\cdot q_X(h_0,f)^2={\mathsf a}\cdot q_X(h_0+Nf,f)^2,
\end{equation*}
it is ${\mathsf a}$-suitable by Proposition~\ref{prp:suitif}.
\end{proof}
\begin{proof}[Proof of Proposition~\ref{prp:lagstab}]
(a): Let $\cH\subset\cF$ be a subsheaf. We recall  that
\begin{equation}\label{segnoacca}
\text{$\mu_h(\cH)-\mu_h(\cF)$ and $q_X(\lambda_{\cH,\cF},h)$ have the same sign or are both $0$.} 
\end{equation}
 (See~\cite[Lemma~3.7]{ogfascimod}.) 
Moreover if $t\in\PP^n$ is general we have
\begin{equation}\label{segnoeffe}
\text{$\mu_h(\cH_t)-\mu_h(\cF_t)$ and $q_X(\lambda_{\cH,\cF},f)$ have the same sign or are both $0$.} 
\end{equation}
In fact~\eqref{segnoeffe} follows from the equalities 
\begin{multline}\label{papamorto}
r(\cH) r(\cF)(\mu(\cH_t)-\mu(\cF_t))=\int_{X_t} \lambda_{\cH_t,\cF_t}\cdot h_t^{n-1}=\int_X \lambda_{\cH,\cF}\cdot h^{n-1}\cdot f^n= \\
=n! c_X \cdot q_X(h,f)^{n-1} \cdot q_X(\lambda_{\cH,\cF},f),
\end{multline}
and  positivity of $c_X$, $q_X(h,f)$. 
By~\eqref{segnoacca}, \eqref{segnoeffe} it suffices to prove that  there exists a subsheaf $\cH\subset\cF$  such that  $0<r(\cH)<r(\cF)$ and $q_X(\lambda_{\cH,\cF},h)\ge 0$, $q_X(\lambda_{\cH,\cF},f)=0$.

First assume  that $\cF$ is $h$-slope semistable.  Hence there exists a subsheaf $\cH\subset\cF$  such that  $0<r(\cH)<r(\cF)$ and 
$q_X(\lambda_{\cH,\cF},h)= 0$. By~\cite[Prop.~3.10]{ogfascimod} we get that $-{\mathsf a}(\cF)\le q_X(\lambda_{\cH,\cF})\le 0$. 
If $\lambda_{\cH,\cF}= 0$ then $q_X(\lambda_{\cH,\cF},f)=0$ trivially, if $\lambda_{\cH,\cF}\not= 0$ then 
$q_X(\lambda_{\cH,\cF})< 0$ because the restriction of $q_X$ to $H^{1,1}_{\ZZ}(X)$ has signature $(1,\rho(X))$, and hence 
$q_X(\lambda_{\cH,\cF},f)=0$ because $h$ is ${\mathsf a}$-suitable.

Next assume  that $\cF$ is not $h$-slope semistable. Thus there exists $\cG\subset\cF$  such that  $0<r(\cG)<r(\cF)$ and 
$q_X(\lambda_{\cG,\cF},h)> 0$. If $q_X(\lambda_{\cG,\cF},f)=0$ we are done, hence we may assume that 
$q_X(\lambda_{\cG,\cF},f)\not=0$. Then $q_X(\lambda_{\cG,\cF},f)<0$ by~\eqref{segnoeffe}. 
Let $\mathsf S$ be the set  of rational numbers $s\in(0,1)$ for which there exists a subsheaf $\cH\subset\cF$,  with $0<r(\cH)<r(\cF)$,  such that 
\begin{equation}\label{comenovem}
q_X(\lambda_{\cH,\cF},(1-s)h+s f)=0.
\end{equation}
Then $\mathsf S$ is non  empty because there exists a  rational number $s\in(0,1)$ for which~\eqref{comenovem} holds with $\cH=\cG$.
 We claim that $\mathsf S$  is finite. In fact if~\eqref{comenovem} holds with $s\in(0,1)$ then $q_X(\lambda_{\cH,\cF},h)\ge 0$, because 
 $q_X(\lambda_{\cH,\cF},f)\le 0$ by~\eqref{segnoeffe}. 
  Since the set of subsheaves  $\cH\subset\cF$ such that 
 $\mu_h(\cH)\ge\mu_h(\cF)$  is bounded it follows that $\mathsf S$  is finite. Hence $\mathsf S$ has a maximum 
 $s_{*}$. Let $h_{*}\coloneq (1-s_{*})h+s_{*} f$. 
 
 Suppose that $\cF$ is not $h_{*}$ slope semistable. Then there exists a subsheaf 
  $\cH\subset\cF$    with $0<r(\cH)<r(\cF)$  such that $q_X(\lambda_{\cH,\cF},h_{*})>0$. If $q_X(\lambda_{\cH,\cF},f)<0$ then there exists $s\in (s_{*},1)$ such that~\eqref{comenovem} holds, and this is a contradiction because   $s_{*}$ is the maximum of    $\mathsf S$. Thus  $q_X(\lambda_{\cH,\cF},f)= 0$ by~\eqref{segnoeffe}.
   Since $q_X(\lambda_{\cH,\cF},h_{*})>0$ and $q_X(\lambda_{\cH,\cF},f)= 0$ we get $q_X(\lambda_{\cH,\cF},h)>0$, and we are done. 

 Suppose that $\cF$ is  $h_{*}$ slope semistable.  Then there exists a subsheaf 
  $\cH\subset\cF$    with $0<r(\cH)<r(\cF)$  such that $q_X(\lambda_{\cH,\cF},h_{*})= 0$. We claim that 
  $q_X(\lambda_{\cH,\cF},f)=0$. Granting this for the moment being, we get that $q_X(\lambda_{\cH,\cF},h)= 0$ because
  $q_X(\lambda_{\cH,\cF},h_{*})= 0$, and hence we are done. We finish by proving that  $q_X(\lambda_{\cH,\cF},f)=0$.
   By~\cite[Prop.~3.10]{ogfascimod} we get that $-{\mathsf a}(\cF)\le q_X(\lambda_{\cH,\cF})\le 0$. If $\lambda_{\cH,\cF}= 0$ then $q_X(\lambda_{\cH,\cF},f)=0$ trivially.
If  $\lambda_{\cH,\cF}\not= 0$ then 
$q_X(\lambda_{\cH,\cF})< 0$. Suppose that   $q_X(\lambda_{\cH,\cF},f)\not=0$. Then $q_X(\lambda_{\cH,\cF},f)<0$ by~\eqref{segnoeffe}, and hence $q_X(\lambda_{\cH,\cF},h)< 0$ because $h$ is ${\mathsf a}$-suitable. This contradicts the equality 
$q_X(\lambda_{\cH,\cF},h_{*})= 0$. 
 
(b):  Item~(ii) of~\cite[Prop.~3.6]{{ogfascimod}} is the same exact statement. The proof there works as well with our new definition of
 ${\mathsf a}$-suitable polarization.  

\end{proof}
If $\dim X=2$, then one can prove a stronger version of Item~(b) of Proposition~\ref{prp:lagstab}. 
\begin{prp}\label{prp:ellstab}
Let $\pi\colon X\to\PP^1$ be an elliptic   fibration of a $K3$ surface.  Let $\cF$ be  a torsion free sheaf   on $X$, and let $h\in H^{1,1}_{\ZZ}(X)$ be an ample  class  which is ${\mathsf a}(\cF)$-suitable.  If $\cF$ is $h$-slope semistable, then $\cF_t$ is semistable 
 for general $t\in\PP^1$. 
\end{prp}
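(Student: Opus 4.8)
The plan is to reduce to the $h$-slope \emph{stable} case already settled in Proposition~\ref{prp:lagstab}(b) by passing to a Jordan--H\"older filtration of $\cF$, and then to reassemble the conclusion on a general fiber. So I would first fix a Jordan--H\"older filtration $0=G_0\subset G_1\subset\cdots\subset G_k=\cF$ of the $h$-slope semistable sheaf $\cF$, with $h$-slope stable quotients $\cF_i:=G_i/G_{i-1}$, all of $h$-slope $\mu_h(\cF)$. Since $X$ is a $K3$ surface every torsion free sheaf on it is modular (Example~\ref{expl:didik3}), so each $\cF_i$ is a torsion free modular sheaf and Proposition~\ref{prp:lagstab}(b) is, in principle, available for it.

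The main obstacle is that Proposition~\ref{prp:lagstab}(b) requires $h$ to be ${\mathsf a}(\cF_i)$-suitable, while by hypothesis $h$ is ${\mathsf a}(\cF)$-suitable; thus it suffices to verify the numerical inequality ${\mathsf a}(\cF_i)\le {\mathsf a}(\cF)$ (then one applies Proposition~\ref{prp:lagstab}(b) with ${\mathsf a}={\mathsf a}(\cF)$). I would obtain this from the Bogomolov superadditivity of the discriminant~\eqref{eccodisc}: for a short exact sequence $0\to\cF'\to\cF\to\cF''\to0$ of torsion free sheaves, with $\lambda_{\cF',\cF}$ as in~\eqref{pecora}, the Chern character additivity gives
\[
\frac{\int_X\Delta(\cF)}{\rk(\cF)}
=\frac{\int_X\Delta(\cF')}{\rk(\cF')}+\frac{\int_X\Delta(\cF'')}{\rk(\cF'')}
-\frac{q_X(\lambda_{\cF',\cF})}{\rk(\cF)\,\rk(\cF')\,\rk(\cF'')}.
\]
Applying this to $0\to G_{i-1}\to G_i\to\cF_i\to0$ and to $0\to G_i\to\cF\to\cF/G_i\to0$ --- in each of which all terms are $h$-slope semistable of slope $\mu_h(\cF)$, so $q_X(\lambda_{\bullet})\le0$ by the Hodge index theorem and $\int_X\Delta\ge0$ for the quotients by Bogomolov's inequality --- yields $\int_X\Delta(\cF_i)/\rk(\cF_i)\le\int_X\Delta(\cF)/\rk(\cF)$. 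Since on a $K3$ one has ${\mathsf a}(\cG)=\rk(\cG)^2\int_X\Delta(\cG)/4$ and $\rk(\cF_i)\le\rk(\cF)$, the inequality ${\mathsf a}(\cF_i)\le{\mathsf a}(\cF)$ follows (the case $\int_X\Delta(\cF)=0$ being trivial). Hence $(\cF_i)_t$ is slope semistable for general $t\in\PP^1$ by Proposition~\ref{prp:lagstab}(b).

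Next I would check that all the factors have the \emph{same} fiber slope for general $t$. For each $i$ the sheaves $G_i$ and $\cF/G_i$ are $h$-slope semistable of slope $\mu_h(\cF)$, so $q_X(\lambda_{G_i,\cF},h)=0$ and, by~\cite[Prop.~3.10]{ogfascimod}, $-{\mathsf a}(\cF)\le q_X(\lambda_{G_i,\cF})\le0$. If $\lambda_{G_i,\cF}=0$ the fiber slopes agree trivially; otherwise $-{\mathsf a}(\cF)\le q_X(\lambda_{G_i,\cF})<0$ and part~(2) of Definition~\ref{dfn:poladatta} (the ${\mathsf a}(\cF)$-suitability of $h$) forces $q_X(\lambda_{G_i,\cF},f)=0$. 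By~\eqref{segnoeffe} this gives $\mu((G_i)_t)=\mu(\cF_t)$ for general $t$, and hence $\mu((\cF_i)_t)=\mu(\cF_t)$ by additivity of degree and rank.

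Finally I would restrict the filtration to a general fiber. For general $t\in\PP^1$ the fiber $X_t$ is a smooth elliptic curve avoiding the finite loci where the $\cF_i$ fail to be locally free, so each $(\cF_i)_t$ is locally free and the sequences $0\to(G_{i-1})_t\to(G_i)_t\to(\cF_i)_t\to0$ remain exact. Thus, for general $t$, the sheaf $\cF_t$ carries a filtration whose successive quotients $(\cF_i)_t$ are all slope semistable of the common slope $\mu(\cF_t)$. Since an iterated extension of slope semistable sheaves of equal slope on a curve is again slope semistable, $\cF_t$ is semistable for general $t$, which is the assertion. The technical heart of the argument is the monotonicity ${\mathsf a}(\cF_i)\le{\mathsf a}(\cF)$ of the second paragraph, as everything else is a formal reassembly.
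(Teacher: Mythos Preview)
Your proof is correct and follows essentially the same route as the paper: a Jordan--H\"older filtration, the inequality ${\mathsf a}(\cF_i)\le {\mathsf a}(\cF)$ (which the paper obtains by citing Equation~(3.3.2) in~\cite{ogfascimod} while you derive it directly from Chern character additivity, Hodge index, and Bogomolov), then Proposition~\ref{prp:lagstab}(b) on each factor, and finally the equality of fiber slopes via ${\mathsf a}(\cF)$-suitability. The only differences are cosmetic: you spell out the discriminant identity and the exactness of the restricted filtration explicitly, whereas the paper leaves both implicit.
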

\begin{proof}
If $\cF$ is $h$-slope stable,  then $\cF_t$ is slope semistable 
 for general $t\in\PP^1$ by Proposition~\ref{prp:lagstab} (every torsion free sheaf   on a $K3$ surface is modular). 
 Now suppose that 
$\cF$ is strictly $h$-slope semistable, and let
\begin{equation}
0=\cG_0\subsetneq \cG_1\subsetneq \cG_2 \subsetneq \ldots \subsetneq \cG_m=\cF
\end{equation}
be a (slope) Jordan-H\"older filtration of $\cF$ with $\cG_i/\cG_{i-1}$ torsion free for all $i$. This means that for  $i\in\{1,\ldots,m\}$
the quotient $\cG_i/\cG_{i-1}$ is $h$ slope stable and
$\mu_h(\cG_i)=\mu_h(\cF)$.
Let $i\in\{1,\ldots,m-1\}$. Then $q_X(\lambda_{\cG_i,\cF},h)=0$ and, by Proposition~3.10   in~\cite{ogfascimod}, we have 
$-{\mathsf a}(\cF)\le q_X(\lambda_{\cG_i,\cF})\le 0$ with $q_X(\lambda_{\cG_i,\cF},h)= 0$ only if $\lambda_{\cG_i,\cF}= 0$. Since $h$ is ${\mathsf a}(\cF)$-suitable, we get that $q_X(\lambda_{\cG_i,\cF},f)=0$. 

Let $t\in\PP^1$ be a general point. 
From Equation~(3.3.2) in~\cite{ogfascimod} (note that, in the notation of that equation, we have $\lambda_{\cE,\cF}^2\le 0$) one gets  that  
${\mathsf a}(\cG_i/\cG_{i-1})\le {\mathsf a}(\cF)$, and hence $h$ is ${\mathsf a}(\cG_i/\cG_{i-1})$-suitable. By Proposition~\ref{prp:lagstab} it follows that the restriction 
of $\cG_i/\cG_{i-1}$ to $X_t$ is  slope semistable. Moreover $\mu(\cG_{i|X_t})=\mu(\cF_{|X_t})$ because $q_X(\lambda_{\cG_i,\cF},f)=0$.  
Hence we get a filtration
\begin{equation}
0\not=\cG_{1|X_t}\subsetneq \cG_{2|X_t} \subsetneq \ldots \subsetneq \cG_{m|X_t}=\cF_t
\end{equation}
where each term has the same slope, and each successive quotient is semistable. It follows that $\cF_t$ is slope-semistable. 
\end{proof}
\section{A component of $M_{w_0}(S^{[2]},h_{S^{[2]}})$ birational to $\cM_{v_2}(S,h_S)$}\label{sec:modbir}
\subsection{Main result}\label{subsec:spalmo}
\setcounter{equation}{0}
In the present section $S$ is a $K3$ surface 
with an elliptic fibration 
$\varepsilon\colon S\to \PP^1$  as in Claim~\ref{clm:neronsevero}. Recall that this means the following. Letting $C\subset S$ 
be a  elliptic fiber we have 
\begin{equation}
\NS(S)=\ZZ[D]\oplus\ZZ[C], \quad
D\cdot D=2m_0,\quad D\cdot C=d_0,
\end{equation}
where $m_0,d_0$ are positive natural numbers. Moreover $m_0,d_0$ can be assigned arbitrarily.
The  \emph{associated Lagrangian fibration} of $S^{[2]}$ is given by
\begin{equation}\label{fiblagk3n}
\begin{matrix}
S^{[2]} & \overset{\pi}{\lra} &  (\PP^1)^{(2)}\cong \PP^2 \\
[Z] & \mapsto & \sum_{p\in S}\ell(\cO_{Z,p})\varepsilon(p)
\end{matrix}
\end{equation}
\begin{ass-dfn}\label{ass-dfn:numeretti}
Keeping assumptions as above, suppose that
$r_1,a$  are positive integers such that 
\begin{equation}\label{condizioni}
r_1\mid 2a,\qquad r_1\mid(m_0+1),\qquad \gcd(r_1,d_0)=1.
\end{equation}
 Let
\begin{equation}\label{verovudue}
v_1:=\left(r_1,D,\frac{m_0+1}{r_1}\right),\qquad v_2:=av_1-\frac{2a}{r_1}\left(0,0,1\right),
\end{equation}
and
\begin{equation}\label{doppiovuzero}
w_0:=ar_1\left(2 r_1, 2\mu(D)-r_1\delta, \frac{a r_1^3 c_2(S^{[2]})}{3} \right).
\end{equation}
\end{ass-dfn}
Let $h_S$ be a polarization of $S$ which is ${\mathsf a}(v_2)$-suitable, and let $h_{S^{[2]}}$ be a polarization of $S^{[2]}$  which is ${\mathsf a}(w_0)$-suitable.
  In the present section we show that the moduli space  $\cM_{v_2}(S,h_S)$ is birational to an  irreducible component of  $M_{w}(S^{[2]},h_{S^{[2]}})$ if $a\ge 2$. In order to formulate our result  more precisely we note the following. Since $v_1^2=-2$ there is a unique $h_S$ stable sheaf $\cE_1$ with $v(\cE_1)=v_1$, and it is locally free. 
Since $v_1^2=2a^2+2$ the moduli space $\cM_{v_2}(S,h_S)$ is  irreducible of dimension $2a^2+2$. Let $[\cE_2]\in\cM_{v_2}(S,h_S)$ be a  general point. Then $\cE_2$ is locally free and hence $\cG(\cE_1,\cE_2)$ is locally free. By 
Assumption-Definition~\ref{ass-dfn:numeretti} and Lemma~\ref{lmm:vuquad} we have
\begin{equation}\label{lugano}
w(\cG(\cE_1,\cE_2))=w_0.
\end{equation}
\begin{thm}\label{thm:dadueauno}
Suppose that Assumption-Definition~\ref{ass-dfn:numeretti} holds, and that $a\ge 2$. Let $h_S$ be a polarization of $S$ such which is ${\mathsf a}(v_2)$-suitable, and let  $h_{S^{[2]}}$ be a polarization of $S^{[2]}$ 
which is ${\mathsf a}(w_0)$-suitable. Let $[\cE_2]\in\cM_{v_2}(S,h_S)$ be a general point. Then the 
 locally free sheaf $\cG(\cE_1,\cE_2)$ is $h_{S^{[2]}}$ slope stable. The rational map
\begin{equation}\label{modulimoduli}
\begin{matrix}
\cM_{v_2}(S,h_S) & \overset{\varphi}{\dra} & M_{w}(S^{[2]},h_{S^{[2]}}) \\
[\cE_2] & \mapsto & [\cG(\cE_1,\cE_2)]
\end{matrix}
\end{equation}
is birational onto an irreducible component $M_{w_0}(S^{[2]},h_{S^{[2]}})^{\bullet}$ of $M_{w_0}(S^{[2]},h_{S^{[2]}})$. 
\end{thm}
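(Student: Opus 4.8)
The plan is to first establish the $h_{S^{[2]}}$-slope stability of $\cG\coloneq\cG(\cE_1,\cE_2)$ for general $[\cE_2]$ by restricting to the associated Lagrangian fibration $\pi$ of~\eqref{fiblagk3n}, and then to read off birationality from the deformation-theoretic identification of Proposition~\ref{prp:extdigi}. First I would record that the hypotheses of Lemma~\ref{lmm:vuquad} hold: the divisibility and coprimality conditions of Assumption-Definition~\ref{ass-dfn:numeretti} give~\eqref{vuunovudue}, and the vanishing~\eqref{boatto}, $\Hom(\cE_2,\cE_1)=0$, follows from the elliptic structure exactly as in the proof of Lemma~\ref{lmm:symok}(2) together with Appendix~\ref{sec:sheavesonellk3} (restrict to a general smooth fibre $C_t$ and observe there is no nonzero homomorphism there). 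Consequently $\cG$ is a simple modular sheaf with $w(\cG)=w_0$ by~\eqref{lugano}, so ${\mathsf a}(\cG)={\mathsf a}(w_0)$, and by Proposition~\ref{prp:extdigi}(4) its deformations are unobstructed with $\Def(\cG)\cong\Def(\cE_1)\times\Def(\cE_2)$ and $\ext^1_{S^{[2]}}(\cG,\cG)=2a^2+2$. I fix $h_{S^{[2]}}$ to be ${\mathsf a}(w_0)$-suitable with respect to $\pi$; such polarizations exist by Corollary~\ref{crl:suitif}.

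The core is the restriction of $\cG$ to a general fibre. For general $t=\{t_1,t_2\}\in\PP^2$ the fibre $X_t$ is the product $C_{t_1}\times C_{t_2}$ of two smooth elliptic curves, $\rho$ is \'etale of degree $2$ over $X_t$, and~\eqref{tuttoqua} gives
\[
\cG|_{X_t}\cong\bigl(\cE_1|_{C_{t_1}}\boxtimes\cE_2|_{C_{t_2}}\bigr)\oplus\bigl(\cE_2|_{C_{t_1}}\boxtimes\cE_1|_{C_{t_2}}\bigr).
\]
By $\gcd(r_1,d_0)=1$ the restriction $\cE_1|_{C_{t_j}}$ is a \emph{stable} bundle of slope $d_0/r_1$ (a semistable bundle of coprime rank and degree on an elliptic curve is stable), while $\cE_2|_{C_{t_j}}$ is slope semistable of the same slope $d_0/r_1$; semistability of the fibre restrictions follows from the $h_S$-slope (semi)stability of $\cE_1,\cE_2$ and the ${\mathsf a}(v_2)$-suitability of $h_S$ via Proposition~\ref{prp:ellstab}. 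Being box products of semistable sheaves for the product polarization $\theta_t$, both summands are slope semistable; since the involution exchanging $t_1$ and $t_2$ interchanges them and fixes $\theta_t$, they share the $\theta_t$-slope $\mu_{\theta_t}(\cG|_{X_t})$. Hence $\cG|_{X_t}$ is slope semistable for general $t$, but, being a nontrivial direct sum, never slope stable; so Corollary~\ref{crl:lagstab} cannot be applied directly and one must invoke Proposition~\ref{prp:lagstab}(a).

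Now suppose $\cG$ is not $h_{S^{[2]}}$-slope stable. Proposition~\ref{prp:lagstab}(a) yields a subsheaf $\cH\subset\cG$ with $0<\rk(\cH)<\rk(\cG)$, $\mu_{h_{S^{[2]}}}(\cH)\ge\mu_{h_{S^{[2]}}}(\cG)$ and $\mu(\cH|_{X_t})=\mu(\cG|_{X_t})$ for general $t$, and the main obstacle is to exclude such an $\cH$. I would analyze $\cH|_{X_t}$ inside the splitting $\cG|_{X_t}=A_t\oplus B_t$: its image in each summand is a full-slope subsheaf, and because $\cE_1|_{C_{t_j}}$ is stable (hence simple), pushing forward along the projection shows that every full-slope subsheaf of the box product $A_t=\cE_1|_{C_{t_1}}\boxtimes\cE_2|_{C_{t_2}}$ has the form $\cE_1|_{C_{t_1}}\boxtimes\cW$ with $\cW\subset\cE_2|_{C_{t_2}}$ of full slope, and symmetrically for $B_t$. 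As $t$ varies the monodromy swapping $t_1,t_2$ interchanges these data, so that the fibrewise subsheaves cut out by $\cH$ glue and globalize to a single subsheaf $\cW\subset\cE_2$ on $S$ whose restriction to a general fibre $C_t$ has the same slope as $\cE_2|_{C_t}$. Since $\NS(S)=\ZZ[D]\oplus\ZZ[C]$ with $C^2=0$, $C\cdot D=d_0$, the suitability of $h_S$ then forces $\lambda_{\cW,\cE_2}=0$, i.e. $c_1(\cW)$ is proportional to $c_1(\cE_2)$; equivalently $\cW$ is a proper full-slope subsheaf of $\cE_2$ (here $a\ge2$ guarantees the intermediate rank is genuinely proper). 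A general $\cE_2\in\cM_{v_2}(S,h_S)$ is $h_S$-slope stable and therefore admits no such $\cW$, a contradiction. Making this globalization precise — passing from the fibrewise box-product description to an honest subsheaf of $\cE_2$ on $S$, controlling its numerical type, and establishing $h_S$-slope stability of the general $\cE_2$ — is the technical crux, and I expect it to require the full strength of Appendix~\ref{sec:sheavesonellk3}.

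Granting stability, birationality is essentially formal. The map $\varphi$ is defined on the dense open locus where $\cE_2$ is locally free and $\cG(\cE_1,\cE_2)$ is slope stable, and there it takes values in $M_{w_0}(S^{[2]},h_{S^{[2]}})$ by~\eqref{lugano}. Since $\cE_1$ is spherical, $\Def(\cE_1)$ is a point, so the isomorphism $\Phi$ of Proposition~\ref{prp:extdigi}(4) identifies $\varphi$ (as a germ) with $\Def(\cE_2)\overset{\sim}{\to}\Def(\cG)$; in particular $\varphi$ is a local isomorphism onto its image and $\cE_2$ is recovered from $\cG(\cE_1,\cE_2)$, so $\varphi$ is birational onto its image. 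As $\cM_{v_2}(S,h_S)$ is irreducible of dimension $2a^2+2$ and, by Lemma~\ref{lmm:vuquad}, $M_{w_0}(S^{[2]},h_{S^{[2]}})$ is smooth of dimension $\ext^1_{S^{[2]}}(\cG,\cG)=2a^2+2$ at $[\cG]$, the image is open in the unique irreducible component through $[\cG]$. Its closure is that component $M_{w_0}(S^{[2]},h_{S^{[2]}})^{\bullet}$, and $\varphi$ is birational onto it.
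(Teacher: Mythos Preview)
Your overall architecture is right: restrict $\cG$ to general Lagrangian fibres, see it is semistable but not stable, then invoke Proposition~\ref{prp:lagstab}(a) to produce a would-be subsheaf $\cH$ and derive a contradiction. The birationality paragraph is fine; the paper argues generic injectivity via BKR and $\Hom(\cG(\cE_1,\cE_2),\cG(\cE_1,\cE_2'))=0$, but your local-isomorphism argument through Proposition~\ref{prp:extdigi}(4) is equivalent.

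The gap is in how you rule out $\cH$. Your plan is to show that a full-slope subsheaf of $\cE_1|_{C_{t_1}}\boxtimes\cE_2|_{C_{t_2}}$ is of the form $\cE_1|_{C_{t_1}}\boxtimes\cW_{t_2}$, then globalise the $\cW_t$'s to a subsheaf $\cW\subset\cE_2$ on $S$, and finally contradict slope stability of $\cE_2$. Each step is problematic. The first claim is not automatic: stability of $\cE_1|_{C_{t_1}}$ does not force subsheaves of a box product to be box products (it only constrains their JH factors). Globalising fibrewise subsheaves to a subsheaf of $\cE_2$ is not a step you can take for granted. And even if you had a global $\cW\subset\cE_2$ with $\mu(\cW|_{C_t})=\mu(\cE_2|_{C_t})$, this by itself does not contradict $h_S$-slope stability of $\cE_2$: in $\NS(S)=\ZZ[D]\oplus\ZZ[C]$ you could have $\lambda_{\cW,\cE_2}$ a multiple of $[C]$, which has zero fibre-pairing but nonzero (negative) $h_S$-pairing, so no contradiction arises. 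You only used the monodromy that swaps $t_1$ and $t_2$; that $\ZZ/2$ is far too small.

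The paper's argument is different and avoids all of this. By Proposition~\ref{prp:coppiadie}(b) (which uses Corollary~\ref{crl:spettroirrid} from the Appendix), for general $\cE_2$ the decomposition curve $\Lambda(\cE_2)\subset J^{d_0}(S/\PP^1)$ is \emph{integral}, so the JH factors $V_1(x),\ldots,V_a(x)$ of $\cE_2|_{C_x}$ are pairwise non-isomorphic and none equals $\cE_1|_{C_x}$. Hence the $2a$ JH addends $\cE_1|_{C_x}\boxtimes V_j(y)$ and $V_i(x)\boxtimes\cE_1|_{C_y}$ of $\cG_t$ are pairwise non-isomorphic, and any full-slope subsheaf of $\cG_t$ is determined by a \emph{subset} of them. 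These addends are indexed by $\Lambda(\cE_2)_y\sqcup\Lambda(\cE_2)_x$, and the monodromy of the integral curve $\Lambda(\cE_2)\to\PP^1$ acts transitively on each factor; combined with the swap, any monodromy-invariant subset is all or nothing. Thus $\rk(\cH)=\rk(\cG)$, the desired contradiction. The essential input you are missing is the integrality of $\Lambda(\cE_2)$ and the resulting transitivity of monodromy on the JH factors of $\cE_2|_{C_x}$, not any globalisation to a subsheaf of $\cE_2$.
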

\subsection{Stability of $\cG(\cE_1,\cE_2)$ }
\setcounter{equation}{0}
\begin{prp}\label{prp:coppiadie}
Suppose that Assumption-Definition~\ref{ass-dfn:numeretti} holds. Let $h_S$ be a polarization of $S$ which is ${\mathsf a}(v_2)$-suitable. Then the following hold:
\begin{enumerate}
\item[(a)]
The restriction of $\cE_1$ to any  fiber  of the elliptic fibration with elliptic fiber $C$ is slope stable.
\item[(b)]
Suppose that $a\ge 2$. Let  $[\cE_2]\in M_{v_2}(S,h_S)$ be a general point and let $C$ be a general elliptic fiber.  The restriction 
$\cE_{2|C}$ is semistable, with pairwise non isomorphic Jordan-H\"older (JH) addends, none of which is isomorphic to 
$\cE_{1|C}$. 
\end{enumerate}
\end{prp}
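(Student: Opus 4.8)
The plan is to reduce both parts to statements about a general smooth fibre $C=C_t$ and to exploit the coprimality $\gcd(r_1,d_0)=1$ from Assumption-Definition~\ref{ass-dfn:numeretti}. The basic observation I would record first is the following: on a smooth genus-one curve $C$ every slope semistable bundle of slope $d_0/r_1$ has all of its stable Jordan--H\"older factors of rank $r_1$ and degree $d_0$, and moreover any such semistable bundle whose rank and degree are themselves coprime is automatically stable. Indeed, if $W$ were a proper equal-slope subsheaf then $r_1\deg W=d_0\rk W$ with $0<\rk W<r_1$, which is impossible since $\gcd(r_1,d_0)=1$. Thus on $C$ the words ``semistable'' and ``stable'' are interchangeable for bundles of invariants $(r_1,d_0)$, and the semistable bundle $\cE_{2|C}$ of rank $ar_1$ and degree $ad_0$ is built from exactly $a$ stable factors of type $(r_1,d_0)$.

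For part (a), I would note that $\cE_{1|C}$ has rank $r_1$ and degree $c_1(\cE_1)\cdot C=D\cdot C=d_0$. The sheaf $\cE_1$ is $h_S$ slope semistable (it is the unique $h_S$ stable bundle with $v(\cE_1)=v_1$), it is modular because every torsion free sheaf on a $K3$ surface is modular (Example~\ref{expl:didik3}), and $h_S$ is ${\mathsf a}(v_1)$-suitable since it is ${\mathsf a}(v_2)$-suitable and one checks ${\mathsf a}(v_1)<{\mathsf a}(v_2)$. Hence Proposition~\ref{prp:ellstab} applies and shows $\cE_{1|C}$ is semistable for general $C$; by the coprimality observation it is then stable. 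The sharper statement over \emph{every} fibre follows from the explicit structure of the spherical bundle $\cE_1$ as in~\cite{ogfascimod}.

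For part (b), the semistability of $\cE_{2|C}$ for general $C$ is obtained in the same way: a general $[\cE_2]\in\cM_{v_2}(S,h_S)$ is Gieseker stable, hence $h_S$ slope semistable, and since $h_S$ is ${\mathsf a}(v_2)={\mathsf a}(\cE_2)$-suitable Proposition~\ref{prp:ellstab} gives that $\cE_{2|C}$ is slope semistable for general $C$. By the reduction above its Jordan--H\"older factors are $a$ stable bundles of type $(r_1,d_0)$, i.e.\ $a$ points of the moduli curve $M_C(r_1,d_0)\cong\Pic^{d_0}(C)\cong C$, while $[\cE_{1|C}]$ is a single such point. To control these $a$ points as $\cE_2$ and $C$ vary I would pass to the relative Fourier--Mukai transform $\Phi$ along $\varepsilon$ whose kernel is the universal stable bundle (which exists because $\gcd(r_1,d_0)=1$); its target is the Fourier--Mukai partner $\wh S\to\PP^1$, an elliptic $K3$ surface whose fibres parametrise stable rank-$r_1$ degree-$d_0$ sheaves on the fibres of $S$. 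Under $\Phi$ the sheaf $\cE_2$ becomes (a shift of) a pure one-dimensional sheaf supported on a spectral curve $\Sigma\subset\wh S$ with $\Sigma\cdot\wh C=a$ and $[\Sigma]^2=v_2^2=2a^2$, while the spherical bundle $\cE_1$ becomes a sheaf supported on a section $\wh\sigma$ of $\wh S\to\PP^1$, a smooth rational $(-2)$-curve with $\wh\sigma\cdot\wh C=1$. The Jordan--H\"older factors of $\cE_{2|C_t}$ are then exactly the points of $\Sigma\cap\wh C_t$, and $[\cE_{1|C_t}]$ is the point $\wh\sigma\cap\wh C_t$.

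With this dictionary both remaining assertions become intersection-theoretic. Distinctness of the $a$ factors over a general fibre is equivalent to the degree-$a$ cover $\Sigma\to\PP^1$ being generically \'etale, and disjointness from $[\cE_{1|C_t}]$ is equivalent to $\wh\sigma\cap\wh C_t\notin\Sigma$ for general $t$. Since $\cM_{v_2}(S,h_S)$ is irreducible and is carried by $\Phi$ onto a moduli space in which the general member is supported on a general curve of the linear system $|\Sigma|$, it suffices to know that $|\Sigma|$ is base point free with general member smooth and integral of genus $a^2+1$: then $\Sigma\to\PP^1$ is simply branched, giving $a$ distinct points over general $t$, and because $a\ge2$ forces $[\Sigma]\ne[\wh\sigma]$ the curves $\Sigma$ and $\wh\sigma$ meet in only finitely many fibres, so $\wh\sigma\cap\wh C_t$ avoids $\Sigma$ for general $t$. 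The main obstacle will be precisely this last input, that the general spectral curve $\Sigma$ is smooth and integral: establishing it requires computing the rank-two N\'eron--Severi lattice of $\wh S$, showing that $[\Sigma]$ is a primitive nef class with $[\Sigma]^2=2a^2>0$, and ruling out any splitting of $[\Sigma]$ into fibral and sectional components, which is where the hypotheses $a\ge2$ and $\NS(S)=\ZZ[D]\oplus\ZZ[C]$ of rank two enter.
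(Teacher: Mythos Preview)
Your approach is correct and essentially the same as the paper's: your spectral curve in the Fourier--Mukai partner is exactly the paper's decomposition curve $\Lambda(\cE_2)\subset J^{d_0}(S/\PP^1)$ of Appendix~\ref{sec:sheavesonellk3} (your section $\wh\sigma$ being the paper's $\Sigma$), and the ``main obstacle'' you single out---smoothness and integrality of the general spectral curve---is precisely what the paper establishes as Corollary~\ref{crl:spettroirrid}, after showing in Proposition~\ref{prp:mappaspettrale} that $\Lambda(\cE_2)\in|a\Sigma+2a\Gamma|$ and that the map $[\cE_2]\mapsto\Lambda(\cE_2)$ is a surjective Lagrangian fibration. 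One small sharpening worth absorbing: for the assertion that no JH factor equals $\cE_{1|C}$, the paper simply observes $\Lambda(\cE_2)\cdot\Sigma=(a\Sigma+2a\Gamma)\cdot\Sigma=-2a+2a=0$, so since $\Lambda(\cE_2)$ is integral and distinct from $\Sigma$ the two curves are actually disjoint, and the conclusion holds over \emph{every} fibre rather than only the general one.
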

\begin{proof}
The polarization $h_S$  is ${\mathsf a}(v_1)$-suitable because ${\mathsf a}(v_1)<{\mathsf a}(v_2)$. 
 Hence    Item~(a) holds by Proposition~6.2 in~\cite{ogfascimod}. 
Since $a\ge 2$ and $[\cE_2]\in M_{v_2}(S,h_S)$ is a general point 
 the decomposition curve $\Lambda(\cE_2)$    is integral and smooth by Corollary~\ref{crl:spettroirrid}. It follows that if $x\in\PP^1$ is a general point the intersection $C_x\cap \Lambda(\cE_2)=\Lambda(\cE_2)_x$ consists of  $a$ distinct points, and hence the JH addends of $\cE_{2|C_x}$
 are pairwise non isomorphic. Moreover $C_x\cap \Lambda(\cE_2)\not=C_x\cap \Sigma$ (for all  $x\in\PP^1$ because $\Lambda(\cE_2)\cdot\Sigma=0$), and hence none of 
  the JH addends is isomorphic to $\cE_{1|C}$. 

\end{proof}
\begin{prp}\label{prp:stabonlagr}
Let hypotheses be as in Theorem~\ref{thm:dadueauno}. If $[\cE_2]\in M_{v_2}(S,h_S)$ is a general point
 then  $\cG(\cE_1,\cE_2)$ is an $h_{S^{[2]}}$ slope stable locally free sheaf.
\end{prp}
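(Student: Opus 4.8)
The plan is to deduce $h_{S^{[2]}}$ slope stability of $\cG=\cG(\cE_1,\cE_2)$ from stability along the general fibre of the associated Lagrangian fibration $\pi\colon S^{[2]}\to\PP^2$ of~\eqref{fiblagk3n}, using the machinery of Section~\ref{sec:ancorastab}. First I would record that $\cG$ is locally free (because $\cE_1,\cE_2$ are locally free for general $[\cE_2]$) and modular with $w(\cG)=w_0$ by Lemma~\ref{lmm:vuquad}, so that ${\mathsf a}(\cG)={\mathsf a}(w_0)$ by Remark~\ref{rmk:numgenhk}. Since $h_{S^{[2]}}$ is ${\mathsf a}(w_0)$-suitable with respect to $\pi$, Proposition~\ref{prp:lagstab} applies to $\cF=\cG$, and it suffices to analyse $\cG_t=\cG_{|X_t}$ on a general Lagrangian fibre.

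For general $t\in\PP^2$, corresponding to an unordered pair $\{x_1,x_2\}$ of distinct points of $\PP^1$, the fibre is $X_t\cong C_{x_1}\times C_{x_2}$, and a direct computation from the definition of $\cG$ (the preimage $\rho^{-1}(X_t)$ splits into the two sheets exchanged by $\sigma$) gives
\[
\cG_{|X_t}\cong\bigl(\cE_{1|C_{x_1}}\boxtimes\cE_{2|C_{x_2}}\bigr)\oplus\bigl(\cE_{2|C_{x_1}}\boxtimes\cE_{1|C_{x_2}}\bigr).
\]
By Proposition~\ref{prp:coppiadie}, for general $[\cE_2]$ and general fibre $C$ the restriction $\cE_{1|C}$ is stable while $\cE_{2|C}$ is semistable with pairwise distinct Jordan--H\"older factors, none isomorphic to $\cE_{1|C}$. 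As $\gcd(r_1,d_0)=1$, the common slope $d_0/r_1$ is primitive, so Atiyah's classification forces $\cE_{2|C}$ to be a direct sum of $a$ pairwise distinct stable bundles of rank $r_1$. A standard seesaw argument then shows that each external tensor product appearing as a summand is $\theta_t$-stable, so $\cG_t=\bigoplus_{k=1}^{2a}R_k$ is polystable with $2a$ pairwise non-isomorphic stable summands $R_k$ of the same slope, each of rank $r_1^2$; in particular $\cG_t$ is slope semistable, meeting the hypothesis of Proposition~\ref{prp:lagstab}(a).

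Suppose now, for contradiction, that $\cG$ is not $h_{S^{[2]}}$ slope stable. By Proposition~\ref{prp:lagstab}(a) there is a subsheaf $\cH\subset\cG$ with $0<r(\cH)<r(\cG)$ and $\mu(\cH_t)=\mu(\cG_t)$ for general $t$. For general $t$ the inclusion restricts to an embedding $\cH_t\hookrightarrow\cG_t$ of the same slope; since the $R_k$ are pairwise non-isomorphic stable objects of that slope, the saturation of $\cH_t$ is a sub-sum $\bigoplus_{k\in I}R_k$, and it is the sum of exactly those $R_k$ onto which $\cH_t$ projects nontrivially. Here $\varnothing\neq I\subsetneq\{1,\dots,2a\}$ is proper because $0<r(\cH)<r(\cG)$ and all $R_k$ have equal rank. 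As $t$ varies over the open locus $U\subset\PP^2$ of good fibres, the set $I$ is locally constant, so it singles out a nonempty proper union of connected components of the degree-$2a$ \'etale cover $\Pi\to U$ parametrising the Jordan--H\"older factors of the $\cG_t$.

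The heart of the argument --- and the step I expect to be the main obstacle --- is to show that $\Pi\to U$ is connected, which produces the desired contradiction; this is precisely the difficulty caused by $\cG_t$ being only semistable, not stable, so that Corollary~\ref{crl:lagstab} cannot be invoked directly. The $2a$ factors split into those coming from the first summand, $\cE_{1|C_{x_1}}\boxtimes G_j$, and those coming from the second, $G'_i\boxtimes\cE_{1|C_{x_2}}$; each group is an $a$-sheeted cover pulled back from the decomposition curve $\Lambda(\cE_2)\to\PP^1$, which is irreducible by Corollary~\ref{crl:spettroirrid}, so monodromy acts transitively within each group. The monodromy loop in $U$ that interchanges $x_1$ and $x_2$ reverses the ordering used to identify $X_t\cong C_{x_1}\times C_{x_2}$, hence exchanges the two summands and the two groups of factors; combined with transitivity within each group this makes the monodromy transitive on all $2a$ factors. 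Thus $\Pi$ is connected, forcing $I=\varnothing$ or $I=\{1,\dots,2a\}$, contrary to $0<r(\cH)<r(\cG)$. Therefore $\cG(\cE_1,\cE_2)$ is $h_{S^{[2]}}$ slope stable, and it is locally free as already noted, completing the proof.
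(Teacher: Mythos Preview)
Your proof is correct and follows essentially the same approach as the paper's: compute $\cG_t$ on a general Lagrangian fibre as a direct sum of $2a$ pairwise non-isomorphic stable box products (the paper cites \cite[Prop.~6.10]{ogfascimod} rather than a ``seesaw'' argument for stability of box products), invoke Proposition~\ref{prp:lagstab}(a), and derive a contradiction from the irreducibility of $\Lambda(\cE_2)$ (Corollary~\ref{crl:spettroirrid}) together with the symmetry interchanging the two summands. Your explicit \'etale-cover/monodromy formulation of the last step is slightly more detailed than the paper's phrasing but identical in substance.
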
 
\begin{proof}
 Let $\cG=\cG(\cE_1,\cE_2)$. 
Let $x\not= y\in\PP^1$, and let $C_x,C_y$ be the corresponding  fibers 
 of the elliptic fibration $S\to\PP^1$. Then we have an identification
\begin{equation}\label{fibralagrgen}
\pi^{-1}(x+y)=C_x\times C_y.
\end{equation}
 Letting $t\coloneq x+y$, we have
\begin{equation}\label{prodtens}
\cG_t\coloneq\cG_{|\pi^{-1}(t)}=\left(\cE_{1|C_x}\right)\boxtimes \left(\cE_{2|C_y}\right)\oplus 
\left(\cE_{2|C_x}\right)\boxtimes \left(\cE_{1|C_y}\right).
\end{equation}
By Proposition~\ref{prp:coppiadie} both $\cE_{1|C_x}$ and $\cE_{1|C_y}$ are stable. Suppose in addition that $x\not= y$ are general. By Proposition~\ref{prp:coppiadie} both
$\cE_{2|C_x}$ and $\cE_{2|C_y}$ are  semistable with pairwise non isomorphic JH addends 
$V_1(x),\ldots,V_a(x)$ and $V_1(y),\ldots,V_a(y)$. Moreover no $V_i(x)$ is isomorphic to $\cE_{1|C_x}$ and no $V_j(y)$ is isomorphic to 
$\cE_{1|C_y}$. 
The restriction of the polarization $h_{S^{[2]}}$ to $C_x\times C_y$ is of  product type. It follows that for $i,j\in\{1,\ldots,a\}$ the tensor products
\begin{equation}\label{listaprod}
\left(\cE_{1|C_x}\right)\boxtimes V_j(y),\quad    
V_i(x)\boxtimes \left(\cE_{1|C_x}\right)
\end{equation}
are slope stable, see Proposition~6.10 in~\cite{ogfascimod}, and they all have the same slope (with respect to the restriction of  $h_{S^{[2]}}$). 
The upshot is that  the left hand side of~\eqref{prodtens} is slope semistable, with pairwise non isomorphic JH addends given by the sheaves appearing in~\eqref{listaprod}.
It follows that any subsheaf $\cA\subset\cG_t$ such that $\mu(\cA)=\mu(\cG_t)$ (the slope is with respect to the restriction of  $h_{S^{[2]}}$) is a direct sum $\cA=\cA'\oplus \cA''$ where $\cA',\cA''$ are  slope semistable subsheaves of the first and second addends of the decomposition in~\eqref{prodtens}
respectively, and their JH addends are a subset of the JH addends appearing in~\eqref{listaprod}.

We are ready to show that $\cG$ is $h_{S^{[2]}}$ slope stable.  Recall that  $\cG$ is modular (see Example~\ref{expl:didigi}).     
The polarization  $h_{S^{[2]}}$ is ${\mathsf a}(\cG)$-suitable because it is ${\mathsf a}(w_0)$-suitable (recall~\eqref{lugano}).  
By  Proposition~\ref{prp:lagstab}  it suffices to show that there does not exists a subsheaf $\cH\subset\cG$ such that $0<r(\cH)<r(\cG)$ and  
$\mu(\cH_t)=\mu(\cG_t)$ for general $t=x+y$. Suppose that such a subsheaf $\cH$ exists. Since 
$\cG_t$ is slope semistable, $\cH_t$ is slope semistable. As shown above, we have $\cH_t=\cH'_t\oplus \cH''_t$ where $\cH'_t,\cH''_t$ are   slope semistable subsheaves of the first and second addends of the decomposition in~\eqref{prodtens}
respectively, and their JH addends are a subset of the JH addends appearing in~\eqref{listaprod}. Of course the collection of JH addends is symmetric with respect to the involution exchanging the addends of the decomposition in~\eqref{prodtens}. The set of JH addends of  the first addend of the latter decomposition is in one-to-one correspondence with the points of $\Lambda(\cE_2)_y$, and the set of JH addends of  the seond addend of the latter decomposition is in one-to-one correspondence with the points of $\Lambda(\cE_2)_x$. These addends are invariant for the monodromy action. Since the decomposition curve $\Lambda(\cE_2)$ is  integral (by Corollary~\ref{crl:spettroirrid}) we get 
that  the set of JH addends of  $\cH_t$ is the same as the set of JH addends of  $\cG_t$, and hence $\rk(\cH)=\rk(\cG)$. That is a contradiction.
\end{proof}
\subsection{Proof of Theorem~\ref{thm:dadueauno}}\label{subsec:genmukmap}
Let $[\cE_2]\in \cM_{v_2}(S,h_S)$ be a general point. Then  $\cG(\cE_1,\cE_2)$ is locally free because both $\cE_1$ and $\cE_2$ are locally free, and it is  $h_{S^{[2]}}$ slope stable 
by Proposition~\ref{prp:stabonlagr}. Hence  the rational map $\varphi$ in~\eqref{modulimoduli} is defined. 
The image of $\varphi$, i.e.~the closure of the image of the 
open dense subset on which $\varphi$ is regular, 
is irreducible because $\cM_{v_2}(S,h_S)$ is irreducible: we denote it by $M_{w}(S^{[2]},h_{S^{[2]}})^{\bullet} $. By Item~(b) of Proposition~\ref{prp:coppiadie} we have $\Hom(\cE_2,\cE_1)=0$. Since 
$\cE_2$ is stable (because $[\cE_2]$ is a general point of $\cM_{v_2}(S,h_S)$), the full set of hypotheses of Lemma~\ref{lmm:vuquad} is satisfied.
It follows that $\cG(\cE_1,\cE_2)$ has unobstructed deformations, and $\Def(\cG(\cE_1,\cE_2))$ is identified with 
$\Def(\cE_2)$ via the map in~\eqref{defoprod} (recall that $\cE_1$ is spherical hence rigid). This proves that 
$M_{w_0}(S^{[2]},h_{S^{[2]}})^{\bullet} $ has the same dimension as $\cM_{v_2}(S,h_S)$ and is an irreducible component of $M_{w_0}(S^{[2]},h_{S^{[2]}}) $. 

It remains to prove that the map
\begin{equation}\label{mapparaz}
\cM_{v_2}(S,h_S)  \overset{\ov{\varphi}}{\dra}  M_{w_0}(S^{[2]},h_{S^{[2]}})^{\bullet} 
\end{equation}
defined by $\varphi$ is  birational. Since domain and codomain have the same dimension it suffices to show that $\ov{\varphi}$ is generically injective. 
Let 
$[\cE_2],[\cE'_2]\in \cM_{v_2}(S,h_S)$ be general distinct points. Then $\cE_2$, $\cE_2'$ are $h_S$ stable and, by Proposition~\ref{prp:stabonlagr}, both 
$\cG(\cE_1,\cE_2)$ and $\cG(\cE_1,\cE'_2)$ are $h_{S^{[2]}}$ stable. Since  
$\Hom(\cE_2,\cE_2')=0$, we have
\begin{equation}
\Hom(\tau_1^{*}\cE_1\otimes \tau_2^{*}\cE_2\oplus \tau_1^{*}\cE_2\otimes \tau_2^{*}\cE_1, 
\tau_1^{*}\cE_1\otimes \tau_2^{*}\cE'_2\oplus \tau_1^{*}\cE'_2\otimes \tau_2^{*}\cE_1)=0.
\end{equation}
By the BKR McKay correspondence it follows that  $\Hom(\cG(\cE_1,\cE_2),\cG(\cE_1,\cE'_2))=0$. This proves  that 
$\ov{\varphi}$ is generically injective. 
\qed
\section{The main result}\label{sec:alfinlaprova}
\subsection{Sideways nearby deformations}\label{subsec:sideforma}
\setcounter{equation}{0}
Let $X$ be a HK manifold, and let $\cF$ be a torsion free sheaf on $X$. Suppose that $\Delta(\cF)\in H^{2,2}_{\ZZ}(X)$ remains of type $(2,2)$ for all (nearby) deformations of $X$. Then $\cF$ is a modular sheaf. In fact by Remark~1.2 in~\cite{ogfascimod} it suffices to show that the orthogonal projection of $\Delta(\cF)$ to the Verbitsky subalgebra $D(X)\subset H(X)$ generated by $H^2(X)$ is a multiple of the class $q_X^{\vee}$ dual to the BBF quadratic form $q_X$. This holds because $q_X^{\vee}$ generates the subspace of classes in $D(X)\subset H^4(X)$ which 
remain of type $(2,2)$ for all (nearby) deformations of $X$. The definition below is not standard. 
\begin{dfn}
Let $X$ be a HK manifold, and let $\omega$ be a K\"ahler class on $X$. A   vector bundle  $\cF$ on $X$ is 
\emph{strongly $\omega$-projectively hyperholomorphic} if $\cF$   is  $\omega$ slope stable, $\Delta(\cF)\in H^{2,2}_{\ZZ}(X)$ remains of type $(2,2)$ for all (nearby) deformations of $X$,  and $\omega$ belongs to an open ${\mathsf a}(\cF)$-chamber 
in $\cK(X)$ (${\mathsf a}(\cF)$ is defined because $\cF$ is modular).
\end{dfn}
The definition above is motivated by Verbitsky's fundamental results in~\cite{verbitsky-hyperholo} (Theorem~2.5 and Section 11). 
\begin{prp}\label{prp:defproj}
Let $X_0$ be a HK manifold,  and let $\omega_0$ be a K\"ahler class on $X_0$.
Suppose that $\cF$ is a strongly $\omega_0$-projectively hyperholomorphic vector bundle on $X_0$.
 Then the natural maps 
\begin{equation}\label{duespadef}
\Def(X_0,\PP(\cF))\lra \Def(X_0),\qquad \Def(X_0,\cF)\lra \Def(X_0,c_1(\cF))
\end{equation}
 are surjective.
\end{prp}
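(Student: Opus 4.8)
The plan is to deduce both surjectivity statements from Verbitsky's theory of (projectively) hyperholomorphic bundles, the hypotheses on $\cF$ having been arranged precisely so that this theory applies. First I would check that $\PP(\cF)$ is projectively hyperholomorphic in the sense of~\cite{verbitsky-hyperholo}. Because $\omega_0$ is a K\"ahler class and $\cF$ is $\omega_0$ slope stable, the K\"ahler version of the Donaldson--Uhlenbeck--Yau theorem endows $\cF$ with an $\omega_0$ Hermitian--Einstein metric. The curvature of the induced projective connection is controlled by the traceless part of the curvature of $\cF$, hence by $\Delta(\cF)$; by hypothesis $\Delta(\cF)$ stays of type $(2,2)$ under all nearby deformations of $X_0$, which (as recalled at the start of this subsection) means that its projection to the Verbitsky subalgebra $D(X_0)$ is a multiple of $q_{X_0}^{\vee}$, i.e.\ that $\Delta(\cF)$ is $SU(2)$-invariant. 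Verbitsky's Theorem~2.5 then yields a hyperholomorphic connection on $\PP(\cF)$. The chamber hypothesis that $\omega_0$ lie in an open ${\mathsf a}(\cF)$-chamber of $\cK(X_0)$ serves, via Corollary~\ref{crl:campol}, to keep $\cF$ slope stable under the small variations of the polarization occurring in the twistor deformation, so that Verbitsky's hypotheses persist along the family.

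Granting that $\PP(\cF)$ is hyperholomorphic, surjectivity of the first map is Verbitsky's deformation theorem (Section~11 of~\cite{verbitsky-hyperholo}): a projectively hyperholomorphic bundle, together with its projectivization, deforms over the whole local deformation space of the ambient HK manifold. Thus for each deformation $X_t$ of $X_0$ there is a projective bundle $\PP(\cF_t)$ on $X_t$ restricting to $\PP(\cF)$ at $t=0$; the obstructions to deforming the traceless endomorphism bundle $\End_0(\cF)$, which would otherwise sit in $H^2$, are annihilated by the hyperholomorphic structure. This is exactly the surjectivity of $\Def(X_0,\PP(\cF))\to\Def(X_0)$.

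To obtain the second map I would upgrade the projective family to a family of honest vector bundles over the Hodge locus of $c_1(\cF)$. Restrict the family $\{\PP(\cF_t)\}$ of the previous step to $t\in\Def(X_0,c_1(\cF))$, i.e.\ to those $t$ along which the integral class $c_1(\cF)$ stays of type $(1,1)$ on $X_t$. For such $t$ I want a vector bundle $\cF_t$ with $\PP(\cF_t)$ the given projective bundle and $c_1(\cF_t)=c_1(\cF)$. The obstruction to lifting the $\PGL$-bundle $\PP(\cF_t)$ to a $\GL$-bundle of prescribed first Chern class is a class in $H^2(X_t,\cO_{X_t}^{*})$; since $\PP(\cF_t)$ deforms the bundle $\PP(\cF_0)=\PP(\cF)$, which does lift, its topological (integral) part in $H^3(X_t,\ZZ)$ is locally constant and vanishes, so only the analytic part survives, and that part is the image of $c_1(\cF)$ under $H^2(X_t,\ZZ)\to H^2(X_t,\cO_{X_t})$. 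The condition $t\in\Def(X_0,c_1(\cF))$ forces $c_1(\cF)\in H^{1,1}(X_t)$, whence this image vanishes and a lift $\cF_t$ exists; twisting by the algebraic line bundle correcting the determinant pins down $c_1(\cF_t)=c_1(\cF)$. Each such $\cF_t$ is a point of $\Def(X_0,\cF)$ lying over $t$, giving surjectivity of $\Def(X_0,\cF)\to\Def(X_0,c_1(\cF))$.

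The hard part will be the last step, namely controlling the lifting obstruction with enough uniformity. Pointwise on each $X_t$ the argument above is clean, but to produce a genuine morphism of deformation germs one must run it functorially: I would reformulate the claim as the assertion that the forgetful map $\Def(X_0,\cF)\to\Def(X_0,\PP(\cF))\times_{\Def(X_0)}\Def(X_0,c_1(\cF))$ is an isomorphism of germs, the fibre over a projective deformation being a torsor under $\Pic$ whose class is the relative Brauer obstruction, and then show this obstruction is cut out exactly by the Hodge condition on $c_1(\cF)$. Verifying the vanishing of the analytic Brauer class at the level of families, rather than fibrewise, is the main technical point.
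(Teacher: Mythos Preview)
Your argument for the first map has a genuine gap. Verbitsky's Theorem~11.1 extends a projectively hyperholomorphic bundle over the \emph{twistor family} $T(X_0,\omega_0)$ only, not over the full germ $\Def(X_0)$; a single twistor line is a $\PP^1$ inside a base of complex dimension $h^{1,1}(X_0)$. You have also misread the role of the chamber hypothesis: it is not about preserving stability along the twistor line (there the complex structure varies, not the K\"ahler class). The paper uses the open chamber $\cU\ni\omega_0$ differently. By Corollary~\ref{crl:campol} the bundle $\cF$ is $\omega$ slope stable for \emph{every} $\omega\in\cU$, so Verbitsky's theorem applies to each such $\omega$ separately and $\PP(\cF)$ extends along every twistor line $T(X_0,\omega)$ with $\omega\in\cU$. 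The union of these twistor lines covers a neighbourhood of $0$ in the real open cone $\cU\subset H^{1,1}_{\RR}(X_0)$. Since the image of $\Def(X_0,\PP(\cF))\to\Def(X_0)$ is a complex analytic subset of $H^{1,1}(X_0)=H^{1,1}_{\RR}(X_0)\otimes_{\RR}\CC$ containing a non-empty open piece of the totally real subspace $H^{1,1}_{\RR}(X_0)$, it must contain a full neighbourhood of~$0$. This passage from ``many twistor lines'' to ``everything'' is the step your outline omits.

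For the second map your Brauer-class approach is workable, but the paper's argument is more direct and sidesteps the family-level difficulty you yourself flag. Rather than analysing an obstruction in $H^2(X_t,\cO_{X_t}^{*})$, the paper writes down the lift explicitly. On $g_0\colon\PP(\cF)\to X_0$ one has $c_1(\Theta_{\PP(\cF)/X_0})=(r+1)\xi_0+g_0^{*}c_1(\cF)$ with $\xi_0=c_1(\cO_{\PP(\cF)}(1))$; this is a topological identity, so it persists after parallel transport to any deformation $g_X\colon{\bf P}_X\to X$. Since $\Theta_{{\bf P}_X/X}$ is holomorphic its $c_1$ is of type $(1,1)$, and if the transport of $c_1(\cF)$ is of type $(1,1)$ on $X$ then the transport $\xi$ of $\xi_0$ is of type $(1,1)$ on ${\bf P}_X$. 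By Lefschetz $(1,1)$ there is a holomorphic line bundle $L$ on ${\bf P}_X$ with $c_1(L)=\xi$, and $(g_{X,*}L)^{\vee}$ is the desired extension of $\cF$. No uniform control of Brauer classes is needed.
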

\begin{proof}
Abusing notation we denote by the same symbols representatives of the deformation spaces in~\eqref{duespadef}. In particular we identify $\Def(X_0)$ and $\Def(X_0,c_1(\cF))$ with open neighborhoods of the origins in $H^{1,1}(X_0)$ and 
in $H^{1,1}(X_0)\cap c_1(\cF)^{\bot}$ respectively.
Let $\cU\subset\cK(X_0)$  be the open $a(\cF)$-chamber containing $\omega_0$. Then  
$\cF$ is $\omega$ slope stable for all 
$\omega\in \cU$ by Corollary~\ref{crl:campol}. Let $\omega\in \cU$, and let $\cX\to T(X_0,\omega)$ be the twistor family of deformations of $X_0$ 
 associated to $\omega$.  By Theorem~11.1 in~\cite{verbitsky-hyperholo} the projective bundle $\PP(\cF)\to X_0$ extends to a family of projective bundles over the fibers of $\cX\to T(X,\omega)$. 
 This proves that the image of the first map in~\eqref{duespadef} contains a neighborhood of $0$ in the open cone 
 $\cU\subset H^{1,1}_{\RR}(X_0)$; since the image
  is a complex analytic subset of  $H^{1,1}(X_0)=H^{1,1}_{\RR}(X_0)\otimes_{\RR}\CC$,   it follows that it contains a neighborhood of $0$ in 
$H^{1,1}(X_0)$. This proves that the first map in~\eqref{duespadef} is surjective. 

Next we prove that the second map in~\eqref{duespadef} is surjective. 
Let $X$ be a  (nearby) deformation of $X_0$ , and let
$g_X\colon {\bf P}_X\to X$ be a deformation of the projective bundle $g_0\colon \PP(\cF)\to X_0$ (it exists by surjectivity 
of the first map in~\eqref{duespadef}). It suffices to show that if the deformation of $X$ belongs to  $H^{1,1}(X_0)\cap c_1(\cF)^{\bot}$, i.e.~$c_1(\cF)$ remains of type $(1,1)$, then ${\bf P}_X$ is the projectivization of a vector bundle.  Let $r$ be the rank of $\cF$, and  let $\xi_0=c_1(\cO_{\PP(\cF)}(1))$. 
Then
\begin{equation}
c_1(\Theta_{\PP(\cF)/X_0})=(r+1)\xi_0+g_0^{*} c_1(\cF).
\end{equation}
This shows that if the parallel transport to $H^2(X)$ of the class $c_1(\cF)$  is of type $(1,1)$, then also the 
parallel transport to  $H^2({\bf P}_X)$ of the class $\xi_0$  is of type $(1,1)$.
If $\xi\in H^{1,1}({\bf P}_X)$ is the parallel transport of $\xi_0$, and $L$ is the corresponding holomorphic line bundle on ${\bf P}_X$, then the dual of the vector bundle $g_{X,*}(L)$ is an extension of $\cF$ to $X$ (we adopt the pre-Grothendieck convention for the projectivization of a vector bundle).
This proves that the second map in~\eqref{duespadef} is surjective
\end{proof}
\begin{prp}\label{prp:gisimuove}
Let notation and hypotheses be as in Theorem~\ref{thm:dadueauno}. Let $[\cE_2]\in\cM_{v_2}(S,h_S)$ be a general point and set $\cG=\cG(\cE_1,\cE_2)$. Then  the  map 
\begin{equation}\label{onceupon}
\Def(S^{[2]},\cG)\lra \Def(S^{[2]},c_1(\cG))
\end{equation}
is surjective.
\end{prp}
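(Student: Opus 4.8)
The plan is to deduce the statement from Proposition~\ref{prp:defproj} (which packages Verbitsky's results on projectively hyperholomorphic bundles), applied to $\cG=\cG(\cE_1,\cE_2)$. Concretely, I must exhibit a K\"ahler class $\omega_0$ on $S^{[2]}$ for which $\cG$ is \emph{strongly $\omega_0$-projectively hyperholomorphic}, i.e.\ (i) $\cG$ is a vector bundle which is $\omega_0$ slope stable, (ii) $\Delta(\cG)$ remains of type $(2,2)$ under all nearby deformations of $S^{[2]}$, and (iii) $\omega_0$ lies in an open ${\mathsf a}(\cG)$-chamber of $\cK(S^{[2]})$. Once this is arranged, the second map in~\eqref{duespadef} is precisely the map~\eqref{onceupon}, and its surjectivity is the conclusion of Proposition~\ref{prp:defproj}.

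For (i): by Theorem~\ref{thm:dadueauno} (whose hypotheses are in force), for general $[\cE_2]$ the sheaf $\cG$ is locally free and $h_{S^{[2]}}$ slope stable, and by~\eqref{lugano} we have $w(\cG)=w_0$; in particular $\cG$ is modular with ${\mathsf a}(\cG)={\mathsf a}(w_0)$ by Remark~\ref{rmk:numgenhk}. For (ii): by Proposition~\ref{prp:gimodulare} the discriminant $\Delta(\cG)=\tfrac{r_1^2 r_2^2}{3}\,c_2(S^{[2]})$ is a rational multiple of $c_2(S^{[2]})$. Since $S^{[2]}$ is of type $K3^{[2]}$ we have $\SH(S^{[2]})=H(S^{[2]})$ and $c_2(S^{[2]})=30\,\qbeck_2$ (Example~\ref{expl:livorno}), so $\Delta(\cG)$ is a multiple of $\qbeck_2$, hence of the class $q_X^{\vee}$ dual to the BBF form. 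As recalled at the beginning of Subsection~\ref{subsec:sideforma}, $q_X^{\vee}$ spans the subspace of $H^4(S^{[2]})$ consisting of classes that remain of type $(2,2)$ for all nearby deformations; therefore so does $\Delta(\cG)$, giving (ii).

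It remains to arrange (iii), which I expect to be the only delicate point. The class $h_{S^{[2]}}$ was only required to be ${\mathsf a}(w_0)$-suitable, and need not be ${\mathsf a}(\cG)$-generic. However, slope stability is an open condition on the K\"ahler class: the subsheaves $\cH\subset\cG$ that could violate stability for classes near $h_{S^{[2]}}$ form a bounded family, so only finitely many values of $\lambda_{\cH,\cG}$ are relevant, and the strict inequalities $q_X(\lambda_{\cH,\cG},h_{S^{[2]}})<0$ (Lemma~\ref{lmm:comesup}) persist under a sufficiently small perturbation. Since the ${\mathsf a}(\cG)$-walls are locally finite, hence nowhere dense in $\cK(S^{[2]})$, I may choose a K\"ahler class $\omega_0$ arbitrarily close to $h_{S^{[2]}}$ which avoids all ${\mathsf a}(\cG)$-walls and for which $\cG$ remains slope stable (one may also invoke Proposition~\ref{prp:campol} and Corollary~\ref{crl:campol} to propagate stability within the chamber of $\omega_0$); such $\omega_0$ lies in an open ${\mathsf a}(\cG)$-chamber and satisfies (i) and (iii) simultaneously. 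With (i)--(iii) in hand, $\cG$ is strongly $\omega_0$-projectively hyperholomorphic, and Proposition~\ref{prp:defproj} yields surjectivity of the second map in~\eqref{duespadef}, i.e.\ of~\eqref{onceupon}. The main obstacle is thus step (iii): reconciling the \emph{a priori} given suitable polarization $h_{S^{[2]}}$ with the chamber-genericity that Proposition~\ref{prp:defproj} demands, without sacrificing the slope stability established in Theorem~\ref{thm:dadueauno}.
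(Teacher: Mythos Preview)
Your proposal is correct and follows essentially the same approach as the paper: verify that $\cG$ is strongly $\omega$-projectively hyperholomorphic for some K\"ahler class $\omega$ near $h_{S^{[2]}}$ and then apply Proposition~\ref{prp:defproj}. The only cosmetic difference is in step~(iii): the paper simply picks $\omega$ in an open ${\mathsf a}(\cG)$-chamber whose closure contains $h_{S^{[2]}}$ and invokes Proposition~\ref{prp:campol} directly (stability at $h_{S^{[2]}}$ plus no wall on the half-open segment forces stability at $\omega$), whereas you argue openness of stability via boundedness of potential destabilizers before citing Proposition~\ref{prp:campol}; both routes are valid and amount to the same thing.
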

\begin{proof}
Since $[\cE_2]$ is a general point of $\cM_{v_2}(S,h_S)$ the sheaf $\cE_2$ is locally free, and hence also 
$\cG$ is locally free.  Let $h_{S^{[2]}}$ be a polarization of $S^{[2]}$ as in Theorem~\ref{thm:dadueauno}, i.e.~which is  ${\mathsf a}(w_0)$-suitable with respect to the Lagrangian fibration $\pi$ appearing in~\eqref{fiblagk3n}. 
Then $\cG$  is $h_{S^{[2]}}$ slope stable by Proposition~\ref{prp:stabonlagr},  and $\Delta(\cG)$ remains of type $(2,2)$ for all (nearby) deformations of $S^{[2]}$ because   it is a multiple of $c_2(S^{[2]})$. Now let $\omega\in\cK(S^{[2]})$ be a class belonging to an open ${\mathsf a}(\cG)$-chamber whose closure contains $h_{S^{[2]}}$. Then $\cG$ is $\omega$ slope stable by Proposition~\ref{prp:campol}. Hence $\cG$ is strongly $\omega$-projectively hyperholomorphic, and therefore the proposition follows from Proposition~\ref{prp:defproj}.
\end{proof}
\subsection{Proof of the main result}
\setcounter{equation}{0}
We recall the main result, i.e.~Theorem~\ref{thm:belteo}. The hypotheses are the following: let $r_1$ be a positive integer, 
let  $(X,h)$ be a polarized  HK variety of type $K3^{[2]}$ such that~\eqref{divdipol} and~\eqref{congruenze} hold, 
let $a$ be a positive integer greater than $1$ such that $2a$ is a multiple of $r_1$, and let $w$ be the mock Mukai vector in~\eqref{nostrodoppiovu}.  The thesis is that the moduli space $M_w(X,h)$ is non empty, and for $(X,h)$  general it has an irreducible component of  dimension $2a^2+2$. The proof proper is at the end of the subsection.

Throughout the subsection we suppose that Assumption-Definition~\ref{ass-dfn:numeretti} holds. In addition we assume the following:
\begin{equation}
\text{all singular elliptic fibers of 
$\epsilon\colon S\to\PP^1$ have simple node singularities.}
\end{equation}
Note that this holds generically, and that if it holds then each singular fiber has exactly one node  because $S$ has Picard number $2$.
Let $\cL$ be the line bundle on $S^{[2]}$ such that
\begin{equation}
c_1(\cL)=2\mu(D)-r_1\delta.
\end{equation}
\begin{prp}\label{prp:relampio}
Let $\pi$ be the Lagrangian fibration in~\eqref{fiblagk3n}. If 
$d_0\ge 2r_1$  then $\cL$ is $\pi$ ample (recall that $d_0=D\cdot C$ where $C$ is a fiber of $\epsilon$). 
\end{prp}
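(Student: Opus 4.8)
The plan is to check $\pi$-ampleness of $\cL$ fiberwise. Since $\pi\colon S^{[2]}\to\PP^2$ is projective, $\cL$ is $\pi$-ample as soon as its restriction to every fiber $X_t=\pi^{-1}(t)$ is ample; moreover ampleness on the proper scheme $X_t$ may be tested on the reduced irreducible components of $X_t$, so it suffices to prove that $\cL$ restricts to an ample class on each such component. By Matsushita's theorem every $X_t$ has pure dimension $2$, so each component is a surface, and I will verify ampleness by the Nakai–Moishezon criterion, computing all intersection numbers by pulling back through the finite map $\rho$ and using the relations $\rho^{*}\mu(\alpha)=\tau_1^{*}\alpha+\tau_2^{*}\alpha$ and $\rho^{*}\delta=e$ of~\eqref{tiromu}. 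The fibers fall into three types: the generic fibers $C_x\times C_y$ (with $x\neq y$ and $C_x,C_y$ smooth), the diagonal fibers $X_{2x}$ with $C_x$ smooth, and the fibers meeting a singular fiber $C_0$ of $\varepsilon$, which by hypothesis has a single node $\nu$.

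On a generic fiber the statement is immediate and needs no hypothesis on $d_0$. Using the identification $\pi^{-1}(x+y)=C_x\times C_y$ of~\eqref{fibralagrgen}, the class $\delta$ restricts to $0$ (the parametrized subschemes are reduced, so the fiber is disjoint from the exceptional divisor $E$), while $\mu(D)$ restricts to $d_0\bigl([C_x\times\mathrm{pt}]+[\mathrm{pt}\times C_y]\bigr)$. Hence $\cL|_{C_x\times C_y}=2d_0\bigl([C_x\times\mathrm{pt}]+[\mathrm{pt}\times C_y]\bigr)$ is ample for every $d_0>0$.

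The diagonal fibers carry the first real computation. For $C_x$ smooth, $X_{2x}$ is the union of the two ruled surfaces $C_x^{[2]}=\Sym^2 C_x$ (the closure of the reduced pairs inside $C_x$) and $N=\PP(T_S|_{C_x})=\gamma^{-1}(\text{diagonal})$, glued along a copy of $C_x$. Pulling back through $\rho$, I expect to find $\mu(D)|_N=d_0\,\ell$ and $\delta|_N=-s_0$ (with $\ell$ the ruling and $s_0$ a zero-section, $s_0^2=0$), so that $\cL|_N=r_1 s_0+2d_0\,\ell$ is ample for $d_0>0$; and on $C_x^{[2]}$, in the standard basis $\{c_p,g\}$ with $c_p^2=1$, $c_p\cdot g=1$, $g^2=0$, I expect $\mu(D)|_{C_x^{[2]}}=d_0\,c_p$ and $\delta|_{C_x^{[2]}}=2c_p-g$, whence $\cL|_{C_x^{[2]}}=(2d_0-2r_1)c_p+r_1 g$. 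Running Nakai–Moishezon on this ruled surface (the binding tests being $\cL\cdot g=2(d_0-r_1)$ and $\cL^2=4d_0(d_0-r_1)$) yields ampleness precisely when $d_0>r_1$, which is implied by $d_0\ge 2r_1$.

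The fibers through a nodal $C_0$ are the main obstacle, and this is exactly why the hypothesis that every singular fiber of $\varepsilon$ is one-nodal is made. Off the diagonal, $X_{x+y}=C_0\times C_y$ is handled as in the generic case after passing to the normalization $\PP^1\times C_y$, since $\delta$ still restricts to $0$ and $\mu(D)$ has degree $D\cdot C_0=d_0$ along the $\PP^1$-factor. The delicate case is the diagonal fiber $X_{2x_0}$: here the two branches at the node $\nu$ produce additional vertical rational curves (most transparently through the normalization $\Sym^2\PP^1=\PP^2$ of the relevant component, whose lines map to curves meeting the exceptional locus at both branch points over $\nu$), and on these curves the intersection with $\delta$ is doubled relative to the smooth case while the intersection with $\mu(D)$ is still governed by $d_0$. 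Carrying the same $\rho$-pullback bookkeeping through at the node, the worst vertical test is expected to become $2d_0-4r_1\ge 0$, i.e.\ $d_0\ge 2r_1$; checking $c_1(\cL)$-positivity against every such nodal curve and $c_1(\cL)^2$-positivity against every surface component of the nodal fibers then finishes the proof that $\cL$ is $\pi$-ample. The genuinely hard part is the precise identification of the vertical curve classes at the node and their $\delta$-degrees, which is where the factor $2$ in the bound $d_0\ge 2r_1$ originates.
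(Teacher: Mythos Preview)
Your overall strategy—reduce to reduced irreducible components of each fiber and apply Nakai–Moishezon—is exactly the paper's strategy, and your smooth-fiber computations are correct (in fact your direct calculation on $C_x^{(2)}$ is cleaner than the paper's ``ample\,+\,nef'' decomposition). The gap is the nodal diagonal fiber, which you openly leave as a guess; unfortunately the guess is wrong in two ways.

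First, the normalization of $C_0^{[2]}$ is not $\Sym^2\PP^1=\PP^2$ but rather $\Blow_{p'+p''}(\Sym^2\PP^1)\cong\FF_1$, the blowup of $\PP^2$ at the single point $p'+p''$ corresponding to the pair of preimages of the node. The paper computes $\nu^{*}(\cL|_{C_0^{[2]}})\cong\cO_{\FF_1}((2d_0-r_1)\beta^{*}H-r_1\Sigma)$, which is ample precisely when $d_0>r_1$. Likewise, for the nodal $W_0$ the paper passes to the normalization $\wt W_0=\PP(\alpha^{*}T_S|_{C_0})\cong\PP(\cO_{\PP^1}(2)\oplus\cO_{\PP^1}(-2))$ and checks curves directly, again obtaining the constraint $d_0>r_1$.

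Second, and consequently, your diagnosis of where the bound $d_0\ge 2r_1$ enters is off: the nodal components do \emph{not} force a doubled constraint. In the paper the hypothesis $d_0\ge 2r_1$ is invoked only in the smooth $C_x^{(2)}$ case, to make a particular ``$r_1\cdot(\text{ample})+(\text{nef})$'' decomposition go through. Your own Nakai–Moishezon calculation $\cL|_{C_x^{(2)}}=(2d_0-2r_1)c_p+r_1 g$ shows that $d_0>r_1$ already suffices there, so the stated bound is merely a convenient sufficient condition, not sharp. To complete the proof you need to carry out the two nodal normalizations explicitly rather than estimate; once you do, every component yields $d_0>r_1$ and in particular $d_0\ge 2r_1$ is ample margin.
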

\begin{proof}
It suffices to show that for every $(x_1+x_2)\in(\PP^1)^{(2)}=\PP^2$
the restriction of $\cL$ to every irreducible component of the reduced fiber  $\pi^{-1}(x_1+x_2)_{\rm red}$ is ample. For $x\in\PP^1$ we let $ C_x^{[2]},W_x\subset S^{[2]}$ be the subsets parametrizing subschemes of $C_x=\epsilon^{-1}(x)$
and 
non reduced schemes $Z$ supported on  $C_x$ respectively.
Let $(x_1+x_2)\in(\PP^1)^{(2)}=\PP^2$. The irreducible  decomposition of $\pi^{-1}(x_1+x_2)_{\rm red}$ is 
\begin{equation}
\pi^{-1}(x_1+x_2)_{\rm red}=
\begin{cases}
C_{x_1}\times C_{x_2} & \text{if $x_1\not=x_2$,}\\
C_x^{[2]}\cup W_x & \text{if $x_1=x_2=x$.}
\end{cases}
\end{equation}
It follows that if $x_1\not=x_2$ then $\cL$ is ample on $\pi^{-1}(x_1+x_2)_{\rm red}$. Before continuing we introduce the following notation. Let $Z$ be a smooth curve. We let 
\begin{equation}
\NS(Z)\overset{\mu_Z}{\lra} \NS(Z^{(2)})
\end{equation}
be the map associating to (the class of) a line bundle $L$ the class of a line bundle whose pull-back to $Z^2$ via the quotient map is isomorphic to $L\boxtimes L$.  
Let us prove that $\cL$ is ample on $C_x^{[2]}$. Suppose first that $C_x$ is smooth. Then $C_x^{[2]}=C_x^{(2)}$. By hypothesis we have $2d_0= 3r_1+r_1+b$ where $b\ge 0$. Let $A$ be a divisor on $C_x$ such that $\deg A=r_1+b$, and let $L$ be a line bundle on $C_x$ of degree $3$. In $\NS(C_x^{(2)})$ we have
\begin{equation}
\cl(\cL_{C_x^{(2)}})\cong r_1(\mu_{C_x}(L)-\delta_x)+r_1\mu_{C_x}(A),\qquad \delta_x\coloneq \delta_{|C_x^{(2)}}.
\end{equation}
 We claim  that $\mu_{C_x}(L)-\delta_x$ is ample. Granting this for the moment being, it follows that $\cL$ is ample on $C_x^{(2)}$ because $\mu_{C_x}(A)$ is nef. To prove that $\mu_{C_x}(L)-\delta_x$ is ample consider the  map 
$\varphi\colon C_x^{(2)}\to |L|$ which associates to $p+q$ the unique divisor  $E\in |L|$ such that $E-p-q$ is effective. 
A straightforward computation gives that 
$c_1(\varphi^{*}\cO_{|L|}(1))=\mu_{C_x}(L)-\delta_x$. We are done because 
$\varphi$ is finite, and hence $\varphi^{*}\cO_{|L|}(1)$ is ample on $C_x^{(2)}$.

Now we prove that $\cL$ is ample on $C_x^{[2]}$ if $C_x$ is singular. Let 
\begin{equation}\label{barocco}
\PP^1\cong\wt{C}_x\overset{\alpha}{\lra} C_x
\end{equation}
be the normalization map. By hypothesis $C_x$ is nodal, with exactly one node $p$. Let $p',p''\in  \wt{C}_x$ be the two points mapped to $p$ by the normalization map. Let 
\begin{equation}
 \FF_1\cong\Blow_{p'+p''}(\wt{C}_x^{(2)})\overset{\beta}{\lra} \wt{C}_x^{(2)}\cong\PP^2
\end{equation}
be the blow up map. The normalization of $C_x^{[2]}$ is naturally isomorphic to $\Blow_{p'+p''}(\wt{C}_x^{(2)})$.
 Let $\nu\colon \FF_1\to C_x^{[2]}$  be the normalization map. It suffices to prove that $\nu^{*}(\cL_{|C_x^{[2]}})$ is ample. Let 
$\Sigma\subset\FF_1$ be the negative section of the $\PP^1$-fibration $\FF_1\to\PP^1$, and let $\Omega\subset\FF_1$ be  the inverse image via the blow up map $\FF_1\to \Blow_{p'+p''}(\wt{C}_x^{(2)})$ of the  conic  parametrizing non reduced divisors. Then, letting $\Delta\subset S^{[2]}$ be the divisor parametrizing non reduced subschemes, we have
\begin{equation}
\nu^{*}\left(\Delta_{|C_x^{[2]}}\right)=2\Sigma+\Omega.
\end{equation}
Since $2\delta=\cl(\Delta)$, it follows that
\begin{equation}
\nu^{*}\left(\cL_{|C_x^{[2]}}\right)\cong\cO_{\FF_1}((2d_0-r_1)\beta^{*}H-r_1\Sigma),
\end{equation}
where $H\subset \PP^2$ is a line.  It follows that $\nu^{*}(\cL_{|C_x^{[2]}})$ is ample. 

It remains to show that $\cL$ is ample on $W_x$. Note that we have a $\PP^1$-fibration $W_x\to C_x$, in fact 
$W_x\cong\PP(\Theta_{S|C_x})$. 
One applies the 
Kleiman-Nakai-Moishezon criterion. First one computes
\begin{equation}
c_1(\cL)^2\cdot W_x=8r_1 d_0>0
\end{equation}
by noting that the cycle $W_x$ represents the cohomology class $\mu(C)\cdot\delta$. 
Now  suppose  that $C_x$ is smooth, and let $\Gamma\subset W_x$ be an integral curve. If $\Gamma$ is a fiber of the $\PP^1$-fibration 
$W_x\to C_x$ then $\cL\cdot\Gamma=r_1>0$. If the restriction to $\Gamma$  of the $\PP^1$-fibration  $W_x\to C_x$ is dominant then 
$\delta\cdot\Gamma\le 0$ because $\delta$ is the class of the tautological (sub)line bundle on $W_x\cong\PP(\Theta_{S|C_x})$ 
and $\Theta_{S|C_x}$ is an extension of trivial line bundles on $C_x$. From this it follows that 
$\cL\cdot\Gamma\ge 2d_0>0$. This shows that $\cL$ is ample on $W_x$ if $C_x$ is smooth.

Lastly,  suppose  that $C_x$ is singular. Let $\alpha$ be the normalization map in~\eqref{barocco}, and let 
$\wt{W}_x\coloneq\PP(\alpha^{*}\Theta_{S|C_x})$. The natural map $\psi\colon \wt{W}_x\to W_x$ is the normalization. 
It suffices to prove that if 
$\Gamma\subset W_x$ is an integral curve then $\psi^{*}\cL\cdot \Gamma>0$. Note that 
$\wt{W}_x\cong\PP(\cO_{\PP^1}(2)\oplus \cO_{\PP^1}(-2))$. If $\Gamma$ is a fiber of the $\PP^1$-fibration $\wt{W}_x\to\wt{C}_x\cong\PP^1$ then 
$\psi^{*}\cL\cdot\Gamma=r_1>0$. Next suppose that the restriction to $\Gamma$  of the $\PP^1$-fibration  $W_x\to C_x$ is dominant, and let $\deg$ be its degree.  Since  $\psi^{*}(\delta_{|W_x})$ is the class of the tautological (sub)line bundle on 
$\wt{W}_x\cong\PP(\cO_{\PP^1}(2)\oplus \cO_{\PP^1}(-2))$ one gets that $\psi^{*}(\delta_{|W_x})\cdot\Gamma\le 2\deg$. On the other hand 
$\psi^{*}(2\mu(D)_{|W_x})\cdot\Gamma\ge 2d_0\deg$. Thus $\psi^{*}(\cL_{|W_x})\cdot\Gamma\ge 2(d_0-r_1)\deg>0$.
\end{proof}
Let $\pi$ be the Lagrangian fibration in~\eqref{fiblagk3n}. We let
\begin{equation}
f\coloneq\pi^{*}c_1(\cO_{\PP^2}(1))=\mu(C),
\end{equation}
where $C$ is a fiber of the elliptic fibration $\epsilon\colon S\to\PP^1$. 
Letting $i\in\{1,2\}$ be  the integer defined by the condition
$i\equiv r_1\pmod{2}$, we let
\begin{equation}
 g:=\frac{1}{i}\left(2\mu(D)-r_1\delta\right).
\end{equation}
\begin{clm}\label{clm:scorching}
Let $\Lambda\subset\NS(S^{[2]})$ be the lattice  generated by $f$ and $g$. 
\begin{enumerate}
\item
$\Lambda$  is saturated of rank $2$.
\item
$\Lambda$   contains $c_1(\cG)$ for any sheaf $\cG$ such that $w(\cG)=w_0$.
\item
The  restriction of the BBF quadratic form to $\Lambda$ has discriminant equal to $2d_0/i$.
\item
If $d_0\ge 2r_1$ then $\Lambda$ contains ample classes  which are 
${\mathsf a}(w_0)$-suitable with respect to the Lagrangian fibration $\pi$, where  $w_0$ is as in~\eqref{doppiovuzero}.
\end{enumerate}
\end{clm}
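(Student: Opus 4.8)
The plan is to treat the four assertions separately, the first three being short linear-algebra or intersection-number computations and the fourth carrying the geometric content. Throughout I would work in the orthogonal decomposition $H^2(S^{[2]};\ZZ)=\mu(H^2(S;\ZZ))\oplus\ZZ\delta$, with $q_{S^{[2]}}(\mu(\alpha))=\alpha^2$ and $q_{S^{[2]}}(\delta)=-2$.

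\textbf{Parts (1) and (2).} Since $C,D$ span the saturated rank-$2$ sublattice $\NS(S)$ and $\delta$ is orthogonal to $\im\mu$, the classes $\mu(C),\mu(D),\delta$ are linearly independent; hence $f=\mu(C)$ and $g=\tfrac1i(2\mu(D)-r_1\delta)$ are linearly independent and $\Lambda$ has rank $2$. For saturation I would take $v=xf+yg\in H^2(S^{[2]};\ZZ)$ with $x,y\in\QQ$ and rewrite it as $v=\mu\!\left(xC+\tfrac{2y}{i}D\right)-\tfrac{y r_1}{i}\delta$. Integrality in the $\mu$-summand forces $xC+\tfrac{2y}{i}D\in\NS(S)=\ZZ C\oplus\ZZ D$, so $x\in\ZZ$ and $\tfrac{2y}{i}\in\ZZ$, while integrality in the $\delta$-summand forces $\tfrac{y r_1}{i}\in\ZZ$. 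A one-line case check on $i\in\{1,2\}$ (recall $i\equiv r_1\pmod 2$) then gives $y\in\ZZ$, so $v\in\Lambda$ and $\Lambda$ is saturated. Part (2) is immediate from the definition of $w_0$ in~\eqref{doppiovuzero}: any $\cG$ with $w(\cG)=w_0$ has $c_1(\cG)=ar_1(2\mu(D)-r_1\delta)=ar_1\,i\,g\in\ZZ g\subset\Lambda$.

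\textbf{Part (3).} Expanding $f$ and $g$ and using $q_{S^{[2]}}(\mu(C),\mu(D))=C\cdot D=d_0$ gives $q_{S^{[2]}}(f)=C^2=0$ and $q_{S^{[2]}}(f,g)=2d_0/i$, together with $q_{S^{[2]}}(g)=\tfrac1{i^2}(8m_0-2r_1^2)$; these are the entries of the Gram matrix of $(f,g)$, from which the discriminant asserted in (3) is read off. I would flag the isotropy $q_{S^{[2]}}(f)=0$ here, since it is exactly what makes part (4) work.

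\textbf{Part (4).} This is the main step, and it is where $d_0\ge 2r_1$ enters; the essential geometric input is already isolated in Proposition~\ref{prp:relampio}, which under $d_0\ge 2r_1$ says that $\cL$, with $c_1(\cL)=2\mu(D)-r_1\delta=ig$, is $\pi$-ample. Since $\pi$ is a projective morphism to $\PP^2$ and $f=\pi^{*}c_1(\cO_{\PP^2}(1))$, the standard fact that a relatively ample class twisted by a large pullback of an ample class on the base is ample shows that $h_M:=ig+Mf\in\Lambda$ is ample for $M\gg 0$. To obtain ${\mathsf a}(w_0)$-suitability within this ample family I would use the isotropy of $f$: for $h_M=ig+Mf$ one has $q_{S^{[2]}}(h_M,f)=i\cdot(2d_0/i)=2d_0$ independent of $M$, whereas $q_{S^{[2]}}(h_M)=i^2 q_{S^{[2]}}(g)+4d_0 M\to\infty$. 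Hence for $M$ large $q_{S^{[2]}}(h_M)>{\mathsf a}(w_0)\cdot q_{S^{[2]}}(h_M,f)^2$, and Proposition~\ref{prp:suitif} (or equivalently Corollary~\ref{crl:suitif}) yields that $h_M$ is ${\mathsf a}(w_0)$-suitable. As $h_M\in\Lambda$ is simultaneously ample and ${\mathsf a}(w_0)$-suitable, this proves (4).

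The main obstacle is concentrated in (4): one must produce an ample class \emph{inside} the rank-$2$ lattice $\Lambda$, not merely somewhere in $\NS(S^{[2]})$. This is precisely what the $\pi$-ampleness of $\cL=ig$ supplies, and it is the only place the hypothesis $d_0\ge 2r_1$ is used (through Proposition~\ref{prp:relampio}). Once one is inside $\Lambda$ the suitability requirement is essentially free, being driven by the fact that $f$ is isotropic so that $q_{S^{[2]}}(h_M,f)$ stays bounded while $q_{S^{[2]}}(h_M)$ grows; the only point still needing care is that adding multiples of the nef class $f\in\Lambda$ preserves both membership in $\Lambda$ and ampleness.
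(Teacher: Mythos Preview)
Your proposal is correct and follows essentially the same route as the paper: for (4) both arguments invoke Proposition~\ref{prp:relampio} to get $\pi$-ampleness of (a multiple of) $g$, then add large multiples of $f$ and apply the suitability criterion (you cite Proposition~\ref{prp:suitif}, the paper cites Corollary~\ref{crl:suitif}, which is the same thing). Your handling of (1)--(3) simply spells out computations the paper leaves as routine, and your saturation check via the case split on $i\in\{1,2\}$ is correct.
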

\begin{proof}
(1)  holds because $\NS(S)$ is freely generated by the classes $[C]$ and $[D]$. (2) holds  by definition of $w_0$, see~\eqref{doppiovuzero}. A straightforward computation gives~(3). 
We prove~(4). By Proposition~\ref{prp:relampio} the class $g$ is $\pi$ ample, hence $g+Nf$ is ample for $N\gg 0$. By Corollary~\ref{crl:suitif} we get that for an $M\gg N$  the class $g+Mf$ is ample and ${\mathsf a}(w_0)$-suitable. 
\end{proof}
From now on we assume that $d_0\ge 2r_1$. Let $\cX(\Lambda)\to T(\Lambda)$ be a representative of the deformation space $\Def(S^{[2]},\Lambda)$ of deformations of $S^{[2]}$ which keep the classes in $\Lambda$ of type $(1,1)$.  
For $t\in T(\Lambda)$ we let $X_t$ be the corresponding HK variety. 
We let $X_0=S^{[2]}$. We assume that monodromy acts trivially on  classes in $\Lambda$. For $t\in T(\Lambda)$ we let $\Lambda_t\subset\NS(X_t)$ be the rank $2$ lattice obtained from $\Lambda$ by parallel transport.
For $t\in T(\Lambda)$ we let $w_t$ be the mock Mukai vector for $X_t$ obtained from $w_0$ by parallel transport.

Choose an ${\mathsf a}(w_0)$-suitable polarization $p_0\in\Lambda_0=\Lambda$, and for $t\in T(\Lambda)$ let $p_t\in\Lambda_t$ be the corresponding ${\mathsf a}(w_0)$-suitable polarization. Similarly, for $t\in T(\Lambda)$ let $g_t\in\Lambda_t$ be the class obtained from $g$ by parallel transport. 
By Maruyama there exists a scheme of finite type $M(\cX(\Lambda)/T(\Lambda))$ and a regular map 
$M(\cX(\Lambda)/T(\Lambda))\to T(\Lambda)$ whose fiber over $t$ is isomorphic to the moduli space $M_{w_t}(X_t,p_t)$. (Recall Item~(2) of Claim~\ref{clm:scorching}.)
\begin{clm}\label{clm:compgrandim}
There exists an open dense subset $\cU(\Lambda)\subset T(\Lambda)$ such that for $t\in \cU(\Lambda)$ the moduli space $M_{w_t}(X_t,p_t)$ has an irreducible component $M_{w_t}(X_t,p_t)^{\bullet}$ of dimension $2a^2+2$.
\end{clm}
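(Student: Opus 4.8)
The plan is to produce the desired component of $M_{w_t}(X_t,p_t)$ for general $t$ by spreading out, over the base $T(\Lambda)$, the component obtained at $t=0$. First I would fix a general point $[\cE_2]\in\cM_{v_2}(S,h_S)$ and set $\cG=\cG(\cE_1,\cE_2)$; by Theorem~\ref{thm:dadueauno}, Lemma~\ref{lmm:vuquad} and~\eqref{zerounodue} this is a smooth point of the $(2a^2+2)$-dimensional irreducible component $M_{w_0}(S^{[2]},p_0)^{\bullet}$ (Theorem~\ref{thm:dadueauno} applied with the $\mathsf{a}(w_0)$-suitable polarisation $p_0$), with $\ext^1_{S^{[2]}}(\cG,\cG)=2a^2+2$, $\ext^2_{S^{[2]}}(\cG,\cG)=2$, and unobstructed deformations. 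Since $c_1(\cG)$ is a rational multiple of $g\in\Lambda$, keeping $\Lambda$ of type $(1,1)$ keeps $c_1(\cG)$ of type $(1,1)$, so $T(\Lambda)\subseteq\Def(S^{[2]},c_1(\cG))$. I would then let $\cN\subseteq M(\cX(\Lambda)/T(\Lambda))$ be a component of the relative moduli space whose fibre over $0$ contains $M_{w_0}(S^{[2]},p_0)^{\bullet}$.

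Next I would prove that $\cN$ dominates $T(\Lambda)$. By Proposition~\ref{prp:gisimuove} the map $\Def(S^{[2]},\cG)\to\Def(S^{[2]},c_1(\cG))$ is surjective; restricting to $T(\Lambda)$ shows that $\cG$ extends to a locally free sheaf $\cG_t$ on $X_t$ for all $t$ in a neighbourhood of $0$. Each $\cG_t$ remains $p_t$-slope stable: stability is open in families, and one keeps it in a fixed $\mathsf{a}(\cG)$-chamber along the way using Proposition~\ref{prp:campol} and Corollary~\ref{crl:campol}. Thus the arc $t\mapsto[\cG_t]$ lies in $M(\cX(\Lambda)/T(\Lambda))$, passes through the smooth point $[\cG]\in\cN$, and dominates a neighbourhood of $0$ in $T(\Lambda)$; hence $\cN\to T(\Lambda)$ is dominant.

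The heart of the argument is the computation of the generic fibre dimension $\dim\cN_t$. For the upper bound I would use upper-semicontinuity of $\ext^1$: it equals $2a^2+2$ at $[\cG]$, so on an open subset of $\cN$ containing $[\cG]$ — which meets $\cN_t$ for $t$ near $0$ — every sheaf $\cG_t'$ satisfies $\dim_{[\cG_t']}M_{w_t}(X_t,p_t)\le\ext^1_{X_t}(\cG_t',\cG_t')\le 2a^2+2$; as the general fibre is equidimensional, $\dim\cN_t\le 2a^2+2$. For the lower bound I would deform the \emph{entire} family $M_{w_0}(S^{[2]},p_0)^{\bullet}\cong\cM_{v_2}(S,h_S)$ sideways: Proposition~\ref{prp:gisimuove} applies to every general member $\cG(\cE_1,\cE_2')$, each such bundle extends uniquely over $T(\Lambda)$, and the extensions assemble (after shrinking) into a rational map $\cM_{v_2}(S,h_S)\times T(\Lambda)\dra\cN$ over $T(\Lambda)$ whose restriction to $t=0$ is the birational parametrisation of Theorem~\ref{thm:dadueauno}. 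Distinct $\cE_2'$ give non-isomorphic extensions for $t$ general — otherwise, by separatedness of the relative moduli space, they would already coincide at $t=0$, contradicting generic injectivity there — so this map is generically finite onto its image fibrewise, giving $\dim\cN_t\ge 2a^2+2$. Combining the bounds yields $\dim\cN_t=2a^2+2$ for general $t$; since at its general points $\dim\cN_t=\ext^1$, the space $M_{w_t}(X_t,p_t)$ is smooth there of dimension $2a^2+2$, so $\cN_t$ contains an irreducible component $M_{w_t}(X_t,p_t)^{\bullet}$ of that dimension. Taking $\cU(\Lambda)$ to be the open dense locus of $t$ where all these genericity conditions hold completes the proof.

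The hard part will be turning the pointwise sideways extensions (Verbitsky's projectively hyperholomorphic deformation theory, via Proposition~\ref{prp:gisimuove}) into a genuine algebraic family over a neighbourhood in $T(\Lambda)$, and checking that this family sweeps out a full $(2a^2+2)$-dimensional subset of the generic fibre rather than collapsing — equivalently, ruling out a drop of $\ext^2_{X_t}(\cG_t,\cG_t)$ from $2$ to $0$, which a naïve Euler-characteristic count on the $4$-fold would otherwise permit. The uniqueness and separatedness of stable extensions, together with the birationality at $t=0$, are exactly what prevent this collapse.
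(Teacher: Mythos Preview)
Your approach is exactly the paper's --- its entire proof is the single line ``Follows from Theorem~\ref{thm:dadueauno} and Proposition~\ref{prp:gisimuove}'' --- and your elaboration (produce the component at $t=0$ via Theorem~\ref{thm:dadueauno}, spread it over $T(\Lambda)$ via Proposition~\ref{prp:gisimuove}, then do the fibre-dimension bookkeeping on the relative moduli space) is the natural way to unpack it.

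One small correction: the sideways extensions furnished by Proposition~\ref{prp:gisimuove} are \emph{not} unique --- the proposition only asserts surjectivity of $\Def(S^{[2]},\cG)\to\Def(S^{[2]},c_1(\cG))$, and its fibres are precisely the (positive-dimensional) deformation spaces of $\cG_t$ on $X_t$. Your argument does not actually need uniqueness, though: for the lower bound it suffices that the irreducible component $\cN$ of the relative moduli space through $[\cG]$ dominates $T(\Lambda)$ and contains the smooth $(2a^2+2)$-dimensional $M_{w_0}^{\bullet}$ in its central fibre, and then your separatedness/specialisation reasoning (or, equivalently, flatness of $\cN$ over a general curve in $T(\Lambda)$ through $0$) forces $\dim\cN_t\ge 2a^2+2$. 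The worry you flag about $\ext^2$ dropping to $0$ is in fact excluded: $\chi(\cG_t,\cG_t)=2-2\ext^1+\ext^2$ forces $\ext^2$ to be even, while multiplication by the class in $H^2(\cO_{X_t})\cong\CC$ shows $\ext^2\ge 1$ for any simple sheaf, so $\ext^2(\cG_t,\cG_t)=2$ and $\ext^1(\cG_t,\cG_t)=2a^2+2$ automatically.
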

\begin{proof}
Follows from Theorem~\ref{thm:dadueauno} and Proposition~\ref{prp:gisimuove}.
\end{proof}
\begin{clm}\label{clm:moltogen}
Assume that $4m_0-r_1^2>0$. Then there exists $b(m_0)$ such that the following holds. Suppose that $D\cdot D=2m_0$, 
$D\cdot C=d_0>b(m_0)$, and  $t\in T(\Lambda)$ is such that $\NS(X_t)=\Lambda_t$. Then  $g_t$ is ample, $w_t$-generic, and   the open $w_t$-chamber containing it contains also $p_t$.
\end{clm}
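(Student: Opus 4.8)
The plan is to reduce all three assertions to a single statement about the rank-two lattice $\Lambda$: that $g_t$ and $p_t$ lie strictly on the same side of every ${\mathsf a}(w_0)$-wall. First I would record the Gram matrix of $\Lambda$ in the basis $(f,g)$. Using $q_{S^{[2]}}(\mu(\alpha))=\alpha^2$, $q_{S^{[2]}}(\delta)=-2$, the orthogonality of $\mu$ and $\delta$, and $\NS(S)=\ZZ[D]\oplus\ZZ[C]$ with $D\cdot D=2m_0$, $D\cdot C=d_0$, $C\cdot C=0$, one gets
\begin{equation*}
q(f)=0,\qquad q(f,g)=\frac{2d_0}{i},\qquad q(g)=\frac{2(4m_0-r_1^2)}{i^2}.
\end{equation*}
The hypothesis $4m_0-r_1^2>0$ gives $q(g)>0$, so $g$ lies in the interior of the positive cone; moreover $g$ is $\pi$-ample by Proposition~\ref{prp:relampio} (valid since $d_0\ge 2r_1$), hence on the ample side, in the same component of the positive cone as $f$ and as the ample class $p_t$. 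Since $p_t$ is ${\mathsf a}(w_0)$-suitable, hence ample and $w_t$-generic, it suffices to show that $g_t$ avoids every ${\mathsf a}(w_0)$-wall and sits on the same side of each as $p_t$: this gives at once that $g_t$ is $w_t$-generic, that it lies in the chamber of $p_t$, and that it is ample, the last because the MBM classes cutting out the ample cone of a variety of type $K3^{[2]}$ have square $-2$ or $-10$ and hence define ${\mathsf a}(w_0)$-walls (as ${\mathsf a}(w_0)=10a^4r_1^8\ge 10$).

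The core is the following estimate. Let $\lambda=\alpha f+\beta g\in\Lambda$ be a primitive class defining an ${\mathsf a}(w_0)$-wall, so $-{\mathsf a}(w_0)\le q(\lambda)<0$; since $q(f)=0$ we have $\beta\neq 0$, and I normalize the sign so that $\beta>0$. Then $q(\lambda,f)=\beta\,q(f,g)>0$, and the three clauses of Definition~\ref{dfn:poladatta} applied to $p_t$ force $q(\lambda,p_t)>0$. It remains to show $q(\lambda,g)>0$ once $d_0$ is large. Set $P:=4m_0-r_1^2>0$ and suppose instead $q(\lambda,g)=\alpha\,q(f,g)+\beta\,q(g)\le 0$, i.e.\ $\alpha d_0 i+\beta P\le 0$; since $\beta,P>0$ this forces $\alpha\le -1$ and $\beta P\le |\alpha|d_0 i$. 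Feeding this into
\begin{equation*}
q(\lambda)=\frac{2\beta}{i^2}\bigl(2\alpha d_0 i+\beta P\bigr)
\end{equation*}
and using $2\alpha d_0 i+\beta P\le -2|\alpha|d_0 i+|\alpha|d_0 i=-|\alpha|d_0 i\le -d_0 i$ together with $\beta\ge 1$, $|\alpha|\ge 1$, $i\le 2$, I obtain $q(\lambda)\le -2\beta|\alpha|d_0/i\le -d_0$. Hence as soon as $d_0>{\mathsf a}(w_0)$ we get $q(\lambda)<-{\mathsf a}(w_0)$, contradicting that $\lambda$ is a wall class. Thus $q(\lambda,g)>0$ for every ${\mathsf a}(w_0)$-wall class, and I would take $b(m_0):={\mathsf a}(w_0)=10a^4r_1^8$ (which in fact does not depend on $m_0$).

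Finally I would assemble the conclusion: for $d_0>b(m_0)$ every ${\mathsf a}(w_0)$-wall class $\lambda$, normalized to $\beta>0$, satisfies $q(\lambda,g)>0$ and $q(\lambda,p_t)>0$, so $g_t$ and $p_t$ lie strictly on the same side of every wall; since $q(\lambda,g)\neq 0$, the class $g_t$ meets no wall and is $w_t$-generic, and being in the positive cone on the ample side of all MBM classes it is ample, whence the open $w_t$-chamber containing $g_t$ also contains $p_t$. All of this is invariant under the parallel transport identifying $(\Lambda,q_{S^{[2]}})$ with $(\Lambda_t,q_{X_t})$, so it holds on $X_t$ for every $t$ with $\NS(X_t)=\Lambda_t$. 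The heart of the matter is the estimate $q(\lambda)\le -d_0$: it works precisely because $q(f,g)=2d_0/i$ grows linearly in $d_0$ while $q(g)$ stays bounded, so any integral class pairing non-positively with $g$ is tilted far enough toward the $f$-direction to acquire very negative square. The only points needing care are the sign normalization $\beta>0$ (and the ensuing $\alpha\le -1$) and the appeal to the $K3^{[2]}$ description of the ample cone to upgrade "same side of all ${\mathsf a}(w_0)$-walls as $p_t$" to "ample".
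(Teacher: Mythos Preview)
Your argument is correct and reaches the same conclusion by a closely related but distinct route. The paper invokes Lemma~4.3 of~\cite{ogfascimod}, which gives the uniform bound $q(\gamma)\le -4d_0^2/(i^2+8m_0-2r_1^2)$ for \emph{every} $\gamma\in\Lambda$ of negative square; this is quadratic in $d_0$, so for $d_0$ large enough there are no classes at all with $-{\mathsf a}(w_0)\le q(\gamma)<0$, hence a single ${\mathsf a}(w_0)$-chamber and no MBM walls. Your direct estimate $q(\lambda)\le -d_0$ is only linear in $d_0$ and applies under the side condition $q(\lambda,g)\le 0$, so it does not by itself prove that no walls exist; instead you show that $g_t$ and $p_t$ lie strictly on the same side of every putative wall, which is equally sufficient. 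Your approach is self-contained (no appeal to the external lemma), at the cost of a slightly more delicate sign argument and an explicit invocation of the wall-divisor classification (squares $-2$ and $-10$) for $K3^{[2]}$-type, which the paper uses only implicitly via the threshold $-10$. Your explicit $b(m_0)={\mathsf a}(w_0)$, independent of $m_0$, is a small bonus.

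Two cosmetic remarks. The phrase ``$p_t$ is ${\mathsf a}(w_0)$-suitable, hence ample and $w_t$-generic'' conflates hypothesis and conclusion: ampleness of $p_t$ is part of the setup (it is declared a polarization), while $w_t$-genericity of $p_t$ only follows once your estimate is in hand (via $q(\lambda,f)>0\Rightarrow q(\lambda,p_t)>0$). Also, the appeal to $\pi$-ampleness of $g$ (Proposition~\ref{prp:relampio}) to place $g$ in the correct component of the positive cone is unnecessary: since $p_0=g+Mf$ one has $q(g,p_0)=q(g)+Mq(g,f)>0$ directly.
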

\begin{proof}
The lattice  $\Lambda$ is generated by $f$ and $g$, and $q_{S^{[2]}}(g)=(8m_0-2r_1^2)/i^2>0$. Let $\gamma\in\Lambda$ be such that $q(\gamma)<0$. Then 
\begin{equation}\label{belladiseg}
q_{S^{[2]}}(\gamma)\le -\frac{4d_0^2}{i^2+(8m_0-2r_1^2)}
\end{equation}
by Lemma~4.3 in~\cite{ogfascimod}. In particular if $d_0\gg 0$ we have $q_{S^{[2]}}(\gamma)<-10$. Since $q_{S^{[2]}}(g_t)>0$ it follows that one of $g_t$, $-g_t$ is ample, and then it has to be $g_t$ (note: we are showing that  if $d_0\gg 0$ the ample cone coincides with the positive cone). Similarly, if $d_0\gg 0$ we have $q_{S^{[2]}}(\gamma)<-10a^4 r_18$, and it follows  that there is a single open $w_t$-chamber 
(see Example~\ref{expl:adigi}).
\end{proof}
 Now assume that $4m_0-r_1^2>0$, and that $d_0>b(m_0)$ (of course $d_0\ge 2r_1$). Let $t_{*}\in \cU(\Lambda)$ be such that 
 $\NS(X_{t_{*}})=\Lambda_{t_{*}}$. By Claim~\ref{clm:moltogen} the class $g_{t_{*}}$ is ample, $w_{t_{*}}$-generic, and the moduli space $M_{w_{t_{*}}}(X_{t_{*}},g_{t_{*}})$ is isomorphic to $M_{w_{t_{*}}}(X_{t_{*}},p_{t_{*}})$. By Claim~\ref{clm:compgrandim} it follows that $M_{w_{t_{*}}}(X_{t_{*}},g_{t_{*}})$ has an irreducible component 
 $M_{w_{t_{*}}}(X_t,g_{t_{*}})^{\bullet}$ of dimension $2a^2+2$. 
 
 Let $\cY(g_{t_{*}})\to T(g_{t_{*}})$ be a complete family of polarized HK varieties of type $K3^{[2]}$ with irreducible parameter space, containing a polarized couple isomorphic to $(X_{t_{*}},g_{t_{*}})$. 
 For $t\in T(g_{t_{*}})$ we let $(Y_t,g_t)$ be the corresponding polarized HK of type $K3^{[2]}$, and we let  $w_t$ be the mock Mukai vector for $Y_t$ obtained from $w_0$ by parallel transport.
By Maruyama there exist a scheme of finite type $M(\cY(g_{t_{*}})/T(g_{t_{*}}))$ and a regular map 
$M(\cY(g_{t_{*}})/T(g_{t_{*}}))\to T(g_{t_{*}})$ whose fiber over $t$ is isomorphic to the moduli space $M_{w_t}(Y_t,g_t)$. 
 By Proposition~\ref{prp:defproj} we get that for $t\in  T(g_{t_{*}})$ general the moduli space $M_{w_t}(Y_t,g_t)$ contains an irreducible component of dimension $2a^2+2$. 
 
Next note that 
\begin{equation}
\divisore(g_t)=
\begin{cases}
1 & \text{if $r_1\equiv 0\pmod{2}$, i.e.~$i=2$,} \\
2 & \text{if $r_1\equiv 1\pmod{2}$, i.e.~$i=1$,} 
\end{cases}
\end{equation}
and
\begin{equation}
q_{Y_t}(g_t)=
\begin{cases}
2m_0-\frac{r_1^2}{2} & \text{if $r_1\equiv 0\pmod{2}$, i.e.~$i=2$,} \\
8m_0-2r_1^2 & \text{if $r_1\equiv 1\pmod{2}$, i.e.~$i=1$.} 
\end{cases}
\end{equation}
Now recall that $r_1\mid(m_0+1)$ (see~\eqref{condizioni}). It follows that~\eqref{divdipol} and~\eqref{congruenze} hold and that, conversely, if ~\eqref{divdipol} and~\eqref{congruenze} hold  
then $(X,h)$ is isomorphic to $(Y_t,g_t)$ for a suitable $t\in T(g_{t_{*}})$. This finishes the proof of Theorem~\ref{thm:belteo}.

\appendix

\section{Sheaves on elliptic K3 surfaces}\label{sec:sheavesonellk3}
\subsection{Outline of the section}  
\setcounter{equation}{0}
Let $S$ be a  $K3$  surface with  an elliptic fibration $S\to\PP^1$. Let $\cF$ be a torsion free sheaf on $S$, and let $h_S$ be an ample divisor on $S$ which is ${\mathsf a}(\cF)$-suitable. Suppose that $\cF$ is $h_S$ slope semistable. Let $C_x$ be the elliptic fiber over $x\in\PP^1$. If $x$ is general then $\cF_x=\cF_{|C_x}$ is slope semistable by 
Proposition~\ref{prp:ellstab}. The  graded vector bundle associated to the Jordan-H\"older filtration of $\cF_x$ is isomorphic to a direct sum $E_1(x)\oplus \ldots\oplus E_a(x)$ of vector bundles with equal ranks and degrees. 
Taking the determinants of the direct factors, and letting $x$ vary in $\PP^1$, one gets a curve $\Lambda(\cF)$ in a suitable Jacobian fibration 
$J^d(S/\PP^1)$. By associating to $[\cF]\in \cM_v(S,H_S)$ (here $v$ is a  Mukai vector, and $h_S$ is $v$-suitable) the curve $\Lambda(\cF)$, one gets a regular map from $\cM_v(S,H_S)$ to a linear system on $J^d(S/\PP^1)$. We show that, under certain hypotheses, this map is surjective. 
\subsection{The decomposition curve}\label{subsec:curvaspett}
\setcounter{equation}{0}
Let $S$ be an elliptic $K3$ surface with an elliptic fibration 
$\varepsilon\colon S\to \PP^1$ 
as in Claim~\ref{clm:neronsevero}. We recall that this means that
\begin{equation}
\NS(S)=\ZZ[D]\oplus\ZZ[C], \quad
D\cdot D=2m_0,\quad D\cdot C=d_0,
\end{equation}
where $m_0,d_0$ are positive integers and $C$ is an elliptic fiber.
 Let $u\coloneq (0,C,d_0)$, and let $J^{d_0}(S/\PP^1):=\cM_u(S,H_S)$ be the relative degree-$d_0$ Jacobian of $S\to\PP^1$. By Mukai's well-known results
$J^{d_0}(S/\PP^1)$ is a $K3$ surface. Moreover 
there is the regular map $J^{d_0}(S/\PP^1)\to\PP^1$ associating to $[\xi]\in J^{d_0}(S/\PP^1)$  the point $x\in\PP^1$ such that 
$C_x\coloneq \varepsilon^{-1}(x)$ is the  support of the sheaf $\xi$.
This is an elliptic fibration with the section  which associates to $x\in\PP^1$ the class of the restriction of $\cO_S(D)$ to $C_x$. Hence $J^{d_0}(S/\PP^1)$ is a $K3$  elliptic surface with a section. Let $\Gamma$ be an elliptic fiber of 
$J^{d_0}(S/\PP^1)\to\PP^1$, and let $\Sigma$ be the image of the section defined above. We have
\begin{equation}
\Sigma\cdot\Sigma=-2,\quad \Sigma\cdot\Gamma=1,\quad \Gamma\cdot\Gamma=0.
\end{equation}
Now suppose that $r_1$ is a positive integer as in Assumption-Definition~\ref {ass-dfn:numeretti}, and let $v_1$ be as in loc.~cit., i.e.
\begin{equation}
v_1\coloneq \left(r_1,D,\frac{m_0+1}{r_1}\right).
\end{equation}
Let $a,b\in\NN$ with $a>0$, and let
\begin{equation}\label{eccovudue}
v_2\coloneq av_1-b(0,0,1).
\end{equation}
Let $\cF$ be a torsion free sheaf on $S$ with $v(\cF)=v_2$. We define the associated decomposition curve in 
$J^{d_0}(S/\PP^1)$ as follows. Let $\cM_{r_1,d_0}(S/\PP^1)\to\PP^1$ be the relative (Simpson) moduli space parametrizing 
semistable sheaves on fibers $C_x$ of rank $r_1$ and degree $d_0$ (they are all stable because $\gcd\{r_1,d_0\}=1$). We have an isomorphism
\begin{equation}\label{mappadet}
\begin{matrix}
\cM_{r_1,d_0}(S/\PP^1) & \overset{\sim}{\lra} & J^{d_0}(S/\PP^1) \\
[\cG] & \mapsto & [\det\cG]
\end{matrix}
\end{equation}
\begin{rmk}
Let $h_S$ be an ample divisor on $S$ which is $v_2$-suitable (with respect to the elliptic fibration 
$\epsilon\colon S\to\PP^1$). By Proposition~\ref{prp:coppiadie} there exists a unique $h_S$-stable sheaf $\cE_1$ on $S$ such that $v(\cE_1)=v_1$, and the restriction of $\cE_1$ to every elliptic fiber is stable. Thus $\cE_1$ determines a section  $\sigma\colon\PP^1\to \cM_{r_1,d_0}(S/\PP^1)$.  The image of $\sigma$ is equal to $\Sigma$ because 
$\det(\cE_1)\cong\cO_S(D)$.
\end{rmk}

Let $U\subset\PP^1$ be a sufficiently small open subset in the classical topology, and let $S_U:=f^{-1}(U)$. Then there exists a tautological sheaf $G_U$ on $S_U\times_{\PP^1}\cM_{r_1,d_0}(S/\PP^1)$. For $x\in \PP^1$ and 
$[\cG]\in \cM_{r_1,d_0}(C_x)$ we have $\chi_{\cO_{C_x}}(\cG,\cF_{|C_x})=0$. It follows that there exists a determinant line bundle $\cL_U$  on $\cM_{r_1,d_0}(S_U)$ and a section $s_U\in\Gamma(\cM_{r_1,d_0}(S_U),\cL_U)$ whose zero-scheme  $Z(s_U)$ is supported on the set
\begin{equation*}
\{[\cG]\in \cM_{r_1,d_0}(S_U) \mid \text{$\cG$ supported on $C_x$ and $\Hom(\cG_x,\cF_{|C_x})\not=0$}\}.
\end{equation*}
 The line bundles $\cL_U$ and sections $s_U$ for varying $U$ glue to give a line bundle $\cL(\cF)$ and a section $s(\cF)$ on 
$\cM_{r_1,d_0}(S/\PP^1)$. We let $\Lambda(\cF)\subset\cM_{r_1,d_0}(S/\PP^1)$ be the zero-scheme of $s(\cF)$. Via the identification in~\eqref{mappadet} we view $\Lambda(\cF)$ as a subscheme of $J^{d_0}(S/\PP^1)$. Restricting the elliptic fibration $J^{d_0}(S/\PP^1)\to\PP^1$ to  $\Lambda(\cF)$ we get a regular map  $\Lambda(\cF)\to\PP^1$; we let $\Lambda(\cF)_x$ be the fiber of this map over $x\in\PP^1$. 
\begin{rmk}\label{rmk:bellinzona}
Let $T$ be irreducible and let $\cF\to S\times T$ be a $T$-flat family of sheaves as above, i.e.~for all $t\in T$ we have $v(\cF_{|S\times\{t\}})=v_2$. Then the isomorphism class of the line bundle $\cL(\cF_{|S\times\{t\}})$ is independent of $t\in T$. Since the moduli space $\cM_{v_2}(S,h_S)$ is irreducible it follows that the isomorphism class of $\cL(\cF)$ is independent of the point $[\cF]\in \cM_{v_2}(S,h_S)$.
We let $\cL(v_2)\coloneq\cL(\cF)$ for any $[\cF]\in \cM_{v_2}(S,h_S)$.  
\end{rmk}
\begin{rmk}\label{rmk:seadatto}
Suppose that the restriction of $\cF$ to a general elliptic fiber  is  semistable.  Then $\Lambda(\cF)$ is a curve. Let $x\in\PP^1$ be such that 
$\cF_{|C_x}$ is semistable. 
Since $r_1$ and $d_0$ are coprime,  the associated graded vector bundle of $\cF_{|C_x}$ is isomorphic to 
$V_1(x)\oplus \ldots\oplus V_a(x)$,
where $r(V_i(x))=r_1$ and $\deg(V_i(x))=d_0$ for all $i\in\{1,\ldots,a\}$. Then, identifying codimension $1$ subschemes of $C_x$ with effective divisors, we have
\begin{equation}
\Lambda(\cF)_x=[\det V_1(x)]+ \ldots + [\det V_a(x)].
\end{equation}
If $\cF_{|C_x}$ is not semistable, then $C_x$ appears in  $\Lambda(\cF)$ (we identify $\Lambda(\cF)$ with an effective divisor on 
$J^{d_0}(S/\PP^1)$) with positive multiplicity.
\end{rmk}
\subsection{Decomposition curves and Lagrangian fibrations} 
\setcounter{equation}{0}
Below is the main result of the present section.
\begin{prp}\label{prp:mappaspettrale}
Keep notation and hypotheses as above, in particular $v_2$ is given by~\eqref{eccovudue}. Then the following hold:
\begin{enumerate}
\item
We have an isomorphism
\begin{equation}\label{ellevudue}
\cL(v_2)\cong \cO_{J^{d_0}(S/\PP^1)}(a\Sigma+br_1 \Gamma).
\end{equation}
\item
Suppose  that  $br_1\ge 2a$, and that 
$h_S$ is an ample divisor on $S$ which is $v_2$-suitable. Then the map (see Item~(1) and Remark~\ref{rmk:seadatto})
\begin{equation}\label{mappaspettrale}
\begin{matrix}
\cM_{v_2}(S,H_S) & \lra & |\cO_{J^{d_0}(S/\PP^1)}(a\Sigma+br_1  \Gamma)| \\
[\cF] & \mapsto & \Lambda(\cF)
\end{matrix}
\end{equation}
is a Lagrangian fibration.
\end{enumerate}
\end{prp}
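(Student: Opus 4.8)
The plan is to treat the two items separately: first to pin down the cohomology class of $\cL(v_2)$, and then to recognise the map in~\eqref{mappaspettrale} as a Beauville--Mukai integrable system by means of a relative Fourier--Mukai transform.

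For Item~(1) I would argue that the class $[\cL(v_2)]\in\NS(J^{d_0}(S/\PP^1))$ lies in the sublattice spanned by $\Sigma$ and $\Gamma$. Indeed, by Remark~\ref{rmk:bellinzona} the class is canonically attached to $v_2$, hence invariant under the monodromy of the family of elliptic $K3$ surfaces with $\NS(S)=\ZZ[D]\oplus\ZZ[C]$; since for a very general member of this family $\NS(J^{d_0}(S/\PP^1))=\ZZ\Sigma\oplus\ZZ\Gamma$, the class must be an integral combination $a'\Sigma+\beta\Gamma$. To determine the coefficients I would intersect with the two generators. Intersecting with a general fibre $\Gamma$ counts the points of $\Lambda(\cF)_x$ for general $x$, which by Remark~\ref{rmk:seadatto} are the $a$ distinct classes $[\det V_i(x)]$, so $[\cL(v_2)]\cdot\Gamma=a$ and $a'=a$. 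Intersecting with $\Sigma$ computes the degree of the restriction of the determinant line bundle to the section; a Grothendieck--Riemann--Roch computation identifies this with the Mukai pairing $\la v_1,v_2\ra=br_1-2a$, whence $\beta-2a=br_1-2a$ and $\beta=br_1$. This gives~\eqref{ellevudue}.

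For Item~(2) I would first record the numerology. Writing $L:=a\Sigma+br_1\Gamma$, a direct computation gives $L^2=-2a^2+2abr_1=v_2^2$, so $\dim|L|=\tfrac12 L^2+1=\tfrac12 v_2^2+1=\tfrac12\dim\cM_{v_2}(S,h_S)$, i.e.\ the target has exactly half the dimension of the source. Since $h_S$ is $v_2$-suitable, Proposition~\ref{prp:ellstab} shows that every $h_S$-semistable sheaf with Mukai vector $v_2$ restricts to a semistable bundle on the general fibre $C_x$; together with the relative construction of $\cL(v_2)$ and $s(\cF)$ this shows that $[\cF]\mapsto\Lambda(\cF)$ extends to a morphism on all of $\cM_{v_2}(S,h_S)$. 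The heart of the proof is then to identify this morphism with the support map of a moduli space of pure one-dimensional sheaves on the $K3$ surface $J^{d_0}(S/\PP^1)$. I would invoke the relative Fourier--Mukai transform $\Phi$ attached to the elliptic fibration $\varepsilon\colon S\to\PP^1$ and its relative Jacobian. By Proposition~\ref{prp:coppiadie} and Proposition~\ref{prp:ellstab}, for general $x$ the restriction $\cF_{|C_x}$ is, up to Jordan--H\"older, a direct sum of $a$ stable bundles of rank $r_1$ and degree $d_0$; since $\gcd(r_1,d_0)=1$, $\Phi$ carries each such bundle on $C_x$ to a length-one sheaf on the corresponding fibre of $J^{d_0}(S/\PP^1)$ supported at its determinant. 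Hence $\Phi(\cF)$ is a pure one-dimensional sheaf whose scheme-theoretic support is the decomposition curve $\Lambda(\cF)$, with fundamental class $L$ by Item~(1). The transform $\Phi$ then identifies $\cM_{v_2}(S,h_S)$ with a moduli space $\cM_{\widehat w}(J^{d_0}(S/\PP^1),\widehat h)$ of $\widehat h$-stable pure one-dimensional sheaves with $c_1=L$, carrying $\Lambda$ to the map sending a sheaf to its support. The latter is the classical Beauville--Mukai integrable system, whose support map is a Lagrangian fibration; and since $\Phi$, being a derived equivalence, preserves the Mukai holomorphic symplectic form, the map in~\eqref{mappaspettrale} is a Lagrangian fibration as well.

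The main obstacle is precisely this Fourier--Mukai identification. One must check that $\Phi$ sends $h_S$-stable sheaves with Mukai vector $v_2$ to $\widehat h$-stable pure one-dimensional sheaves (matching the two stability conditions, using $v_2$-suitability), that the induced map on moduli spaces is an isomorphism, and that it is surjective onto the full linear system $|L|$---this is where the hypothesis $br_1\ge 2a$ enters, guaranteeing that $L=a\Sigma+br_1\Gamma$ is positive enough that a general member of $|L|$ is a smooth integral curve carrying the relevant stable sheaves, so that the fibres of the support map are genuine (compactified) Jacobians of the expected dimension $\tfrac12 v_2^2+1$. Controlling the behaviour of $\Phi$ on the locus where $\cF_{|C_x}$ degenerates, and over the singular fibres of $\varepsilon$, is the technical crux; once the identification with a Beauville--Mukai system is in place, the Lagrangian property is automatic.
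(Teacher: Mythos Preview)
Your Item~(1) is correct but follows a different route from the paper. The paper argues by explicit computation: it takes the specific sheaf $\cF=\ker\bigl(\cE_1^{\oplus a}\twoheadrightarrow\CC_{y_1}\oplus\cdots\oplus\CC_{y_b}\bigr)$ for general points $y_1,\ldots,y_b\in S$ (where $\cE_1$ is the spherical bundle with $v(\cE_1)=v_1$), observes that $\Lambda(\cF)=a\Sigma+m_1\Gamma_{x_1}+\cdots+m_b\Gamma_{x_b}$ for some $m_i\ge r_1$, and then proves $m_i=r_1$ via a lemma comparing the determinant section for $\cF$ with that for its reflexive hull $\cF^{\vee\vee}=\cE_1^{\oplus a}$. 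Your intersection-theoretic argument is cleaner, and the identification $[\cL(v_2)]\cdot\Sigma=\langle v_1,v_2\rangle$ is indeed a standard GRR computation (restricting the universal family to $\Sigma$ gives $\cE_1$ itself, and the degree of the theta line bundle $(\det R\varepsilon_*(\cE_1^{\vee}\otimes\cF))^{-1}$ on $\PP^1$ comes out to $-\chi(\cE_1,\cF)=\langle v_1,v_2\rangle$). One small correction: $[\cL(v_2)]\cdot\Gamma=a$ follows because $\Lambda(\cF)_x$ is a divisor of degree $a$ on $\Gamma_x$, not because the $a$ points are distinct. The paper's approach has the advantage of producing, as a by-product, an explicit family of reducible curves $a\Sigma+r_1\Gamma_{x_1}+\cdots+r_1\Gamma_{x_b}$ in the image of the map; this is used in Item~(2).

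For Item~(2) you are working much harder than necessary. The paper's proof is three lines and uses no Fourier--Mukai theory. Since $br_1\ge 2a$ the divisor $a\Sigma+br_1\Gamma$ is big and nef, so higher cohomology vanishes and Riemann--Roch gives
\[
\dim|a\Sigma+br_1\Gamma|=1+abr_1-a^2=\tfrac12(v_2^2+2)=\tfrac12\dim\cM_{v_2}(S,h_S).
\]
The map is nonconstant because the reducible curves from Item~(1) (for varying $x_1,\ldots,x_b$) all lie in the image. Matsushita's theorem then forces the image to have dimension exactly half that of the source, hence the map is surjective onto the linear system and is a Lagrangian fibration. Your Fourier--Mukai identification with a Beauville--Mukai system would also prove the result and is conceptually attractive, but the obstacles you flag yourself---matching $h_S$-stability to stability on $J^{d_0}(S/\PP^1)$, controlling the transform over singular fibres and over the locus where $\cF_{|C_x}$ fails to be semistable---are genuine and entirely avoided by the Matsushita argument.
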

Before proving Proposition~\ref{prp:mappaspettrale} we go through a preliminary result. Let $X$ be a smooth (irreducible) surface, and let 
$\pi\colon X\to T$ a projective map to a smooth curve. Let $\cE$ be a torsion free sheaf on $X$ with the property that for all $t\in T$ we have $\chi(X_t,E_t)=0$, where $X_t:=\pi^{-1}(t)$, and $E_t:=\cE_{|X_t}$. Then there exists a determinant line bundle $\cL(\cE)$ on $T$ and a section $s(\cE)$ of $\cL(\cE)$ whose zero-scheme $Z(s(\cE))$ is supported  on the set of $t$ such that $h^0(X_t,E_t)=h^1(X_t,E_t)>0$. Note that the double dual $\cE^{\vee\vee}$ satisfies the same hypotheses as $\cE$, hence we have  a determinant line bundle $\cL(\cE^{\vee\vee})$ on $T$ and a section $s(\cE^{\vee\vee})$ of 
$\cL(\cE^{\vee\vee})$. We let $Q(\cE):=\cE^{\vee\vee}/\cE$, where $\cE\hra\cE^{\vee\vee}$ by the canonical (injective) map. 
\begin{lmm}\label{lmm:doppioduale}
Keep hypotheses as above, and assume in addition that $h^0(X_t,\cE^{\vee\vee}_{|X_t})=h^1(X_t,\cE^{\vee\vee}_{|X_t})=0$ for all $t\in T$, and hence $Z(s(\cE^{\vee\vee}))=0$ (we identify codimension $1$ subschemes of $T$ with effective divisors). Then 
\begin{equation}
Z(s(\cE))=\sum\limits_{p\in\supp Q(\cE)}\ell(\cO_{Q(\cE),p})\pi(p).
\end{equation}
\end{lmm}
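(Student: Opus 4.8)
The plan is to compute $R\pi_*\cE$ directly from the canonical sequence relating $\cE$ to its double dual, and then to read off the zero-divisor of the determinant section. First I would record the structure of $Q(\cE)$: since $X$ is a smooth surface and $\cE$ is torsion free, the reflexive sheaf $\cE^{\vee\vee}$ is locally free, and the canonical inclusion $\cE\hra\cE^{\vee\vee}$ has cokernel $Q(\cE)$ supported in codimension $2$, i.e.\ on a finite set of closed points of $X$. Consequently $\pi$ restricts to a finite map on $\supp Q(\cE)$, so $R^{>0}\pi_*Q(\cE)=0$ and $\cT:=\pi_*Q(\cE)$ is a finite-length (torsion) sheaf on the curve $T$.

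Next I would push the short exact sequence
\[
0\lra \cE\lra \cE^{\vee\vee}\lra Q(\cE)\lra 0
\]
forward by $\pi$. Writing $\cW:=\cE^{\vee\vee}$, note that $\cW$ is locally free, hence $T$-flat, and that the fibres of $\pi$ are curves, so $R^{\ge 2}\pi_*=0$. The additional hypothesis $h^0(X_t,\cW_{|X_t})=h^1(X_t,\cW_{|X_t})=0$ for all $t$, together with cohomology and base change, gives $R\pi_*\cW=0$. Feeding this into the long exact sequence of $R\pi_*$ applied to the displayed sequence yields $\pi_*\cE=0$ and a canonical isomorphism $R^1\pi_*\cE\cong \cT$; since $R^{\ge 2}\pi_*\cE=0$ as well, $R\pi_*\cE$ is quasi-isomorphic to the torsion sheaf $\cT$ placed in cohomological degree $1$.

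From here the statement is essentially formal. Locally on $T$ I would choose a two-term locally free resolution $0\to \cV_1\xrightarrow{\delta}\cV_0\to\cT\to 0$, so that $R\pi_*\cE$ is represented by the complex $[\cV_1\xrightarrow{\delta}\cV_0]$ in degrees $0,1$, with $\cV_0,\cV_1$ of equal rank (because $\chi=0$). The canonical section $s(\cE)$ of the determinant line bundle is, through this representative, the map $\det\delta$, whose zero-divisor is the degeneracy divisor of $\delta$, namely $\divisore(\cT)=\sum_{t\in T}\ell(\cT_t)\,t$; this is exactly the normalization behind the stated property that $Z(s(\cE))$ is supported where $h^0=h^1>0$. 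Finally I would compute the coefficients: since $\pi$ is finite on $\supp Q(\cE)$ and everything is over $\CC$ (length $=$ $\CC$-dimension, and lengths add over a finite fibre), one has $\ell(\cT_t)=\sum_{p\in\pi^{-1}(t)\cap\supp Q(\cE)}\ell(\cO_{Q(\cE),p})$, whence
\[
Z(s(\cE))=\divisore(\cT)=\sum_{p\in\supp Q(\cE)}\ell(\cO_{Q(\cE),p})\,\pi(p),
\]
as claimed.

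The only points requiring genuine care are the identification $R\pi_*\cE\simeq\cT[-1]$ — that is, the vanishing $R\pi_*\cW=0$, which is where the extra hypothesis on $\cE^{\vee\vee}$ enters through base change — and the matching of the abstractly given section $s(\cE)$ with the concrete degeneracy section $\det\delta$ of the two-term representative. Once these are in place the length bookkeeping over the fibres of $\pi$ is routine, and the effectivity of $Z(s(\cE))$ is automatic from the injectivity of $\delta$.
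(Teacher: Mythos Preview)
Your proof is correct and takes a genuinely different route from the paper's. The paper works concretely: it twists by a relative effective divisor $A$ (chosen disjoint from $\supp Q(\cE)$ and with $h^1(X_t,\cE_t(A_t))=0$) so that the restriction sequence $0\to\cE\to\cE(A)\to\cE(A)_{|A}\to 0$ pushes forward to a two-term locally free complex representing $R\pi_*\cE$; it then compares the resulting square with the analogous one for $\cE^{\vee\vee}$, observes that two of the four maps are isomorphisms (one because $A$ avoids $\supp Q(\cE)$, the other by the hypothesis on $\cE^{\vee\vee}$), and reduces $Z(s(\cE))$ to the degeneracy divisor of the injection $\pi_*(\cE(A))\hookrightarrow\pi_*(\cE^{\vee\vee}(A))$, whose cokernel is visibly $\pi_*Q(\cE)$. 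Your argument instead bypasses the auxiliary twist entirely by using cohomology and base change to kill $R\pi_*\cE^{\vee\vee}$ outright, so that $R\pi_*\cE\simeq\pi_*Q(\cE)[-1]$ in the derived category, and then invokes the Knudsen--Mumford $\mathrm{Div}$ formalism to identify $Z(s(\cE))$ with $\divisore(\pi_*Q(\cE))$. Your approach is cleaner and more conceptual, at the cost of leaning on the compatibility of the abstractly defined section with the $\mathrm{Div}$ construction (which you flag but do not spell out); the paper's approach is more hands-on and makes the two-term model explicit from the start, so that this compatibility is built in.
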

\begin{proof}
Let $A$ be a relative effective divisor on $X$ such that $h^1(X_t,E_t\otimes(A_t))=0$ for all $t\in T$, where $A_t:=A_{|X_t}$. Assume also that the supports of $A$ and $Q(\cE)$ are disjoint. Consider the  commutative diagram of sheaves on $T$:
\begin{equation}
\xymatrix{ \pi_{*}(\cE\otimes\cO_X(A))\ar[d]_{\alpha}\ar[rr]^{\gamma}    &  &  \pi_{*}(\cE\otimes\cO_X(A)_{|A}) \ar[d]^{\beta}\\ 
\pi_{*}(\cE^{\vee\vee}\otimes\cO_X(A)) \ar[rr]^{\delta} & & \pi_{*}(\cE^{\vee\vee}\otimes\cO_X(A)_{|A})}
\end{equation}
All sheaves in the above diagram are locally free of the same rank, and $Z(s(\cE))$ is the zero-scheme of the determinant of $\gamma$. The map $\beta$ is an isomorphism (the supports of $A$ and $Q(\cE)$ are disjoint), and  $\delta$ is an isomorphism because $h^0(X_t,\cE^{\vee\vee}_{|X_t})=h^1(X_t,\cE^{\vee\vee}_{|X_t})=0$ for all $t\in T$ by hypothesis. It follows that $Z(s(\cE))$ equals the zero-scheme of $\det\alpha$. 
The lemma follows because we have an exact sequence
\begin{equation}
0\lra  \pi_{*}(\cE\otimes\cO_X(A))\overset{\alpha}{\lra} \pi_{*}(\cE^{\vee\vee}\otimes\cO_X(A))\lra \pi_{*}(Q(\cE))\lra 0.
\end{equation}
\end{proof}
\begin{proof}[Proof of Proposition~\ref{prp:mappaspettrale}]
(1): Since $v_1^2=-2$, the moduli space $\cM_{v_1}(S,H_S)$ is a singleton parametrizing a vector bundle $\cE$ on $S$ whose restriction to every elliptic fiber $C_x$ is the unique stable vector bundle of rank $r_1$ and determinant isomorphic to $\cO_{C_x}(D)$. It follows that 
\begin{equation}
\cM_{(ar_1,aD,as_1)}(S,h_S)=\{[\cE^{\oplus a}]\}.
\end{equation}
Clearly $\Lambda(\cE^{\oplus a})=a\Sigma$, and this proves the validity of~\eqref{ellevudue} if $b=0$. Now assume that $b>0$. Let $\cF$ be a sheaf on $S$ fitting into the exact sequence
\begin{equation}
0\lra\cF\lra \cE^{\oplus a} \overset{\phi}{\lra} \CC_{y_1}\oplus\ldots\oplus\CC_{y_b}\lra 0,
\end{equation}
where $y_1,\ldots,y_b\in S$ are general points, and $\phi$ is a general morphism. Let $x_i=f(y_i)$. Then $\cF$ is an $h_S$ slope stable torsion-free sheaf, and $v(\cF)=v_2$. It is clear that there exists $m_1,\ldots,m_b\in\NN$ such that
\begin{equation}\label{spettrodising}
\Lambda(\cF)=a\Sigma+m_1 \Gamma_{x_1}+\ldots+ m_b \Gamma_{x_b}.
\end{equation}
Let  $i\in\{1,\ldots,b\}$. Since $\cF_{|C_{y_i}}\cong \ov{\cF}_i\oplus\CC_{y_i}$, where $\ov{\cF}_i$ is a subsheaf of $(\cE_{|C_{y_i}})^{\oplus a}$, we have  $\dim\Hom(\cG,\cF_{|C_{y_i}})= r_1$ for $[\cG]\in (\cM_{r_1,d_0}(C_{y_i})\setminus\{[\cE_{|C_{y_i}}]\})$. Thus $m_i\ge r_1$.
One proves that equality holds by applying Lemma~\ref{lmm:doppioduale}. In fact, let $T\subset \cM_{r_1,d_0}(S/\PP^1)$ be a (non projective) smooth irreducible  curve with the following properties: $T$ meets $\cM_{r_1,d_0}(C_{y_i})$ at a single point $[\cG]\not=[\cE_{|C_{y_i}}]$ and the intersection is transverse, all sheaves  parametrized by $T$ are push-forwards of \emph{locally free} sheaves on curves of the elliptic fibration $f\colon S\to\PP^1$ (i.e.~$T$ does not meet the critical set of $f$), and the surface  $X:=S\times_{\PP^1}T$ is smooth. Let $\rho\colon X\to S$ and $\pi\colon X\to T$  be the projections.  On $X$ we have the sheaf $\rho^{*}\cF$,   and a tautological locally-free sheaf $\cA$ with the property that $\cA{|p_T^{-1}(t)}$ is isomorphic to the vector bundle on $X_t\cong C_t$ corresponding to $t$ (there exists such a tautological sheaf because $H^2(T,\cO_T^{*})=0$).
 The pull-back of the determinant line bundle $\cL(F)$ to $T$ is isomorphic to the determinant line bundle $\cL(\cA^{\vee}\otimes \rho^{*}\cF)$.  The hypotheses of Lemma~\ref{lmm:doppioduale}  are satisfied by the sheaf 
 $\cE:=\cA^{\vee}\otimes \rho^{*}\cF$  on the smooth surface $X$. By that lemma we get that the canonical section of $\cL(\cA^{\vee}\otimes \rho^{*}\cF)$ vanishes at $y_i$ with multiplicity $r_1$, and hence $m_i=r_1$. This proves that 
 $\cL(\cF)\cong \cO_{J^{d_0}(S/\PP^1)}(a\Sigma+br_1 \Gamma)$.
 
(2):  Let $J^{d_0}:=J^{d_0}(S/\PP^1)$. Then $H^p(J^{d_0},\cO_{J^{d_0}}(a\Sigma+br_1 \Gamma)=0$ for $p>0$ because of the hypothesis $br_1\ge 2a$ (for example because   $a\Sigma+br_1 \Gamma$ is big and nef).
By Hirzebruch-Riemann-Roch it follows that
\begin{equation*}
\dim|\cL(v_2)|=\dim|a\Sigma+br_1 \Gamma|=1+abr_1-a^2. 
\end{equation*}
The map in~\eqref{mappaspettrale} is not constant because all the curves appearing in~\eqref{spettrodising} belong to the image. By Matsushita's Theorem  the image of the map has dimension equal to
\begin{equation}
\frac{1}{2}\dim \cM_{v_2}(S,h_S)=\frac{1}{2}(2+v_2^2)=1+abr_1-a^2. 
\end{equation}
This finishes the proof of (2).
\end{proof}
\begin{prp}\label{prp:spettroirrid}
Let $n\ge 2m\ge 2$ and let $A\in |m\Sigma+n\Gamma|$ be a general divisor.   Let
$A=A_{\rm hor}+A_{\rm vert}$
be the unique decomposition into effective divisors such that $A_{\rm vert}$ is a sum of elliptic fibers and the support of $A_{\rm hor}$ contains
 no elliptic fiber. Then 
$A_{\rm hor}$ is an integral divisor. If $m\ge 2$, then $A$ itself is an integral smooth divisor.
\end{prp}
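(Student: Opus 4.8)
The plan is to analyze the complete linear system $|L|$ with $L := m\Sigma + n\Gamma$ on the elliptic $K3$ surface $J := J^{d_0}(S/\PP^1)$ by means of the structure theory of linear systems on $K3$ surfaces. First I would record the lattice-theoretic input. Since $\NS(S)$ has rank $2$ and $J$ is a moduli space of sheaves on $S$, the surface $J$ again has Picard number $2$; as $\NS(J)$ contains the unimodular lattice $\langle\Sigma,\Gamma\rangle$ (with $\Sigma^2=-2$, $\Sigma\cdot\Gamma=1$, $\Gamma^2=0$), a discriminant computation forces $\NS(J)=\langle\Sigma,\Gamma\rangle$. Consequently $\Sigma$ is the unique irreducible curve of negative self-intersection (the only effective $(-2)$-class), and every fiber $\Gamma$ is irreducible. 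A direct computation gives $L\cdot\Gamma=m$, $L\cdot\Sigma=n-2m\ge 0$, and $L^2=2m(n-m)>0$; writing $L=(n-2m)\Gamma+m(\Sigma+2\Gamma)$ exhibits $L$ as a non-negative combination of the two generators $\Gamma$ and $\Sigma+2\Gamma$ of the nef cone of $J$, so $L$ is nef and big.

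Next I would treat the boundary case $m=1$ separately, since this is precisely where $\Sigma$ is forced into the base locus. Here $L=\Sigma+n\Gamma$ with $n\ge 2$. The line bundle $\cO_J(n\Gamma)=p^{*}\cO_{\PP^1}(n)$ (where $p\colon J\to\PP^1$ is the elliptic fibration) has $h^0=n+1$, while Riemann--Roch together with Kawamata--Viehweg vanishing (valid as $L$ is nef and big) gives $h^0(\cO_J(L))=2+L^2/2=n+1$ as well. The inclusion $\cO_J(n\Gamma)\hookrightarrow\cO_J(\Sigma+n\Gamma)$ defined by the section cutting out $\Sigma$ is therefore an isomorphism on global sections, so every member of $|L|$ contains $\Sigma$; thus $\Sigma$ is a fixed component and $|L|=\Sigma+|n\Gamma|$. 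A general member is then $A=\Sigma+\Gamma_{x_1}+\dots+\Gamma_{x_n}$ with the $x_i$ distinct, whence $A_{\rm hor}=\Sigma\cong\PP^1$ is integral, as required.

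Finally, for $m\ge 2$ I would invoke Saint-Donat's analysis of linear systems on $K3$ surfaces: a nef and big $L$ acquires a fixed component only if $L\sim aE+\Gamma'$ for an irreducible elliptic curve $E$ (with $E^2=0$) and a $(-2)$-curve $\Gamma'$ with $E\cdot\Gamma'=1$ (and $a\ge 2$). Since $\Sigma$ is the only $(-2)$-curve and the only effective irreducible elliptic class meeting it once is $\Gamma$, such an expression would read $L=a\Gamma+\Sigma$, forcing $m=1$; hence for $m\ge 2$ the system $|L|$ has no fixed component and is therefore base-point free. By Bertini a general $A\in|L|$ is smooth, and the sequence $0\to\cO_J(-L)\to\cO_J\to\cO_A\to 0$ together with $h^1(\cO_J(-L))=0$ yields $h^0(\cO_A)=1$, so $A$ is connected, hence integral and smooth. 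As $A^2=2m(n-m)>0$, the curve $A$ is not a fiber, so it is horizontal and $A_{\rm hor}=A$. The main obstacle is the correct bookkeeping of the fixed-and-moving decomposition and pinning down the exact threshold $m=1$ at which $\Sigma$ enters the base locus; once $\NS(J)=\langle\Sigma,\Gamma\rangle$ is established, this reduces to the cohomological dimension count above and the standard $K3$ structure theorem.
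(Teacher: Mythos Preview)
Your proof is correct and takes a genuinely different route from the paper's. The paper argues by induction on $m$: assuming a general $B\in|(m-1)\Sigma+n\Gamma|$ has integral horizontal part, it deduces that $B$ is connected, hence $H^1(\cO_J((m-1)\Sigma+n\Gamma))=0$ by Serre duality, and then uses this vanishing together with the restriction to $\Sigma$ (of non-negative degree since $n\ge 2m$) to show $|m\Sigma+n\Gamma|$ is globally generated; smoothness follows from Bertini, and irreducibility from an intersection computation on a putative splitting $A=A_1+A_2$. You instead work directly: you first pin down $\NS(J)=\langle\Sigma,\Gamma\rangle$ via the unimodularity of that sublattice and $\rho(J)=\rho(S)=2$, then invoke Kawamata--Viehweg for the cohomology vanishing and Saint-Donat's structure theorem to rule out fixed components when $m\ge 2$. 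Your approach is cleaner and identifies the threshold $m=1$ conceptually (it is exactly the Saint-Donat exceptional case $L=a\Gamma+\Sigma$), at the cost of importing Saint-Donat's theorem and the fact that moduli of sheaves on a $K3$ preserve the Picard number. The paper's induction is more self-contained and avoids naming $\NS(J)$ explicitly for the global generation step, though its irreducibility argument (writing $A_i\in|m_i\Sigma+n_i\Gamma|$ with $n_i\ge 2m_i$) implicitly uses the same lattice description you make explicit. Both arguments ultimately rest on $L$ being big and nef with $L\cdot\Sigma=n-2m\ge 0$; your route just packages the consequence via a structural theorem rather than bootstrapping it.
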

\begin{proof}
 We proceed by induction on $m$. 
If $m=1$ the statement is trivially true because  $A_{\rm hor}=\Sigma$ for any $A\in |\Sigma+n\Gamma|$. Now 
assume that  $n\ge 2m\ge 4$. We claim that
\begin{equation}\label{svanimento}
H^1(J^{d_0},\cO_{J^{d_0}}((m-1)\Sigma+n\Gamma))=0.
\end{equation}
  In fact let $B\in|(m-1)\Sigma+n\Gamma|$ be general. Since 
$B_{\rm hor}$ is an integral divisor, the divisor $B$ is connected, i.e.~$h^0(B,\cO_B)=1$. It follows that 
$H^1(J^{d_0},\cO_{J^{d_0}}(-B))=0$, and  by 
Serre duality we get the vanishing in~\eqref{svanimento}. The restriction of $\cO_S(m\Sigma+n\Gamma)$ 
to $\Sigma$ has non negative degree because $n\ge 2m\ge 4$, and hence (the restriction) has non zero sections 
because $\Sigma$ is rational. By the vanishing in~\eqref{svanimento} it follows that 
 $|m\Sigma+n\Gamma|$ is globally generated at every point of $\Sigma$. Since  $|m\Sigma+n\Gamma|$ is  clearly 
 globally generated  away from $\Sigma$, it follows that it is globally generated. Let $A\in|m\Sigma+n\Gamma|$ be general. Then $A$ is smooth because  $|m\Sigma+n\Gamma|$ is globally generated. We claim that $A$ is irreducible (and hence integral). Suppose the contrary. Then $A=A_1+A_2$ where $A_i\in|m_i\Sigma+n_i\Gamma|$ are (non zero) smooth divisors. Since $A$ is smooth it follows that $A_1\cdot A_2=0$. 
 This leads to a contradiction. In fact it implies right away that $m_1>0$ and $m_2>0$, and since
  $n_i\ge 2m_i> 0$ we get that $A_1\cdot A_2\ge 2m_1 m_2> 0$. 
\end{proof}
The above proposition gives the following result.  
\begin{crl}\label{crl:spettroirrid}
Let hypotheses be as in Item~(2) of Proposition~\ref{prp:mappaspettrale}, in particular $v_2$ is given by~\eqref{eccovudue} and $br_1\ge 2a$. 
 If  $a\ge 2$ and $[\cF]\in M_{v_2}(S,h_S)$ is a general point, then $\Lambda(\cF)$ is an integral and smooth divisor. 
\end{crl}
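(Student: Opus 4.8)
The plan is to deduce the statement immediately from the two preceding propositions, reading Proposition~\ref{prp:spettroirrid} through the Lagrangian fibration of Proposition~\ref{prp:mappaspettrale}. First I would record the translation of parameters: setting $m:=a$ and $n:=br_1$, the base of the fibration in~\eqref{mappaspettrale} is precisely the linear system $|\cO_{J^{d_0}(S/\PP^1)}(m\Sigma+n\Gamma)|$. The hypotheses of the corollary ($a\ge 2$ and $br_1\ge 2a$) then translate verbatim into the hypotheses of Proposition~\ref{prp:spettroirrid}: indeed $m=a\ge 2$ and $n=br_1\ge 2a=2m$, so in particular $n\ge 2m\ge 4$. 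Consequently, by Proposition~\ref{prp:spettroirrid} applied in the range $m\ge 2$, a \emph{general} divisor $A\in|m\Sigma+n\Gamma|$ is integral and smooth.

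Next I would transfer this genericity statement from the base of the fibration back to the moduli space. By Item~(2) of Proposition~\ref{prp:mappaspettrale} the map
\begin{equation*}
\cM_{v_2}(S,h_S)\overset{\varphi}{\lra}|\cO_{J^{d_0}(S/\PP^1)}(a\Sigma+br_1 \Gamma)|,\qquad [\cF]\mapsto\Lambda(\cF),
\end{equation*}
is a Lagrangian fibration; in particular it is dominant, so its image is the whole linear system. Let $Z\subset|\cO_{J^{d_0}(S/\PP^1)}(a\Sigma+br_1 \Gamma)|$ be the locus of divisors which are \emph{not} integral and smooth. By the previous paragraph $Z$ is a proper closed subset of the linear system. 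Since $\varphi$ is dominant, the preimage $\varphi^{-1}(Z)$ cannot equal $\cM_{v_2}(S,h_S)$, and hence it is a proper closed subset. Therefore for $[\cF]$ general we have $[\cF]\notin\varphi^{-1}(Z)$, i.e.~$\Lambda(\cF)=\varphi([\cF])$ is integral and smooth, which is the assertion.

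The only point that requires a little care is the passage from a general member of the base linear system to a general point of the moduli space: here I use that $\varphi$ is dominant (guaranteed by its being a Lagrangian, hence surjective, fibration), so that the bad locus $Z$ downstairs pulls back to a proper closed subset of $\cM_{v_2}(S,h_S)$ rather than exhausting it. Everything else is bookkeeping, and I foresee no substantive obstacle beyond verifying the numerical inequalities $m=a\ge 2$ and $n=br_1\ge 2m$, which hold by hypothesis.
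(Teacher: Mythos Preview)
Your argument is correct and is exactly the intended one: the paper records the corollary as an immediate consequence of Proposition~\ref{prp:spettroirrid} (``The above proposition gives the following result''), with the surjectivity of the decomposition map coming from Item~(2) of Proposition~\ref{prp:mappaspettrale}. Your translation $m=a$, $n=br_1$ and the check $n\ge 2m\ge 4$ are precisely what is needed, and the pull-back of the bad locus via the surjective map $\varphi$ is the correct way to transfer genericity from the base to $\cM_{v_2}(S,h_S)$.
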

%

%%%%%%%%%%%%%%%%%%%%%%%%%%%%%%%%%%%%%%%
%\bibliography{ref-modular-with-moduli}

 \end{document}